\documentclass[a4paper,10pt]{article}
 
\usepackage[utf8]{inputenc}
\usepackage[T1]{fontenc}
\usepackage[english]{babel}
\usepackage{lmodern}
\usepackage{geometry}
\usepackage{url,xspace}
\usepackage{amsmath,amssymb,stmaryrd,spalign,bbm,mathtools,mathrsfs,dsfont}
\usepackage{xfrac}
\usepackage{comment}
\usepackage[thmmarks,amsmath,framed,thref,hyperref]{ntheorem}
\usepackage[colorlinks=true,citecolor=teal,linkcolor=blue]{hyperref}
\usepackage{upgreek}
\usepackage{array}
\usepackage{enumitem}
\usepackage{cite}
\usepackage{graphicx}
\usepackage[toc,page]{appendix}
\usepackage{etoolbox}
\usepackage{pgfplots}
\pgfplotsset{width=10cm,compat=1.9}

\usepackage{tikz, xcolor, float}
\usetikzlibrary{arrows.meta,patterns,calc}

\usepackage{makecell}
\usepackage{tikz-cd}

\usepgfplotslibrary{external}

\newtheorem{theorem}{Theorem}[section]
    \newtheorem{corollary}[theorem]{Corollary}
    
    \newtheorem{proposition}[theorem]{Proposition}

    \theorembodyfont{\upshape}
    \newtheorem{definition}[theorem]{Definition}
    \newtheorem{notation}[theorem]{Notation}
    \newtheorem{remark}[theorem]{Remark}

\theoremstyle{nonumberplain}
\theoremheaderfont{\itshape}
\theorembodyfont{\normalfont}
\theoremseparator{.\,—}
\theoremsymbol{}
\newtheorem{proof-wo}{Proof}
\theoremsymbol{\ensuremath{\blacksquare}}
\newtheorem{proof}{Proof}

\newcommand{\B}{\mathrm{B}}
\newcommand{\C}{\mathrm{C}}     \newcommand{\Ccinfty}{\mathrm{C}^\infty_c}     \newcommand{\Ccinftydiv}{\mathrm{C}^\infty_{c,\sigma}}
\newcommand{\D}{\mathrm{D}}

\renewcommand{\d}{\mathrm{d}}
\renewcommand{\H}{\mathrm{H}}
\newcommand{\I}{\mathrm{I}}
\renewcommand{\L}{\mathrm{L}}
\renewcommand{\P}{\mathrm{P}}
\newcommand{\N}{\mathrm{N}}
\newcommand{\T}{\mathrm{T}}
\newcommand{\K}{\mathrm{K}}
\newcommand{\R}{\mathrm{R}}
\newcommand{\W}{\mathrm{W}}
\newcommand{\X}{\mathrm{X}}
\newcommand{\Y}{\mathrm{Y}}
\newcommand{\Z}{\mathrm{Z}}

\DeclareMathAlphabet\gothic{U}{euf}{m}{n}

\renewcommand{\AA}{\mathbb{A}}
\newcommand{\RR}{\mathbb{R}}

\newcommand{\NN}{\mathbb{N}}

\newcommand{\CC}{\mathbb{C}}

\newcommand{\PP}{\mathbb{P}}

\newcommand{\ff}{\mathbf{f}}

\newcommand{\uu}{\mathbf{u}}
\newcommand{\vv}{\mathbf{v}}
\newcommand{\ww}{\mathbf{w}}

\let\div\undefined
\DeclareMathOperator{\div}{div\,}

\DeclareMathOperator{\supp}{supp\,}

\counterwithin{equation}{section}

\setlist[enumerate,1]{label={\textit{(\roman*)}}}% Required for inserting images

\title{The Boussinesq system in 3-dimensional bounded rough domains:
Well-posedness in critical spaces and long-time behavior\thanks{{\textbf{MSC 2020:} (Primary) 76D03, 76D07, 35Q30, 35B65 (Secondary) 35A02, 35J05, 35Q35 }\\
\textbf{keywords:} \textit{Boussinesq system, $\L^1$-Maximal regularity, Bounded Lipschitz domains, Bounded H\"{o}lder continuous domains, Well-posedness, Critical spaces, Geophysical flows}}}
\author{
  Anatole \textsc{Gaudin}\thanks{Universität Duisburg-Essen, Fakultät für Mathematik, \textbf{e-mail:} anatole.gaudin@uni-due.de}
}
\date{}

\begin{document}

\maketitle

\begin{abstract}
We study the three-dimensional Boussinesq system in bounded rough domains, including bounded Lipschitz and $\mathrm{C}^{1,\alpha}$ domains, within a critical functional framework. We establish existence and uniqueness results that are global in time for small initial data and local in time for arbitrary initial data. Well-posedness in critical endpoint Besov spaces with third index equal to $\infty$ is obtained in domains with H\"older continuous boundaries, relying on $\mathrm{L}^2$-maximal regularity in time. We also prove well-posedness in critical Besov spaces with third index equal to $1$, using $\mathrm{L}^1$-maximal regularity. In this $\mathrm{L}^1$-in-time setting, the analysis applies to arbitrary bounded Lipschitz domains. In any case, we show that the fluid velocity stabilizes exponentially for large times and that the temperature converges to the initial averaged temperature of the fluid.

The linear theory --fitting the adapted product estimates and vice versa-- is properly established prior to the nonlinear analysis. With this fully prepared linear framework in hand, the nonlinear estimates that follow are then handled in the critical framework with a simplified treatment --especially in the case where the fluid velocity and the temperature belong to slightly larger spaces than $\mathrm{L}^2(\mathrm{W}^{1,3})$ and $\mathrm{L}^2(\mathrm{L}^{3/2})$ respectively-- when compared with previously known similar results in smooth domains. This approach relies on a robust linear theory and sharp product estimates based on operator-theoretic methods and Besov space techniques.  Finally, as part of the analysis, we establish several new results for the underlying linear operators, including refined characterizations for the domains of fractional powers of the Neumann Laplacian and of the Stokes operator in bounded Lipschitz domains.
\end{abstract}

\tableofcontents

%----------------------------------------------------
%------------------ Introduction --------------------
%----------------------------------------------------
\section{Introduction}

\subsection{State of the art, the main theorems, and functional analytic motivations}

Given a bounded \textit{Lipschitz} domain $\Omega$ of $\RR^3$, with outward unit normal $\mathbf{n}$, the goal of this article is to study the following Cauchy problem 
\begin{equation}\tag{BSQ}\label{eq:BoussinesqSystemIntro}
    \left\{\begin{aligned}
        \partial_t \uu - \nu\Delta \uu + \nabla \mathfrak{p}+ \div (\uu \otimes \uu)  & =  \theta\mathfrak{e}_3 & \quad & \text{in $(0,T)\times\Omega$}, \\
        \partial_t \theta - \kappa\Delta \theta +\div(\theta\mathbf{u})  & = 0 & \quad & \text{in $(0,T)\times\Omega$}, \\
        \div \uu & =  0 & \quad & \text{in $(0,T)\times\Omega$}, \\
        \uu & =0&  &  \text{on $(0,T)\times\partial \Omega$}, \\
        \partial_\mathbf{n}\theta & =0&  &  \text{on $(0,T)\times\partial \Omega$}, \\
        (\uu, \theta) (0) & = (\uu_0, \theta_0). &  &
    \end{aligned}\right.
\end{equation}
where $\nu,\kappa>0$ are given, it is called the \textit{incompressible} Boussinesq system. Here $\uu\,:\,(0,T)\times\Omega\longrightarrow\RR^3$ is the fluid velocity, $\mathfrak{p}\,:\,(0,T)\times\Omega\longrightarrow\RR$ the pressure of the fluid and $\theta\,:\,(0,T)\times\Omega\longrightarrow\RR$ is the ``temperature'' of the fluid. This is a simplified model coming from geophysical flows.  This system describes a heated fluid where it creates temperature variations $\theta$; these variations generate a buoyancy force\footnote{This is an arbitrarily chosen direction: one could choose any constant unit vector of $\RR^3$ instead of $\mathfrak{e}_3$.} $\theta \mathfrak{e}_3$, which sets the fluid into motion, and this motion transports the temperature. For the modeling, see for instance \cite[Chap.~10]{Friedlander80} and \cite[Chap.~2,~Sec.~16]{Salmon1998}, the latter concerning the case when viscosity and diffusion are neglected ($\nu=\kappa=0$).

\medbreak

In order to investigate the well-posedness of such a non-linear system, the notion of a \emph{scaling--invariant space} plays a fundamental role, because it captures the intrinsic balance between nonlinearity and dissipation (or the lack thereof). Each equation comes with a natural scaling law: if $\Omega=\RR^3$, $T=\infty$, and $(\uu(t,x),\theta(t,x))$ is a solution to \eqref{eq:BoussinesqSystemIntro}, then for any $\lambda>0$
\begin{align}\label{eq:scaling}
    \uu_\lambda(t,x)=\lambda \uu(\lambda^2 t,\lambda x),\qquad \theta_\lambda(t,x)=\lambda^3 \theta(\lambda^2 t,\lambda x)
\end{align}
also yields a solution. A function space $\X$ is called critical for \eqref{eq:BoussinesqSystemIntro} when $$\|(\uu_\lambda,\theta_\lambda)\|_{\X(\RR_+\times\RR^3)}=\|(\uu,\theta)\|_{\X(\RR_+\times\RR^3)},$$ meaning that the norm does not distinguish between different spatial scales. This is precisely the threshold at which the equation is neither dominated by dissipation nor by nonlinearity: in subcritical spaces dissipation prevails and regularity improves, while in supercritical spaces nonlinear effects may concentrate energy and generate singularities. Working in scaling--invariant spaces therefore allows one to formulate existence, uniqueness, and stability results that are intrinsic to the dynamics of the equation and independent of any arbitrary choice of scale, and to reach the optimal framework where the genuine analytical difficulty of the problem is revealed.

\medbreak

Within this regime, we can then consider a wealth of different function spaces for the unknown $(\uu,\theta)$ in order to reach well-posedness, such as
\begin{align}\label{eq:CritSclaingspacesIntro}
    \L^\infty(\L^3(\RR^3)^3\times\L^1(\RR^3)),\,\,\, \L^\infty(\dot{\B}^0_{3,q}(\RR^3)^3\times\dot{\B}^{-1}_{3/2,r}(\RR^3)),\,\,\, \L^\infty(\L^3(\RR^3))\times\L^2(\L^\frac{3}{2}(\RR^3)),\,\,\,\text{etc.}
\end{align}
and many other combinations with different integrability index and regularity indices are possible, some being more difficult to deal with than others in order to reach well-posedness results.

\medbreak

The system has been widely investigated mostly in the whole space or the $n$-dimensional torus: Cannon and DiBenedetto \cite{CannonDiBenedetto80} achieved the first investigation in $\L^q(\L^p(\RR^n))$, $2<p,q<\infty$ in critical spaces for the solution but non-critical for the initial data. In the work by Sawada and Taniuchi, \cite{SawadaTaniuchi2004}, the system has been investigated for non-decaying initial data in spaces like (and built upon) $\L^\infty(\RR^n)$. Danchin and Paicu \cite{DanchinPaicu2008} did investigate the case $\kappa=0$ for initial data in critical Lebesgue-Lorentz $\L^{n,\infty}(\RR^n)\times \L^{\frac{n}{3}}(\RR^n)$ and in Besov spaces $\dot{\B}^{0}_{n,1}(\RR^n)^n\times \dot{\B}^{0}_{n,1}(\RR^n)$ with solutions in appropriate critical Lebesgue-Besov spaces. After this, Deng and Cui pursued similar interests, \cite{DengCui2012}, considering data in less regular log-Besov spaces like $(\B^{-1}_{1,\infty}\cap \B^{-1,1}_{\infty\infty})(\RR^n)^n\times\B^{-1}_{p,r}(\RR^n)$. More recently, Brandolese and He \cite{BrandoleseHe2020BSQ} constructed, with uniqueness, mild solutions in $\C^{0}(\L^3(\RR^3)\times\L^1(\RR^3))$ for  data  $(\uu_0,\theta_0)\in\L^3(\RR^3)\times\L^1(\RR^3)$. Pursuing this analysis, Brandolese and Monniaux, \cite{BrandoleseMonniaux2021}, obtained existence and uniqueness in $\C^{0}(\L^3(\RR^3))\times\L^2(\L^{\frac{3}{2}}(\RR^3))$, a critical space, provided in $(\uu_0,\theta_0)\in\L^3(\RR^3)\times\dot{\B}^{-1}_{3/2,2}(\RR^3)$, for either small initial data and globally-in-time, or finite times and arbitrary data they also proved. Very recently, in a similar spirit, Fern\'{a}ndez-Dalgo and Jarr\'{i}n \cite{FernandezDalgoJarrin2025} performed a similar analysis in the $\L^2$-Sobolev spaces $\H^{r}(\RR^3)^3\times\dot{\H}^{s}(\RR^3)$, $s\in[-1/2,0]$, $r\geqslant 1/2$, but only for finite times.

For domains with boundary,  it has been obtained by Fife and Joseph, \cite{FifeJoseph69}, the well--posedness for smooth domains $\Omega$ within the Hilbertian setting. Hishida, \cite{Hishida91}, did reach well-posedness in the space $\C^0(\L^p(\Omega)^n\times\L^q(\Omega))$, $\Omega$ to be bounded with smooth boundary, in the non-critical regime, then Hishida and Yamada, \cite{HishidaYamada92}, obtained the corresponding results in the case of exterior domains. In \cite{Kagei93}, Kagei discusses, for $\Omega$ to be smooth bounded or exterior, the existence of weak solutions for initial data $\uu_0\in\L^2(\Omega)$ and $\theta_0\in\L^1(\Omega)$, and their uniqueness in some critical $\L^p$ classes. In \cite{Morimoto92}, Morimoto investigated existence and uniqueness for weak solutions in the variational Hilbertian setting where the temperature is subject to mixed, non-zero prescribed, boundary conditions on the boundary.

When it comes to the analysis in bounded domains, results in critical spaces seem to be largely missing from the literature. In this work, we not only aim at filling this gap, but also at addressing the case of rough geometries, where elliptic operators typically exhibit weakened smoothing effects. Compared with the sole Navier--Stokes system, the temperature equation introduces several additional obstructions. First, due to the scaling associated with the coupling, see~\eqref{eq:scaling}, there is necessarily a gap of two derivatives, in the sense of Sobolev indices, between the functional space for the temperature and that for the velocity, as their respective scaling differ by $\lambda^2$. Moreover, the same scaling~\eqref{eq:scaling} suggests that the product rule involving the velocity $\uu$ and the temperature $\theta$ is more naturally formulated in function spaces of non-positive regularity. Indeed, if $\X^{s,p}$ denotes a scaling-invariant space with Sobolev index $s-3/p$, requiring $\L^q(\X^{s,p})$ to be scaling invariant for the product $\uu\theta$ leads to the constraint
\begin{align*}
    4+s = \frac{2}{q}+\frac{3}{p}.
\end{align*}

All these issues become particularly challenging in the presence of boundaries, even for smooth domains, since the two-derivative gap forces us to work almost exclusively with non-positive regularity spaces for the temperature. Even allowing for shifts in the regularity indices would then impose compatibility conditions that effectively render the system overdetermined. A possible workaround would be to introduce a large separation between the underlying Lebesgue scales, which should still preserve this gap of $2$ exact derivatives thanks to Sobolev embeddings; however, in rough domains this strategy is not available either, due to the limited solvability of the Neumann and Stokes problems, see Section~\ref{Sec:Laplacians}. These difficulties are absent in the sole Navier--Stokes system, although part of the analytical strategy for the fluid component of the Boussinesq system remains similar.

The main contributions of this paper are the two main theorems that read as follows:
\begin{theorem}\label{thm:mainThmBSQLp}Let $\Omega\subset\RR^3$ be a bounded $\C^{1,\alpha}$ domain with $\alpha>1/3$. Let $(\uu_0,\theta_0)\in\B^{0}_{3,2}(\Omega)^3\times\B^{-1}_{3/2,2,0}(\Omega)$ such that $\div \uu_0 =0$ and $\uu_0\cdot\mathbf{n}_{|_{\partial\Omega}}=0$. One sets $\Tilde{\theta}_0:= \frac{1}{|\Omega|}\int_\Omega \theta_0\in\RR$. Then, either,
\begin{itemize}
    \item $T=\infty$, we assume $(\uu_0,\theta_0-\Tilde{\theta}_0)$ has  sufficiently small norm; or
    \item we assume $(\uu_0,\theta_0)$ is arbitrary, so there exists $T>0$ such that ;
\end{itemize}
the system \eqref{eq:BoussinesqSystemIntro} admits a unique weak solution
\begin{align*}
    (\uu,\theta-\Tilde{\theta}_0)\in\C^0([0,T];\B^{0}_{3,2}(\Omega)^3\times \B^{-1}_{3/2,2}(\Omega))\cap\L^2(0,T;\B^{1}_{3,\infty}(\Omega)^3\times\B^{0}_{3/2,\infty}(\Omega)).
\end{align*}
Furthermore, when $T=\infty$, there exists ${\lambda}>0$ such that
\begin{align*}
    e^{\lambda t}\lVert (\uu(t),\theta(t))- (0,\Tilde{\theta}_0) \rVert_{\B^{0}_{3,2}(\Omega)\times \B^{-1}_{3/2,2}(\Omega)} \xrightarrow[t\rightarrow+\infty]{} 0.
\end{align*}
\end{theorem}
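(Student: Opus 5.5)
The plan is to follow the classical Fujita–Kato scheme: reformulate the problem as a fixed point in a critical space built from the linear Stokes and Neumann semigroups, and close the argument with product estimates. First, decompose the temperature: since $\int_\Omega \div(\theta\uu)=\int_{\partial\Omega}\theta\,\uu\cdot\mathbf{n}=0$ and the Neumann condition kills the flux of $\Delta\theta$, the mean of $\theta$ is conserved. Hence $\vartheta:=\theta-\Tilde{\theta}_0$ has zero mean for all times. The buoyancy term splits as $\vartheta\mathfrak{e}_3+\Tilde{\theta}_0\mathfrak{e}_3$, and the constant part $\Tilde{\theta}_0\mathfrak{e}_3=\nabla(\Tilde{\theta}_0x_3)$ is absorbed into the pressure. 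Applying the Helmholtz–Leray projector $\mathbb{P}$, the system becomes
\begin{align*}
\partial_t\uu+\nu A\uu=\mathbb{P}\vartheta\mathfrak{e}_3-\mathbb{P}\div(\uu\otimes\uu),\qquad \partial_t\vartheta+\kappa(-\Delta_{\mathcal N})\vartheta=-\div(\vartheta\uu),
\end{align*}
with $A$ the Stokes operator and $-\Delta_{\mathcal N}$ the Neumann Laplacian restricted to mean-zero functions. Both operators are invertible there, which is what produces the exponential decay.

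Second, I would invoke the linear theory developed earlier in the paper. This consists of $\L^2$-maximal regularity for $A$ on $\B^{-1}_{3,\infty}$-type spaces and for $-\Delta_{\mathcal N}$ on $\B^{-2}_{3/2,\infty,0}$-type spaces in $\C^{1,\alpha}$ domains, obtained via real interpolation of fractional power domains. It also includes the identification of the trace spaces as $\B^0_{3,2}$ and $\B^{-1}_{3/2,2}$ with the $\C^0$ embedding, and exponential weights $e^{\lambda t}$, which are allowed because both spectra lie in $\{\Re z\geqslant c>0\}$. Denote by $\X_T$ the solution space of the statement, normed by $\|e^{\lambda t}\cdot\|$ when $T=\infty$. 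The linear result then gives
\begin{align*}
\|(\uu,\vartheta)\|_{\X_T}\lesssim \|(\uu_0,\vartheta_0)\|+\|\mathbb{P}\div(\uu\otimes\uu)\|_{\L^2(\B^{-1}_{3,\infty})}+\|\vartheta\|_{\L^2(\B^{-1}_{3,\infty})}+\|\div(\vartheta\uu)\|_{\L^2(\B^{-2}_{3/2,\infty})}.
\end{align*}
The coupling term is linear: it is controlled by the embedding $\B^0_{3/2,\infty}\hookrightarrow\B^{-1}_{3,\infty}$, and it can be made small, or handled by a triangular structure, by solving the $\vartheta$ equation first. Alternatively, one can absorb it using a small time or a weight $\lambda$ smaller than the spectral gap of $\nu A$.

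Third, the nonlinear terms require bilinear estimates. The first is $\|\uu\otimes\vv\|_{\B^0_{3,\infty}}\lesssim\|\uu\|_{\B^0_{3,2}}\|\vv\|_{\B^1_{3,\infty}}$, or a variant using $\B^1_{3,\infty}\hookrightarrow \L^\infty$-type control at the critical level, which is delicate. The second is $\|\vartheta\uu\|_{\B^{-1}_{3/2,\infty}}\lesssim \|\vartheta\|_{\B^{-1}_{3/2,2}}\|\uu\|_{\B^1_{3,\infty}}+\|\vartheta\|_{\B^0_{3/2,\infty}}\|\uu\|_{\B^0_{3,2}}$. Combined with boundedness of $\div$ from $\B^{s}$ to $\B^{s-1}$ on the relevant (dual, zero-trace) scales, these close in $\L^2$-in-time by Hölder, $\L^\infty\times\L^2$. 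The global small-data case then follows from the Banach fixed point theorem applied to the map built from these estimates. For arbitrary data, one writes the solution as the free evolution plus a perturbation. The free evolution has small $\L^2(0,T;\cdot)$ norm for $T$ small by dominated convergence, since the third index $2<\infty$ on the data, and the fixed point runs in a ball of the perturbation. Uniqueness in the full class follows from the same difference estimate, after splitting $[0,T]$ into short intervals. The decay statement comes from the weighted space: $e^{\lambda t}\|(\uu,\vartheta)(t)\|$ is bounded and continuous, and it tends to zero by running the argument with a slightly larger $\lambda$.

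The main obstacle I expect is the product estimate at endpoint Besov regularity with third index $\infty$ in a rough domain. The paraproduct remainder fails at regularity $0$ and $-1$, and the negative-regularity spaces $\B^{-1}_{3/2,\infty}$ must be realized as duals of $\B^{1}_{3,1,0}$, compatibly with the Neumann condition. I would prove it by duality: test $\vartheta\uu$ against $\varphi\in\B^{1}_{3,1}$, estimate $\uu\varphi$ in $\B^{1}_{3/2\cdot}$ by extension to $\RR^3$ and paraproducts there, and use the restriction $\alpha>1/3$ only to guarantee that the required fractional domains of $\Delta_{\mathcal N}$ and $A$ coincide with these Besov spaces.
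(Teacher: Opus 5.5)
The step that carries this theorem is missing, and it is the only place where $\C^{1,\alpha}$ with $\alpha>1/3$ is used. Your choice of spaces for the transport term cannot supply it. The $\L^2$-maximal regularity for $-\Delta_{\mathcal N}$ that yields $\L^\infty(\B^{-1}_{3/2,2})\cap\L^2(\B^{0}_{3/2,\infty})$ is run in the abstract space $(-\Delta_{\mathcal N})^{1/2}\B^{-1}_{3/2,\infty,\circ}(\Omega)$. This space is not identified with any $\B^{-2}_{3/2,\infty,0}$-type space, since $s=-2<-2+\frac{2}{3}$ lies outside the range of Proposition~\ref{prop:NeumannBesov}.

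So what must be proved is that $(-\Delta_{\mathcal N})^{-1/2}\div$ maps the space containing $\vartheta\uu$ boundedly into $\B^{-1}_{3/2,\infty,\circ}(\Omega)$. Boundedness of $\div:\B^{s}\to\B^{s-1}$ says nothing about this. With your choice $\vartheta\uu\in\B^{-1}_{3/2,\infty}$, duality turns the requirement into $\nabla(-\Delta_{\mathcal N})^{-1/2}:\B^{1}_{3,1,\circ}\to\B^{1}_{3,1}$. That is $\B^{2}_{3,1}$-regularity for the Neumann problem with $\B^{0}_{3,1}$ data, which $\alpha>1/3$ does not provide.

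The paper trades integrability for regularity. It estimates the product in $\B^{-1/2}_{6/5,\infty}$ and then applies Proposition~\ref{prop:RieszTransfC1alphaDomainNeumannLap}: $(-\Delta_{\mathcal N})^{-1/2}\div$ is bounded on $\B^{-1/2}_{6/5,\infty,0}$, a statement dual to $\B^{3/2}_{6,1}$-Neumann regularity, which is exactly what $\alpha>1/3$ buys. Only afterwards does it use $\B^{-1/2}_{6/5,\infty}\hookrightarrow\B^{-1}_{3/2,\infty}$. The Stokes part needs only a Lipschitz boundary.

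Your endpoint product estimates, closed by H\"older $\L^\infty\times\L^2$ in time, are also unavailable. Since $\B^{1}_{3,\infty}\not\hookrightarrow\L^\infty$, the paraproduct costs a logarithm. At total regularity $0$, the resonant sum with third indices $2$ and $\infty$ does not converge. Testing against $\varphi\in\B^{1}_{3,1}$ does not cure this, because the loss is in the third index: $\uu\varphi$ inherits the index $\infty$ of $\uu$, so it is not in $\B^{1}_{3,2}$, whose dual $\B^{-1}_{3/2,2,0}$ contains $\vartheta$. The paper first interpolates in time,
\[
\L^\infty(\B^{0}_{3,2})\cap\L^2(\B^{1}_{3,\infty})\hookrightarrow\L^4(\B^{1/2}_{3,1}),\qquad \L^\infty(\B^{-1}_{3/2,2})\cap\L^2(\B^{0}_{3/2,\infty})\hookrightarrow\L^4(\B^{-1/2}_{3/2,1}),
\]
so that both factors have third index $1$. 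Then $\B^{1/2}_{3,1}\cdot\B^{-1/2}_{3/2,1}\subset\B^{-1/2}_{6/5,\infty}$ (Proposition~\ref{prop:ProductRullBesovDim3-2}) and H\"older $\L^4\times\L^4$ close the temperature estimate. For the velocity it uses boundedness of $\AA_{\mathcal D}^{-1/2}\PP_\Omega\div$ on $\L^3$ (Corollary~\ref{cor:SrtPropStokesW-1p}) together with $\L^4(\L^6)\cdot\L^4(\L^6)\subset\L^2(\L^3)$.

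The resulting geometric-mean form is also what makes your large-data local argument work. With your additive estimate for $\vartheta\uu$, consider the temperature perturbation $\zeta$. The cross term $\|e^{-\nu t\AA_{\mathcal D}}\uu_0\|_{\L^\infty(0,T;\B^{0}_{3,2})}\|\zeta\|_{\L^2(0,T;\B^{0}_{3/2,\infty})}$ stays of size $\|\uu_0\|_{\B^{0}_{3,2}}$ as $T\to0$, so the linearized map is not a contraction.

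Finally, the statement asserts uniqueness among \emph{weak} solutions, but your difference estimate applies only to mild ones. The paper first shows that weak solutions in this class are mild. It uses boundedness of $\Omega$ to place the differences in the Hilbertian $\L^2$ theory of $\AA_{\mathcal D}$ and $-\Delta_{\mathcal N}$, and only then runs the short-time argument.
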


\begin{theorem}\label{thm:mainThmBSQ}Let $\Omega\subset\RR^3$ be a bounded Lipschitz domain. Let $(\uu_0,\theta_0)\in\B^{0}_{3,1}(\Omega)^3\times\B^{-1}_{3/2,1,0}(\Omega)$ such that $\div \uu_0 =0$ and $\uu_0\cdot\mathbf{n}_{|_{\partial\Omega}}=0$. One sets $\Tilde{\theta}_0:= \frac{1}{|\Omega|}\int_\Omega \theta_0\in\RR$. Then, either,
\begin{itemize}
    \item $T=\infty$, we assume $(\uu_0,\theta_0-\Tilde{\theta}_0)$ has  sufficiently small norm; or
    \item we assume $(\uu_0,\theta_0)$ is arbitrary, so there exists $T>0$ such that ;
\end{itemize}
the system \eqref{eq:BoussinesqSystemIntro} admits a unique mild solution
\begin{align*}
    (\uu,\theta-\tilde{\theta}_0)\in\C^0([0,T];&\B^{0}_{3,1}(\Omega)^3\times\B^{-1}_{3/2,1,0}(\Omega))\cap\L^1(0,T;\D^{0}_{3,1}(\AA_\mathcal{D},\Omega)\times \B^{1}_{3/2,1}(\Omega)).
\end{align*}
If additionally $\Omega$ has $\C^{1,\alpha}$ boundary with $\alpha>1/3$, then this solution is unique within the class of weak solutions.
\medbreak
\noindent Furthermore, when $T=\infty$,  there exists ${\lambda}>0$  such that
\begin{align*}
    e^{\lambda t}\lVert (\uu(t),\theta(t))- (0,\Tilde{\theta}_0) \rVert_{\B^{0}_{3,1}(\Omega)\times \B^{-1}_{3/2,1}(\Omega)} \xrightarrow[t\rightarrow+\infty]{} 0.
\end{align*}
\end{theorem}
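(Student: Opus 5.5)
The plan is to reduce Theorem~\ref{thm:mainThmBSQ} to a fixed point argument in the $\L^1$-in-time maximal regularity space attached to the Stokes operator $\AA_\mathcal{D}$ and the Neumann Laplacian $-\Delta_\mathcal{N}$ on a bounded Lipschitz domain.

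\textbf{Step 1: mean removal.} Integrating the temperature equation against $1$, using $\partial_\mathbf{n}\theta=0$, $\uu=0$ on $\partial\Omega$ and $\div(\theta\uu)$, shows that $\int_\Omega\theta(t)$ is conserved. Set $\vartheta:=\theta-\tilde\theta_0$. Since $\tilde\theta_0\mathfrak{e}_3=\nabla(\tilde\theta_0x_3)$ is a gradient, it is absorbed into the pressure. Hence $(\uu,\vartheta)$ solves the same system with mean-free data $\vartheta_0\in\B^{-1}_{3/2,1,0}(\Omega)$. The coupling $\div(\theta\uu)=\div(\vartheta\uu)$ is unchanged, because $\div\uu=0$ and $\uu$ vanishes at the boundary.

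\textbf{Step 2: linear theory.} Write the problem in mild form,
\[
\uu(t)=e^{-t\nu\AA_\mathcal{D}}\uu_0+\int_0^t e^{-(t-s)\nu\AA_\mathcal{D}}\PP\big(\vartheta\mathfrak{e}_3-\div(\uu\otimes\uu)\big)\,\d s ,
\qquad
\vartheta(t)=e^{t\kappa\Delta_\mathcal{N}}\vartheta_0-\int_0^t e^{(t-s)\kappa\Delta_\mathcal{N}}\div(\vartheta\uu)\,\d s .
\]
I would use the earlier results in the paper: $\L^1$-maximal regularity of $\AA_\mathcal{D}$ on $\B^0_{3,1}$ and of $-\Delta_\mathcal{N}$ on the mean-free space $\B^{-1}_{3/2,1,0}$, together with the trace characterisations of the relevant spaces. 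These results give
\[
\|\uu\|_{\L^\infty(\B^0_{3,1})}+\|\uu\|_{\L^1(\D^0_{3,1}(\AA_\mathcal{D}))}\lesssim \|\uu_0\|_{\B^0_{3,1}}+\|f\|_{\L^1(\B^0_{3,1})},
\]
and the analogous estimate for $\vartheta$ in $\L^\infty(\B^{-1}_{3/2,1,0})\cap\L^1(\B^1_{3/2,1})$.

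Both operators are invertible on the mean-free or solenoidal spaces of a bounded domain. Their semigroups therefore decay exponentially. Multiplying by $e^{\lambda t}$ with $\lambda$ below the spectral bound keeps these estimates valid uniformly on $(0,\infty)$, which will later give the long-time behaviour.

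\textbf{Step 3: nonlinear estimates.} These are the core of the argument, and there are three terms to control.

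First, the buoyancy term $\vartheta\mathfrak{e}_3$ needs $\L^1(\B^1_{3/2,1})\hookrightarrow\L^1(\B^0_{3,1})$. This is exactly the Sobolev embedding $1-\tfrac{2}{3}=0-\tfrac13$, and it is linear, so there is no product.

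Second, the convection term $\PP\div(\uu\otimes\uu)$ must lie in $\L^1(\B^0_{3,1})$. I would use that $\D^0_{3,1}(\AA_\mathcal{D})\hookrightarrow\B^{2}_{3,1}$ locally fails on Lipschitz domains, so only the embedding into $\B^{1+1/p}_{3,1}$-type spaces is available. The product estimate then follows by interpolation: $\uu\in\L^\infty(\B^0_{3,1})\cap\L^1(\D^0_{3,1})$ gives $\uu\in\L^2(\B^1_{3,1})$, and $\B^1_{3,1}\hookrightarrow\L^\infty$ together with $\nabla\uu\in\B^0_{3,1}$ bounds $\uu\cdot\nabla\uu$ in $\L^1(\B^0_{3,1})$. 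One also needs that the Leray projector $\PP$ is bounded on $\B^0_{3,1}$, which holds for $p=3$ near $2$ in Lipschitz domains.

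Third, the transport term $\div(\vartheta\uu)$ must lie in $\L^1(\B^{-1}_{3/2,1,0})$. By duality, $\div$ maps $\B^0_{3/2,1}$ into $\B^{-1}_{3/2,1,0}$, and it lands mean-free because $\uu\cdot\mathbf{n}=0$. So it suffices to bound $\vartheta\uu$ in $\L^1(\B^0_{3/2,1})$, which follows from $\vartheta\in\L^2(\B^0_{3/2,1})$, $\uu\in\L^2(\B^1_{3,1})\subset\L^2(\L^\infty)$, and a paraproduct estimate on $\Omega$ obtained via extension.

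I expect this last step to be the main obstacle. At regularity $0$ in $\L^{3/2}$, products with $\L^\infty\cap\B^1_{3,1}$ functions are borderline, so one needs the $\ell^1$ third index to close the paraproduct remainder. I would try first to prove it on $\RR^3$ with Stein extension and restrict.

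\textbf{Step 4: fixed point and conclusions.} Global existence for small data then follows from the bilinear fixed point lemma in the weighted space. Local existence for large data follows by splitting off the free evolution, whose $\L^1(0,T)$ and $\L^2(0,T)$ norms tend to $0$ as $T\to0$, by dominated convergence. This is where the $\ell^1$ third index matters: it makes the data space the exact trace space.

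Continuity in time comes from the trace theory. Exponential decay follows from the weighted estimate: $e^{\lambda t}\|(\uu,\vartheta)(t)\|$ is bounded for a slightly larger $\lambda'$, which gives the decay for $\lambda<\lambda'$.

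Uniqueness among weak solutions in the $\C^{1,\alpha}$ case would use the identification of weak and mild solutions there, via Theorem~\ref{thm:mainThmBSQLp}-type regularity, followed by a Gronwall argument in the weaker $\B^0_{3,\infty}\times\B^{-1}_{3/2,\infty}$ framework.
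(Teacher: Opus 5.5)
\textbf{Existence and decay follow the paper; the gap is uniqueness.} Your existence and decay argument follows the paper's route:
\begin{itemize}
\item Da Prato--Grisvard $\L^1$-maximal regularity;
\item interpolation to $\L^2(0,T;\B^1_{3,1})$, via the square-root property \eqref{eq:SQRTStokesBesovLip}, and to $\L^2(0,T;\B^0_{3/2,1})$;
\item Corollary~\ref{cor:ProductRullBesovDim3-1}, the embedding $\B^1_{3/2,1}\hookrightarrow\B^0_{3,1}$, and a fixed point.
\end{itemize}
Running the fixed point directly in exponentially weighted spaces is a clean way to obtain the decay.

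For a general bounded Lipschitz domain, the theorem asserts uniqueness of the mild solution in the whole class $\C^0([0,T];\dots)\cap\L^1(0,T;\dots)$. Your proposal does not prove this. The fixed-point lemma (Proposition~\ref{prop:FixedPointBilin}) gives uniqueness only in a small ball, $4\lVert\mathcal{B}\rVert\lVert x\rVert<1$, and the only uniqueness argument you sketch is tied to the $\C^{1,\alpha}$ weak case.

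What is needed is an unconditional argument for two arbitrary mild solutions. This is Proposition~\ref{prop:UniqMild}, and it uses no boundary regularity:
\begin{itemize}
\item Write the difference as $\ww=\B_\nu(\ww,\uu_1+\uu_2)+\L_\nu(\Theta)$ and $\Theta=\C_\kappa(\uu_2,\Theta)+\C_\kappa(\ww,\theta_1)$.
\item Apply $\L^1$-maximal regularity on $(0,t)$.
\item Use $\lVert\ww\rVert_{\B^1_{3,1}}\lesssim\lVert\ww\rVert_{\B^0_{3,1}}^{1/2}\lVert\AA_\mathcal{D}\ww\rVert_{\B^0_{3,1}}^{1/2}$ and $\lVert\Theta\rVert_{\B^0_{3/2,1}}\lesssim\lVert\Theta\rVert_{\B^{-1}_{3/2,1}}^{1/2}\lVert\Theta\rVert_{\B^1_{3/2,1}}^{1/2}$.
\item Absorb the $\L^1$-in-time parts by Young's inequality. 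Weight the temperature inequality by a large constant so that it swallows the buoyancy contribution $\int_0^t\lVert\Theta\rVert_{\B^1_{3/2,1}}$.
\item Close with Gr\"onwall. The weight $1+\lVert\uu_1\rVert_{\B^1_{3,1}}^2+\lVert\uu_2\rVert_{\B^1_{3,1}}^2+\lVert\theta_1\rVert_{\B^0_{3/2,1}}^2$ is locally integrable.
\end{itemize}

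\textbf{Uniqueness among weak solutions.} You name the right goal, weak $\Rightarrow$ mild, but not how to reach it, and that is where both the work and the hypothesis $\alpha>1/3$ sit.
\begin{itemize}
\item \emph{Velocity.} Boundedness of $\Omega$ gives $\B^0_{3,1}\hookrightarrow\L^2$. Hence $\ww$ lies in the energy class $\L^\infty(0,T;\L^2_{\mathfrak{n},\sigma})\cap\L^2(0,T;\W^{1,2}_{0,\sigma})$, with forcing in $\L^1(0,T;\L^2)+\div\L^2(0,T;\L^2)$. Sohr's uniqueness result for the linear Stokes problem then identifies $\ww$ with its Duhamel formula.
\item \emph{Temperature.} One works in the Hilbert scale $(-\Delta_\mathcal{N})^{3/4}\L^2_\circ$. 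Placing $\div(\uu_2\Theta)+\div(\ww\theta_1)$ in $\L^2(0,T;(-\Delta_\mathcal{N})^{3/4}\W^{-1,2}_\circ)$ requires the bound $(-\Delta_\mathcal{N})^{-1/2}\div:\B^{-1/2}_{6/5,\infty,0}\to\B^{-1/2}_{6/5,\infty,\circ}$ of Proposition~\ref{prop:RieszTransfC1alphaDomainNeumannLap}. This is precisely where $\C^{1,\alpha}$ with $\alpha>1/3$ enters.
\end{itemize}
After this step, the Gr\"onwall argument above in $\B^0_{3,1}\times\B^{-1}_{3/2,1}$ finishes, so there is no need to pass to a $\B_{\cdot,\infty}$ framework.

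If by ``Theorem~\ref{thm:mainThmBSQLp}-type regularity'' you mean embedding the Besov weak class into the class of Theorem~\ref{thm:UniquenessWeakLp} and invoking that theorem, this is a legitimate shortcut. You must then check two things. First, the inclusion of classes. Second, that the two weak formulations agree: they use different temperature test functions, and $\kappa\nabla\theta\cdot\nabla\eta$ in one against $-\kappa\theta\Delta\eta$ in the other.

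\textbf{Two smaller points.}
\begin{itemize}
\item $\L_\nu$ has norm $\lesssim_\Omega 1/\kappa$, which is not small. The purely bilinear lemma therefore applies only after substituting the temperature Duhamel formula into the buoyancy term, as in \eqref{eq:ModifAbstractBoussinesq}. Alternatively, weight the temperature component of the norm by a large constant and use Proposition~\ref{prop:FixedPoint2Bil+Lin}.
\item $\B^1_{3,1}\hookrightarrow\L^\infty$ does not by itself bound $\uu\cdot\nabla\uu$ in $\B^0_{3,1}$, since $\L^\infty$ does not act on $\B^0_{3,1}$. Use instead that $\B^1_{3,1}(\Omega)$ is an algebra, so that $\div(\uu\otimes\uu)\in\L^1(0,T;\B^0_{3,1})$.
\end{itemize}
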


Note that Theorem~\ref{thm:mainThmBSQ} also contributes to the folklore of analysis of the Navier--Stokes equations, for which no comparable results were previously available in non-smooth domains. In the case $\theta=0$, however, the Navier--Stokes result provided by Theorem~\ref{thm:mainThmBSQ} is not optimal: for $p$ in a small interval around $3$, it can be readily improved by working in the critical Besov space $\B^{3/p-1}_{p,1}$. Moreover, within the $\L^p$ framework for the pure Navier--Stokes equations, stronger results for mild solutions are known in arbitrary bounded Lipschitz domains; see, for instance \cite{Tolksdorf2018-1}. In contrast, the temperature variable $\theta$ and its evolution equation constitute the genuine limiting factors in the present analysis. In particular, the restriction to $\C^{1,\alpha}$-domains stems from the need for refined product estimates in combination with the study of the solution operator, both relying on the regularity properties of the Neumann Laplacian.

\medbreak

Further comments are in order. We highlight the following:
\begin{itemize}
    \item $\D^{0}_{3,1}(\AA_\mathcal{D},\Omega)$ is the domain of  the Stokes operator $\AA_{\mathcal{D}}$ in the Besov spaces $\B^{0}_{3,1}(\Omega)^3$, whenever the domain is smooth one can replace it by $\B^{2}_{3,1}(\Omega)^3$. This choice is necessary, and supported by comments on the lack of regularity for the Stokes operator in bounded rough domains, below;
    \item Concerning the function spaces exhibited in Theorems~\ref{thm:mainThmBSQLp}~and~\ref{thm:mainThmBSQ}, they are all critical, as those exposed in \eqref{eq:CritSclaingspacesIntro};
    \item Introducing  better boundary regularity with $\C^{1,\alpha}$-domains, $\alpha>1/3$, is necessary for the existence and uniqueness in Theorem~\ref{thm:mainThmBSQLp}, and only necessary for the uniqueness of weak solutions in Theorem~\ref{thm:mainThmBSQ}. The obstruction comes from the necessity to use a regularity estimate for the Neumann Laplacian (Proposition~\ref{prop:RieszTransfC1alphaDomainNeumannLap}), which is known to  hold only in this framework. While the author thinks it should be possible to reach $\C^{1,\alpha}$ for any $\alpha>0$, the question to know of whether the inability to reach boundary of minimal smoothness --such as $\C^1$ or Lipschitz-- is a purely technical artifact or not, remains unclear;
    \item Both results lie at the interface of the many previous ones mentioned in our short and non-exhaustive review above: We consider here initial data in  $\B^{0}_{3,2}(\Omega)^3\times\B^{-1}_{3/2,2}(\Omega)$ and  $\B^{0}_{3,1}(\Omega)^3\times\B^{-1}_{3/2,1}(\Omega)$ respectively. Hence, we allow distributional temperatures $\theta_0$. This space here is slightly smaller than the one exhibited by Monniaux and Brandolese in $\RR^3$, corresponding to $\L^3(\Omega)^3\times\B^{-1}_{3/2,2}(\Omega)$. Finally, the space for initial data  and the solution space, especially for the temperature, are larger than the ones considered by Danchin and Paicu in \cite[Thm.~1.3]{DanchinPaicu2008}. 
\end{itemize}

\noindent From the technical point of view, the hardest part for proving results such as Theorems~\ref{thm:mainThmBSQLp}~and~\ref{thm:mainThmBSQ} is the knowledge of a sufficiently robust linear theory that allows to close the non-linear estimates.
\begin{itemize}
    \item The standard approach is to use a fixed point argument and $\L^q$-maximal regularity theory. Yet, closing properly the product rules, especially in the critical setting, appears to be extremely tedious, see for instance \cite{DanchinPaicu2008,BrandoleseMonniaux2021}. However, Besov spaces with third index $1$, $\B^{s}_{p,1}$, $s\geqslant0$, $p\in[1,\infty]$, are known to have a better behavior with respect to product rules, see for instance their use in \cite{DanchinPaicu2008}, or results such as Proposition~\ref{prop:Productq=1Besov} in the Appendix. But using them as ground spaces for our analysis necessarily forces us to consider $\L^1$-in-time maximal regularity, see  \cite{RiFarwig2022} and \cite[Thm.~4.5]{BechtelBuiKunstmann2024} for necessary and sufficient conditions, and see below for a short presentation of $\L^1$-maximal regularity. As we shall see, this $\L^1$-framework is rather a blessing than a burden, yielding quite simple proofs.
    \item The roughness of the boundary: $\Omega$ is assumed here to only have Lipschitz boundary, or at most $\C^{1,\alpha}$ boundary, for $\alpha$ away from $1$. Therefore, one might not expect full elliptic regularity for the Neumann Laplacian and the Stokes operator, \textit{i.e.} no gain of two derivatives in the appropriate class. For instance, it might happen that
    \begin{align*}
        &\D_{2}(\Delta_\mathcal{N})=\{\eta\in\H^{1}(\Omega)\,|\, \Delta \eta \in\L^2(\Omega),\text{ and }\partial_{\mathbf{n}}\eta_{|_{\partial\Omega}}=0 \text{ in }\H^{-1/2,2}(\partial\Omega)\}\not\subset \W^{2,2}(\Omega),\\
        &\D_{2}(\AA_\mathcal{D})\,=\{\vv\in\H^{1}_0(\Omega)^3\,|\, \div \vv =0,\text{ and }\exists \mathfrak{q}\in\L^2(\Omega), -\Delta\vv+\nabla \mathfrak{q}\in\L^2_{\mathfrak{n},\sigma}(\Omega)\} \not\subset \W^{2,2}(\Omega),
    \end{align*}
    where $\L^p_{\mathfrak{n},\sigma}(\Omega):=\{ \ff\in\L^p(\Omega)^3 \,|\,\div \ff =0\text{ and } \ff\cdot\mathbf{n}_{|_{\partial\Omega}}=0\}$. The same might obviously happen to other function spaces, $\L^p$ and $\W^{2,p}$, $\B^{0}_{p,q}$ and $\B^{2}_{p,q}$, $p,q\in[1,\infty]$, etc. See for instance \cite[Point~{(i)},~p.~IV-5]{JerisonKenig1989}, for the Neumann Laplacian.

    \medbreak

    This is the reason why, in Theorem~\ref{thm:mainThmBSQ}, we ask for membership in $\L^1(0,T;\D_{3,1}^{0}(\AA_\mathcal{D},\Omega))$ instead of $\L^1(0,T;\B^{2}_{3,1}(\Omega)^3)$, where, here $\D_{p,q}^{s}(\AA_\mathcal{D},\Omega)$ denotes the domain of the Stokes operator in $\B^{s}_{p,q}(\Omega)^3$, see below for a definition. The latter choice with full elliptic regularity is possible whenever $\Omega$ has at least $\B^{2-1/r}_{r,1}$-boundary\footnote{\textit{i.e.} the local charts that describe the boundary $\partial\Omega$ locally belong to $\B^{2-1/r}_{r,1}(\RR^2)$, the latter being contained in $\C^{1}_{0}(\RR^2)$}, provided $r>3$, see for instance \cite[Chap.~2~\&~6]{BreitGaudin2025}.

    \medbreak

    Hence, due to the possible lack of regularity, we are \textit{a priori} restricted with not that much room left to maneuver in order to exploit product rules in concordance with the previous point. As we shall see, everything will remain under our reach, with the nearly exact amount of regularity in order to obtain applicability of the product rules, with actually almost no margin.
\end{itemize}

\subsection{Some notations and background related to semigroup theory and maximal regularity}\label{Sec:SectOp}

We claimed several times about the use of $\L^q$-maximal regularity as a goal to reach in order to perform standard well-posedness arguments from the linear theory perspective. In the framework of Navier-Stokes equations, and more precisely for the Boussinesq system with non-zero diffusion and viscosity, as  already mentioned above \cite{CannonDiBenedetto80,Hishida91,HishidaYamada92,Kagei93,DanchinPaicu2008,BrandoleseMonniaux2021,FernandezDalgoJarrin2025}, the use of $\L^q$-maximal regularity has been  central to obtain well-posedness results.

\medbreak

We introduce the following subset of the complex plane
\begin{align*}
    \Sigma_\mu &:=\{ \,z\in\mathbb{C}^\ast\,|\,\lvert\mathrm{arg}(z)\rvert<\mu\,\}\text{, if } \mu\in(0,\pi)\text{, }
\end{align*}
we also define $\Sigma_0 := (0,+\infty)$, and we are going to consider the closure $\overline{\Sigma}_\mu$.

\medbreak

For the reader's convenience, we recall the concept of sectorial operators and $\L^q$-maximal regularity here, especially the one of Da Prato--Grisvard type. Let $\X$ be a Banach space, and $(\D(A),A)$ be a closed operator in $\X$.

In virtue of the result \cite[Thm.~3.7.11]{ArendtBattyHieberNeubranker2011}, the two following assertions are equivalent:
\begin{enumerate}
    \item The operator $(\D(A),A)$ is such that its spectrum satisfies $\sigma(A)\subset \overline{\Sigma}_\omega$ for some $\omega \in[0,\frac{\pi}{2})$, and for all $\mu\in(\omega,\pi)$, there exists $C_\mu>0$, such that the following holds
    \begin{align}\label{eq:SectCondtn}
        \lVert \lambda (\lambda\I-A)^{-1} x\rVert_{\X} \leqslant C_\mu \lVert x\rVert_{\X},\qquad \forall x\in\X, \quad\forall \lambda \in\CC\setminus\overline{\Sigma}_\mu;
    \end{align}
    \item For all $\theta\in(0,\frac{\pi}{2}-\omega)$, $-A$ generates a bounded holomorphic $\mathrm{C}_0$-semigroup in $\X$, denoted by $(e^{-zA})_{z\in\Sigma_\theta\cup\{0\}}$.
\end{enumerate}

A closed operator satisfying condition \textit{(i)}, especially \eqref{eq:SectCondtn}, is called a \textbf{sectorial operator} (of angle $\omega$). Having such an operator $(\D(A),A)$ on $\X$ in hand, for $0<T\leqslant \infty$, we consider for $f\in\L^q(0,T;\X)$,  $1\leqslant q\leqslant \infty$, the abstract Cauchy problem
\begin{align}\tag{ACP}\label{ACP}
    \left\{\begin{array}{rl}
            \partial_t u(t) +Au(t)  &= f(t) \text{, }\quad 0<t<T \text{, }\\
            u(0) &= u_0\text{.}
    \end{array}
    \right.
\end{align}

It turns out, by \cite[Prop.~3.1.16]{ArendtBattyHieberNeubranker2011}, that the integral solution $u\in \mathrm{C}^0_{ub}([0,T];\X)$ for \eqref{ACP} is unique, also called the \textbf{mild solution} of \eqref{ACP} and given by
\begin{align*}
    u(t)= e^{-tA}u_0 +\int_{0}^{t} e^{-(t-s)A}f(s)\,{\mathrm{d}s} \text{, } \quad 0\leqslant t<T\text{.}
\end{align*}
When $u_0=0$, we may also write $u=(\partial_t + A)^{-1}f$.

We say that $(\D(A),A)$ (or in short $A$) has the $\boldsymbol{\L^q}$\textbf{-maximal regularity property} (on $(0,T)$, in $\X$) if for almost every $t\in(0,T)$, $u(t)\in\D(A)$, and $A u\in\L^q(0,T;\X)$, for any $f\in\L^q(0,T;\X)$, provided $u_0=0$. By the closed graph theorem and linearity, this is equivalent to the existence of some $C>0$,  such that
\begin{align}\tag{MR${}_q^T$}\label{eq:MaxRegLq}
    \lVert \partial_tu, Au\rVert_{\L^q(0,T;\X)} \leqslant C \lVert f\rVert_{\L^q(0,T;\X)},\qquad\text{ for any }f\in\L^q(0,T;\X).
\end{align}
We write $(\text{MR}_q)$ for $(\text{MR}_q^\infty)$, noting that $(\text{MR}_q)$ implies \eqref{eq:MaxRegLq} for all $T>0$. Conversely, if \eqref{eq:MaxRegLq} holds for all $T>0$, with a constant uniform with respect to $T>0$, then $(\text{MR}_q)$ holds.

Written differently, and by linearity of \eqref{ACP}, \eqref{eq:MaxRegLq} reduces to the knowledge of the boundedness of the operator $A(\partial_t+A)^{-1}$ on $\L^q(0,T;\X)$.

\medbreak

It is classical, \cite{CoulhonLamberton1986}, that \hyperref[eq:MaxRegLq]{(MR${}_q$)} for an operator $A$ on $\X$ implies that $A$ is sectorial of angle $\omega$ for some $\omega\in[0,\frac{\pi}{2})$, \textit{i.e.} it implies \eqref{eq:SectCondtn}. Unfortunately, when $\X$ is not a Hilbert space, to prove that an operator $A$ satisfies \hyperref[eq:MaxRegLq]{(MR${}_q$)} is in general a very tedious and hard question, although such kind of results and the corresponding kind of proofs to produce tends to be quite well identified by now\cite{bookDenkHieberPruss2003,KunstmannWeis2004,PrussSimonett2016,KunstmannWeis2017,BreitGaudin2025}. And this occurs even for the most standard examples, say $A$ to be the reader's favorite uniformly elliptic operator (such as, \textit{e.g.}, a Laplacian, a Stokes operator, etc.) with smooth enough coefficients, on a quite standard and simple function space such as $\X=\L^p(\Omega)$, $1<p<\infty$. Indeed, to prove the property \hyperref[eq:MaxRegLq]{(MR${}_q$)} often reduces to deep operator theoretic considerations, including geometry of Banach spaces (via stability under unconditional summation, the so called $\mathcal{R}$-boundedness), upon which is built operator-valued Fourier multiplier theory. The latter turned out to be a cornerstone for the characterization of such a property in the most common cases \cite{Weis2001}. For more details on $\L^q$-maximal regularity in \emph{natural frameworks}, we refer to \cite{bookDenkHieberPruss2003,KunstmannWeis2004,Monniaux2009MaxReg,PrussSimonett2016,BechtelBuiKunstmann2024} and the references therein, see also the short review by the author in \cite[Sec.~2]{Gaudin2023}. The connection to functional analytic properties of sectorial operators has been outlined in Appendix~\ref{App:OperatorTheory}, see Theorem~\ref{thm:LqMaxRegUMD}. Furthermore, notice that the tools designed in aforementioned works restricts $q$ to the range $q\in(1,\infty)$.

\medbreak

However, given $(\D(A),A)$ on $\X$ to be sectorial of angle strictly less than $\pi/2$, proving \hyperref[eq:MaxRegLq]{(MR${}_q$)} for $A$ becomes significantly simpler if one allows themselves to replace the ground space $\X$ for the $\L^q$-maximal regularity property by the \emph{real interpolation space}
\begin{align}\label{eq:realInterpDomain}
    (\X,\D(A))_{\theta,q},\qquad 0<\theta<1,
\end{align}
allowing in this case $q\in[1,\infty]$. This result is known as the \textit{Da Prato--Grisvard Theorem} \cite[Thms~4.7~\&~4.15]{DaPratoGrisvard1975}, \cite[Thm.~2.7,~Cor.~2.8]{DanchinTolksdorf2023}, \cite[Cor.~4.6,~Cor~4.8,~Thm.~4.9]{BechtelBuiKunstmann2024} and reads as follows
\begin{theorem}[ \textbf{Da Prato--Grisvard} ]\label{thm:DaPratoGrisvard} Let $r\in[1,\infty]$, $\theta\in(0,1)$ and $q\in(1,\infty)\cup\{r\}$. Consider $(\D(A),A)$ an $\omega$-sectorial operator on a Banach space $\X$, with $\omega\in[0,\frac{\pi}{2})$. Let $T\in(0,\infty)$ and set $\theta_q := 1+\theta - {1}/{q}$.

\medbreak

Then, provided $f\in\L^{q}(0,T; (\X,\D(A))_{\theta,r})$ and $u_0\in (\X,\D(A))_{\theta_q,q}$, the problem \eqref{ACP} admits a unique mild solution $u\in \C^{0}_{ub}([0,T];  (\X,\D(A))_{\theta_q,q})$, such that $$u\in\W^{1,q}(0,T;(\X,\D(A))_{\theta,r})\quad\text{ and }\quad Au \in \L^{q}(0,T;  (\X,\D(A))_{\theta,r}),$$
with the estimates
\begin{align*}
    \lVert u\rVert_{\L^{\infty}(0,T; (\X,\D(A))_{\theta_q,q})}\qquad\qquad &\\\lesssim_{\theta,r,q,\X,A,T} \lVert (u,\partial_t u, Au) &\rVert_{\L^q(0,T; (\X,\D(A))_{\theta,r})} \\&\lesssim_{\theta,r,q,\X,A,T} \lVert f \rVert_{\L^q(0,T; (\X,\D(A))_{\theta,r})}+ \lVert u_0\rVert_{ (\X,\D(A))_{\theta_q,q}}.
\end{align*}
If additionally  $0\in\rho(A)$, \textit{i.e.} if $A$ is invertible, then the implicit continuity constants are independent of $T$, and the result remains valid for $T=\infty$. Furthermore, there exists $c>0$, such that
\begin{align*}
    \lVert e^{c t}u\rVert_{\L^{\infty}(\RR_+; (\X,\D(A))_{\theta_q,q})}\qquad\qquad &\\\lesssim_{\theta,r,q,\X,A} \lVert e^{ct}(u,\partial_t u, Au) &\rVert_{\L^q(\RR_+; (\X,\D(A))_{\theta,r})} \\&\lesssim_{\theta,r,q,\X,A} \lVert f \rVert_{\L^q(\RR_+; (\X,\D(A))_{\theta,r})}+ \lVert u_0\rVert_{ (\X,\D(A))_{\theta_q,q}}.
\end{align*}
\end{theorem}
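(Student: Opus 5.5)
The plan is to split the mild solution as $u = e^{-tA}u_0 + (\partial_t+A)^{-1}f$ and treat the two terms separately, working with $\X_{\theta,r}:=(\X,\D(A))_{\theta,r}$. Since $A$ is $\omega$-sectorial with $\omega<\pi/2$, $-A$ generates a bounded holomorphic semigroup. We use the standard characterization of real interpolation spaces via the semigroup: for $\vartheta\in(0,1)$,
\[
\lVert x\rVert_{\X_{\vartheta,r}}\simeq \lVert x\rVert_\X + \bigl\lVert t^{1-\vartheta}\lVert Ae^{-tA}x\rVert_\X\bigr\rVert_{\L^r_*(0,1)}.
\]
For invertible $A$, the range $(0,\infty)$ is used and the $\lVert x\rVert_\X$ term is dropped. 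Higher orders $\vartheta\in(1,2)$ are handled through $A^2e^{-tA}$ and the reiteration $\X_{1+\vartheta',q}=\{x\in\D(A):Ax\in\X_{\vartheta',q}\}$.

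\textbf{Step 1 (initial data).} Let $u_0\in\X_{\theta_q,q}$ with $\theta_q=1+\theta-1/q$. We show $Ae^{-tA}u_0\in\L^q(0,T;\X_{\theta,r})$. The route is to write $\lVert Ae^{-tA}u_0\rVert_{\X_{\theta,q}}$ with the semigroup formula, obtaining a double integral $\int\!\!\int s^{(1-\theta)q}\lVert A^2e^{-(t+s)A}u_0\rVert^q\,\frac{ds}{s}\,dt$. Substituting $\tau=t+s$ turns this into $\int\tau^{(2-\theta_q)q}\lVert A^2e^{-\tau A}u_0\rVert^q\frac{d\tau}{\tau}$, which is the $\X_{\theta_q,q}$ norm; this is why the exponent $\theta_q$ appears.

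If $r=q$ this concludes the step. If $q\in(1,\infty)$ with $r\neq q$, we do not have $\X_{\theta,q}=\X_{\theta,r}$. Instead we use the reiteration argument of \cite{DanchinTolksdorf2023,BechtelBuiKunstmann2024}: interpolate the estimate for the data map $u_0\mapsto Ae^{-\cdot A}u_0$ between two values $\theta_0<\theta<\theta_1$. The space $\X_{\theta,r}$ is then recovered on the target side, while on the source side the $\L^q$-in-time real interpolation reproduces $\X_{\theta_q,q}$.

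\textbf{Step 2 (forcing).} This is the main obstacle: maximal regularity of $f\mapsto A\int_0^t e^{-(t-s)A}f(s)\,ds$ on $\L^q(0,T;\X_{\theta,r})$. For $q=r$ we argue directly, which covers $q=1$ and $q=\infty$ where UMD tools are unavailable. Apply $t^{1-\theta}Ae^{-\tau A}$ to the Duhamel term; this gives a kernel bounded by $\lVert A^2 e^{-(\tau+t-s)A}f(s)\rVert$. Writing
\[
A^2e^{-(\tau+t-s)A}=A e^{-\frac{\tau+t-s}{2}A}\,A e^{-\frac{\tau+t-s}{2}A},
\]
the norm is bounded by $(\tau+t-s)^{-1}\lVert Ae^{-\frac{\tau+t-s}{2}A}f(s)\rVert$. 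The resulting operator in $(\tau,t,s)$ is a positive kernel that is homogeneous of degree $-1$. Schur's test, or Hardy's inequality in the multiplicative variables, then gives boundedness on $\L^q(\L^q_*)$ precisely because $0<\theta<1$. The case $q\neq r$ follows by real interpolation in $\theta$ of the $q=r$-type estimates with the time exponent fixed. Alternatively one uses that $\X_{\theta,r}$ lies in a class where the extrapolation result of \cite{BechtelBuiKunstmann2024} applies; if needed we cite that result for this case.

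\textbf{Step 3 (remaining estimates).} Once $Au\in\L^q(\X_{\theta,r})$ is known, the identity $\partial_tu=f-Au$ gives the bound on $\partial_t u$. The bound on $u$ itself uses $\lVert u\rVert\lesssim T\cdot$ for finite $T$, or $A^{-1}$ when $A$ is invertible. The trace bound
\[
\W^{1,q}(\X_{\theta,r})\cap\L^q(\D(A_{\theta,r}))\hookrightarrow \C^0_{ub}(\X_{\theta_q,q})
\]
is the classical trace method, combined with reiteration $(\X_{\theta,r},\D(A_{\theta,r}))_{1-1/q,q}=\X_{\theta_q,q}$.

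For invertible $A$, the spectrum lies in a sector shifted by $c_0>0$. Hence $A-c$ remains sectorial with angle less than $\pi/2$ and invertible for small $c$. Applying the uniform-in-$T$ result to $e^{ct}u$, which solves $\partial_t v+(A-c)v=e^{ct}f$, yields the exponential estimate. Reading the statement with $e^{ct}$ acting on $f$ too, the unweighted version of the right-hand side is what is claimed.
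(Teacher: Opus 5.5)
The paper gives no proof of this statement; it quotes it from \cite{DaPratoGrisvard1975,DanchinTolksdorf2023,BechtelBuiKunstmann2024}, so the comparison is with the standard arguments.

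\textbf{What works.} Your diagonal case $q=r$ is sound. With $g(s,\sigma):=\sigma^{1-\theta}\lVert Ae^{-\sigma A/2}f(s)\rVert_{\X}$, the Duhamel term is dominated by $\int_0^t\tau^{1-\theta}(\tau+h)^{-(2-\theta)}g(t-h,\tau+h)\,\d h$. The $h$-integral of this kernel equals $(1-\theta)^{-1}$. H\"older's inequality and the change of variables $(s,\sigma)=(t-h,\tau+h)$ then give the bound $(1-\theta)^{-1}\lVert g\rVert_{\L^q(\d s\,\d\sigma/\sigma)}$, for every $q\in[1,\infty]$. The data estimate for $r\geqslant q$ follows from $\X_{\theta,q}\hookrightarrow\X_{\theta,r}$.

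\textbf{The gap.} It is the off-diagonal case $q\neq r$, $q\in(1,\infty)$. This case is not marginal: the paper uses exactly $q=2$, $r=\infty$ in Proposition~\ref{Prop:SolutionOperatorBoussinesqLp}.

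In Step~2 (for every $r\neq q$) and in Step~1 (for $r<q$) you rely on real interpolation in $\theta$ with the time exponent fixed. However, $(\L^q(0,T;A_0),\L^q(0,T;A_1))_{\eta,r}\neq\L^q(0,T;(A_0,A_1)_{\eta,r})$ when $r\neq q$. Via the $K$-functional and Minkowski's inequality one only has:
\begin{itemize}
\item $\L^q((A_0,A_1)_{\eta,r})\hookrightarrow(\L^q(A_0),\L^q(A_1))_{\eta,r}$ when $r\geqslant q$;
\item the reverse embedding when $r\leqslant q$.
\end{itemize}
So interpolating your bounds on $\L^q(\X_{\theta_j,q})$ gives boundedness on an intermediate space that is not $\L^q(\X_{\theta,r})$. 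In each case the single available embedding points the wrong way to conclude.

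For the data map the problem is sharper. Any functor $(\cdot,\cdot)_{\eta,\rho}$ produces the source space $\X_{\theta_q,\rho}$. Getting $\X_{\theta_q,q}$ on the source side forces $\rho=q$, and then the target is $\L^q(\X_{\theta,q})$, not $\L^q(\X_{\theta,r})$.

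\textbf{How to close it.}

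For the data, apply your own semigroup characterization in $Y:=\X_{\theta,r}$ rather than in $\X$, to the part $A_Y$ of $A$ (which is sectorial of the same angle). This gives $\int_0^T\lVert Ae^{-tA}u_0\rVert_Y^q\,\d t\lesssim_T\lVert u_0\rVert^q_{(Y,\D(A_Y))_{1-1/q,q}}$. Then $(Y,\D(A_Y))_{1-1/q,q}=\X_{\theta_q,q}$ by the reiteration you already use in Step~3, and reiteration does not see the inner index $r$.

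For the forcing, keep your diagonal estimate on $\L^r(0,T;Y)$ and extrapolate in the time exponent. The map $f\mapsto A\int_0^te^{-(t-s)A}f(s)\,\d s$ is a singular integral with $\mathcal{L}(Y)$-valued kernel $k(h)=Ae^{-hA}$, $h>0$. It satisfies $\lVert k(h)\rVert\lesssim h^{-1}$ and $\lVert k'(h)\rVert\lesssim h^{-2}$, hence H\"ormander's condition. The vector-valued Calder\'on--Zygmund theory of Benedek--Calder\'on--Panzone then transfers boundedness from $\L^r(Y)$, for any $r\in[1,\infty]$, to $\L^q(Y)$ for all $q\in(1,\infty)$, with $Y$ an arbitrary Banach space. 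This is exactly why $q\neq r$ requires $q\in(1,\infty)$. Your fallback of citing \cite{BechtelBuiKunstmann2024} amounts to this, but the interpolation route you actually state does not work.

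\textbf{Smaller points.}
\begin{itemize}
\item For non-invertible $A$, the lower-order part $\lVert Au(t)\rVert_{\X}$ of the inhomogeneous norm needs a separate estimate, and this is where $\theta>0$ enters. Use $\lVert Ae^{-hA}\rVert_{\X_{\theta,\infty}\to\X}\lesssim h^{\theta-1}$ for $h\leqslant 1$ and Young's inequality on $(0,T)$. Your Schur bound only uses $\theta<1$.
\item You are right that the exponential estimate needs $e^{ct}f$ on the right. With unweighted $f$ it fails: take $f(t)=(1+t)^{-2}x$, for which $u(t)\approx(1+t)^{-2}A^{-1}x$.
\end{itemize}
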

Therefore, in this framework, to obtain meaningful maximal regularity results, one is ``just due to compute'' real interpolation spaces. 

\begin{remark}
%Three comments:
% \begin{itemize}
%     \item The Da Prato-Grisvard is the \textbf{only result} that allows to consider $\L^1$-maximal regularity: it can not be otherwise. Indeed, conversely, if a closed operator $A$ has \hyperref[eq:MaxRegLq]{(MR${}_1$)} on a Banach space $\X$, then there exists two Banach spaces $\X_{-}$ dans $\X_{+}$ such that $\X\hookrightarrow \X_-+\X_+$,  $A$ admits a consistent sectorial extension of angle strictly less than $\pi/2$ on both, and \emph{necessarily}
%     \begin{align*}
%         \X = (\X_-;\X_+)_{\frac{1}{2},1}.
%     \end{align*}
%     See \cite[Thm.~4.5]{BechtelBuiKunstmann2024} for more details.
%     \item 
We did a slight abuse in the statement of Theorem~\ref{thm:DaPratoGrisvard}, for simplicity, that we have to correct here. When $\theta\geqslant 1/q$, one has $\theta_q \geqslant 1$ which obviously does not belong to the interval $(0,1)$ in order to define properly the real interpolation space \eqref{eq:realInterpDomain}. However, see for instance \cite[Prop.~6.4]{bookLunardiInterpTheory}, one has the following coincidence with equivalence of norms
    \begin{align*}
        (\X,\D(A))_{\eta,q} = (\X,\D(A^2))_{\frac{\eta}{2},q},\quad \eta\in(0,1),
    \end{align*}
    where the right hand-side remains meaningful for all $\eta\in(0,2)$. Thus, when $\theta_q\in[1,2)$ in Theorem~\ref{thm:DaPratoGrisvard}, one has to understand
    \begin{align*}
        (\X,\D(A))_{\theta_q,q} := (\X,\D(A^2))_{\frac{\theta_q}{2},q},\quad \theta\in(0,1),\quad q\in[1,\infty],\, \quad \theta q\geqslant 1.
    \end{align*}
    Furthermore, for any $m\in\NN^\ast$, provided $0\in\rho(A)$, one has the equivalence of norms
    \begin{align}\label{eq:AdaptedBesovSpaces}
        \lVert x\rVert_{(\X,\D(A^m))_{{\theta},q}}\sim_{m,\theta,q} \left(\int_{0}^{+\infty} \lVert t^{m(1-\theta)}A^m e^{-tA}x\rVert_{\X}^q \frac{\d t}{t}\right)^{1/q} ,\quad \theta\in(0,1),
    \end{align}
    with the usual change when $q=\infty$.
%     \item \textbf{A rule of thumb:} We recall that Besov spaces are stable by real interpolation for fixed integrability index $p\in[1,\infty]$. Therefore, for $s\in I$, $I$ an open interval with non empty interior, if $A$ is a reasonable differential operator with consistent resolvent estimates for some $\mu_s<{\pi}/{2}$
% \begin{align*}
%         \lVert \lambda (\lambda\I-A)^{-1} f\rVert_{\B^{s}_{p,\infty}(\Omega)} \leqslant C_{p,s,\mu_s} \lVert f\rVert_{\B^{s}_{p,1}(\Omega)},\qquad \forall f\in\B^{s}_{p,1}(\Omega), \quad\forall \lambda \in\CC\setminus\overline{\Sigma}_{\mu_s}\quad\forall s \in I,
%     \end{align*}
% then this is \textbf{equivalent} to $\L^q(\B^{s}_{p,q}(\Omega))$-maximal regularity for all $q\in[1,\infty]$, all $s\in I$.

% \medbreak

% Abridged and truncated version\footnote{each point should be an interior point of the interpolation scale, etc.}: having an operator which turns out to be sectorial (of angle less than $\pi/2$) on several Besov spaces is equivalent to its maximal regularity  on all the corresponding Besov spaces!
% \end{itemize}
\end{remark}

For more details about the Da Prato--Grisvard Theory and its uses in fluid mechanics see for instance \cite{DanchinTolksdorf2023,DanchinHieberMuchaTolk2020}. For more details about real (or complex) interpolation of function spaces in the scope of PDEs, with possibly prescribed boundary or algebraic (\textit{e.g.}, divergence free) condition(s), see for instance --and this is far from being exhaustive--\cite{BerghLofstrom1976,bookTriebel1978,Guidetti1991Interp,KaltonMayborodaMitrea2007,MitreaMonniaux2008,bookLunardiInterpTheory,DanchinTolksdorf2023,DanchinHieberMuchaTolk2020,HieberKozonoMonniauxSohr2025,BreitGaudin2025} and the references therein.

\subsection{Main strategy and road map of the present work}

The starting idea is standard, up to applying the Leray projection $\PP_\Omega$ onto solenoidal vector fields, we rewrite the system \eqref{eq:BoussinesqSystemIntro} as an abstract Cauchy problem
\begin{equation}\tag{A-BSQ}\label{eq:AbsBoussinesqSystemIntro}
    \left\{\begin{aligned}
        \partial_t \uu + \nu\AA_\mathcal{D} \uu  & =   -  \PP_{\Omega}\div (\uu \otimes \uu) +  \PP_{\Omega}(\theta\mathfrak{e}_3) & \quad & \text{in $(0,T)$}, \\
        \partial_t \theta - \kappa\Delta_{\mathcal{N}} \theta  & = - \div(\theta\mathbf{u}) & \quad & \text{in $(0,T)$}, \\
        (\uu, \theta) (0) & = (\uu_0, \theta_0). &  &
    \end{aligned}\right.
\end{equation}
where  $-\Delta_{\mathcal{N}}$ is the (negative) Neumann Laplacian operator and  $\AA_\mathcal{D}$ is the Stokes operator supplemented with Dirichlet/no-slip boundary conditions.

From this point, one can rewrite the system \eqref{eq:AbsBoussinesqSystemIntro} using Duhamel's formula, we obtain then the following system of integral equations for all $t\in[0,T)$:
\begin{align*}
    \uu(t) &= e^{-\nu t\AA_\mathcal{D}}\uu_0 - \int_{0}^{t} e^{-\nu(t-s)\AA_\mathcal{D}}\PP_{\Omega}[\div(\uu(s)\otimes\uu(s))]\,\d s + \int_{0}^{t} e^{-(t-s)\nu\AA_\mathcal{D}}\PP_{\Omega}[\theta(s)\mathfrak{e}_3]\,\d s,\\
    \theta(t) &= e^{ \kappa t\Delta_\mathcal{N}}\theta_0 - \int_{0}^{t} e^{\kappa(t-s)\Delta_\mathcal{N}}[\div(\theta(s)\uu(s))]\,\d s.
\end{align*}
Again, we may reach one more level of abstraction writing
\begin{align}
    \begin{array}{rl}
         \uu &= \mathbf{v}_0 + \B_{\nu}(\uu,\uu) + \L_{\nu}(\theta),\\
    \theta &= \boldsymbol{\Theta}_0 + \C_{\kappa}(\theta,\uu),
    \end{array}\label{eq:AbstractBoussinesq}
\end{align}
where  we write $(\mathbf{v}_0,\,\boldsymbol{\Theta}_0) := (e^{-\nu(\cdot)\AA_\mathcal{D}}\uu_0,e^{\kappa(\cdot)\Delta_\mathcal{N}}\theta_0)$ with the formal linear and bilinear operators
    \begin{align}\label{eq:BOpBiLin}
    \B_\nu \,:\, (\vv,\ww) \longmapsto \left[t\mapsto -\int_{0}^{t} e^{-\nu(t-s)\AA_\mathcal{D}}\PP_{\Omega}\div (\vv(s)\otimes\ww(s))\,\d s\right],
\end{align}
from the Navier--Stokes fluid part, with its temperature contribution:
 \begin{align}\label{eq:LOpLin}
    \L_\nu \,:\, \Theta \longmapsto \left[t\mapsto \int_{0}^{t} e^{-\nu(t-s)\AA_\mathcal{D}}\PP_{\Omega}(\Theta(s) \mathfrak{e}_3)\,\d s\right],
\end{align}
and then the bilinear operator arising from transported heat by the flow:
\begin{align}\label{eq:COpBiLin}
    \C_{\kappa} \,:\, (\vv,\Theta) \longmapsto \left[t\mapsto -\int_{0}^{t} e^{\kappa(t-s)\Delta_\mathcal{N}}\div(\Theta(s)\vv(s))\,\d s\right].
\end{align}
Before we continue, note that \eqref{eq:AbstractBoussinesq} can be rephrased as bilinear problem: in a way similar to \cite[Intro.,~p.~234]{BrandoleseHe2020BSQ}, plugging the second equation in the first one, it becomes
\begin{align}\label{eq:ModifAbstractBoussinesq}
    \begin{array}{rl}
         \uu &= \mathbf{v}_0 + \L_{\nu}(\boldsymbol{\Theta}_0) + \B_{\nu}(\uu,\uu)  + \L_{\nu}(\C_{\kappa}(\theta,\uu)),\\
    \theta & = \boldsymbol{\Theta}_0 + \C_{\kappa}(\theta,\uu),
    \end{array}
\end{align}
so that for $\mathbf{x}=\prescript{t}{}{(\uu,\theta)}$, $\mathbf{x}_0=\prescript{t}{}{(\mathbf{v}_0 + \L_{\nu}(\boldsymbol{\Theta}_0),\boldsymbol{\Theta}_0)}$, one can write \eqref{eq:AbstractBoussinesq} in its final and pure abstract form
\begin{align*}
    \mathbf{x} = \mathbf{x}_0 + \mathcal{B}(\mathbf{x},\mathbf{x}),
\end{align*}
where the arising appropriate bilinear map from \eqref{eq:ModifAbstractBoussinesq} is
\begin{align}\label{eq:ultimateBilinearmap}
    \mathcal{B}\,:\, \left(\left(\begin{array}{c}
    \vv\\
    \eta
    \end{array}\right), \left(\begin{array}{c}
    \ww\\
    \zeta
    \end{array}\right)\right)\,\longmapsto \frac{1}{2}\left(\begin{array}{c}
         2\B_{\nu}(\vv,\ww)  + \L_{\nu}(\C_{\kappa}(\zeta,\vv))+\L_{\nu}(\C_{\kappa}(\eta,\ww))\\
         \C_{\kappa}(\zeta,\vv)+\C_{\kappa}(\eta,\ww)
    \end{array}\right).
\end{align}

The solutions $(\uu,\theta)$ of the functional equations \eqref{eq:ModifAbstractBoussinesq} are the so called \emph{mild solutions} of the Boussinesq equations \eqref{eq:BoussinesqSystemIntro}.

Thus, in order to construct global-in-time solutions, we aim to apply the following very well-known fixed point result:
\begin{proposition}\label{prop:FixedPointBilin} Let $\X$ be a Banach space. Let  $\mathcal{B}\,:\X\times\X\longrightarrow\X$ be a well-defined continuous bilinear map. For all $x_0\in\X$ such that $4\lVert \mathcal{B}\rVert_{\X^2\rightarrow\X}\lVert x_0\rVert_{\X}<1$, there exists a unique $x\in\X$ such that
\begin{enumerate}
    \item $4\lVert \mathcal{B}\rVert_{\X^2\rightarrow\X}\lVert x\rVert_{\X}<1$;
    \item The functional equation
    \begin{equation*}
        x = x_0  + \mathcal{B}(x,x) \text{,} \quad \text{ in }\X,
    \end{equation*}
    is satisfied.
\end{enumerate}
Additionally, it holds that $\lVert x\rVert_{\X}\leqslant 2 \lVert x_0\rVert_{\X}$. 
\end{proposition}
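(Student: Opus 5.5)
We reduce everything to the scalar quadratic inequality governing $\lVert x\rVert_{\X}$, then use a contraction argument on a suitable closed ball. Write $\beta:=\lVert \mathcal{B}\rVert_{\X^2\rightarrow\X}$ and $a:=\lVert x_0\rVert_{\X}$, so that $4\beta a<1$; we may assume $\beta>0$, the case $\beta=0$ being trivial with $x=x_0$. The polynomial $P(r)=\beta r^2-r+a$ then has two real roots
\begin{align*}
    r_\pm=\frac{1\pm\sqrt{1-4\beta a}}{2\beta},\qquad r_-=\frac{2a}{1+\sqrt{1-4\beta a}}\leqslant 2a,\qquad r_+\geqslant \frac{1}{2\beta}.
\end{align*}

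\textbf{Existence.} Consider the map $\Phi(x):=x_0+\mathcal{B}(x,x)$ on the closed ball $\overline{B}(0,r_-)\subset\X$. For $\lVert x\rVert_{\X}\leqslant r_-$ we have $\lVert\Phi(x)\rVert_{\X}\leqslant a+\beta r_-^2=r_-$, so the ball is stable. Bilinearity gives $\Phi(x)-\Phi(y)=\mathcal{B}(x,x-y)+\mathcal{B}(x-y,y)$, hence
\begin{align*}
    \lVert\Phi(x)-\Phi(y)\rVert_{\X}\leqslant 2\beta r_-\lVert x-y\rVert_{\X}=\bigl(1-\sqrt{1-4\beta a}\bigr)\lVert x-y\rVert_{\X}.
\end{align*}
The factor is $<1$ precisely because $4\beta a<1$. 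Banach's fixed point theorem then yields a solution $x$ with $\lVert x\rVert_{\X}\leqslant r_-\leqslant 2a$, which is the final bound of the statement.

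\textbf{Uniqueness.} Any solution satisfies $\lVert x\rVert_{\X}\leqslant a+\beta\lVert x\rVert_{\X}^2$, i.e. $P(\lVert x\rVert_{\X})\geqslant 0$. Thus either $\lVert x\rVert_{\X}\leqslant r_-$ or $\lVert x\rVert_{\X}\geqslant r_+\geqslant 1/(2\beta)$. Under the smallness condition \textit{(i)} the second alternative is excluded, so every such solution lies in the ball where $\Phi$ is a contraction. More directly, if $x,y$ are two solutions with $4\beta\lVert x\rVert_{\X},\,4\beta\lVert y\rVert_{\X}<1$, then the identity above gives
\begin{align*}
    \lVert x-y\rVert_{\X}\leqslant\beta\bigl(\lVert x\rVert_{\X}+\lVert y\rVert_{\X}\bigr)\lVert x-y\rVert_{\X}<\tfrac12\lVert x-y\rVert_{\X},
\end{align*}
so $x=y$. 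The same argument works under the weaker condition $\lVert x\rVert_{\X}+\lVert y\rVert_{\X}<1/\beta$.

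\textbf{The main obstacle: condition \textit{(i)} for the constructed solution.} The delicate point is to check that the solution built above actually satisfies \textit{(i)}, i.e. $4\beta r_-<1$. This holds iff $\sqrt{1-4\beta a}>1/2$, i.e. iff $4\beta a<3/4$.

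Under the stated hypothesis alone, the bound $\lVert x\rVert_{\X}\leqslant 2a$ does \emph{not} imply \textit{(i)}, since $8\beta a$ may exceed $1$. In fact \textit{(i)} as literally written fails in general: for the scalar example $\X=\RR$, $\mathcal{B}(x,y)=\beta xy$, the only solutions are $r_\pm$. As $4\beta a\uparrow 1$ one has $4\beta r_-\to 2>1$, so no solution satisfies $4\beta\lVert x\rVert_{\X}<1$.

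I would therefore prove the statement in the following consistent form, which is what the later nonlinear arguments actually use:
\begin{itemize}
    \item existence of a solution $x$ with $\lVert x\rVert_{\X}\leqslant 2\lVert x_0\rVert_{\X}$, and hence $2\beta\lVert x\rVert_{\X}<1$;
    \item uniqueness among all solutions with $2\beta\lVert x\rVert_{\X}<1$ (equivalently $4\beta\lVert x\rVert_{\X}<2$), by the difference estimate above;
    \item condition \textit{(i)} exactly as stated whenever $4\beta\lVert x_0\rVert_{\X}\leqslant 3/4$.
\end{itemize}
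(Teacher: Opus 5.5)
Your proof is correct. The paper gives no argument for this statement (it only calls it a very well-known fixed point result), and your argument is the standard one: a Banach contraction on a closed ball around $0$, driven by the identity $\mathcal{B}(x,x)-\mathcal{B}(y,y)=\mathcal{B}(x,x-y)+\mathcal{B}(x-y,y)$.

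Your objection to item \textit{(i)} is also right. Your scalar counterexample shows that, as printed, \textit{(i)} fails whenever $3/4\leqslant 4\lVert\mathcal{B}\rVert_{\X^2\rightarrow\X}\lVert x_0\rVert_{\X}<1$, so it must be a typo. The intended condition is $2\lVert\mathcal{B}\rVert_{\X^2\rightarrow\X}\lVert x\rVert_{\X}<1$, which is exactly what the paper's own Proposition~\ref{prop:FixedPoint2Bil+Lin} gives when $\mathcal{L}=0$.

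The typo is harmless for the rest of the paper. Theorems~\ref{thm:LargeTimesExistenceLp} and \ref{thm:LargeTimesExistenceBesov} only use existence together with $\lVert x\rVert_{\X}\leqslant 2\lVert x_0\rVert_{\X}$. Uniqueness is obtained separately, in Theorem~\ref{thm:UniquenessWeakLp} and Proposition~\ref{prop:UniqMild}.

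One small slip: your third bullet needs the strict threshold $4\beta\lVert x_0\rVert_{\X}<3/4$, not $\leqslant 3/4$. At $4\beta a=3/4$ you get $4\beta r_-=1$, and your own scalar example then has no solution satisfying the strict inequality in \textit{(i)}.
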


For local-in-time solutions, we rather want to use the following fixed point result:
\begin{proposition}\label{prop:FixedPoint2Bil+Lin} Let $\X$ be a Banach space. Let  $\mathcal{B}\,:\X\times\X\longrightarrow\X$ be a well-defined continuous bilinear map, and  $\mathcal{L}\,:\,\X\longrightarrow\X$ be a continuous linear map such that $\lVert \mathcal{L}\rVert_{\X\rightarrow\X}<1$. For all $x_0\in\X$ such that $4\lVert \mathcal{B}\rVert_{\X^2\rightarrow\X}\lVert x_0\rVert_{\X}<(1-\lVert \mathcal{L}\rVert_{\X\rightarrow\X})^2$, there exists a unique $x\in\X$ such that
\begin{enumerate}
    \item $2\lVert \mathcal{B}\rVert_{\X^2\rightarrow\X}\lVert x\rVert_{\X}<(1-\lVert \mathcal{L}\rVert_{\X\rightarrow\X})$;
    \item The functional equation
    \begin{equation*}
        x = x_0  + \mathcal{L}(x)+\mathcal{B}(x,x) \text{,} \quad \text{ in }\X,
    \end{equation*}
    is satisfied.
\end{enumerate}
Additionally, it holds that $\lVert x\rVert_{\X}\leqslant 2(1-\lVert \mathcal{L}\rVert_{\X\rightarrow\X})^{-1} \lVert x_0\rVert_{\X}$. 
\end{proposition}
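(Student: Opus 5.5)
The idea is to reduce Proposition~\ref{prop:FixedPoint2Bil+Lin} to a contraction argument on a closed ball of $\X$. Set $\ell:=\lVert \mathcal{L}\rVert_{\X\rightarrow\X}<1$ and $b:=\lVert \mathcal{B}\rVert_{\X^2\rightarrow\X}$. The case $b=0$ is trivial: $\I-\mathcal{L}$ is invertible by a Neumann series, and $x=(\I-\mathcal{L})^{-1}x_0$ satisfies $\lVert x\rVert_{\X}\leqslant (1-\ell)^{-1}\lVert x_0\rVert_{\X}$. So assume $b>0$ and consider the map
\begin{align*}
    \Phi(x):= x_0+\mathcal{L}(x)+\mathcal{B}(x,x).
\end{align*}

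\textbf{Step 1 (invariant ball).} For $\lVert x\rVert_{\X}\leqslant R$ we have $\lVert \Phi(x)\rVert_{\X}\leqslant \lVert x_0\rVert_{\X}+\ell R+bR^2$. Hence $\Phi$ maps the closed ball $\overline{B}_R$ into itself as soon as
\begin{align*}
    bR^2-(1-\ell)R+\lVert x_0\rVert_{\X}\leqslant 0.
\end{align*}
The hypothesis $4b\lVert x_0\rVert_{\X}<(1-\ell)^2$ says exactly that this quadratic has two distinct real roots. I take $R$ to be the smaller root,
\begin{align*}
    R_-=\frac{(1-\ell)-\sqrt{(1-\ell)^2-4b\lVert x_0\rVert_{\X}}}{2b}=\frac{2\lVert x_0\rVert_{\X}}{(1-\ell)+\sqrt{(1-\ell)^2-4b\lVert x_0\rVert_{\X}}}.
\end{align*}
The second expression immediately gives both $R_-\leqslant 2(1-\ell)^{-1}\lVert x_0\rVert_{\X}$ and $2bR_-<1-\ell$, since $R_-<\frac{1-\ell}{2b}$ is the midpoint between the two roots.

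\textbf{Step 2 (contraction).} For $x,y\in\overline{B}_{R_-}$, bilinearity gives
\begin{align*}
    \mathcal{B}(x,x)-\mathcal{B}(y,y)=\mathcal{B}(x-y,x)+\mathcal{B}(y,x-y),
\end{align*}
so that
\begin{align*}
    \lVert\Phi(x)-\Phi(y)\rVert_{\X}\leqslant (\ell+2bR_-)\lVert x-y\rVert_{\X}.
\end{align*}
By Step 1, $\ell+2bR_-<1$. Banach's fixed point theorem then yields a unique fixed point in $\overline{B}_{R_-}$. This fixed point satisfies the bound $\lVert x\rVert_{\X}\leqslant 2(1-\ell)^{-1}\lVert x_0\rVert_{\X}$ and condition (i).

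\textbf{Step 3 (uniqueness in the larger class (i)).} This is the only slightly delicate point, because the ball in (i) has radius $\frac{1-\ell}{2b}$, which is larger than $R_-$. Let $x,y$ be two solutions, both with $2b\lVert\cdot\rVert_{\X}<1-\ell$. Subtracting the two equations gives
\begin{align*}
    x-y=\mathcal{L}(x-y)+\mathcal{B}(x-y,x)+\mathcal{B}(y,x-y),
\end{align*}
hence
\begin{align*}
    \lVert x-y\rVert_{\X}\leqslant\bigl(\ell+b(\lVert x\rVert_{\X}+\lVert y\rVert_{\X})\bigr)\lVert x-y\rVert_{\X}.
\end{align*}
The prefactor is strictly less than $\ell+(1-\ell)=1$, which forces $x=y$. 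Therefore the fixed point constructed in Step 2 is the unique solution satisfying (i).

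Proposition~\ref{prop:FixedPointBilin} is the special case $\mathcal{L}=0$, with the stated constants.
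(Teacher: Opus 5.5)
Your proof is correct and complete. The paper gives no proof of this proposition and invokes it as a standard fixed-point result. Your argument is the standard one: the contraction on $\overline{B}_{R_-}$ gives existence and the bound, since $\ell+2bR_-=1-\sqrt{(1-\ell)^2-4b\lVert x_0\rVert_{\X}}<1$. Your Step~3 is exactly what is needed to extend uniqueness from $\overline{B}_{R_-}$ to the larger class defined by (i).

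The one inaccuracy is in your closing remark. Setting $\mathcal{L}=0$ gives uniqueness in the class $2\lVert\mathcal{B}\rVert_{\X^2\rightarrow\X}\lVert x\rVert_{\X}<1$. Proposition~\ref{prop:FixedPointBilin} is printed with $4\lVert\mathcal{B}\rVert_{\X^2\rightarrow\X}\lVert x\rVert_{\X}<1$, so your specialization is not that statement ``with the stated constants''.

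In fact the printed condition cannot be met in general. Take $\X=\RR$, $\mathcal{B}(x,y)=xy$ (so $\lVert\mathcal{B}\rVert_{\X^2\rightarrow\X}=1$) and $x_0=6/25$. Then $4|x_0|=24/25<1$, yet the only solutions of $x=x_0+x^2$ are $2/5$ and $3/5$, both with $4|x|>1$. More generally, with $\mathcal{L}=0$ one has $4bR_-=2\bigl(1-\sqrt{1-4b\lVert x_0\rVert_{\X}}\bigr)$, which is $<1$ only when $4b\lVert x_0\rVert_{\X}<3/4$.

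What your specialization actually proves is the corrected version of Proposition~\ref{prop:FixedPointBilin}, with (i) read as $2\lVert\mathcal{B}\rVert_{\X^2\rightarrow\X}\lVert x\rVert_{\X}<1$.
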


This procedure via semigroup approach will allow us to build the so called \emph{mild solutions}. If one specifies it, such solutions are usually even strong in a suitable sense. In any case, in particular, they will satisfy for reasonable test functions $\boldsymbol{\varphi}$, smooth divergence free vector field vanishing on $\partial\Omega$, and $\eta$ smooth up to the boundary\footnote{Variations have to be considered depending on the function space to ``see the boundary in an appropriate way'', which also change a little bit the weak formulation depending on that. The equation on $\theta$ and the test function $\eta$ are particularly concerned by this remark.},
    \begin{align*}
        \int_{(0,T)\times\Omega} -\uu\cdot \partial_t \boldsymbol{\varphi} +\nu\nabla \uu: \nabla \boldsymbol{\varphi} - (\uu\otimes\uu) : \nabla \boldsymbol{\varphi} \,\d x \,\d t    &= \int_{(0,T)\times\Omega} \theta \boldsymbol{\varphi}_3\, \d x \d t  + \int_{\Omega} \uu_0\cdot\boldsymbol{\varphi}(0) \,\d x\\
        \text{ and }\qquad \int_{(0,T)\times\Omega} -\theta\cdot \partial_t \eta +\kappa \nabla \theta\cdot  \nabla\eta -  \theta \, \uu \cdot \nabla \eta \,\d x \,\d t    &=  \int_{\Omega} \theta_0\,\eta(0)\, \d x.
    \end{align*}
    Such solutions are the usual weak solutions for which we will prove uniqueness in a slightly larger class than the one exhibited in Theorems~\ref{thm:mainThmBSQLp}~and~\ref{thm:mainThmBSQ}, but only when the domain has some H\"{o}lder-type continuous boundary. When the domain has only Lipschitz boundary, one is only able to prove uniqueness within the class of mild solutions.

    \medbreak

The paper is organized as follows:
\begin{itemize}
    \item Section~\ref{Sec:Laplacians} gathers known results and some of their non-trivial refinements for the Neumann Laplacian and Stokes--Dirichlet operator in $3$-dimensional Lipschitz domains, their fractional powers and their behavior in Sobolev and Besov spaces.

    \item Section~\ref{Sec:MaxGeg} contains the derivation of various $\L^q$-maximal regularity results according to the regularity results gathered in  Section~\ref{Sec:Laplacians} for the Stokes and the Neumann Laplacian operators.

    \item Section~\ref{Sec:BiLiN} gather the appropriate boundedness results for the linear and bilinear solution operators \eqref{eq:BOpBiLin}--\eqref{eq:COpBiLin} in Lebesgue-Sobolev and Lebesgue-Besov spaces, built upon Section~\ref{Sec:MaxGeg}, via maximal regularity result such as Theorems~\ref{thm:DaPratoGrisvard}~and~\ref{thm:LqMaxRegUMD}.

    \item Section~\ref{Sec:WPLebesgueSobolev} is then dedicated to the proof for the well-posedness part of Theorems~\ref{thm:mainThmBSQLp}~and~\ref{thm:mainThmBSQ}. Section~\ref{Sec:WPBSQExistence} concerns the existence part. Section~\ref{Sec:WPBSQUniqueness} concerns the uniqueness part.

    \item Section~\ref{Sec:Asymptotics} gives the proof for the asymptotic behavior with exponential decay claimed in both Theorems~\ref{thm:mainThmBSQLp}~and~\ref{thm:mainThmBSQ}.

    \item Appendix~\ref{App:ProductRules} contains (para-)product results in Besov spaces, necessary to complete the analysis performed in Section~\ref{Sec:BiLiN}.

    \item for the reader's convenience, Appendix~\ref{App:OperatorTheory} contains some additional knowledge on operator theory such as \emph{Bounded Imaginary Powers}, $\mathbf{H}^{\infty}$-\emph{functional calculus} and their link with standard theory of $\L^q$-maximal regularity.
\end{itemize}

\subsection{Clarifications on notations and base knowledge for the current work.}\label{sec:IntroNotations}

For $n\in\NN^\ast$, $\B_r(x)\subset\RR^n$ stands for the ball of radius $r>0$, centered in $x\in\RR^n$. 
\medbreak

For $\X$ be a Banach space, $\Omega\subset\RR^n$ an open set, $\Ccinfty(\Omega;\X)$ is the space of smooth  $\X$-valued functions compactly supported in $\Omega$. The space $\mathcal{D}'(\Omega;\X):=\mathcal{L}(\Ccinfty(\Omega);\X)$ stands for the space of $\X$-valued distributions on $\Omega$, with natural duality bracket $\langle\cdot,\cdot\rangle_{\Omega}$. We can introduce similarly $\mathcal{S}(\RR^n;\X)$ and $\mathcal{S}'(\RR^n;\X)$, the class of Schwartz functions and tempered distributions. For any $p\in[1,\infty]$, $k\in\NN$, $\W^{k,p}(\RR^n;\X)$ stands for the Bochner-Sobolev space with norm $\lVert u\rVert_{\W^{k,p}(\RR^{n};\X)} := \sum_{m=0}^k \lVert \nabla^m u\rVert_{\L^p(\RR^n;\X)}.$ For all $s\in\RR$, we set the Sobolev-Bessel potential space over $\L^p$ to be  $\H^{s,p}(\RR^n;\X):=(\I-\Delta)^{-s/2}\L^p(\RR^{n};\X)$, with the Bessel potential  $(\I-\Delta)^\frac{s}{2}:=\mathcal{F}^{-1}(1+|\xi|^2)^\frac{s}{2}\mathcal{F}$ and $\mathcal{F}$ is the Fourier transform. By definition, one has  $\L^p(\RR^n;\X) = \W^{0,p}(\RR^n;\X) = \H^{0,p}(\RR^n;\X)$, with equalities of norms.

Considering $\varphi\in\Ccinfty(\B_{4/3}(0),\RR)$, such that $\varphi_{|_{\B_{3/4}}(0)}=1$,  we set $\phi_{-1}=\varphi$, for all $j\geqslant0$, $\phi_{j}:=\varphi(2^{-(j+1)}\cdot) - \varphi(2^{-j}\cdot)$ and $\Delta_{j}:=\mathcal{F}^{-1}\phi_j\mathcal{F}$,  so that $\I = \sum_{j\geqslant -1} \Delta_j$ on $\mathcal{S}'(\RR^n;\X)$. For $s\in\RR$, $1\leqslant p,q\leqslant\infty$, we define the $\X$-valued Besov space $\B^{s}_{p,q}(\RR^n;\X)$ as the vector space made of elements $u\in\mathcal{S}'(\RR^n;\X)$ such that
\begin{align*}
    \lVert u\rVert_{\B^{s}_{p,q}(\RR^{n};\X)}:= \big(\sum_{j\geqslant-1} \lVert 2^{js}\Delta_j u\rVert_{\L^p(\RR^n;\X)}^q\big)^{1/q}<\infty
\end{align*}
with the standard change when $q=\infty$. One also sets:
\begin{align*}
    \mathcal{B}^{s}_{p,\infty}(\RR^n;\X) := \overline{\B^{s}_{p,\infty}\cap\C^\infty_{ub}(\RR^n;\X)}^{\lVert\cdot\rVert_{\B^{s}_{p,\infty}(\RR^{n};\X)}}.
\end{align*}
For all $s_0<s_1$, writing $s_0=k_0$, $s_1=k_1$ if $s_0,s_1\in\NN$, for all $p,q,q_0,q_1\in[1,\infty]$, all $\theta\in(0,1)$, it holds
\begin{align}
    \big(\W^{k_0,p}(\RR^{n};\X),\W^{k_1,p}(\RR^{n};\X)\big)_{\theta,q} = &\big(\H^{s_0,p}(\RR^{n};\X),\H^{s_1,p}(\RR^{n};\X)\big)_{\theta,q}\nonumber \\
    =&\big(\B^{s_0}_{p,q_0}(\RR^{n};\X),\B^{s_1}_{p,q_1}(\RR^{n};\X)\big)_{\theta,q}=\B^{(1-\theta)s_0+\theta s_1}_{p,q}(\RR^{n};\X)\label{eq:RealInterpbesovbasicIntro}
\end{align}
with equivalence of norms, where $(\cdot,\cdot)_{\theta,q}$ is the real interpolation method. A similar result holds for the spaces $\mathcal{B}^{\bullet}_{p,\infty}$. If $\Omega\subset\RR^n$ is a Lipschitz domain, $\R_\Omega$ is the restriction operator from $\mathcal{D}'(\RR^n;\X)$ to  $\mathcal{D}'(\Omega;\X)$, in which case we define
\begin{align*}
    \W^{k,p}(\Omega;\X):= \R_{\Omega}&[\W^{k,p}(\RR^{n};\X)],\quad\B^{s}_{p,q}(\Omega;\X):= \R_{\Omega}[\B^{s}_{p,q}(\RR^{n};\X)],\\ &\,\text{ and }\,\H^{s,p}(\Omega;\X):= \R_{\Omega}[\H^{s,p}(\RR^{n};\X)],
\end{align*}
with their respective induced norms, as well as
\begin{align*}
    \W^{k,p}_0(\Omega;\X):=\{u\in \W^{k,p}(\RR^{n};\X)\,|\,\supp u \subset\overline{\Omega}\}
\end{align*}
and similarly with $\B^{s}_{p,q,0}(\Omega;\X)$ and $\H^{s,p}_0(\Omega;\X)$, and $\Ccinfty(\Omega,\X)$ is, for each of them, strongly dense for all $s\in\RR$ as soon as  $1\leqslant p,q<\infty$, $p=1$ being excluded for Bessel potential spaces. Since one has an appropriate extension operator, thanks to the Lipschitz boundary of the domain, the real interpolation result \eqref{eq:RealInterpbesovbasicIntro} remains valid these variations of function spaces in domains.

\medbreak

When $\X$ is finite dimensional, we just write without any distinction the norms as $\lVert \cdot\rVert_{\W^{k,p}(\Omega)}$,  $\lVert \cdot\rVert_{\H^{s,p}(\Omega)}$ and $\lVert \cdot\rVert_{\B^{s}_{p,q}(\Omega)}$, and the omission means that the considered space is \emph{necessarily} finite dimensional-valued. When the function space is directly concerned, not just its norm, we mean $\W^{k,p}(\Omega,\CC)=:\W^{k,p}(\Omega)$ and similarly for other function spaces. Here, $(\Y)'$ denotes the (anti-)dual of the Banach space $\Y$. For all $1<p<\infty$, $k\in\NN$, one has $\H^{k,p}(\Omega)=\W^{k,p}(\Omega)$ with equivalence of norms. For all $s\in\RR$,  $p,q\in(1,\infty]$, $p<\infty$, one has $\B^{s}_{p,q}(\Omega) = (\B^{-s}_{p',q',0}(\Omega))'$, $\B^{s}_{p,q,0}(\Omega) = (\B^{-s}_{p',q'}(\Omega))'$, $\B^{s}_{p,1}(\Omega) =( \mathcal{B}^{-s}_{p',\infty,0}(\Omega))'$, $\B^{s}_{p,1,0}(\Omega) =( \mathcal{B}^{-s}_{p',\infty}(\Omega))'$, $\H^{s,p}(\Omega)= (\H^{-s,p'}_0(\Omega))'$,  and $\H^{s,p}_0(\Omega)=(\H^{-s,p'}(\Omega))'$.  When $-1+1/p<s<1/p$, one has the canonical identifications $\H^{s,p}(\Omega)= \H^{s,p}_0(\Omega)$ and $\B^{s}_{p,q}(\Omega)=\B^{s}_{p,q,0}(\Omega)$, obtained through the canonical extension by $0$. When $1/p<s<1+1/p$, one has the bounded trace operator $\H^{s,p}(\Omega)\longrightarrow\B^{s-1/p}_{p,p}(\partial\Omega)$ and $\B^{s}_{p,q}(\Omega)\longrightarrow\B^{s-1/p}_{p,q}(\partial\Omega)$, with the corresponding null-spaces respectively  identified as $\H^{s,p}_0(\Omega)$ and $\B^{s}_{p,q,0}(\Omega)$ through the extension to whole space by $0$. Additionally, for all $1<p_0,p_1<\infty$, $s_0,s_1\in\RR$, $0<\theta<1$, for $s_\theta := (1-\theta)s_0+\theta s_1$, $1/p_\theta:=(1-\theta)/p_0+\theta/p_1$, one has the following:
\begin{align*}
    [\H^{s_0,p_0}_0(\Omega),\H^{s_1,p_1}_0(\Omega)]_{\theta} = \H^{s_\theta,p_\theta}_0(\Omega),\,\text{ and }\,[\H^{s_0,p_0}(\Omega),\H^{s_1,p_1}(\Omega)]_{\theta} = \H^{s_\theta,p_\theta}(\Omega),
\end{align*}
where  $[\cdot,\cdot]_\theta$ is the complex interpolation method.

When  $\Omega\subset\RR^n$ is a bounded Lipschitz domain, the characteristic function of $\Omega$ satisfies $\mathds{1}_{\Omega}\in\H^{s,p}_0(\Omega),\B^{s}_{p,q,0}(\Omega)$, for all $s<1/p$. Therefore, for all $u\in \H^{s,p}(\Omega),\in\B^{s}_{p,q}(\Omega)$, $s>-1+1/p$, one can make sense  of $\int_{\Omega} u := \langle u,\,\mathds{1}_{\Omega} \rangle_{\Omega}$. In this case, we define the (orthogonal) projection onto mean-free distributions as $\P_\circ:=\I-|\Omega|^{-1}  \langle \cdot ,\,\mathds{1}_{\Omega} \rangle_{\Omega}\mathbf{1}$, and the mean-free function spaces be $\H^{s,p}_\circ(\Omega) := \P_\circ\H^{s,p}(\Omega)$ and $\B^{s}_{p,q,\circ}(\Omega):=\P_\circ \B^{s}_{p,q}(\Omega)$, $s>-1+1/p$. When $s< 1/p$, one has  $\mathbf{1}\in\H^{-s,p'}(\Omega),\,\B^{-s}_{p',q'}(\Omega)$, identified with $\mathds{1}_\Omega$, so that one can extend the scale of mean-free function spaces to arbitrary negative regularity by setting  $\H^{s,p}_\circ(\Omega) := \P_\circ\H^{s,p}_0(\Omega)$ and $\B^{s}_{p,q,\circ}(\Omega):=\P_\circ \B^{s}_{p,q,0}(\Omega)$. The definitions are consistent on the overlap $-1+1/p<s<1/p$. Both families are again interpolation scales.

Finally, for an unbounded linear operator $A$ that can be realized simultaneously on different function spaces, we write $\D_p(A)=\D_p^0(A)$ its domain in $\L^p$, $\D_{p}^{s}(A)$ its domain in $\H^{s,p}$, $\D^{s}_{p,q}(A)$ its domain in $\B^{s}_{p,q}$.

When $(\D(A),A)$ is an invertible sectorial operator on Banach space $\X$, \textit{i.e.} satisfying \eqref{eq:SectCondtn}, following \cite[Chap.~6,~Sec~6.3]{bookHaase2006}, one can define the \emph{universal vector space adapted to A}
\begin{align*}
    \mathcal{U}_{A}:=\bigcup_{n\in\NN} (\I+A)^{2n}A^{-n}\X
\end{align*}
on which $A$ extends naturally and consistently. For $\alpha\in\RR$, this allows to define the abstract Banach spaces
\begin{align*}
    \{\, x\in\mathcal{U}_A\,|\, A^{\alpha} x \in\X\,\},\quad 
    \text{ with norm }\quad x\longmapsto\lVert A^{\alpha} x\rVert_{\X}.
\end{align*}
The latter can be canonically identified as $\D(A^\alpha)$ when $\alpha \geqslant0$, due to invertibility of $A$. When $\alpha<0$, we denote the corresponding space by  $A^{-\alpha}\X$. In particular, for all $0\leqslant \alpha \leqslant 1$, $A$ acts has an isomorphism from $\D(A^{1-\alpha})$ to $A^{\alpha}\X$. If additionally $\X=\H$ is a Hilbert space, then the norms of $A^{\alpha}\H$, $\alpha\in\mathbb{R}$, can be chosen so that the latter is still a Hilbert space and $A^{\alpha}\,:\,\H\longrightarrow A^{\alpha}\H$ to be a bijective isometry.

%----------------------------------------------------
%--------------- Functional Setting -----------------
%----------------------------------------------------
\section{Functional Setting: Linear theory and maximal regularity estimates}

\subsection{Linear theory for the  Neumann Laplacian and the Stokes operator.}\label{Sec:Laplacians}

\paragraph{The Neumann Laplacian}

We start dealing with refined properties of the Neumann Laplacian in bounded Lipschitz domains.

\begin{definition}For $\Omega\subset \RR^n$, to be a Lipschitz domain, the (negative) Neumann Laplacian is the unbounded $\L^2$-realization of the bounded operator $-\Delta\,:\,\H^{1,2}(\Omega)\longrightarrow (\H^{1,2}(\Omega))'= \H^{-1,2}_{0}(\Omega)$, denoted by $(\D_2(-\Delta_\mathcal{N}),-\Delta_\mathcal{N})$, and induced by the sesquilinear form
\begin{align*}
    \mathfrak{a}_{\mathcal{N}}\,:\,\D(\mathfrak{a}_{\mathcal{N}})\times \D(\mathfrak{a}_{\mathcal{N}}) &\longrightarrow \CC\\
    (u ,v)\quad\quad&\longmapsto \langle \nabla u , \nabla  v\rangle_{\Omega} = \int_{\Omega} \nabla u (x) \cdot \overline{\nabla v(x)}\,\d x,
\end{align*}
with form domain $\D(\mathfrak{a}_{\mathcal{N}}):=\H^{1,2}(\Omega)$.
\end{definition}
In particular, the Hilbert space setting $(\H^{1,2}(\Omega),\L^2(\Omega), (\H^{1,2}(\Omega))')$ allows to solve uniquely (up to a constant) the Laplace equation supplemented with a Neumann boundary condition : for any $f \in\L^2(\Omega)$, there exists a $u\in\H^{1,2}(\Omega)$ unique up to a constant, such that
\begin{equation*}
    \left\{ \begin{array}{rllr}
         - \Delta u  &= f \text{, }&&\text{ in } \Omega\text{,}\\
        \partial_\mathbf{n} u_{|_{\partial\Omega}} &=0\text{, } &&\text{ on } \partial\Omega\text{.}
    \end{array}
    \right.
\end{equation*}
Since the Neumann Laplacian defined this way is given by a reasonable symmetric sesquilinear form, it is very well-known, \cite[Chap.~1--6]{bookOuhabaz2005}, we can define the operator in $\L^p$ and we can even go a bit further.

\begin{proposition}\label{prop:NeumannHinfty} Let $\Omega$ be a bounded Lipschitz domain of $\RR^n$. For all $p\in(1,\infty)$, the (negative) Neumann Laplacian $-\Delta_\mathcal{N}$ is sectorial of angle $0$ in $\L^p(\Omega)$. Furthermore, 
its restriction to $\L^p_{\circ}(\Omega)$ is invertible, remains sectorial of angle $0$ and has bounded $\mathbf{H}^\infty(\Sigma_\theta)$-functional calculus for all $\theta\in(0,\pi)$.
\end{proposition}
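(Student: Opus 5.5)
The plan is to obtain everything from the form structure of $\mathfrak{a}_\mathcal{N}$ via the theory of symmetric Markovian semigroups, as in \cite[Chap.~1--6]{bookOuhabaz2005}.

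\emph{Step one: the $\L^2$ setting.} The form $\mathfrak{a}_\mathcal{N}$ is densely defined, closed, symmetric and nonnegative on $\L^2(\Omega)$ with domain $\H^{1,2}(\Omega)$. Hence $-\Delta_\mathcal{N}$ is self-adjoint and nonnegative, and therefore sectorial of angle $0$ on $\L^2(\Omega)$.

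\emph{Step two: extension to $\L^p$.} We check the Beurling--Deny criteria:
\begin{itemize}
    \item $\H^{1,2}(\Omega)$ is stable under $u\mapsto (\mathrm{Re}\,u)^+\wedge 1$, with $\mathfrak{a}_\mathcal{N}$ not increasing.
\end{itemize}
So $(e^{t\Delta_\mathcal{N}})_{t\geqslant0}$ is positive and $\L^\infty$-contractive, hence a symmetric submarkovian semigroup. It extrapolates consistently to contraction semigroups on all $\L^p(\Omega)$, $1\leqslant p\leqslant\infty$. By Stein's theorem (or the Liskevich--Perelmuter refinement), it is bounded holomorphic on $\L^p$, $1<p<\infty$, of angle $\frac{\pi}{2}(1-|1-2/p|)$.

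\emph{Step three: bounded functional calculus.} For a symmetric submarkovian semigroup, Cowling's theorem (alternatively the transference results of Carbonaro--Dragičević) yields bounded $\mathbf{H}^\infty(\Sigma_\theta)$-calculus on $\L^p$ for every $\theta>\pi|1/2-1/p|$, and in fact for all $\theta\in(0,\pi)$ once one uses the optimal angle results. Bounded $\mathbf{H}^\infty$-calculus of arbitrarily small angle forces the sectoriality angle to be $0$, in the sense of \eqref{eq:SectCondtn} holding for every $\mu\in(0,\pi)$. This gives the first claim.

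\emph{Step four: restriction to mean-free functions.} The constants form the kernel, since $\Omega$ is connected with $\mathfrak{a}_\mathcal{N}(u,u)=0\Rightarrow\nabla u=0$. The projection $\P_\circ$ commutes with the semigroup, because $\int_\Omega e^{t\Delta_\mathcal{N}}f=\int_\Omega f$, as $\mathbf{1}$ is invariant under the self-adjoint semigroup. Hence $\L^p_\circ(\Omega)$ is an invariant complemented subspace, and sectoriality and the $\mathbf{H}^\infty$-calculus pass to the part of $-\Delta_\mathcal{N}$ there.

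\emph{Main obstacle: invertibility on $\L^p_\circ$.} On $\L^2_\circ$ this follows from the Poincaré--Wirtinger inequality on the bounded Lipschitz (connected) domain, giving a spectral gap $\lambda_1>0$ and $\lVert e^{t\Delta_\mathcal{N}}\rVert_{\L^2_\circ\to\L^2_\circ}\leqslant e^{-\lambda_1 t}$. To transfer this to $\L^p_\circ$, interpolate (Riesz--Thorin) between this $\L^2_\circ$ decay and uniform boundedness on $\L^1$ or $\L^\infty$ (the restriction to $\L^1_\circ$ is bounded by $2$). This gives exponential decay on $\L^p_\circ$ for every $1<p<\infty$. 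Then $0\in\rho$ via the Laplace transform $(-\Delta_\mathcal{N})^{-1}=\int_0^\infty e^{t\Delta_\mathcal{N}}\,\d t$.

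Alternatively, one can use compactness of the resolvent from $\H^{1,2}\hookrightarrow\L^2$ compactly, together with the $p$-independence of the spectrum given by Gaussian-type or ultracontractive bounds. Either route gives invertibility with sectorial estimates uniform on the shifted sector.
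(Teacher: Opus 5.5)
Your Steps one, two and four, and your argument for invertibility on $\L^p_\circ$, are sound and match Steps~1--2 of the paper. That argument is the spectral gap on $\L^2_\circ$ from Poincar\'e--Wirtinger, then interpolation, then the Laplace transform. The interpolation should be applied to $e^{t\Delta_\mathcal{N}}\P_\circ$ on the full $\L^1$, $\L^2$, $\L^\infty$ spaces, since Riesz--Thorin does not act on subspaces. The gap is in Step three. The submarkovian structure alone cannot give angle $0$:
\begin{itemize}
\item Cowling's theorem yields a bounded $\mathbf{H}^\infty(\Sigma_\theta)$-calculus on $\L^p$ only for $\theta>\pi|1/2-1/p|$.
\item The optimal result of Carbonaro--Dragi\v{c}evi\'{c} improves this to $\theta>\arcsin|1-2/p|$.
\item That angle is sharp for the class of symmetric submarkovian semigroups. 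The Ornstein--Uhlenbeck operator on Gaussian space belongs to this class, yet its semigroup on $\L^p$, $p\neq 2$, is bounded for small complex times only inside the sector of half-angle $\pi/2-\arcsin|1-2/p|$. So that operator is not even sectorial of any angle smaller than $\arcsin|1-2/p|$.
\end{itemize}
Hence the claim that optimal angle results give all $\theta\in(0,\pi)$ is false, and your derivation of sectoriality of angle $0$ falls with it. As written, your argument proves the proposition only with the angle $0$ replaced by $\arcsin|1-2/p|$. You invoke Gaussian-type or ultracontractive bounds only as an alternative route to invertibility, but they are in fact indispensable for the angle.

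The missing ingredient is geometric information specific to $\Omega$, strong enough to transport the $\L^2$ angle $0$ to $\L^p$. The paper obtains it in three steps:
\begin{itemize}
\item A Nash inequality on the bounded Lipschitz domain gives $\L^p$--$\L^q$ smoothing.
\item Davies-type exponentially weighted $\L^2$ off-diagonal estimates are added (as in \cite{Davies1995}).
\item \cite[Theorem~2.2]{BlunckKuntsmann2003} is then applied; a Duong--Robinson type theorem under Gaussian bounds would serve the same purpose.
\end{itemize}
This works only for the shifted operator $\varepsilon\I-\Delta_\mathcal{N}$. Indeed, $e^{t\Delta_\mathcal{N}}\mathbf{1}=\mathbf{1}$ rules out $\L^1$--$\L^\infty$ decay like $t^{-n/2}$ for large $t$. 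Removing the shift is where your mean-free analysis genuinely enters. Sectoriality of $-\Delta_\mathcal{N}$ for $\lambda$ near $0$ follows by splitting off the mean and using invertibility on $\L^p_\circ$. The $\mathbf{H}^\infty$-calculus of the unshifted operator on $\L^p_\circ$ then follows from a perturbation result for invertible operators, \cite[Cor.~5.5.5]{bookHaase2006}. Your Cowling-type route has the merit of acting directly on the unshifted semigroup, so it avoids this step, but it cannot reach the angle claimed in the statement.
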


Although a part of this result is very well-known, a proper transition from $\L^p$-theory to average-free $\L^p_\circ$-theory from the  operator theoretic point of view seems to be missing in the literature. Also, no known reference seems to address the boundedness of the $\mathbf{H}^{\infty}$-functional calculus of the Neumann Laplacian itself --and not a shift by a positive multiple of the identity-- in the $\L^p$ setting. Hence, we give a detailed scheme of proof here, for the reader's convenience.

\begin{proof} 
\textbf{Step 1:} We explain the very well known fact: why $-\Delta_\mathcal{N}$  is sectorial of angle $0$ in $\L^2(\Omega)$ with bounded holomorphic functional calculus, and that the semigroup extrapolates to $\L^p(\Omega)$, $1\leqslant p\leqslant \infty$, holomorphic when $1<p<\infty$.

Briefly, the $\L^2$-theory and the $\L^p$-extrapolation of the semigroup follows from the standard theory of sesquilinear form, see, \textit{e.g.}, \cite[Prop.~1.51,~Thm.~1.52,~Thm.~3.13]{bookOuhabaz2005}, applied to the real symmetric and coercive sesquilinear form $(\D(\mathfrak{a}_{\mathcal{N}}),\mathfrak{a}_{\mathcal{N}})$. This yields two results:
\begin{itemize}
    \item Combined with self-adjointness, we obtain a uniformly bounded holomorphic semigroup $(e^{z\Delta_\mathcal{N}})_{z\in\CC_+}$ in $\L^2(\Omega)$, with the estimate
    \begin{align*}
        \lVert e^{z\Delta_\mathcal{N}} f\rVert_{\L^2(\Omega)}\leqslant \lVert f\rVert_{\L^2(\Omega)},\quad \forall f\in\L^2(\Omega),\quad \forall z\in\CC_+; 
    \end{align*}
    Furthermore, as a self-adjoint operator of angle $0$, it enjoys the boundedness of its $\mathbf{H}^\infty(\Sigma_\theta)$-calculus, for all $\theta\in(0,\pi)$, see \cite{McIntosh1986}.
    \item And the semigroup $(e^{t\Delta_{\mathcal{N}}})_{t\geqslant 0}$ extends as such in $\L^p(\Omega)$ for all $p\in[1,\infty]$, and one has uniform bound
    \begin{align*}
        \lVert e^{t\Delta_\mathcal{N}} f\rVert_{\L^p(\Omega)}\leqslant \lVert f\rVert_{\L^p(\Omega)},\quad \forall f\in\L^p(\Omega),\quad \forall t\geqslant 0.
    \end{align*}
    Moreover, for all $1<p<\infty$, given  $\omega_p:= \arctan\big(\frac{1}{2}\sqrt{|p'-2||p-2|}\big)<\pi/2$, the semigroup is even holomorphic in $\Sigma_{\pi/2-\omega_p}$, with the same bound. Additionally, the semigroup is strongly continuous in $\L^p$ up to time $0$ for all $1\leqslant p<\infty$. 
\end{itemize}
Additionally, one can reproduce the analysis for $\varepsilon \langle\cdot,\cdot\rangle_\Omega+\mathfrak{a}_\mathcal{N}$, so that one obtains that $\varepsilon\I-\Delta_\mathcal{N}$ is an invertible $0$-sectorial operator in $\L^2(\Omega)$, with bound
\begin{align*}
        \lVert e^{-z(\varepsilon\I-\Delta_\mathcal{N})} f\rVert_{\L^p(\Omega)}\leqslant e^{-\Re(z)\varepsilon} \lVert f\rVert_{\L^p(\Omega)},\quad  \forall f\in\L^p(\Omega),\quad \forall z\in\Sigma_{\pi/2-\omega_p},\quad 1<p<\infty.
\end{align*}
When $p=1,\infty$, the semigroup $(e^{-t(\varepsilon\I-\Delta_\mathcal{N})})_{t\geqslant 0}$ has the bound $e^{-t\varepsilon}\leqslant 1$.    
    
Thus, \eqref{eq:SectCondtn} holds for $\varepsilon\I-\Delta_\mathcal{N}$, in $\L^p$ :  for $\omega_p= \arctan\big(\frac{1}{2}\sqrt{|p'-2||p-2|}\big)<\pi/2$, for all $\varepsilon\geqslant 0$, all $\mu\in(0,\pi-\omega_p)$, one obtains the following resolvent estimate for all $f\in\L^p(\Omega)$, for all $\lambda\in\CC$ such that $|\arg(\lambda)|<\mu<\omega_p$, 
\begin{align}\label{eq:ResolEstShiftNeumannLp}
    (\varepsilon+|\lambda|)\lVert (\lambda\I + \varepsilon\I-\Delta_\mathcal{N})^{-1}f\rVert_{\L^p(\Omega)} \lesssim_{\mu,p}\lVert f\rVert_{\L^p(\Omega)}.
\end{align}

\textbf{Step 2:} The semigroup in the average free $\L^p$ spaces, $\L^p_\circ(\Omega)$: we derive basic properties for the Neumann Laplacian acting on mean-free functions, and prove the semigroup is well-defined and exponentially stable. This will imply some useful spectral properties that will be of use to extrapolate $\mathbf{H}^\infty$-functional calculus of angle $0$ in Step 3.

It turns out that the Neumann Laplacian in $\L^2(\Omega)$ has exactly null-space $\N_2(\Delta_\mathcal{N})= \CC$. Since, we have a canonical orthogonal decomposition  $\L^2(\Omega) = \L^2_\circ(\Omega)\oplus \CC$ given by the orthogonal projection $\P_\circ =\mathbf{I}-|\Omega|^{-1}\langle \mathbf{1},\, \cdot\rangle_{\Omega}\mathbf{1}$, we check that $(\D_{2}(\Delta_\mathcal{N}),-\Delta_{\mathcal{N}})$  restricts naturally as an invertible $0$ sectorial operator in $\L^2_\circ(\Omega)$ with form domain $\H^{1,2}_\circ(\Omega)$, and that the semigroup decays exponentially when restricted to $\L^p_{\circ}(\Omega)$, for all $1<p<\infty$.

\textbf{Step 2.1:} We aim to show that for all $f\in\L^2(\Omega)$, all $\lambda\in\Sigma_{\pi}$, one has $\P_\circ(\lambda\I-\Delta_\mathcal{N})^{-1}f \in\D_{2}(\Delta_\mathcal{N})$ and that
\begin{align}\label{eqref:NeumannResolvDivFree}
    \P_\circ(\lambda\I-\Delta_\mathcal{N})^{-1}f=(\lambda\I-\Delta_\mathcal{N})^{-1}\P_\circ f.
\end{align}

\medbreak

Fix $\lambda\in\Sigma_\pi$ and $f\in\L^2(\Omega)$, there exists a unique $u\in\H^{1}(\Omega)$ such that
\begin{align*}
    \lambda \langle u,\,v \rangle_{\Omega} + \mathfrak{a}_\mathcal{N}(u,v) = \langle  f,\,v \rangle_{\Omega}.
\end{align*}
Note that $\P_\circ$ maps $\L^2$ to $\L^2_\circ$, is orthogonal, hence self-adjoint, and that $\P_\circ$ maps $\H^{1,2}(\Omega)$ to $\H^{1,2}_\circ(\Omega)$, and that $\nabla \P_\circ = \nabla$ in $\L^2(\Omega)$. Therefore, for all $v\in\H^{1,2}(\Omega)$,
\begin{align*}
    \lambda \langle \P_\circ u,\,v \rangle_{\Omega} + \mathfrak{a}_\mathcal{N}( \P_\circ u,v) &= \lambda \langle \P_\circ u,\,v \rangle_{\Omega} + \mathfrak{a}_\mathcal{N}(  u,v)\\
    &= \lambda \langle u,\, \P_\circ v \rangle_{\Omega} + \mathfrak{a}_\mathcal{N}(  u,\P_\circ v)\\
    &= \langle  f,\,\P_\circ v \rangle_{\Omega}\\
    &= \langle  \P_\circ  f,\,v \rangle_{\Omega}.
\end{align*}
Since $\P_\circ f \in \L^2(\Omega)$ and $\P_\circ u\in\H^{1,2}(\Omega)$, by uniqueness, we deduce $\P_\circ u=(\lambda\I-\Delta_\mathcal{N})^{-1}\P_\circ f \in\D_2(\Delta_\mathcal{N})$, which means that we also have obtained \eqref{eqref:NeumannResolvDivFree} .

\textbf{Step 2.2:} The semigroup in $\L^p_\circ$, $1\leqslant p\leqslant \infty$ is uniformly bounded and exponentially stable when $1<p<\infty$, implying that the Neumann Laplacian is an invertible sectorial operator in $\L^p_\circ(\Omega)$.

By the Poincaré-Wirtinger inequality, the sesquilinear form $\mathfrak{a}_\mathcal{N}$ restricted with form domain $\P_\circ \D(\mathfrak{a}_\mathcal{N})=\H^{1,2}_\circ(\Omega)$  with ground space $\L^2_\circ$, satisfies
\begin{align*}
    \mathfrak{a}_\mathcal{N}(u,u) = \lVert \nabla u\rVert^2_{\L^2(\Omega)} \geqslant \frac{1}{C_\Omega^2}\lVert u \rVert_{\L^2(\Omega)}^2.
\end{align*}
Therefore, the sesquilinear form being now strictly coercive, there exists a unique self-adjoint invertible $0$-sectorial operator $(\D_2(\Delta_\mathcal{N}^\circ),-\Delta_\mathcal{N}^{\circ})$ in $\L^{2}_{\circ}(\Omega)$ such that it realizes the sesquilinear form $(\P_\circ \D(\mathfrak{a}_\mathcal{N}),\mathfrak{a}_\mathcal{N})$. By Step 2.1, for all $f\in\L^2(\Omega)$, for all $\lambda\in\Sigma_\pi$, it holds
\begin{align}\label{eq:ResolvIdentityMeanFreeNeumann}
    (\lambda\I-\Delta_\mathcal{N}^{\circ})^{-1}\P_\circ f = (\lambda\I-\Delta_\mathcal{N})^{-1}\P_\circ f.
\end{align}
By the Cauchy Integral formula, it transfers as a corresponding identity for the respective semigroups for all $f\in\L^2(\Omega)$, all $z\in\Sigma_{\pi/2}$, it holds
\begin{align}\label{eq:SemigroupIdentityMeanFreeNeumann}
    e^{z\Delta_\mathcal{N}^\circ}\P_\circ f = e^{z\Delta_\mathcal{N}}\P_\circ f,\quad\text{ in }\L^2(\Omega).
\end{align}

Since the resolvent is compact in $\L^2$, there exists $(\lambda_k)_{k\in\NN^\ast}\subset(0,\infty)$, such that
\begin{align*}
    \sigma(-\Delta_\mathcal{N}^\circ)=\{ \lambda_k,\, k\in\NN^\ast\},\quad\text{ and }\quad\sigma(-\Delta_\mathcal{N})=\sigma(-\Delta_\mathcal{N}^\circ)\cup\{0\}.
\end{align*}
By \eqref{eq:SemigroupIdentityMeanFreeNeumann}, since $-\Delta_{\mathcal{N}}^\circ$ is invertible and positive in $\L^2_\circ(\Omega)$, one deduces
\begin{align*}
    \lVert e^{z\Delta_\mathcal{N}}\P_\circ f\rVert_{\L^2(\Omega)}=\lVert e^{z\Delta_\mathcal{N}^\circ}\P_\circ f\rVert_{\L^2(\Omega)}\leqslant e^{-\Re(z)\lambda_1}\lVert \P_\circ f\rVert_{\L^2(\Omega)}\leqslant e^{-\Re(z)\lambda_1}\lVert f\rVert_{\L^2(\Omega)}.
\end{align*}
And one also has in $\L^p(\Omega)$, 
\begin{align*}
    \lVert e^{z\Delta_\mathcal{N}}\P_\circ f\rVert_{\L^p(\Omega)}=\lVert e^{z\Delta_\mathcal{N}^\circ}\P_\circ f\rVert_{\L^p(\Omega)}\leqslant\lVert \P_\circ f\rVert_{\L^p(\Omega)}\leqslant 2\lVert f\rVert_{\L^p(\Omega)}.
\end{align*}
So that $(e^{t\Delta_{\mathcal{N}}})_{t\geqslant 0}$ restricts to $\L^p_{\circ}(\Omega)$ as the uniformly bounded semigroup $(e^{t\Delta_{\mathcal{N}}^\circ})_{t\geqslant 0}$ , for all $p\in[1,\infty]$, and is holomorphic in $\Sigma_{\pi/2-\omega_p}$ for all $1<p<\infty$. By complex interpolation, one obtains for all $1<p<\infty$, $z\in\Sigma_{\pi/2-\omega_p}$, and all $f\in\L^p_\circ(\Omega)$
\begin{align*}
\lVert e^{z\Delta_\mathcal{N}}f\rVert_{\L^p(\Omega)}\leqslant 2 e^{-2\Re(z)\lambda_1/\max(p,p')}\lVert f\rVert_{\L^p(\Omega)}.
\end{align*}
Therefore, for all $1<p<\infty$, $-\Delta_\mathcal{N}-2\lambda_1/\max(p,p')$  is sectorial of angle $\omega_p$, and in particular $\Delta_\mathcal{N}$ is an invertible sectorial operator in $\L^p_\circ(\Omega)$, \textit{i.e.} there exists $r_p>0$ such that
\begin{align}\label{eq:InvertibleneumannMeanFreeResolv}
    \B_{r_p}(0)\subset \rho_p^\circ(\Delta_{\mathcal{N}}).
\end{align}

Additionally, we derive  from \eqref{eq:ResolvIdentityMeanFreeNeumann} and the Cauchy integral formula, the following functional calculus identity valid for all $\theta\in(0,\pi)$, for all $\varphi\in\mathbf{H}^\infty(\Sigma_\theta)$ and all $\varepsilon\geqslant 0$: 
\begin{align}\label{eq:FuncCalcIdentityMeanFreeNeumann}
    \varphi(\varepsilon\I-\Delta_\mathcal{N}^\circ)\P_\circ f = \varphi(\varepsilon\I-\Delta_\mathcal{N})\P_\circ f, \quad \text{ in }\L^2(\Omega).
\end{align}
    
\textbf{Step 3:} Now, we prove that both: the Neumann Laplacian is sectorial of angle $0$ in $\L^p(\Omega)$ and it has $\mathbf{H}^{\infty}$-functional calculus of the same angle.

\textbf{Step 3.1:} For any $\varepsilon\in(0,1)$, we prove first the $\mathbf{H}^\infty$-functional calculus of angle $0$ for $\varepsilon\I-\Delta_\mathcal{N}$ in $\L^p(\Omega)$ for all $1<p<\infty$. The goal is to apply \cite[Theorem~2.2]{BlunckKuntsmann2003}.

First, since the following Nash inequality is valid
\begin{align*}
    \lVert v\rVert_{\L^2(\Omega)} \lesssim_{\Omega,n} \Big(\frac{1}{\varepsilon}\Big)^\frac{2n}{n+2} \lVert v\rVert_{\L^1(\Omega)}^{1-\frac{n}{n+2}}\Big( \varepsilon\lVert v\rVert_{\L^2(\Omega)}^2+\lVert \nabla v\rVert_{\L^2(\Omega)}^2\Big)^{\frac{1}{2} \frac{n}{n+2}},\quad \forall v\in\H^{1,2}(\Omega),
\end{align*}
writing $\mathfrak{a}_{\mathcal{N}}^\varepsilon = \varepsilon \langle \cdot,\cdot\rangle_{\Omega} + \mathfrak{a}_{\mathcal{N}}$, we tautologically have
\begin{align*}
    \lVert u\rVert_{\L^2(\Omega)} \lesssim_{\Omega,n} \Big(\frac{1}{\varepsilon}\Big)^\frac{2n}{n+2} \lVert u\rVert_{\L^1(\Omega)}^{1-\frac{n}{n+2}}\Big( \mathfrak{a}_{\mathcal{N}}^\varepsilon(u,u)\Big)^{\frac{1}{2} \frac{n}{n+2}},\quad \forall u\in\H^{1,2}(\Omega)=\D(\mathfrak{a}_\mathcal{N}^\varepsilon).
\end{align*}
Thus, by \cite[Thm~6.3]{bookOuhabaz2005}, self-adjointness, duality and complex interpolation, we deduce the bound
\begin{align}\label{eq:LpLqDecay}
    \lVert e^{-t(\varepsilon\I-\Delta_{\mathcal{N}})} u\rVert_{\L^q(\Omega)} \lesssim_{p,q,n,\Omega} \frac{1}{\varepsilon^{2n(\frac{1}{p}-\frac{1}{q})}}\frac{1}{t^{\frac{n}{2}(\frac{1}{p}-\frac{1}{q})}}\lVert  u\rVert_{\L^p(\Omega)}, 
\end{align}
which holds for all $u\in\L^p(\Omega)$ and all $t>0$, provided $1\leqslant p\leqslant q\leqslant \infty$.
Now, notice that as in the proof of \cite[Theorem~8]{Davies1995}, one can deduce there  for all $1$-Lipschitz real valued function $\psi$, there exists $c>0$ such that for all $u\in\L^2(\Omega)$, all $t>0$, all $\varrho>0$,
\begin{align}\label{eq:L2offdiag}
    \lVert e^{\varrho\psi} e^{-t(\varepsilon\I-\Delta_{\mathcal{N}})}[e^{-\varrho\psi}u]\rVert_{\L^2(\Omega)} \lesssim_{n,\Omega} e^{-c \varrho^2 t}\lVert  u\rVert_{\L^2(\Omega)}.
\end{align}

With \eqref{eq:LpLqDecay} and \eqref{eq:L2offdiag} at hand, we can apply \cite[Theorem~2.2]{BlunckKuntsmann2003}, and deduce that for all $\varepsilon\in(0,1)$, $\varepsilon\I-\Delta_{\mathcal{N}}$ is sectorial of angle $0$ in $\L^p(\Omega)$ for all $p\in(1,\infty)$, which means that \eqref{eq:ResolEstShiftNeumannLp} remains valid for all $\lambda\in\Sigma_\mu\cup\{0\}$, for all $\mu\in(0,\pi)$, but more importantly it admits bounded $\mathbf{H}^{\infty}$-functional calculus of angle $0$.

\textbf{Step 3.2:} We prove that $-\Delta_\mathcal{N}$ is sectorial of angle $0$ in $\L^p(\Omega)$, for all $1<p<\infty$.

Since  $\varepsilon\I-\Delta_{\mathcal{N}}$ is $0$ sectorial in $\L^p(\Omega)$ for all $\varepsilon\in(0,1)$, choosing $\varepsilon>0$ small enough, it suffices to show that, for $\mu\in(0,\pi)$, for some $r>0$, for all $\lambda\in\Sigma_\mu\cap\B_{r}(0)$, one has
\begin{align*}
    |\lambda|\lVert (\lambda\I-\Delta_{\mathcal{N}})^{-1}f\rVert_{\L^p(\Omega)} \lesssim_{\mu,p,r} \lVert f\rVert_{\L^p(\Omega)}.
\end{align*}

We choose $r_p>0$ (note that $r_p \leqslant r_2$) from \eqref{eq:InvertibleneumannMeanFreeResolv}, and we consider $f\in\L^2\cap\L^p(\Omega)$, for all  $\lambda\in\Sigma_\mu\cap \B_{r_p}(0)$, one has in $\L^2(\Omega)$, hence almost everywhere,
\begin{align*}
    (\lambda \I-\Delta_{\mathcal{N}})^{-1}f = (\lambda \I-\Delta_{\mathcal{N}})^{-1}\P_\circ f + \frac{1}{\lambda}(f)_\Omega
\end{align*}
where $(f)_\Omega = 1/|\Omega|\int_{\Omega}f$. By the end of Step 2.2, \eqref{eq:InvertibleneumannMeanFreeResolv}, one has
\begin{align*}
    (1+|\lambda|)\lVert (\lambda \I-\Delta_{\mathcal{N}})^{-1}\P_{\circ}f  \rVert_{\L^p(\Omega)}\lesssim_{p,r_p,\mu} \lVert f\rVert_{\L^p(\Omega)},
\end{align*}
and then
\begin{align*}
    |\lambda|\lVert(\lambda \I-\Delta_{\mathcal{N}})^{-1}f \rVert_{\L^p(\Omega)} &\leqslant  |\lambda|\lVert (\lambda \I-\Delta_{\mathcal{N}})^{-1}\P_{\circ}f  \rVert_{\L^p(\Omega)} + |\lambda|\Big\lVert \frac{1}{|\lambda|} (f)_\Omega \Big\rVert_{\L^p(\Omega)}\\
    &\lesssim_{p,r_p,\mu} \lVert \P_{\circ}f  \rVert_{\L^p(\Omega)} + \lVert  (f)_\Omega \rVert_{\L^p(\Omega)}\\
    &\lesssim_{p,r_p,\mu} \lVert f  \rVert_{\L^p(\Omega)} .
\end{align*}
Since $\L^2\cap\L^p$ is dense in $\L^p$, $p$ is finite, the result holds for all $f\in\L^p(\Omega)$. In particular, thanks to the identity \eqref{eq:ResolvIdentityMeanFreeNeumann} valid in $\L^2\cap\L^p(\Omega)$, then in $\L^p$ by density, one also obtains that the Neumann Laplacian restrict as an invertible $0$ sectorial operator in $\L^p_\circ(\Omega)$.

Additionally, thanks to \eqref{eq:FuncCalcIdentityMeanFreeNeumann}, for all $\varepsilon\in(0,1)$, $\varepsilon\I-\Delta_\mathcal{N}$ has bounded holomorphic functional calculus of angle $0$ in $\L^p_\circ(\Omega)$.

\textbf{Step 3.3:} We prove that $-\Delta_\mathcal{N}$ has bounded holomorphic functional calculus of angle $0$ in $\L^p_\circ(\Omega)$ for all $1<p<\infty$. 

Let $p\in(1,\infty)$. By the previous Step 3.2, choosing $\varepsilon_{0}\in(0,r_p)$ to be fixed, one has
\begin{enumerate}
    \item $\varepsilon_0\I-\Delta_\mathcal{N}$ is invertible and sectorial of angle $0$ and has a bounded holomorphic functional calculus in $\L^p_\circ(\Omega)$;
    \item $\sup_{t\geqslant 1} \lVert t (-\varepsilon_0)(t\I + \varepsilon_0\I-\Delta_{\mathcal{N}})^{-1} f\rVert_{\L^p(\Omega)} \lesssim_{p,\Omega} \varepsilon_0 \lVert f\rVert_{\L^p(\Omega)}$ for all $f\in\L^p_\circ(\Omega)$;
    \item $-\Delta_{\mathcal{N}} = (\varepsilon_0\I-\Delta_{\mathcal{N}}) + (- \varepsilon_0\I)$ is an invertible sectorial operator of angle $0$ in $\L^p_\circ(\Omega)$.
\end{enumerate}

By \cite[Cor.~5.5.5]{bookHaase2006}, $-\Delta_\mathcal{N}$ has bounded $\mathbf{H}^{\infty}(\Sigma_\theta)$-functional calculus in $\L^p_\circ(\Omega)$, for all $\theta\in(0,\pi)$ and all $1<p<\infty$, which ends the proof.
\end{proof}

Now, we can characterize the behavior for the fractional powers for the Neuamnn Laplacian in $\L^p_\circ(\Omega)$, we are then able to derive regularity theory for negative indices. Although, Mendez and Mitrea seems to have a similar claim \cite[Thm~1.1]{MendezMitrea2001}, we go a bit further giving more details, and allowing explicitly negative regularity function spaces with a stronger emphasis between the invertible and the non-invertible case.

\begin{proposition}\label{prop:NeumannRegularity}Let $\Omega$ be a bounded Lipschitz domain of $\RR^n$, then there exists $\varepsilon_{{}_\Omega}>0$, such that for all $1<p<\infty$, $s\in(-2+1/p,1+1/p)$ satisfying either
\begin{enumerate}
        \item $s\in(\sfrac{3}{p}-3-\varepsilon_{{}_\Omega},1+\sfrac{1}{p})$, if $p\in(1,\frac{2}{1+\varepsilon_{{}_\Omega}}]$; or
        \item $s\in(-2+\sfrac{1}{p},1+\sfrac{1}{p})$, if $p\in[\frac{2}{1+\varepsilon_{{}_\Omega}},\frac{2}{1-\varepsilon_{{}_\Omega}}]$; or
        \item $s\in(-2+\sfrac{1}{p},\sfrac{3}{p}+\varepsilon_{{}_\Omega})$, if $p\in[\frac{2}{1-\varepsilon_{{}_\Omega}},\infty)$.
\end{enumerate}
\begin{figure}[H]
\centering
\begin{tikzpicture}[yscale=1,xscale=8]

\def\littleparam{0.2}

  \draw[->] (-0.1,0) -- (1.1,0) node[right,yshift=-2mm] {$1/p$};
  \draw[->] (0,-2) -- (0,2.6) node[above] {$s$};
 \fill[red!30,opacity=0.5] (0,\littleparam) -- plot[domain=0:{(1-\littleparam)/2}](\x,3*\x+\littleparam)-- plot[domain={(1-\littleparam)/2}:1](\x,1+\x)  -- (1,-\littleparam)-- ({(1+\littleparam)/2},{-1-(1-\littleparam)/2})  -- (0,-2)-- cycle;

  \draw[domain=0:(5/6-\littleparam/3)),dashed,variable=\x,teal] plot (\x,3*\x+\littleparam) node[right,yshift=2mm] {$s=3/p+\varepsilon_{{}_\Omega}$};
  \draw[domain=(1/6+\littleparam/3):1,dashed,variable=\x,teal] plot (\x,3*\x-3-\littleparam);
  \node[anchor=north east] at (0.7,-2.05) {\color{teal}$s=3/p-3-\varepsilon_{{}_\Omega}$};
  
  \draw[domain=0:1,smooth,variable=\x,blue] plot ({\x},{-1+\x}) node[right,yshift=2mm] {$s=-1+1/p$};
  \draw[domain=0:1,smooth,variable=\x,blue] plot ({\x},{-2+\x}) node[right] {$s=-2+1/p$};
  \draw[domain=0:1,smooth,variable=\x,blue] plot ({\x},{1+\x}) node[right,yshift=2mm] {$s=1+1/p$};
  \draw[domain=0:1,smooth,variable=\x,blue] plot ({\x},{\x}) node[right]{$s=1/p$};

  \draw[dashed] (1,1) -- (0,1)  node[left] {$s=1$};
  \draw[dashed] (1,2) -- (0,2)  node[left] {$s=2$};
  \draw[dashed] (1,-1) -- (0,-1)  node[left] {$s=-1$};
  \node[circle,fill,inner sep=1.5pt,label=below:$\mathrm{L}^2$] at (0.5,0) {};
  \node[circle,fill,inner sep=1.5pt,label=above:${\mathrm{H}}^1$] at (0.5,1) {};
  \draw[circle,fill,inner sep=1pt] (0,0) node[below left] {$0$};
  \draw[circle,fill,inner sep=1pt] (1,0) node[below right] {$1$};
\end{tikzpicture}
\caption{Representation of parameters $(s,\sfrac{1}{p})$}
\label{Fig:RegularityFracNeuLap}
\end{figure}
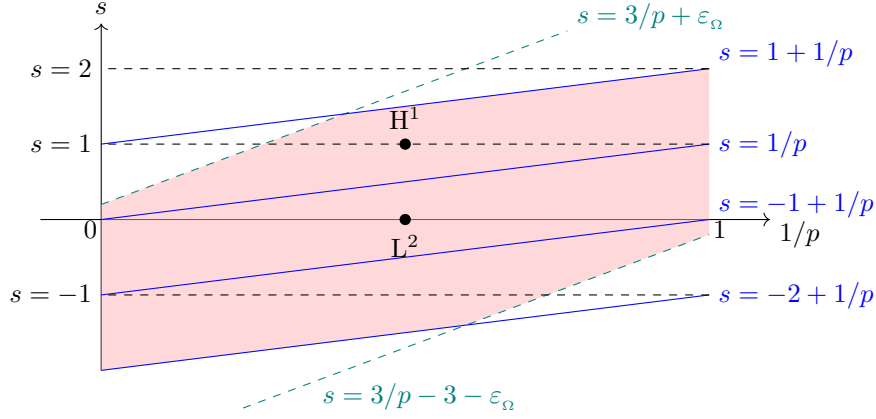
it holds that the mapping
\begin{align*}
    (-\Delta_\mathcal{N})^{s/2}\,:\,\H^{s,p}_\circ(\Omega)\longrightarrow\L^p_\circ(\Omega)
\end{align*}
is an isomorphism. In particular, the Neumann Laplacian is a sectorial operator of angle $0$ in $\H^{s,p}_0(\Omega)$ whenever $s\in(-2+1/p,-1+1/p)$ satisfies $s>3/p-3-\varepsilon_{{}_{\Omega}}$, and it restricts as an invertible sectorial operator of angle $0$ in $\H^{s,p}_\circ(\Omega)$ and with bounded $\mathbf{H}^\infty$-functional calculus of the same angle.
\end{proposition}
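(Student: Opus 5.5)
The strategy is to identify the domains of fractional powers of $-\Delta_\mathcal{N}$ on $\L^p_\circ(\Omega)$ with mean-free Bessel potential spaces at three anchor levels ($s=0$, $s=1$, $s=-1$), then fill in the rest of the region by complex interpolation and duality, which the bounded $\mathbf{H}^\infty$-calculus from Proposition~\ref{prop:NeumannHinfty} makes available.

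\textbf{Step 1: anchor levels.} The case $s=0$ is trivial. Since $-\Delta_\mathcal{N}^\circ$ is invertible with bounded $\mathbf{H}^\infty$-calculus on $\L^p_\circ(\Omega)$, it has bounded imaginary powers. Hence the scale $\D_p((-\Delta_\mathcal{N})^{\alpha})$, $\alpha\in\RR$, taken in the universal extrapolation space, is a complex interpolation scale: $[\D(A^{\alpha_0}),\D(A^{\alpha_1})]_\theta=\D(A^{\alpha_\theta})$.

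The level $s=1$ is the square-root (Kato) problem in $\L^p$, namely $\lVert(-\Delta_\mathcal{N})^{1/2}u\rVert_{\L^p}\sim\lVert\nabla u\rVert_{\L^p}$ on $\H^{1,p}_\circ(\Omega)$. For $p=2$ this is immediate from the form:
\begin{align*}
\lVert(-\Delta_\mathcal{N})^{1/2}u\rVert_{\L^2}^2=\mathfrak{a}_\mathcal{N}(u,u)=\lVert\nabla u\rVert_{\L^2}^2,
\end{align*}
and Poincaré–Wirtinger then gives the $\H^{1,2}_\circ$ norm. For general $p$, I would split into two inequalities.
\begin{itemize}
\item The bound $\lVert\nabla u\rVert_{\L^p}\lesssim\lVert(-\Delta_\mathcal{N})^{1/2}u\rVert_{\L^p}$ is $\L^p$-boundedness of the Riesz transform $\nabla(-\Delta_\mathcal{N})^{-1/2}$. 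This holds for $p<2+\varepsilon$ via Gaussian bounds, Blunck–Kunstmann weak-type theory, and Meyers-type reverse Hölder/gradient estimates on Lipschitz domains. For larger $p$ I would rely on the Neumann $\W^{1,p}$ solvability of Jerison–Kenig / Zanger, i.e. $\nabla(-\Delta_\mathcal{N})^{-1}\div$ bounded on $\L^p$ for $p\in(3/2-\varepsilon,3+\varepsilon)$ in dimension 3 (generally $|1/p-1/2|<1/6+\varepsilon$).
\item The reverse bound follows by duality: apply the Riesz transform estimate at $p'$ together with $\langle(-\Delta_\mathcal{N})^{1/2}u,v\rangle=\langle\nabla u,\nabla(-\Delta_\mathcal{N})^{-1/2}v\rangle$. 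This yields $\D_p((-\Delta_\mathcal{N})^{1/2})=\H^{1,p}_\circ(\Omega)$ on the common range.
\end{itemize}
The level $s=-1$ follows by duality. The adjoint of $(-\Delta_\mathcal{N})^{1/2}$ on $\L^{p'}_\circ$ shows that $(-\Delta_\mathcal{N})^{-1/2}$ maps $\L^p_\circ$ isomorphically onto $(\H^{1,p'}_\circ)'=\H^{-1,p}_\circ(\Omega)$, using the identification $(\H^{1,p'}(\Omega))'=\H^{-1,p}_0(\Omega)$ projected by $\P_\circ$.

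\textbf{Step 2: the range beyond $s=1$.} To reach $s$ up to $1+1/p$ (respectively $3/p+\varepsilon$ for large $p$), I would use the Neumann regularity in Besov/Bessel scales from Jerison–Kenig and Fabes–Mendez–Mitrea. Concretely, $-\Delta:\H^{s,p}_\circ\cap\{\partial_\mathbf{n}u=0\}\to\H^{s-2,p}_\circ$ (with the $\H_0$ convention below $-1+1/p$) is an isomorphism for $(s,1/p)$ in the hexagon-type region depicted in Figure~\ref{Fig:RegularityFracNeuLap}, shifted by $2$. This identifies $\D_p^{s-2}(\Delta_\mathcal{N})$, and thus the graph of $(-\Delta_\mathcal{N})^{1}$ between the two scales.

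\textbf{Step 3: fill the region.} Combine the isomorphisms $(-\Delta_\mathcal{N})^{1/2}:\H^{1,p}_\circ\to\L^p_\circ$, $(-\Delta_\mathcal{N})^{-1/2}:\L^p_\circ\to\H^{-1,p}_\circ$, and $(-\Delta_\mathcal{N})^{s/2}$ at the endpoint levels from Step 2. By complex interpolation in both $s$ and $1/p$, using BIP on the operator side and $[\H^{s_0,p_0}_\circ,\H^{s_1,p_1}_\circ]_\theta=\H^{s_\theta,p_\theta}_\circ$ on the function side (valid since $\P_\circ$ is a common retraction), we obtain the isomorphism throughout the convex hull. This hull is exactly the shaded region of the statement; it is convex because it is bounded by lines in the $(1/p,s)$ plane.

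The "in particular" part follows by transferring $-\Delta_\mathcal{N}^\circ$ through the isomorphism $(-\Delta_\mathcal{N})^{s/2}$. The operator is similar to its $\L^p_\circ$ version, so sectoriality of angle $0$, invertibility and bounded $\mathbf{H}^\infty$-calculus carry over to $\H^{s,p}_\circ$. On $\H^{s,p}_0=\H^{s,p}_\circ\oplus\CC\mathds{1}_\Omega$, one then argues as in Step 3.2 of Proposition~\ref{prop:NeumannHinfty}, treating the constant component via the explicit resolvent $\lambda^{-1}(f)_\Omega$.

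\textbf{Main obstacle.} The hard step is the Kato/Riesz transform identification at $s=1$ for $p$ far from $2$, together with the optimal Neumann regularity endpoints $3/p+\varepsilon_\Omega$ and $3/p-3-\varepsilon_\Omega$ on Lipschitz domains. I would import these from the Jerison–Kenig / Fabes–Mendez–Mitrea results rather than reprove them, and only check carefully that the mean-free projection commutes with all identifications.
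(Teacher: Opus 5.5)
Your ingredients are the right ones: imaginary powers on $\L^p_\circ(\Omega)$, duality, the Fabes--Mendez--Mitrea isomorphism $-\Delta_\mathcal{N}:\H^{\sigma+1,p}_\circ(\Omega)\to\H^{\sigma-1,p}_\circ(\Omega)$, and complex interpolation. But Steps~2--3 do not yet prove the statement.

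The regularity theorem only says that $-\Delta_\mathcal{N}$ is an isomorphism between two levels. To conclude that $(-\Delta_\mathcal{N})^{s/2}:\H^{s,p}_\circ(\Omega)\to\L^p_\circ(\Omega)$ is an isomorphism, you must factor $(-\Delta_\mathcal{N})^{s/2}=(-\Delta_\mathcal{N})^{s/2-1}(-\Delta_\mathcal{N})$ and already know the identification at level $s-2$. Step~3 skips this: it takes ``$(-\Delta_\mathcal{N})^{s/2}$ at the endpoint levels from Step~2'' as anchors and asserts that their convex hull is the shaded region.

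What you have actually justified is level $0$ for all $p$, levels $\pm1$ for $p$ in your Kato range, and one shift at those $p$. The convex hull of this contains only $|s|=O(\min(1/p,1-1/p))$ near $1/p\in\{0,1\}$. The target region, by contrast, contains $s\in(-2+1/p,\,3/p+\varepsilon_{{}_\Omega})$ as $1/p\to 0$. For example, the strip $3/p<s<3/p+\varepsilon_{{}_\Omega}$ for large $p$ needs the level $s-2\in(3/p-2,\,3/p-2+\varepsilon_{{}_\Omega})$. These are points near the corner $(1/p,s)=(0,-2)$, which no hull of your anchors reaches.

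The missing idea is an iterated bootstrap, which is essentially what the paper's Step~2 carries out:
\begin{enumerate}
\item Near $1/p=1$, shift by $2$ the already covered, slightly negative levels $3/p-3<s<1/p-1$. This produces a thin wedge at $(1,2)$ with upper edge $s=1+1/p$.
\item Dualize this wedge to one at $(0,-2)$ and interpolate. This gives $\max(-2+1/p,\,3/p-3)<s<\min(1+1/p,\,3/p)$.
\item Only now is level $s-2$ available for $s\in(3/p,\,3/p+\varepsilon_{{}_\Omega})$, so a second shift yields the $\varepsilon_{{}_\Omega}$-strip. Duality and interpolation then close the figure.
\end{enumerate}
Also, the region where the shift is available is the Fabes--Mendez--Mitrea region moved up by $1$, i.e.\ the part of the target above $s=1/p$. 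It is not the target region moved by $2$.

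Second, your anchor at $s=\pm1$ for $p$ far from $2$ is not correctly justified, and it is unnecessary. Neumann $\W^{1,p}$-solvability gives boundedness of $\nabla(-\Delta_\mathcal{N})^{-1}\div$, not of the Riesz transform $\nabla(-\Delta_\mathcal{N})^{-1/2}$. Passing from one to the other via $\nabla(-\Delta_\mathcal{N})^{-1/2}=\nabla(-\Delta_\mathcal{N})^{-1}\circ(-\Delta_\mathcal{N})^{1/2}$ requires the reverse inequality $\lVert(-\Delta_\mathcal{N})^{1/2}u\rVert_{\L^{p'}}\lesssim\lVert\nabla u\rVert_{\L^{p'}}$. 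You obtain that inequality from the Riesz bound at $p$, which is circular. The circle can only be broken by a further heavy import, such as a Calder\'on--Zygmund argument for $p'<2$ or Shen's reverse H\"older criterion.

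The paper never needs any of this. Its only square-root identification is at $p=2$, where $\D_2((-\Delta_\mathcal{N})^{1/2})=\H^{1,2}(\Omega)$ is free from the form. The regularity theorem at $p=2$ extends this to $|s|<3/2$. Stein-type complex interpolation of the analytic family $z\mapsto(-\Delta_\mathcal{N})^{z/2}$, between this line and the bounded imaginary powers on $\L^p_\circ(\Omega)$, then gives the diamond $2|1/p-1/2|+\frac{2}{3}|s|<1$, from which the bootstrap above starts. The $\L^p$ square-root property for $1<p<3/(1-\varepsilon_{{}_\Omega})$ comes out as a consequence rather than an input.

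Your treatment of the ``in particular'' part matches the paper: transfer by similarity, then split off $(f)_\Omega\mathds{1}_\Omega$ in $\H^{s,p}_0(\Omega)$.
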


\begin{proof}Before actually starting the proof, we recall that, by \cite[Thm~9.2]{FabesMendezMitrea1998}, there exists $\varepsilon_{{}_\Omega}>0$ such that for all $p\in(1,\infty)$, $s\in(-1+1/p,1/p)$ satisfying either
\begin{enumerate}
        \item $s\in(\sfrac{3}{p}-2-\varepsilon_{{}_\Omega},\sfrac{1}{p})$, if $p\in(1,\frac{2}{1+\varepsilon_{{}_\Omega}}]$; or
        \item $s\in(-2+\sfrac{1}{p},\sfrac{1}{p})$, if $p\in[\frac{2}{1+\varepsilon_{{}_\Omega}},\frac{2}{1-\varepsilon_{{}_\Omega}}]$; or
        \item $s\in(-1+\sfrac{1}{p},\sfrac{3}{p}-1+\varepsilon_{{}_\Omega})$, if $p\in[\frac{2}{1-\varepsilon_{{}_\Omega}},\infty)$.
\end{enumerate}
\begin{figure}[H]
\centering
\begin{tikzpicture}[yscale=1,xscale=8]

\def\littleparam{0.2}

  \draw[->] (-0.1,0) -- (1.1,0) node[right,yshift=-2mm] {$1/p$};
  \draw[->] (0,-1.6) -- (0,1.6) node[above] {$s$};

  \draw[domain=0:(5/6-\littleparam/3)),dashed,variable=\x,teal] plot (\x,3*\x-1+\littleparam) node[right,yshift=2mm] {$s=3/p-1+\varepsilon_{{}_\Omega}$};
  \draw[domain=(1/6+\littleparam/3):1,dashed,variable=\x,teal] plot (\x,3*\x-2-\littleparam);
  \node[anchor=north east] at (0.7,-1.05) {\color{teal}$s=3/p-2-\varepsilon_{{}_\Omega}$};
  
  \draw[domain=0:1,smooth,variable=\x,blue] plot ({\x},{-1+\x}) node[right,yshift=2mm] {$s=-1+1/p$};
  \draw[domain=0:1,smooth,variable=\x,blue] plot ({\x},{\x}) node[right]{$s=1/p$};
 \fill[green!30,opacity=0.5] (0,-1+\littleparam) -- plot[domain=0:{(1-\littleparam)/2}](\x,3*\x-1+\littleparam)-- plot[domain={(1-\littleparam)/2}:1](\x,\x)  -- (1,1-\littleparam)-- ({(1+\littleparam)/2},{-(1-\littleparam)/2})  -- (0,-1)-- cycle;

  \draw[dashed] (1,1) -- (0,1)  node[left] {$s=1$};
  \node[circle,fill,inner sep=1.5pt,label=below:$\mathrm{L}^2$] at (0.5,0) {};
  \node[circle,fill,inner sep=1.5pt,label=above:${\mathrm{H}}^1$] at (0.5,1) {};
  \draw[circle,fill,inner sep=1pt] (0,0) node[below left] {$0$};
  \draw[circle,fill,inner sep=1pt] (1,0) node[below right] {$1$};
\end{tikzpicture}
\caption{Representation of $(s,\sfrac{1}{p})$.}
\label{Fig:RegularityNeuLapLipDomains}
\end{figure}
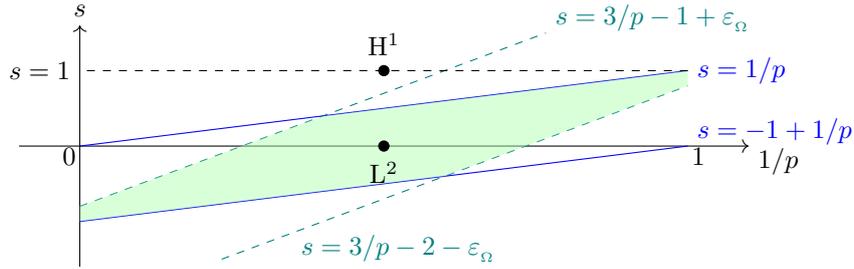
then for all $f\in\H^{s-1,p}_\circ(\Omega)$ (\textit{i.e.} $f\in\H^{s-1,p}_{0}(\Omega)$ such that $\langle f ,\,\mathbf{1}\rangle_{\Omega}=0$), there exists a unique $u\in \H^{s+1,p}_\circ(\Omega)$ (\textit{i.e.} $u\in\H^{s+1,p}(\Omega)$ such that $\langle u ,\,\mathds{1}_{\Omega}\rangle_{\Omega}=0$), satisfying
\begin{align*}
    -\Delta u =f,\quad\text{ in }\Omega,\quad\text{and}\quad\partial_\mathbf{n}u_{|_{\partial\Omega}} =0,
\end{align*}
with the estimate
\begin{align*}
    \lVert \nabla u \rVert_{\H^{s,p}(\Omega)} \lesssim_{p,s,n,\Omega}\lVert f \rVert_{\H^{s-1,p}(\Omega)}.
\end{align*}

\textbf{Step 1:} The case $p=2$, $s\in(-3/2,3/2)$. By the bounded holomorphic functional calculus in $\L^2$, in particular BIP, see Theorem~\ref{thm:BIP}, for all $\alpha\in[0,1]$, one has
\begin{align*}
    \D_{2,\circ}((-\Delta_\mathcal{N})^{\alpha/2}) = [\L^2_\circ(\Omega),\D_{2,\circ}((-\Delta_\mathcal{N})^{1/2}) ]_{\alpha} = [\L^2_\circ(\Omega),\H^{1,2}_\circ(\Omega) ]_{\alpha} = \H^{\alpha,2}_\circ(\Omega),
\end{align*}
with an isomorphism
\begin{align*}
    (-\Delta_\mathcal{N})^{\alpha/2}\,:\,\H^{\alpha,2}_\circ(\Omega)\longrightarrow\L^2_\circ(\Omega).
\end{align*}
By self-adjointness in $\L^2$ and duality, it remains valid for $\alpha\in[-1,0]$ \footnote{We recall for the reader's clarity that with our notations $(\H^{1,2}_\circ(\Omega))' =(\P_\circ\H^{1,2}(\Omega))' = \P_\circ(\H^{1,2}(\Omega))' = \P_\circ\H^{-1,2}_0(\Omega) = \H^{-1,2}_\circ(\Omega)$.}. In particular, one can export the bounded holomorphic function calculus to $\H^{\alpha,2}_\circ(\Omega)$ for all $\alpha\in[-1,1]$. However, by The Fabes Mendez and Mitrea result recalled above, for all $\alpha\in(1/2,3/2)$
\begin{align*}
    -\Delta_{\mathcal{N}}\,:\,\H^{\alpha,2}_\circ(\Omega)\longrightarrow\H^{\alpha-2,2}_\circ(\Omega)
\end{align*}
is an isomorphism, thus for all $s\in[-1,-1/2)$, $(-\Delta_\mathcal{N})^{-s/2}=(-\Delta_\mathcal{N})^{1-s/2}(-\Delta_\mathcal{N})$ is an isomorphism from $\H^{s+2,2}_\circ(\Omega)$ to $\L^{2}_{\circ}(\Omega)$. Thus, we obtain the isomorphism $(-\Delta_{\mathcal{N}})^{\alpha/2}\,:\,\H^{\alpha,2}_\circ(\Omega)\longrightarrow\L^{2}_{\circ}(\Omega)$, $\alpha\in[-1,3/2)$. By duality, we recover the full interval $s\in(-3/2,3/2)$.

\textbf{Step 2:} Now, the $\L^p$-case. From Proposition~\ref{prop:NeumannHinfty}, one has 
\begin{enumerate}
    \item For $\Re (z)=0$, $(-\Delta_\mathcal{N})^{\sfrac{z}{2}}$ acts as an isomorphism on $\L^p_\circ(\Omega)$, for all $p\in(1,\infty)$,
    \item For $\Re (z)=s$, $(-\Delta_\mathcal{N})^{\sfrac{z}{2}}$ is an isomorphism mapping $\H^{s,2}_\circ(\Omega)$ to $\L^2_\circ(\Omega)$;
\end{enumerate}
By complex interpolation, and duality, for all $\alpha\in(-3/2,3/2)$, such that $2|\frac{1}{p}-\frac{1}{2}|+\frac{2}{3}|\alpha|<1$, we obtain
\begin{align*}
    \D_{p,\circ}((-\Delta_\mathcal{N})^{{\alpha}/{2}}) = \H^{\alpha,p}_\circ(\Omega)
\end{align*}
accompanied with the underlying isomorphism. Now, choosing $p$ close to $1$, for all $s\in(-1+1/p-\varepsilon_{{}_\Omega},0)$ and such that $2|\frac{1}{p}-\frac{1}{2}|+\frac{2}{3}|s|<1$, since $-\Delta_{\mathcal{N}}\,:\,\H^{s+2,p}_\circ(\Omega)\longrightarrow\H^{s,p}_{\circ}(\Omega)$ is an isomorphism, so is $(-\Delta_{\mathcal{N}})^{1+s/2}=(-\Delta_{\mathcal{N}})^{s/2}(-\Delta_{\mathcal{N}})\,:\,\H^{s+2,p}_\circ(\Omega)\longrightarrow\L^{p}_{\circ}(\Omega)$. Now applying complex interpolation again to $(-\Delta)^{z/2}$, this yields the desired results for all $1<p\leqslant 2$, $s\in(-1+1/p,2+1/p)$, with isomorphism $(-\Delta_{\mathcal{N}})^{s/2}\,:\,\H^{s,p}_\circ(\Omega)\longrightarrow\L^{p}_{\circ}(\Omega)$, and by duality this yields the case $2\leqslant p<\infty$,  $s\in(-2+1/p,1/p)$.

Now let $p\in[\frac{2}{1-\varepsilon_{{}_\Omega}},\infty)$, for all $1/p<s<3/p-1+\varepsilon_{{}_\Omega}$, $\Delta_{\mathcal{N}}\,:\,\H^{s,p}_\circ(\Omega)\longrightarrow\H^{s-2,p}_{\circ}(\Omega)$ is an isomorphism, so is $(-\Delta_{\mathcal{N}})^{s/2}=(-\Delta_{\mathcal{N}})^{s/2-1}(-\Delta_{\mathcal{N}})\,:\,\H^{s,p}_\circ(\Omega)\longrightarrow\L^{p}_{\circ}(\Omega)$, complex interpolation yields the case $s=1/p$. Finally, duality and complex interpolation allows to fulfill the figure which ends the proof. In particular, the Neumann Laplacian is sectorial of angle $0$ in spaces $\H^{s,p}_\circ(\Omega)$ for all such indices, and it has bounded $\mathbf{H}^\infty$-functional calculus of angle $0$.

\textbf{Step 4:} Sectoriality in Sobolev spaces: We give the argument for sectoriality of angle $0$ in the bigger spaces $\H^{s,p}_0(\Omega)$, $s<-1+1/p$.

Let $\mu\in(0,\pi)$, $\lambda\in\Sigma_\mu$ and $f\in\H^{s,p}_{0}(\Omega)\cap\L^2(\Omega)$, then it holds
\begin{align*}
    (\lambda \I-\Delta_{\mathcal{N}})^{-1}f = (\lambda \I-\Delta_{\mathcal{N}})^{-1}\P_\circ f + \frac{1}{\lambda} (f)_{\Omega}\mathds{1}_{\Omega}, \quad \text{ in }\L^2(\Omega).
\end{align*}
By the case of mean-free function spaces, we deduce
\begin{align*}
    \lVert\lambda(\lambda \I-\Delta_{\mathcal{N}})^{-1}f \rVert_{\H^{s,p}(\Omega)} &\lesssim_{p,s,n,\Omega,\mu} \lVert\lambda(\lambda \I-\Delta_{\mathcal{N}})^{-1} \P_\circ f \rVert_{\H^{s,p}(\Omega)} +\lVert (f)_{\Omega}\mathds{1}_{\Omega} \rVert_{\H^{s,p}(\Omega)} \\ &\lesssim_{p,s,n,\Omega,\mu} \lVert \P_\circ f \rVert_{\H^{s,p}(\Omega)} +\lVert (f)_{\Omega}\mathds{1}_{\Omega} \rVert_{\H^{s,p}(\Omega)} \\&\lesssim_{p,s,n,\Omega,\mu} \lVert  f \rVert_{\H^{s,p}(\Omega)}.
\end{align*}
the result follows by strong density of $\H^{s,p}_{0}(\Omega)\cap\L^2(\Omega)$ in $\H^{s,p}_{0}(\Omega)$.
\end{proof}

We now state the regularity properties for the Neumann Laplacian in Besov spaces.

\begin{proposition}\label{prop:NeumannBesov}Let $\Omega$ be a bounded Lipschitz domain of $\RR^n$. Let $\varepsilon_{{}_\Omega}>0$ be given by Proposition~\ref{prop:NeumannRegularity}. Let $p\in(1,\infty)$, $s\in(-2+1/p,-1+1/p)$ such that $s>3/p-3+\varepsilon_{{}_\Omega}$. Let $q\in[1,\infty]$. Then the following assertions hold
\begin{enumerate}
    \item The Neumann Laplacian is sectorial of angle $0$ in $\B^{s}_{p,q,0}(\Omega)$, and its restriction to $\B^{s}_{p,q,\circ}(\Omega)$ is invertible and remains sectorial of angle $0$, and both are densely defined whenever $q<\infty$;
    \item Provided $s+2< 3/p+\varepsilon_{{}_\Omega}$, its domains are respectively $\B^{s+2}_{p,q}(\Omega)$ and $\B^{s+2}_{p,q,\circ}(\Omega)$;
    \item For all $\theta\in(0,1)$, without any restriction on $s$, asking for $s+2\theta< 3/p+\varepsilon_{{}_\Omega}$, for all $r\in[1,\infty]$, it holds
    \begin{align*}
        (\B^{s}_{p,q,0}(\Omega),\D^{s}_{p,q}(\Delta_{\mathcal{N}}))_{\theta,r} = \begin{cases}
            \B^{s+2\theta}_{p,r,0}(\Omega),\quad\text{ if } s+2\theta <1/p;\\
            \B^{s+2\theta}_{p,r}(\Omega),\quad\text{ if } s+2\theta  >-1+1/p;\\
    \end{cases}
    \end{align*}
    and similarly
   \begin{align*}
        (\B^{s}_{p,q,\circ}(\Omega),\D^{s}_{p,q,\circ}(\Delta_{\mathcal{N}}))_{\theta,r} = \B^{s+2\theta}_{p,r,\circ}(\Omega);
    \end{align*}
    \item Setting $I_p:=(-2+1/p,1+1/p)\cap(3/p-3-\varepsilon_{{}_\Omega},3/p+\varepsilon_{{}_\Omega})$, for all $\tau,\beta\in I_p$ such that $\tau+\beta \in I_p$,
    \begin{align*}
        (-\Delta_\mathcal{N})^{\beta/2}\,:\,\B^{\tau+\beta}_{p,q,\circ}(\Omega)\longrightarrow\B^{\tau}_{p,q,\circ}(\Omega)
    \end{align*}
    is an isomorphism.
\end{enumerate}
\end{proposition}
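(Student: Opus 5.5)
The plan is to obtain everything by real interpolation from the Bessel potential results of Proposition~\ref{prop:NeumannRegularity}, using that real interpolation of $\H^{s,p}$-scales with fixed $p$ yields the Besov scales, cf. \eqref{eq:RealInterpbesovbasicIntro}, and the corresponding statements for the $_0$ and $_\circ$ variants. Fix $p$ and $s$ as in the statement. First pick $s_0<s<s_1$, both in $(-2+1/p,-1+1/p)$ and above $3/p-3-\varepsilon_{{}_\Omega}$; this is possible since the admissible set is open. By Proposition~\ref{prop:NeumannRegularity}, $-\Delta_\mathcal{N}$ is $0$-sectorial on $\H^{s_j,p}_0(\Omega)$, invertible on $\H^{s_j,p}_\circ(\Omega)$, and the resolvents are consistent (they agree on $\L^2\cap\H^{s_j,p}_0$). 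The resolvent bounds are uniform over $\lambda\in\Sigma_\mu$, so interpolating them with $(\cdot,\cdot)_{\eta,q}$, where $s=(1-\eta)s_0+\eta s_1$, gives sectoriality of angle $0$ on $\B^{s}_{p,q,0}(\Omega)$ and on $\B^{s}_{p,q,\circ}(\Omega)$, with invertibility on the latter. The decomposition $f=\P_\circ f+(f)_\Omega\mathds{1}_\Omega$ from Step~4 of the previous proof passes the estimate from the $_\circ$ space to the $_0$ space. Density of the domain when $q<\infty$ follows because $\L^2\cap\B^s_{p,q,0}$, or even $\Ccinfty(\Omega)$, is dense and is mapped into the domain by $\lambda(\lambda-\Delta_\mathcal{N})^{-1}$. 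This proves \textit{(i)}.

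For \textit{(iv)}, I would use that $(-\Delta_\mathcal{N})^{\beta/2}$ is an isomorphism $\H^{\tau+\beta,p}_\circ\to\H^{\tau,p}_\circ$ whenever the indices lie in the region of Proposition~\ref{prop:NeumannRegularity}. This is obtained by composing $(-\Delta_\mathcal{N})^{(\tau+\beta)/2}$ with the inverse of $(-\Delta_\mathcal{N})^{\tau/2}$ through $\L^p_\circ$, and it relies on consistency of the fractional powers across the scales, which comes from the $\mathbf{H}^\infty$-calculus identities. Then real interpolation between two nearby pairs $(\tau_0,\tau_1)$ with the same $\beta$ transfers the isomorphism to $\B^{\tau+\beta}_{p,q,\circ}\to\B^\tau_{p,q,\circ}$. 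This requires $I_p$ to be open, which it is.

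Next, \textit{(ii)} is the case $\beta=2$, $\tau=s$ of \textit{(iv)}, which gives $\D^s_{p,q,\circ}(\Delta_\mathcal{N})=\B^{s+2}_{p,q,\circ}(\Omega)$; it requires $s+2\in I_p$, that is, $s+2<3/p+\varepsilon_{{}_\Omega}$. Adding constants then gives $\B^{s+2}_{p,q}(\Omega)$ for the $_0$-version, since $s+2>-1+1/p$, so no support condition appears and the mean is recovered through $\P_\circ$.

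For \textit{(iii)}, I would use \eqref{eq:AdaptedBesovSpaces} and the reiteration-type identity $(\X,\D(A))_{\theta,r}=(\X,\D(A^m))_{\theta/m,r}$. Since $A=-\Delta_\mathcal{N}$ is invertible on $\B^s_{p,q,\circ}$, one has $(\X,\D(A))_{\theta,r}=A^{-\theta}$-type spaces; by \textit{(iv)}, $\D(A^{\theta\pm\delta})=\B^{s+2\theta\pm2\delta}_{p,q,\circ}$ for small $\delta$. Applying the reiteration theorem between these two then gives $\B^{s+2\theta}_{p,r,\circ}$. I expect this to be the main obstacle. The domain $\D(\Delta_\mathcal{N})$ need not be a Besov space when $s+2$ falls outside $I_p$, which is why no restriction on $s$ beyond $s+2\theta<3/p+\varepsilon_{{}_\Omega}$ is imposed. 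So I must interpolate through the fractional domains identified by \textit{(iv)} rather than through $\D(A)$ itself, using that $\D(A^\alpha)$ for $0<\alpha<1$ lies in the interpolation class $J_\alpha\cap K_\alpha$. Finally, the $_0$ version adds the constants back. It is identified with $\B^{s+2\theta}_{p,r,0}$ or $\B^{s+2\theta}_{p,r}$ according to whether $s+2\theta<1/p$ or $>-1+1/p$, using the canonical identifications on the overlap.
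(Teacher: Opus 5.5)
Your proposal is correct and takes the paper's route: the paper's entire proof is that everything follows from real interpolation applied to Proposition~\ref{prop:NeumannRegularity}, and you make explicit what it leaves implicit, namely consistency of resolvents and fractional powers for \textit{(i)} and \textit{(iv)}, and reiteration through $\D(A^{\theta\pm\delta})$ for \textit{(iii)} when $\D(A)$ itself is not a Besov space. One cosmetic point: $2\notin I_p$, so \textit{(ii)} is not literally the case $\beta=2$ of \textit{(iv)} as stated, but your composition argument only needs $\tau,\tau+\beta\in I_p$ and therefore does cover it.
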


\begin{proof} Everything is a direct consequence of real interpolation applied to Proposition~\ref{prop:NeumannRegularity}.
\end{proof}

In the case of the $\L^p$-theory for the Boussinesq system, we will need, see the proof of Proposition~\ref{Prop:SolutionOperatorBoussinesqLp} below, the boundedness of the following linear operator when $\Omega\subset\RR^3$,
\begin{align*}
    (-\Delta_{\mathcal{N}})^{-1/2}\div\,:\, \B^{-1/2}_{6/5,\infty,0}(\Omega,\CC^3)\longrightarrow\B^{-1/2}_{6/5,\infty,\circ}(\Omega),
\end{align*}
but by duality and the mapping properties, it can be checked that this is equivalent to the isomorphism $\Delta_{\mathcal{N}}\,:\,\B^{3/2}_{6,1,\circ,\mathcal{N}}(\Omega)\longrightarrow\B^{-1/2}_{6,1,\circ}(\Omega)$, where $$\B^{3/2}_{6,1,\circ,\mathcal{N}}(\Omega):= \{u\in \B^{3/2}_{6,1,\circ}(\Omega)\,|\,\partial_\mathbf{n} u {}_{|_{\partial\Omega}}=0\,\}.$$ This property cannot be reached in an arbitrary bounded Lipschitz domain.

\begin{proposition}\label{prop:RieszTransfC1alphaDomainNeumannLap}Let $\Omega\subset \mathbb{R}^3$ be a bounded $\C^{1,\alpha}$ domain, with $\alpha>1/3$. Then, one has an isomorphism
\begin{align}\label{eq:refHigherRgneumannC1aplha}
    -\Delta_{\mathcal{N}}\,:\,\B^{3/2}_{6,1,\circ,\mathcal{N}}(\Omega)\longrightarrow\B^{-1/2}_{6,1,\circ}(\Omega).
\end{align}
In particular,
\begin{align}\label{NeumannRieszTransf}
     (-\Delta_{\mathcal{N}})^{-1/2}\div\,:\, \B^{-1/2}_{6/5,\infty,0}(\Omega,\CC^3)\longrightarrow\B^{-1/2}_{6/5,\infty,\circ}(\Omega)
\end{align}
is well-defined and bounded.
\end{proposition}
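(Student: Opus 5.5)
The plan is to prove the isomorphism \eqref{eq:refHigherRgneumannC1aplha} first, through a regularity result for the Neumann problem in $\C^{1,\alpha}$ domains, and then obtain \eqref{NeumannRieszTransf} from it by duality.

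\textbf{Injectivity and a priori setting.} The point $(s,p)=(3/2,6)$, i.e.\ $s=3/p+1$, lies outside the range of Proposition~\ref{prop:NeumannRegularity} and Proposition~\ref{prop:NeumannBesov}, which stops at $s<3/p+\varepsilon_{{}_\Omega}$ for the solution. The geometry therefore has to be used. Injectivity is immediate. If $u\in\B^{3/2}_{6,1,\circ,\mathcal{N}}(\Omega)$ and $-\Delta u=0$ with $\partial_\mathbf{n}u=0$, then $u\in\H^{1,2}_\circ(\Omega)$ because $\Omega$ is bounded. The $\L^2$ theory of Proposition~\ref{prop:NeumannHinfty} then gives $u=0$.

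\textbf{Surjectivity with the right regularity.} Take $f\in\B^{-1/2}_{6,1,\circ}(\Omega)$. Since $-1/2>-1+1/6$, the data lie in the range where $f$ has mean zero. Solve first in the Lipschitz theory, for instance in $\B^{s}_{p,1,\circ}$ with $p$ close to $2$, to get a unique weak solution $u$ of mean zero. The remaining task is to show $u\in\B^{3/2}_{6,1}(\Omega)$ with the bound $\lVert u\rVert_{\B^{3/2}_{6,1}}\lesssim\lVert f\rVert_{\B^{-1/2}_{6,1}}$.

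I would do this by localization and flattening. On a chart the boundary is a graph $x_3>\phi(x')$ with $\phi\in\C^{1,\alpha}$. Use the standard bi-Lipschitz change of variables, or better a regularized-distance (Dahlberg--Kenig--Stein type) diffeomorphism $\Phi$ that is $\C^{1,\alpha}$ up to the boundary and smooth inside. This turns the problem into a divergence-form Neumann problem on a half-space,
\begin{align*}
    -\div(\mathbf{a}\nabla w)=g,\qquad \mathbf{a}\nabla w\cdot\mathfrak{e}_3=0,
\end{align*}
with $\mathbf{a}\in\C^{0,\alpha}$. Perturbing from constant coefficients (Schauder/Campanato-type reasoning transferred to Besov spaces, or the layer potential theory of Fabes--Jodeit--Rivière for $\C^1$ domains refined to $\C^{1,\alpha}$) gives
\begin{align*}
    \nabla w\in\B^{s}_{p,q}\quad\text{for } |s|<\alpha, \quad p\in(1,\infty).
\end{align*}
This step relies on the multiplier estimate $\lVert \mathbf{a}\,h\rVert_{\B^{s}_{p,q}}\lesssim\lVert \mathbf{a}\rVert_{\C^{0,\alpha}}\lVert h\rVert_{\B^{s}_{p,q}}$, valid only for $|s|<\alpha$. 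Here $\nabla u$ must lie in $\B^{1/2}_{6,1}$, so we need $\alpha>1/2$, not merely $1/3$.

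I expect the main obstacle to be exactly this threshold. To reach $\alpha>1/3$ I would not flatten. Instead I would work with the Neumann layer potential, writing $u=\mathcal{S}h$ with $(\tfrac12 I+K^*)h=\partial_\mathbf{n}$-data. I would use that on $\C^{1,\alpha}$ boundaries the operator $K^*$ is compact (indeed smoothing of order $\alpha$) on $\B^{s}_{p,q}(\partial\Omega)$ for $s$ in a range of size governed by $\alpha$. The required trace regularity is $\nabla u|_{\partial\Omega}\in\B^{1/2-1/6}_{6,1}=\B^{1/3}_{6,1}(\partial\Omega)$, which is where the condition $\alpha>1/3$ naturally appears. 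Fredholm theory together with injectivity then yields invertibility, so $u\in\B^{3/2}_{6,1}$. The volume term $f$ is handled by the Newtonian potential.

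\textbf{Duality.} The Riesz-transform statement \eqref{NeumannRieszTransf} follows by duality. Its adjoint is $-\nabla(-\Delta_\mathcal{N})^{-1/2}$ acting from $\B^{1/2}_{6,1,\circ}(\Omega)$ to $\B^{1/2}_{6,1}(\Omega)^3$. By Proposition~\ref{prop:NeumannBesov}\,(iv) (at $p=6$ and $\tau=-1/2$, which is admissible), $(-\Delta_\mathcal{N})^{-1/2}$ maps $\B^{1/2}_{6,1,\circ}$ isomorphically onto the domain of $(-\Delta_\mathcal{N})^{1/2}$ in $\B^{-1/2}_{6,1,\circ}$. Through \eqref{eq:refHigherRgneumannC1aplha} and interpolation, that domain equals $\B^{3/2}_{6,1,\circ,\mathcal{N}}$, i.e.\ $(-\Delta_\mathcal{N})^{-1}\B^{-1/2}_{6,1,\circ}$. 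Hence $\nabla$ lands in $\B^{1/2}_{6,1}$, and dualizing with $\B^{-1/2}_{6/5,\infty}=(\B^{1/2}_{6,1})'$ gives the claimed bounded operator.
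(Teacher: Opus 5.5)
Your duality step is the paper's own remark that \eqref{NeumannRieszTransf} is equivalent to \eqref{eq:refHigherRgneumannC1aplha}, up to a slip. The operator $(-\Delta_\mathcal{N})^{-1/2}$ maps $\B^{1/2}_{6,1,\circ}(\Omega)=(-\Delta_\mathcal{N})^{-1/2}\B^{-1/2}_{6,1,\circ}(\Omega)$ onto $(-\Delta_\mathcal{N})^{-1}\B^{-1/2}_{6,1,\circ}(\Omega)$, which is the domain of $-\Delta_\mathcal{N}$ itself rather than of its square root, and no interpolation is needed.

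The real difference is in the proof of \eqref{eq:refHigherRgneumannC1aplha}, where the paper takes precisely the route you discard. Starting from the weak solution $u\in\H^{1,2}_\circ(\Omega)$, it localizes and reduces to half-space problems as in \cite[Proof of Thm.~B.1]{BreitSchwarzacher2025}, using the negative-regularity localization estimates of \cite[Proof of Thm.~6.2]{BreitGaudin2025}. It reaches $\alpha>1/3$ through Maz'ya--Shaposhnikova's Sobolev multiplier theory \cite{MazyaShaposhnikova2009}, \cite[Prop.~2.38]{BreitGaudin2025}. Your claim that flattening needs $\alpha>1/2$ is an artifact: you treat the transformed coefficients as mere $\C^{0,\alpha}$ functions and use the pointwise multiplier bound for $|s|<\alpha$. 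The regularized-distance map you mention gives coefficients that are smooth inside, with controlled blow-up of their derivatives at $\partial\Omega$. In that framework the multiplier condition on $\B^{1/2}_{6,1}$ reduces to a trace-level one: $\nabla\phi$ must be a pointwise multiplier on $\B^{1/2-1/6}_{6,1}(\RR^2)=\B^{1/3}_{6,1}(\RR^2)$. This holds for $\phi\in\C^{1,\alpha}$ with $\alpha>1/3$. It is the same count $1/2-1/6=1/3$ you find on the boundary side, so flattening is not the obstruction.

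The layer-potential route is a legitimate alternative with one real advantage. The Newtonian potential absorbs the negative-order datum $f$ globally (extended by zero, since $-1+1/6<-1/2<1/6$), so you never localize in $\B^{-1/2}_{6,1}$. As written, however, its decisive step is missing.
\begin{itemize}
\item Compactness of $K^*$ on $\B^{1/3}_{6,1}(\partial\Omega)$ and Fredholm theory only produce a density $h\in\B^{1/3}_{6,1}(\partial\Omega)$.
\item To conclude $u\in\B^{3/2}_{6,1}(\Omega)$ you need the mapping property $\mathcal{S}:\B^{1/3}_{6,1}(\partial\Omega)\to\B^{3/2}_{6,1}(\Omega)$. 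This lies outside the Lipschitz range of \cite{FabesMendezMitrea1998}, and it is exactly where the $\C^{1,\alpha}$ geometry must enter. You assert it without argument.
\end{itemize}
One natural way to obtain it is the bound $|\int_{\partial\Omega}\nabla^2\Gamma(x-y)\,\d\sigma(y)|\lesssim\mathrm{dist}(x,\partial\Omega)^{\alpha-1}$ (for a half-space this integral vanishes), together with $\B^{1/3}_{6,1}(\partial\Omega)\hookrightarrow\L^\infty(\partial\Omega)$. The contribution of $h(\hat x)\int_{\partial\Omega}\nabla^2\Gamma(x-y)\,\d\sigma(y)$ to the $\mathrm{dist}^{1/2}$-weighted $\L^6$ norms of $\nabla^2\mathcal{S}h$ on the layers $\{\mathrm{dist}(x,\partial\Omega)\approx 2^{-j}\}$ is then $O(2^{-j(\alpha-1/3)})$. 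This is $\ell^1$-summable exactly when $\alpha>1/3$.

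There is also a smaller issue. The interior Neumann boundary operator is neither injective nor onto, so ``Fredholm plus injectivity'' must be replaced by solvability for compatible data, with uniqueness modulo the kernel. You can get this by comparison with the $\L^2$ theory via $\B^{1/3}_{6,1}(\partial\Omega)\hookrightarrow\L^2(\partial\Omega)$. The compatibility condition holds because $\langle f,\mathbf{1}\rangle_\Omega=0$.
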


\begin{proof}  We just exhibit place in the literature for the arguments for \eqref{eq:refHigherRgneumannC1aplha} to hold, with the location in the literature to find the different parts of the arguments to prove such a statement, in order to not substantially increase the length of the present work\footnote{Otherwise, this would require us to introduce tremendous amount of concepts, heavy notations and so on, while not giving new information.}. By Proposition~\ref{prop:NeumannRegularity}, consider $f\in\B^{-1/2}_{6,1,\circ}(\Omega)\subset\H^{-1,2}_{\circ}(\Omega)$ and $u\in\H^{1,2}_{\circ}(\Omega)$ the solution to 
\begin{align*}
    -\Delta u =f,\quad\text{ in }\Omega,\quad\text{and}\quad\partial_\mathbf{n}u_{|_{\partial\Omega}} =0.
\end{align*}
The idea is to prove that furthermore $u\in\B^{3/2}_{6,1}(\Omega)$, one can localize and reduce to many finitely problems in the half-space following \cite[Proof of Thm. B.1]{BreitSchwarzacher2025} but applying the localization estimates arguments in negative Sobolev spaces from the \cite[Proof of Thm~6.2]{BreitGaudin2025}\footnote{The proof simplifies then by a lot in the present case of the Neumann Laplacian. It actually becomes  quite close in spirit to the one already performed for \cite[Thm. B.1]{BreitSchwarzacher2025}}. What allows us to consider $\C^{1,1/3+\varepsilon}$ instead of  $\C^{1,1/2+\varepsilon}$ is the use of Maz'ya and Shaposhnikova's  Sobolev multiplier theory \cite{MazyaShaposhnikova2009} in  \cite{BreitSchwarzacher2025,BreitGaudin2025}, refined by the author and Dominic Breit for their use in (some) negative regularity Sobolev spaces in \cite[Chap.~2,~Prop.~2.38]{BreitGaudin2025}. One could even relax the result to a rougher, but more fancy, boundary regularity, see for instance \cite[Prop.~2.41]{BreitGaudin2025}. More generally, one can obtain the exact Neumann Laplacian-counterpart of \cite[Thm.~6.5]{BreitGaudin2025} by just substituting the appropriate notations.
\end{proof}

\paragraph{The Stokes operator}

Let $\Omega\subset\RR^n$, $n\geqslant 2$,  be a bounded Lipschitz domain. Before, introducing the Stokes operator and the Stokes equations with their regularity properties, we introduce the wealth of solenoidal function spaces that accommodate the appropriate features. The space of the smooth divergence free vector fields is introduced as:
\begin{align*}
    \Ccinftydiv(\Omega) := \{ \vv\in\Ccinfty(\Omega,\CC^n)\,|\, \div \vv =0 \}.
\end{align*}
The adapted Sobolev and Besov spaces for the Stokes operator are introduced as
\begin{align*}
    \H^{s,p}_{0,\sigma}(\Omega)=\overline{\Ccinftydiv(\Omega)}^{\lVert \cdot\rVert_{\H^{s,p}(\Omega)}},\quad\text{ and } \B^{s,\sigma}_{p,q,0}(\Omega)=\overline{\Ccinftydiv(\Omega)}^{\lVert \cdot\rVert_{\B^{s}_{p,q}(\Omega)}}
\end{align*}
for all $s\in\mathbb{R}$, $p\in[1,\infty]$, $q\in[1,\infty)$, the case $q=\infty$ could be defined by real interpolation. It is known, see for instance \cite[Cor.~2.11]{MitreaMonniaux2008} and \cite[Thm~4.17,~Prop.~4.22]{BreitGaudin2025}, whenever $1<p<\infty$ and $s\neq1/p$, that we can canonically identify
\begin{align*}
    \H^{s,p}_{0,\sigma}(\Omega)=\begin{cases}
        \{\vv\in\H^{s,p}(\Omega,\CC^n)\,|\,\div \vv = 0,\,\text{ and }\, \vv\cdot\mathbf{n}_{|_{\partial\Omega}}=0\},\quad &\text{ if } -1+1/p<s<1/p,\\
        \{\vv\in\H^{s,p}(\Omega,\CC^n)\,|\,\div \vv = 0,\,\text{ and }\, \vv_{|_{\partial\Omega}}=0\}, &\text{ if } \quad\qquad1/p<s<1+1/p,
    \end{cases}
\end{align*}
and similarly in the case of Besov spaces $\B^{s,\sigma}_{p,q,0}(\Omega)$. Both are stables complex and real interpolation scales, except at the threshold cases $s=-1+1/p,1/p$, see \cite[Discussion following Prop.~2.10,~Thm.~2.12]{MitreaMonniaux2008}, \cite[Thm.~4.19,~Prop.~4.23]{BreitGaudin2025}.
\medbreak

For simplicity, we set $\L^p_{\mathfrak{n},\sigma}(\Omega):= \H^{0,p}_{0,\sigma}(\Omega)$, and $\W^{k,p}_{0,\sigma}(\Omega):=\H^{k,p}_{0,\sigma}(\Omega)$, $1<p<\infty$, $k\in\NN^{\ast}$. When $p=2$, it well known that there exists a unique orthogonal projection
\begin{align*}
    \PP_\Omega\,:\,\L^2(\Omega,\CC^n) \longrightarrow \L^2_{\mathfrak{n},\sigma}(\Omega)
\end{align*}
called the \emph{Helmholtz--Leray projection} and inducing the topological orthogonal decomposition
\begin{align*}
    \L^2(\Omega,\CC^n) = \L^2_{\mathfrak{n},\sigma}(\Omega) \oplus \nabla \H^{1,2}(\Omega)
\end{align*}
called the \emph{Helmholtz decomposition}. When it comes to Sobolev and Besov spaces with $p\neq2$, the question of such decomposition becomes way more intricate, especially in domains of low regularity. This actually deeply related to solvability of the Neumann Laplacian in the corresponding function spaces, see \cite[Sec.~9~\&~11]{FabesMendezMitrea1998} and \cite[Prop.~2.16]{MitreaMonniaux2008}, where the optimal results in Lipschitz domains are given. Here is the result, initially obtained in \cite[Prop.~2.16]{MitreaMonniaux2008}, improved to the case of Besov spaces by applying real interpolation to it:
\begin{proposition}\label{prop:HelmholtzDecomp}Let $\Omega$ be a bounded Lipschitz domain of $\RR^n$, $n\geqslant 2$. Let $\varepsilon_{{}_{\Omega}}>0$ be given by Proposition~\ref{prop:NeumannHinfty}, then  for all $1<p<\infty$, $q\in[1,\infty]$, $s\in(-1+1/p,1/p)$ satisfying either
\begin{enumerate}
        \item $s\in(\sfrac{3}{p}-2-\varepsilon_{{}_\Omega},\sfrac{1}{p})$, if $p\in(1,\frac{2}{1+\varepsilon_{{}_\Omega}}]$; or
        \item $s\in(-1+\sfrac{1}{p},\sfrac{1}{p})$, if $p\in[\frac{2}{1+\varepsilon_{{}_\Omega}},\frac{2}{1-\varepsilon_{{}_\Omega}}]$; or
        \item $s\in(-1+\sfrac{1}{p},\sfrac{3}{p}-1+\varepsilon_{{}_\Omega})$, if $p\in[\frac{2}{1-\varepsilon_{{}_\Omega}},\infty)$;
\end{enumerate}
see Figure~\ref{Fig:RegularityNeuLapLipDomains}, the following topological decompositions holds
\begin{align*}
    \H^{s,p}(\Omega,\CC^{n}) &= \H^{s,p}_{0,\sigma}(\Omega)\oplus \nabla \H^{s+1,p}(\Omega),\\
    \text{ and }\quad\B^{s}_{p,q}(\Omega,\CC^{n}) &= \B^{s,\sigma}_{p,q,0}(\Omega)\oplus \nabla \B^{s+1}_{p,q}(\Omega),
\end{align*}
induced by the extension of the Leray projection $\PP_{\Omega}$ as a bounded linear operator
\begin{align*}
    \PP_{\Omega}&\,:\,\H^{s,p}(\Omega,\CC^{n})\longrightarrow\H^{s,p}_{0,\sigma}(\Omega);\\
    &\,:\,\B^{s}_{p,q}(\Omega,\CC^{n}) \longrightarrow \B^{s,\sigma}_{p,q,0}(\Omega).
\end{align*}
In particular, for all $p\in[3/2,3]$ it holds true for $\L^p(\Omega)$ and $\B^{0}_{p,q}(\Omega)$.
\end{proposition}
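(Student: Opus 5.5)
The plan is to construct $\PP_\Omega$ explicitly through the Neumann problem and read off both decompositions from its mapping properties. First, for the Sobolev scale: given $\ff\in\H^{s,p}(\Omega,\CC^n)$ with $(s,1/p)$ in the admissible region, I regard $\div\ff$ as an element of $\H^{s-1,p}_0(\Omega)$ through the weak pairing $\langle \div \ff,\varphi\rangle := -\langle \ff,\nabla\varphi\rangle_\Omega$ for $\varphi\in\H^{1-s,p'}(\Omega)$. Since $s>-1+1/p$, the pairing is well defined, because $\nabla\varphi\in\H^{-s,p'}(\Omega)=(\H^{s,p}_0(\Omega))'$ and $\H^{s,p}=\H^{s,p}_0$ in the range $-1+1/p<s<1/p$. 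This functional vanishes on constants, so it lies in $\H^{s-1,p}_\circ(\Omega)$. The Fabes--Mendez--Mitrea result recalled in the proof of Proposition~\ref{prop:NeumannRegularity} then yields a unique $\pi\in\H^{s+1,p}_\circ(\Omega)$ solving $\Delta \pi=\div\ff$ with $\partial_\mathbf{n}\pi=\ff\cdot\mathbf{n}$ in the weak sense encoded by the pairing above, together with the estimate $\lVert\nabla\pi\rVert_{\H^{s,p}}\lesssim\lVert\ff\rVert_{\H^{s,p}}$. I will then set $\PP_\Omega\ff:=\ff-\nabla\pi$.

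Next I check the structure of this projection. By construction $\langle \PP_\Omega\ff,\nabla\varphi\rangle=0$ for all $\varphi\in\H^{1-s,p'}(\Omega)$. Using the identification of $\H^{s,p}_{0,\sigma}(\Omega)$ recalled above (divergence free with vanishing normal trace, for $-1+1/p<s<1/p$), this says exactly that $\PP_\Omega\ff\in\H^{s,p}_{0,\sigma}(\Omega)$. If $\ff=\nabla g$ with $g\in\H^{s+1,p}(\Omega)$, uniqueness modulo constants gives $\PP_\Omega\ff=0$. If instead $\ff\in\H^{s,p}_{0,\sigma}$, then the data vanish and $\PP_\Omega\ff=\ff$. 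Hence $\PP_\Omega$ is a bounded projection. The intersection $\H^{s,p}_{0,\sigma}\cap\nabla\H^{s+1,p}$ is trivial by the same uniqueness statement, which gives the topological direct sum. Consistency with the $\L^2$ orthogonal projection on $\L^2\cap\H^{s,p}$ follows from the fact that the same weak Neumann problem defines the Helmholtz projection in $\L^2$.

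For Besov spaces I apply real interpolation $(\cdot,\cdot)_{\theta,q}$ between two Sobolev pairs $(s_0,p)$ and $(s_1,p)$ lying in the admissible region with $s_0<s<s_1$; such a pair exists because, for fixed $p$, the admissible set of $s$ is an open interval. By \eqref{eq:RealInterpbesovbasicIntro} the interpolated space is $\B^{s}_{p,q}(\Omega,\CC^n)$. Since $\PP_\Omega$ is a bounded projection on both endpoint spaces, interpolation commutes with complemented subspaces. The ranges interpolate to $(\H^{s_0,p}_{0,\sigma},\H^{s_1,p}_{0,\sigma})_{\theta,q}=\B^{s,\sigma}_{p,q,0}(\Omega)$, using the stability of these scales away from the thresholds, and the kernels give $\nabla\B^{s+1}_{p,q}(\Omega)$, since $\nabla$ is an isomorphism from $\H^{\cdot+1,p}_\circ$ onto its range.

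The final claim follows from the region itself. For $s=0$ the conditions read $3/p-2-\varepsilon_{{}_\Omega}<0$ and $0<3/p-1+\varepsilon_{{}_\Omega}$, that is $3/(2+\varepsilon_{{}_\Omega})<p<3+3\varepsilon_{{}_\Omega}/(1-\varepsilon_{{}_\Omega})$, an interval that contains $[3/2,3]$. The main obstacle I expect is the careful weak formulation of the Neumann problem with rough data in negative regularity, namely identifying $\div\ff$ with the normal trace term as a single element of $\H^{s-1,p}_\circ(\Omega)$, together with justifying the identification of the range with $\H^{s,p}_{0,\sigma}$. Both points rest on the cited results of Mitrea--Monniaux.
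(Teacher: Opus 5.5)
Your proposal is correct and is essentially the paper's argument. The paper cites Mitrea--Monniaux \cite[Prop.~2.16]{MitreaMonniaux2008} for the $\H^{s,p}$ decomposition, and that result rests on exactly your construction: $\PP_\Omega\ff:=\ff-\nabla\pi$, with $\pi\in\H^{s+1,p}_\circ(\Omega)$ solving the weak Neumann problem with data $\div\ff\in\H^{s-1,p}_\circ(\Omega)$ via Fabes--Mendez--Mitrea. The paper then passes to $\B^{s}_{p,q}$ by real interpolation, as you do, and your check that $s=0$ is admissible precisely for $3/(2+\varepsilon_{{}_\Omega})<p<3/(1-\varepsilon_{{}_\Omega})$, an interval containing $[3/2,3]$, is also right.
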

 This latter proposition, implies that for all $p\in(1,\infty)$, $s\in(-1+1/p,1/p)$, satisfying the same conditions, \textit{i.e.} $(1/p,s)$ lying in the green area of Figure~\ref{Fig:RegularityNeuLapLipDomains}, that one has the duality property
\begin{align}
    \H^{s,p}_{0,\sigma}(\Omega) = (\H^{-s,p'}_{0,\sigma}(\Omega))'.\label{eq:DualitySolenoidal}
\end{align}

 We now introduce the Stokes--Dirichlet operator drawing inspiration from \cite[Sec.~4]{MitreaMonniaux2008}, \cite[Sec.~2]{MonniauxShen2018}. On bounded Lipschitz domains, we endow $\H^{1,2}_{0}(\Omega)$ and $\H^{1,2}_{0,\sigma}(\Omega)$ with the norm $\lVert \nabla \cdot \rVert_{\L^{2}}$. We set
\begin{enumerate}
    \item $\iota\,:\,\L^2_{\mathfrak{n},\sigma}(\Omega)\longrightarrow \L^2(\Omega,\CC^n)$ to be the canonical embedding;
    \item $\PP_{\Omega}\,:\,\L^2(\Omega,\CC^n)\longrightarrow\L^2_{\mathfrak{n},\sigma}(\Omega)$ to be the orthogonal Leray projection, we have that $\iota'=\PP_{\Omega}$ with $\PP_{\Omega}\iota=\mathbf{I}_{\L^2_{\mathfrak{n},\sigma}}$;
    \item $\iota_0\,:\,\H^{1,2}_{0,\sigma}(\Omega)\longrightarrow \H^{1,2}_{0}(\Omega,\CC^n)$ to be the canonical embedding, with the property $\iota_{|_{\H^{1,2}_{0,\sigma}}}=\iota_0$;
    \item $\PP_{1}\,:\,\H^{-1,2}(\Omega,\CC^n)\longrightarrow(\H^{1,2}_{0,\sigma}(\Omega))'$, the projection given by $\PP_1:=\iota_0'$.
\end{enumerate}

\begin{definition}The Stokes--Dirichlet operator is the unbounded $\L^2_{\mathfrak{n},\sigma}$-realization of the bounded operator $\AA_{\mathcal{D}}\,:\,\H^{1,2}_{0,\sigma}(\Omega)\longrightarrow (\H^{1,2}_{0,\sigma}(\Omega))'$, denoted by $(\D_2(\AA_{\mathcal{D}}),\AA_{\mathcal{D}})$, and induced by the sesquilinear form
\begin{align*}
    \mathfrak{a}_{\mathcal{D},\sigma}\,:\,\D(\mathfrak{a}_{\mathcal{D},\sigma})\times \D(\mathfrak{a}_{\mathcal{D},\sigma}) &\longrightarrow \CC\\
    (\uu ,\vv)\quad\qquad&\longmapsto \langle \nabla \iota \uu, \nabla \iota \vv\rangle_{\Omega} = \int_{\Omega} \nabla \uu (x) : \overline{\nabla \vv(x)}\,\d x,
\end{align*}
with form domain $\D(\mathfrak{a}_{\mathcal{D},\sigma}):=\H^{1,2}_{0,\sigma}(\Omega)$.
\end{definition}
In particular, \cite[Thm.~4.7]{MitreaMonniaux2008},  \cite[Prop.~1]{MonniauxShen2018}, we have a suitable standard description in the Hilbert space setting $(\H^{1,2}_{0,\sigma}(\Omega),\L^2_{\mathfrak{n},\sigma}(\Omega), (\H^{1,2}_{0,\sigma}(\Omega))')$ to solve uniquely the so called (resolvent) Stokes equations: provided $\mu\in(0,\pi)$, for any $\lambda\in\Sigma_\mu\cup\{0\}$ and any $\ff \in\L^2_{\mathfrak{n},\sigma}(\Omega)$, there exists a unique $\uu\in\H^{1,2}_{0,\sigma}(\Omega)$, and there exists $\mathfrak{p}\in\L^{2}(\Omega)$,
\begin{equation*}\tag{DS${}_\lambda$}\label{eq:DirStokesSystemBddLip}
    \left\{ \begin{array}{rllr}
         \lambda \uu - \Delta \uu +\nabla \mathfrak{p} &= \ff \text{, }&&\text{ in } \Omega\text{,}\\
        \div \uu &= 0\text{, } &&\text{ in } \Omega\text{,}\\
        \uu_{|_{\partial\Omega}} &=0\text{, } &&\text{ on } \partial\Omega\text{.}
    \end{array}
    \right.
\end{equation*}
called the Stokes--Dirichlet (resolvent) equations. It holds that
\begin{align*}
    \mathbb{A}_\mathcal{D}\uu = -\Delta \uu + \nabla \mathfrak{p},\qquad \text{ in }\H^{-1,2}(\Omega).
\end{align*}
Therefore, one has the description
\begin{align*}
    \D_2(\AA_\mathcal{D})=\{\, \uu\in\W^{1,2}_{0,\sigma}(\Omega)\,|\, \exists\, \mathfrak{p}\in\L^2(\Omega),\,-\Delta\uu+\nabla\mathfrak{p}\in\L^{2}_{\mathfrak{n},\sigma}(\Omega)\,\}.
\end{align*}
Now, in $\L^p$, since $\Omega$ is bounded, we can define
\begin{align*}
    \D_p(\AA_\mathcal{D}):=\{\, \uu\in\D_{2}(\AA_{\mathcal{D}})\cap\L^p_{\mathfrak{n},\sigma}(\Omega)\,|\, \mathbb{A}_\mathcal{D}\uu\in\L^{p}_{\mathfrak{n},\sigma}(\Omega)\,\},\quad \text{ if } p>2,
\end{align*}
and if $p<2$, provided $\mathbb{A}_{\mathcal{D}}$ is closable in $\L^{p}_{\mathfrak{n},\sigma}(\Omega)$, one sets the domain in $\L^p$ as the closure of $\AA_\mathcal{D}$, \textit{i.e.},
\begin{align*}
    \D_p(\AA_\mathcal{D})&:=\{\, \vv\in \L^p_{\mathfrak{n},\sigma}(\Omega)\,|\, \exists\,\ff\in\L^p_{\mathfrak{n},\sigma}(\Omega),\,\exists (\vv_{k})_{k\in\NN}\subset\D_{2}(\AA_{\mathcal{D}}),\, \AA_\mathcal{D}\vv_k\xrightarrow[k\rightarrow+\infty]{}\ff\,\text{ in }\,\L^p_{\mathfrak{n},\sigma}(\Omega)\},\\
    \AA_\mathcal{D}\uu &:= \lim_{k\rightarrow +\infty}  \AA_\mathcal{D}\uu_k  = \ff.
\end{align*}
Note that both definitions are consistent with the ones that we obtain when we are able to extend the resolvent $(\lambda\I+\mathbb{A}_{\mathcal{D}})^{-1}$ from $\L^{2}_{\mathfrak{n},\sigma}\cap \L^{p}_{\mathfrak{n},\sigma}(\Omega)$ to the whole $ \L^{p}_{\mathfrak{n},\sigma}(\Omega)$ by density.

\begin{theorem}\label{thm:StokesLipSobolev} Let $\Omega$ be a bounded Lipschitz domain of $\RR^n$. There exists $\varepsilon_{\Omega}\in(0,(n-1)/2n)$, such that for all $p\in(1,\infty)$ satisfying
\begin{align}\label{eq:CondtnLpStokesLip}
    \left| \frac{1}{p}-\frac{1}{2}\right|<\frac{1}{2n}+\varepsilon_{\Omega}
\end{align}
it holds that the Stokes operator $\mathbb{A}_\mathcal{D}$ is an invertible sectorial of angle $0$ in $\L^p_{\mathfrak{n},\sigma}(\Omega)$ and has bounded $\mathbf{H}^\infty(\Sigma_\theta)$-functional calculus for all $\theta\in(0,\pi)$.
\medbreak

\noindent Additionally, there exists $\delta_{{}_\Omega,p}>0$, such that for all $s\in(-1+1/p,1+\delta_{{}_\Omega,p})$,  $s\neq1/p$, one has an isomorphism
    \begin{align}
        \mathbb{A}_\mathcal{D}^{s/2}\,:\,\H^{s,p}_{0,\sigma}(\Omega)\longrightarrow\L^p_{\mathfrak{n},\sigma}(\Omega).\label{eq:IsomFracPowerStokes}
    \end{align}
In particular, when $n=3$, everything holds true for $p\in[3/2,3]\subset( \frac{3}{2+3\varepsilon_{{}_{\Omega}}},\frac{3}{1-3\varepsilon_{{}_{\Omega}}})$.
\end{theorem}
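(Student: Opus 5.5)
The plan is to assemble Theorem~\ref{thm:StokesLipSobolev} from known results in the literature on the Stokes operator in Lipschitz domains, following the same operator-theoretic scheme as in Propositions~\ref{prop:NeumannHinfty} and~\ref{prop:NeumannRegularity}.

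\textbf{The $\L^2$ theory.} The form $\mathfrak{a}_{\mathcal{D},\sigma}$ is symmetric and, by the Poincaré inequality on $\H^{1,2}_{0}(\Omega)$, strictly coercive. Hence $\AA_\mathcal{D}$ is an invertible, self-adjoint, nonnegative operator on $\L^2_{\mathfrak{n},\sigma}(\Omega)$. Self-adjointness gives sectoriality of angle $0$, and McIntosh's theorem gives a bounded $\mathbf{H}^\infty(\Sigma_\theta)$-calculus for every $\theta\in(0,\pi)$. Since $\D(\AA_\mathcal{D}^{1/2})=\H^{1,2}_{0,\sigma}(\Omega)$ is the form domain, BIP and complex interpolation of the solenoidal scales (Mitrea--Monniaux) identify $\D(\AA_\mathcal{D}^{s/2})=\H^{s,2}_{0,\sigma}(\Omega)$ for $s\in[0,1]$ with $s\neq 1/2$. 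Duality via \eqref{eq:DualitySolenoidal} extends this to $s\in(-1/2,0]$.

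\textbf{The $\L^p$ theory.} For $p$ satisfying \eqref{eq:CondtnLpStokesLip}, I would invoke the resolvent estimates of Shen and the subsequent $\L^p$ results for Lipschitz domains (Kunstmann--Weis, Tolksdorf). These give an invertible sectorial operator of angle $0$ in $\L^p_{\mathfrak{n},\sigma}(\Omega)$ that is consistent with the $\L^2$ operator. The bounded $\mathbf{H}^\infty$-calculus would follow as in Step~3 of Proposition~\ref{prop:NeumannHinfty}, either from Kunstmann--Weis's extrapolation via off-diagonal/Gaussian-type estimates or by citing \cite{Tolksdorf2018-1}. Invertibility comes from spectral stability and compactness of the resolvent, so no shift argument is needed.

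\textbf{Fractional powers.} The identification \eqref{eq:IsomFracPowerStokes} would proceed as in Proposition~\ref{prop:NeumannRegularity}. Combine the $\L^2$ identification with BIP on $\L^p$ and use Stein interpolation applied to $z\mapsto\AA_\mathcal{D}^{z/2}$. This yields $\D_p(\AA_\mathcal{D}^{s/2})=\H^{s,p}_{0,\sigma}(\Omega)$ for small $|s|$ with $p$ near $2$. Extend it to $s\in(-1+1/p,0)$ by duality, and to $s$ up to $1$ via the Mitrea--Monniaux result $\D_p(\AA_\mathcal{D}^{1/2})=\W^{1,p}_{0,\sigma}(\Omega)$ (the Lipschitz-domain Kato square root/Riesz transform estimate), valid in the same $p$-range.

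\textbf{The main obstacle.} The hard step is passing beyond $s=1$, to $s\in(1,1+\delta_{\Omega,p})$. For this I would use the Mitrea--Wright / Fabes--Kenig--Verchota regularity for the Dirichlet Stokes problem on Lipschitz domains. That result says $\uu\in\H^{1+\delta,p}$ whenever $-\Delta\uu+\nabla\mathfrak{p}\in\H^{-1+\delta,p}$, for small $\delta>0$ and $p$ near $2$. I would first combine it with the isomorphism $\AA_\mathcal{D}\colon\H^{1+\delta,p}_{0,\sigma}(\Omega)\to\AA_\mathcal{D}^{1/2-\delta/2}\L^p$, using the identification already obtained at negative order $-1+\delta$. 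I would then interpolate once more. Excluding $s=1/p$ is forced by the failure of interpolation of the solenoidal scales at that threshold.

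\textbf{The case $n=3$.} Condition \eqref{eq:CondtnLpStokesLip} reads $|1/p-1/2|<1/6+\varepsilon_\Omega$. This contains $[1/3,2/3]$, that is, $p\in[3/2,3]$, as claimed.
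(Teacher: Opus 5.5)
Up to $s\leqslant 1$ your plan matches the paper: Shen and Kunstmann--Weis for invertible sectoriality and the $\mathbf{H}^\infty$-calculus, then BIP, the square-root isomorphism at $s=1$, and duality. (The Stein-interpolation step near $p=2$ is superfluous once BIP and the square root are available.) For $s>1$ you take a different route, and as written its key step has a gap.

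\textbf{The negative-order identification you invoke was not obtained.} You use ``the identification already obtained at negative order $-1+\delta$''. Your duality step only produced $s\in(-1+1/p,0)$, and for small $\delta$ one has $-1+\delta<-1+1/p$ (e.g.\ $-1+\delta<-2/3$ at $p=3$). That order lies outside the range of \eqref{eq:DualitySolenoidal}, so nothing identifies $\AA_\mathcal{D}^{(1-\delta)/2}\L^p_{\mathfrak{n},\sigma}(\Omega)$ with a space $\H^{-1+\delta,p}_{0,\sigma}(\Omega)$.

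What you can actually get comes from dualizing the $\L^{p'}$-identification at $s=1-\delta\in(1/p',1]$. This gives $\AA_\mathcal{D}^{(1-\delta)/2}\L^p_{\mathfrak{n},\sigma}(\Omega)\simeq(\H^{1-\delta,p'}_{0,\sigma}(\Omega))'$. Since these test fields carry a full Dirichlet condition, this dual is essentially $\H^{-1+\delta,p}(\Omega)^n$ modulo gradients, not a solenoidal subspace.

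The Stokes regularity must then be used in this weak form. For each $F\in(\H^{1-\delta,p'}_{0,\sigma}(\Omega))'$ you need a unique $\uu\in\H^{1+\delta,p}_{0,\sigma}(\Omega)$ with $\int_\Omega\nabla\uu:\nabla\boldsymbol{\varphi}\,\d x=F(\boldsymbol{\varphi})$ for all $\boldsymbol{\varphi}\in\Ccinftydiv(\Omega)$. This follows via a Hahn--Banach extension of $F$ to $\H^{-1+\delta,p}(\Omega)^n$ plus a pressure. You must also check that this solution operator agrees with $\AA_\mathcal{D}^{-1}$ on $\L^2\cap\L^p_{\mathfrak{n},\sigma}(\Omega)$.

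\textbf{The $p$-range is too narrow.} You state the regularity only for $p$ near $2$, but the theorem requires $\delta_{{}_\Omega,p}>0$ for \emph{every} $p$ satisfying \eqref{eq:CondtnLpStokesLip}. In particular $p=3$ is exactly what \eqref{eq:SQRTStokesBesovLip} is used for later. You would have to verify that the well-posedness region you cite contains a point $(1/p,1+\delta)$ for each such $p$.

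\textbf{How the paper does it.} Using the elementary embedding $\W^{2,p}_{0,\sigma}(\Omega)\hookrightarrow\D_p(\AA_\mathcal{D})$, consider the identity map from $[\L^p_{\mathfrak{n},\sigma}(\Omega),\W^{2,p}_{0,\sigma}(\Omega)]_{s/2}=\H^{s,p}_{0,\sigma}(\Omega)$ to $[\L^p_{\mathfrak{n},\sigma}(\Omega),\D_p(\AA_\mathcal{D})]_{s/2}=\D_p(\AA_\mathcal{D}^{s/2})$. It is bounded for $s\in[0,2]$ and an isomorphism for $s\in[0,1]$. Sneiberg's stability theorem then makes it an isomorphism on $[0,1+\delta_{{}_\Omega,p})$, for every admissible $p$ at once.

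This argument is soft: it uses nothing beyond the square-root theorem and never touches negative-order spaces, but it says nothing about the size of $\delta_{{}_\Omega,p}$. Your route, once repaired, imports a deep layer-potential regularity theorem for the Stokes system. In exchange, it would tie $\delta_{{}_\Omega,p}$ to an explicit well-posedness region.
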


\begin{proof} \textbf{Step 1:} When $\delta_{{}_\Omega,p}=0$ the full result is already somehow known. Indeed by \cite[Thm.~1.1]{Shen2012} yields sectoriality of angle $0$ and invertibility, then \cite[Thm.~16]{KunstmannWeis2017} proved the bounded $\mathbf{H}^{\infty}$-functional calculus of the same angle in $\L^{p}_{\mathfrak{n},\sigma}(\Omega)$ for all $p$ satisfying \eqref{eq:CondtnLpStokesLip}. So that by invertibility and bounded $\mathbf{H}^{\infty}$-functional calculus, it has Bounded Imaginary Powers, and for all $\alpha\in[0,2]$, by Theorem~\ref{thm:BIP}, it holds
\begin{align}
    [\L^{p}_{\mathfrak{n},\sigma}(\Omega),\D_p(\AA_{\mathcal{D}})]_{\alpha/2} = \D_{p}(\mathbb{A}_{\mathcal{D}}^{\alpha/2}),\label{eq:CompInterpBIpStokes}
\end{align}
with the underlying isomorphism
\begin{align*}
    \mathbb{A}_{\mathcal{D}}^{\alpha/2}\,:\,[\L^{p}_{\mathfrak{n},\sigma}(\Omega),\D_p(\AA_{\mathcal{D}})]_{\alpha/2} \longrightarrow \L^p_{\mathfrak{n},\sigma}(\Omega).
\end{align*}
For the same $p$, it has been proved, \cite[Thm.~1.1]{Tolksdorf2018-1}, that
\begin{align*}
    \mathbb{A}_{\mathcal{D}}^{1/2}\,:\,\W^{1,p}_{0,\sigma}(\Omega) \longrightarrow \L^p_{\mathfrak{n},\sigma}(\Omega) 
\end{align*}
is an isomorphism. Therefore, by complex interpolation and BIP, Theorem~\ref{thm:BIP}, for all $s\in[0,1]$, $s\neq1/p$, and the interpolation results \cite[Thm.~2.12]{MitreaMonniaux2008}, \cite[Prop.~4.23]{BreitGaudin2025},
\begin{align}
    [\L^{p}_{\mathfrak{n},\sigma}(\Omega),\D_p(\AA_{\mathcal{D}})]_{s/2} = [\L^{p}_{\mathfrak{n},\sigma}(\Omega),\D_p(\AA_{\mathcal{D}}^{1/2})]_{s}= [\L^{p}_{\mathfrak{n},\sigma}(\Omega),\W^{1,p}_{0,\sigma}(\Omega)]_{s} = \H^{s,p}_{0,\sigma}(\Omega) \label{eq:CompInterpDomainStokesLip}
\end{align}
with the underlying isomorphism
\begin{align*}
    \mathbb{A}_{\mathcal{D}}^{s/2}\,:\,\H^{s,p}_{0,\sigma}(\Omega) \longrightarrow \L^p_{\mathfrak{n},\sigma}(\Omega).
\end{align*}
By duality, \eqref{eq:DualitySolenoidal}, the isomorphism remains valid for all $s\in(-1+1/p,0]$, thus has been obtained for all $s\in(-1+1/p,1]$.

\textbf{Step 2:} We prove that we can actually go slightly beyond regularity $s=1$ for the identification of fractional powers. By \eqref{eq:CompInterpDomainStokesLip}, for $s\in[0,1]$, $s\neq 1/p$ one has
\begin{align*}
    [\L^{p}_{\mathfrak{n},\sigma}(\Omega),\D_p(\AA_{\mathcal{D}})]_{s/2}  = \H^{s,p}_{0,\sigma}(\Omega) = [\L^{p}_{\mathfrak{n},\sigma}(\Omega),\W^{2,p}_{0,\sigma}(\Omega)]_{s/2},
\end{align*}
with the underlying canonical isomorphism. One has the continuous embedding $\W^{2,p}_{0,\sigma}(\Omega)\hookrightarrow \D_{p}(\AA_{\mathcal{D}})$, \cite[Lem.~2.5]{Tolksdorf2018-1}, hence, the identity map
\begin{itemize}
    \item $\I\,:\,[\L^{p}_{\mathfrak{n},\sigma}(\Omega),\W^{2,p}_{0,\sigma}(\Omega)]_{s/2} \longrightarrow [\L^{p}_{\mathfrak{n},\sigma}(\Omega),\D_p(\AA_{\mathcal{D}})]_{s/2}$ is well-defined and bounded for all $s\in[0,2]$;
    \item $\I\,:\,[\L^{p}_{\mathfrak{n},\sigma}(\Omega),\W^{2,p}_{0,\sigma}(\Omega)]_{s/2} \longrightarrow [\L^{p}_{\mathfrak{n},\sigma}(\Omega),\D_p(\AA_{\mathcal{D}})]_{s/2}$ is an isomorphism for all $s\in[0,1]$.
\end{itemize}
By Sneiberg's stability theorem, see \cite[Thm~1.3.24]{EgertPhDThesis2015}, there exists $\delta_{{}_{\Omega},p}>0$, such that
\begin{align*}
    \I\,:\,[\L^{p}_{\mathfrak{n},\sigma}(\Omega),\W^{2,p}_{0,\sigma}(\Omega)]_{s/2} \longrightarrow [\L^{p}_{\mathfrak{n},\sigma}(\Omega),\D_p(\AA_{\mathcal{D}})]_{s/2}
\end{align*}
remains an isomorphism for all $s\in[0,1+\delta_{{}_{\Omega},p})$. Thus, by \eqref{eq:CompInterpBIpStokes} combined with the fact that  $[\L^{p}_{\mathfrak{n},\sigma},\W^{2,p}_{0,\sigma}]_{s/2}= \H^{s,p}_{0,\sigma}$, we deduce the isomorphism
\begin{align*}
    \I\,:\,\D_p(\AA_{\mathcal{D}}^{s/2}) \longrightarrow \H^{s,p}_{0,\sigma}(\Omega),\,\quad s\in[1,1+\delta_{{}_\Omega,p}).
\end{align*}
The latter translates as \eqref{eq:IsomFracPowerStokes} since $\mathbb{A}_{\mathcal{D}}$ is invertible. This ends the proof.
\end{proof}

\begin{corollary}\label{cor:SrtPropStokesW-1p} Let $\Omega$ be a bounded Lipschitz domain of $\RR^n$. For all $p\in (1,\infty)$ that satisfies \eqref{eq:CondtnLpStokesLip}, the linear operator
\begin{align*}
     \AA_{\mathcal{D}}^{-1/2}\PP_{\Omega}\div\,:\,\L^p(\Omega,\CC^{n\times n})\longrightarrow\L^p_{\mathfrak{n},\sigma}(\Omega)
\end{align*}
is well-defined and bounded.    
\end{corollary}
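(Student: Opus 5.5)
The plan is a duality argument built on Theorem~\ref{thm:StokesLipSobolev} with $s=1$ and the exponent $p'$. Condition \eqref{eq:CondtnLpStokesLip} is symmetric under $p\mapsto p'$, so Theorem~\ref{thm:StokesLipSobolev} applies to both $p$ and $p'$. It gives that $\AA_{\mathcal{D}}$ is invertible and sectorial on $\L^{p'}_{\mathfrak{n},\sigma}(\Omega)$ and that
\begin{align*}
    \AA_{\mathcal{D}}^{1/2}\,:\,\W^{1,p'}_{0,\sigma}(\Omega)\longrightarrow\L^{p'}_{\mathfrak{n},\sigma}(\Omega)
\end{align*}
is an isomorphism. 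Equivalently, $\AA_{\mathcal{D}}^{-1/2}$ maps $\L^{p'}_{\mathfrak{n},\sigma}(\Omega)$ boundedly into $\W^{1,p'}_{0,\sigma}(\Omega)$, which yields
\begin{align*}
    \lVert \nabla \AA_{\mathcal{D}}^{-1/2}\boldsymbol{\varphi}\rVert_{\L^{p'}(\Omega)}\lesssim \lVert\boldsymbol{\varphi}\rVert_{\L^{p'}(\Omega)},\qquad \boldsymbol{\varphi}\in\L^{p'}_{\mathfrak{n},\sigma}(\Omega).
\end{align*}

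First, I define the operator on a dense class. For $F\in\Ccinfty(\Omega,\CC^{n\times n})$, $\div F$ is smooth with compact support, and $\PP_\Omega\div F\in\L^2_{\mathfrak{n},\sigma}(\Omega)$ is well defined by Proposition~\ref{prop:HelmholtzDecomp} (the case $p=2$, $s=0$). Hence $\AA_{\mathcal{D}}^{-1/2}\PP_\Omega\div F$ is a well-defined element of $\L^2_{\mathfrak{n},\sigma}(\Omega)$. Next, for $\boldsymbol{\varphi}\in\L^{2}_{\mathfrak{n},\sigma}\cap\L^{p'}_{\mathfrak{n},\sigma}(\Omega)$, I use three facts: self-adjointness of $\AA_{\mathcal{D}}^{-1/2}$ in $\L^2_{\mathfrak{n},\sigma}$, $\PP_\Omega=\iota'$, and the fact that $\AA_{\mathcal{D}}^{-1/2}\boldsymbol{\varphi}\in\H^{1,2}_{0,\sigma}(\Omega)$ vanishes on $\partial\Omega$. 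Integrating by parts then gives
\begin{align*}
    \big\langle \AA_{\mathcal{D}}^{-1/2}\PP_\Omega\div F,\,\boldsymbol{\varphi}\big\rangle_\Omega = \langle \div F,\,\AA_{\mathcal{D}}^{-1/2}\boldsymbol{\varphi}\rangle_\Omega = -\langle F,\,\nabla\AA_{\mathcal{D}}^{-1/2}\boldsymbol{\varphi}\rangle_\Omega .
\end{align*}
I then apply H\"older's inequality together with the bound above, using consistency of the $\L^2$ and $\L^{p'}$ realizations of the fractional powers (these are built from consistent resolvents). This bounds the modulus by $\lVert F\rVert_{\L^p}\lVert\boldsymbol{\varphi}\rVert_{\L^{p'}}$.

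The final step is to pass from this pairing estimate to an $\L^p$ bound. The duality \eqref{eq:DualitySolenoidal} with $s=0$ gives $\L^p_{\mathfrak{n},\sigma}=(\L^{p'}_{\mathfrak{n},\sigma})'$, which is valid here because $(1/p,0)$ lies in the admissible region of Proposition~\ref{prop:HelmholtzDecomp} for such $p$. Combined with density of $\L^2_{\mathfrak{n},\sigma}\cap\L^{p'}_{\mathfrak{n},\sigma}$ in $\L^{p'}_{\mathfrak{n},\sigma}$, this yields
\begin{align*}
    \lVert \AA_{\mathcal{D}}^{-1/2}\PP_\Omega\div F\rVert_{\L^p}\lesssim\lVert F\rVert_{\L^p}.
\end{align*}
The operator then extends by density of $\Ccinfty$ in $\L^p$.

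The main obstacle I anticipate is this bookkeeping step: checking that the $\L^2$ and $\L^{p'}$ fractional powers agree on the intersection, and that the solenoidal duality holds for the full range \eqref{eq:CondtnLpStokesLip}. Both follow from the resolvent consistency and the Mitrea--Monniaux range, which contains the $\L^p$ range of the Stokes theory.
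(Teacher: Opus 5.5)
Your proposal is correct and follows essentially the paper's route. Both arguments combine three ingredients: the bound $\nabla\AA_{\mathcal{D}}^{-1/2}:\L^{p'}_{\mathfrak{n},\sigma}(\Omega)\to\L^{p'}(\Omega,\CC^{n\times n})$ from Theorem~\ref{thm:StokesLipSobolev}, the duality identity $\langle \AA_{\mathcal{D}}^{-1/2}\PP_\Omega\div F,\boldsymbol{\varphi}\rangle_\Omega=-\langle F,\nabla\AA_{\mathcal{D}}^{-1/2}\boldsymbol{\varphi}\rangle_\Omega$ obtained via $\PP_\Omega=\iota'$, and extension by density. The only difference is that you test against $\L^2_{\mathfrak{n},\sigma}\cap\L^{p'}_{\mathfrak{n},\sigma}(\Omega)$ rather than $\Ccinftydiv(\Omega)$, and you spell out the resolvent-consistency and solenoidal-duality bookkeeping that the paper leaves implicit.
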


\begin{proof}By Theorem~\ref{thm:StokesLipSobolev} and Poincaré's inequality, $\nabla\AA_{\mathcal{D}}^{-1/2}$, as a linear operator from $\L^{p'}_{\mathfrak{n},\sigma}(\Omega)$ to $\L^{p'}(\Omega,\CC^{n\times n})$, is well-defined and bounded. Writing $=\nabla\AA_{\mathcal{D}}^{-1/2}=\nabla \iota\AA_{\mathcal{D}}^{-1/2}$ where $\iota'=\PP_{\Omega}$, the result holds by duality. Indeed, writing for all $\uu\in\Ccinftydiv(\Omega)$, all $\mathbf{F}\in\Ccinfty(\Omega,\CC^{n\times n})$
\begin{align*}
    \langle \mathbf{F}, \nabla\AA^{-1/2}_\mathcal{D}\uu\rangle_{\Omega} = \langle \mathbf{F}, \nabla \iota\AA^{-1/2}_\mathcal{D}\uu\rangle_{\Omega} = \langle -\div( \mathbf{F}), \iota\AA^{-1/2}_\mathcal{D}\uu\rangle_{\Omega} = \langle -\AA^{-1/2}_\mathcal{D}\PP_\Omega\div( \mathbf{F}), \uu\rangle_{\Omega}, 
\end{align*}
one can then extends $\AA_{\mathcal{D}}^{-1/2}\PP_{\Omega}\div$ to the whole $\L^p(\Omega,\CC^{n\times n})$, by strong density of $\Ccinfty(\Omega,\CC^{n\times n})$ in $\L^p(\Omega,\CC^{n\times n})$.
\end{proof}

Here is the Besov spaces counterpart of the result Theorem~\ref{thm:StokesLipSobolev}.

\begin{proposition}\label{prop:StokesBesovLip}Let $\Omega$ be a bounded Lipschitz domain of $\RR^n$. Let $\varepsilon_{{}_\Omega},\delta_{{}_\Omega,p}>0$ be given by Theorem~\ref{thm:StokesLipSobolev}. Let $p\in(1,\infty)$, $s\in(-1+1/p,1/p)$ such that \eqref{eq:CondtnLpStokesLip} is satisfied. Let $q\in[1,\infty]$. Then the following assertions hold
\begin{enumerate}
    \item The Stokes operator is an invertible sectorial operator of angle $0$ in $\B^{s,\sigma}_{p,q,0}(\Omega)$, and it is densely defined whenever $q<\infty$;
    \item For all $\theta\in(0,1)$, such that $s+2\theta< 1+{\delta_{{}_\Omega,p}}$, $s+2\theta\neq1/p$, for all $r\in[1,\infty]$, it holds
    \begin{align*}
        (\B^{s,\sigma}_{p,q,0}(\Omega),\D^{s}_{p,q}(\AA_\mathcal{D}))_{\theta,r} = \B^{s+2\theta,\sigma}_{p,r,0}(\Omega)
    \end{align*}
    with equivalence of norms;
    \item Setting $I_p^{\sigma}:=(-1+1/p,1+\delta_{{}_\Omega,p})\setminus\{1/p\}$, for all $\tau,\beta\in I_p^{\sigma}$ such that $\tau+\beta \in I_p^{\sigma}$,
    \begin{align*}
        \AA_\mathcal{D}^{\beta/2}\,:\,\B^{\tau+\beta,\sigma}_{p,q,0}(\Omega)\longrightarrow\B^{\tau,\sigma}_{p,q,0}(\Omega)
    \end{align*}
    is an isomorphism. In particular, one has the isomorphism
    \begin{align}
         \AA_\mathcal{D}^{1/2}\,:\,\B^{1,\sigma}_{p,q,0}(\Omega)\longrightarrow\B^{0,\sigma}_{p,q,0}(\Omega).\label{eq:SQRTStokesBesovLip}
    \end{align}
\end{enumerate}
\end{proposition}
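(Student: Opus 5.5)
The strategy is to transfer Theorem~\ref{thm:StokesLipSobolev} from the Sobolev scale $\H^{s,p}_{0,\sigma}(\Omega)$ to the Besov scale by real interpolation, as was done for Proposition~\ref{prop:NeumannBesov}. Fix $p$ satisfying \eqref{eq:CondtnLpStokesLip}. By Theorem~\ref{thm:StokesLipSobolev}, $\AA_\mathcal{D}$ is invertible, sectorial of angle $0$, with bounded $\mathbf{H}^\infty$-calculus in $\L^p_{\mathfrak{n},\sigma}(\Omega)$. Moreover, $\AA_\mathcal{D}^{s/2}:\H^{s,p}_{0,\sigma}(\Omega)\to\L^p_{\mathfrak{n},\sigma}(\Omega)$ is an isomorphism for $s\in I_p^\sigma$.

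\textbf{Step 1 (the realizations on $\H^{s,p}_{0,\sigma}$).} For $s\in(-1+1/p,1/p)$ I set $\X_s:=\H^{s,p}_{0,\sigma}(\Omega)=\AA_\mathcal{D}^{-s/2}\L^p_{\mathfrak{n},\sigma}(\Omega)$, understood inside the universal space $\mathcal{U}_{\AA_\mathcal{D}}$, so that negative $s$ is covered via the duality \eqref{eq:DualitySolenoidal}. On $\X_s$ the Stokes operator is similar, through $\AA_\mathcal{D}^{s/2}$, to its $\L^p$ realization. It is therefore invertible and $0$-sectorial there, with domain $\AA_\mathcal{D}^{-1-s/2}\L^p_{\mathfrak{n},\sigma}$. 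The resolvents commute with $\AA_\mathcal{D}^{s/2}$, so these realizations are consistent on the scale, and the $\mathbf{H}^\infty$-calculus carries over as well. For $s_0<s<s_1$ in this range, write $s=(1-\vartheta)s_0+\vartheta s_1$. The standard identity $(\AA^{-\alpha_0}\X,\AA^{-\alpha_1}\X)_{\vartheta,q}=(\X,\D(\AA^{\alpha_1-\alpha_0}))_{\vartheta,q}$, shifted by $\AA^{-\alpha_0}$, combined with \eqref{eq:RealInterpbesovbasicIntro} for the solenoidal scale (\cite[Thm.~2.12]{MitreaMonniaux2008}, \cite[Prop.~4.23]{BreitGaudin2025}), identifies
\[
(\X_{s_0},\X_{s_1})_{\vartheta,q}=\B^{s,\sigma}_{p,q,0}(\Omega).
\]

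\textbf{Step 2 (assertion (i)).} Interpolating the uniform resolvent bounds $\lVert\lambda(\lambda\I+\AA_\mathcal{D})^{-1}\rVert_{\X_{s_j}\to\X_{s_j}}\leqslant C_\mu$, $j=0,1$, gives sectoriality of angle $0$ on $\B^{s,\sigma}_{p,q,0}(\Omega)$. The same argument applied to $\AA_\mathcal{D}^{-1}$ gives invertibility. Density of the domain for $q<\infty$ holds because $\X_{s_0}\cap\X_{s_1}$ is dense in the real interpolation space when $q<\infty$, and it contains $\D(\AA_\mathcal{D})$ of the finer space.

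\textbf{Step 3 (assertions (ii) and (iii)).} Using the similarity of Step 1, I identify
\[
\D^s_{p,q}(\AA_\mathcal{D})=\AA_\mathcal{D}^{-1}\B^{s,\sigma}_{p,q,0}=(\X_{s_0+2},\X_{s_1+2})_{\vartheta,q}.
\]
Here I use $\X_{t}:=\AA_\mathcal{D}^{-t/2}\L^p_{\mathfrak{n},\sigma}$ for all $t$, which equals $\H^{t,p}_{0,\sigma}$ only when $t\in I_p^\sigma$. The reiteration theorem then gives $(\B^{s,\sigma}_{p,q,0},\D^{s}_{p,q}(\AA_\mathcal{D}))_{\theta,r}=(\X_{t_0},\X_{t_1})_{\eta,r}$ for any $t_0<s+2\theta<t_1$ with $\eta$ fixed by $s+2\theta=(1-\eta)t_0+\eta t_1$. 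I choose $t_0,t_1$ in $I_p^\sigma$ on the same side of $1/p$ as $s+2\theta$, which is possible since $s+2\theta<1+\delta_{{}_\Omega,p}$ and $s+2\theta\neq1/p$; then Step 1 gives $\B^{s+2\theta,\sigma}_{p,r,0}$. For (iii), $\AA_\mathcal{D}^{\beta/2}$ is an isomorphism $\X_{t+\beta}\to\X_t$ for every $t$, hence between the interpolated spaces. Choosing $t_0<\tau<t_1$ with $t_j,t_j+\beta\in I_p^\sigma$, and on the correct side of $1/p$, gives the Besov isomorphism; \eqref{eq:SQRTStokesBesovLip} is the case $\tau=0$, $\beta=1$.

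\textbf{Main obstacle.} The delicate point is the threshold $1/p$, where the solenoidal scale is not stable under interpolation. One must interpolate only within a single component of $I_p^\sigma\setminus\{1/p\}$. In particular, when $s<1/p<s+2\theta$, the identification of $(\B^{s,\sigma},\D^s(\AA_\mathcal{D}))_{\theta,r}$ must go through reiteration with endpoints above $1/p$, rather than directly from $s$ and $s+2$. Since the abstract spaces $\X_t$ form an exact interpolation scale regardless of $1/p$, this reiteration is legitimate. It is the concrete identification $\X_t=\H^{t,p}_{0,\sigma}$ that requires $t\neq 1/p$.
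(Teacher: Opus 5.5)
Your proposal is correct and follows the same route as the paper, whose entire proof is the remark that everything follows by real interpolation applied to Theorem~\ref{thm:StokesLipSobolev}. You supply the details the paper leaves implicit. In particular, you reiterate inside the abstract scale $\AA_\mathcal{D}^{-t/2}\L^p_{\mathfrak{n},\sigma}(\Omega)$, which is what legitimately handles the threshold $1/p$ and the fact that $\D^{s}_{p,q}(\AA_\mathcal{D})$ is not itself a solenoidal Besov space.
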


\begin{proof} Everything is a direct consequence of  of  real interpolation applied to Theorem~\ref{thm:StokesLipSobolev}.
\end{proof}

The square-root property \eqref{eq:SQRTStokesBesovLip}, allowed by the existence of $\delta_{{}_\Omega,p}>0$ in Theorem~\ref{thm:StokesLipSobolev}, is the corner stone for our analysis of the Stokes system in Besov spaces over bounded Lipschitz domains, Proposition~\ref{Prop:SolutionOperatorBoussinesqBesov}.

\subsection{The parabolic maximal regularity estimates.}\label{Sec:MaxGeg}

We now, specify the regularity-in-time results we are going to use for the linear Neumann Heat and Stokes evolutionary equations, the latter being subject to no-slip boundary condition.

These evolution problems, are respectively represented by the set of equations
\begin{equation}\tag{NH}\label{eq:NeumanHeatEq}
    \left\{\begin{aligned}
        \partial_t \theta - \kappa\Delta \theta  & = f & \quad & \text{ in $(0,T)\times\Omega$}, \\
        \partial_\mathbf{n}\theta & =0&  &  \text{ on  $(0,T)\times\partial \Omega$}, \\
    \theta (0) & = \theta_0, &  &
    \end{aligned}\right.
\end{equation}
\noindent and
\begin{equation}\tag{SD}\label{eq:StokesDirichlet}
    \left\{\begin{aligned}
        \partial_t \uu - \nu\Delta \uu + \nabla \mathfrak{p}  & =  \ff & \quad & \text{in $(0,T)\times\Omega$}, \\
        \div \uu & =  0 & \quad & \text{in $(0,T)\times\Omega$}, \\
        \uu & =0&  &  \text{on $(0,T)\times\partial \Omega$}, \\
        \uu (0) & = \uu_0. &  &
    \end{aligned}\right.
\end{equation}
are the reference systems that are going to be investigated. We recall for the reader that the norm estimates induced by the following Propositions~\ref{thm:LqMaxRegNeumannBesov}~and~\ref{thm:LqMaxRegStokesBesov}, are actually estimates on the solution operator
\begin{align*}
    (x_0,\,g)\longmapsto\left[e^{-tA}x_0+\int_{0}^{t}e^{-(t-s)A}\,g(s)\,\d s\right] 
\end{align*}
where $A$ is either the Neumann Laplacian $-\kappa\Delta_\mathcal{N}$ or the Stokes operator $\nu\AA_\mathcal{D}$.

\begin{proposition}\label{thm:LqMaxRegNeumannBesov} Let  $\Omega\subset\RR^n$ be a bounded Lipschitz domain. Let $p\in(1,\infty)$, $r\in[1,\infty]$, $q\in(1,\infty)\cup\{r\}$, $s\in(-2+1/p,-1+1/p)$, such that
\begin{align*}
    p> \frac{3}{2-\varepsilon_{{}_{\Omega}}}\quad  \text{ and } \quad s+2<\frac{3}{p}+\varepsilon_{{}_{\Omega}},
\end{align*} 
where $\varepsilon_{{}_{\Omega}}$ is given by Proposition~\ref{prop:NeumannRegularity}.
\medbreak
Let $T\in(0,\infty)$. For all $f\in \L^q(0,T;\B^{s}_{p,r,0}(\Omega))$, all $\theta_0\in \B^{s+2-2/q}_{p,q,0}(\Omega)$, the problem \eqref{eq:NeumanHeatEq} admits a unique mild solution $\theta\in \C^0_{ub}([0,T];\B^{s+2-2/q}_{p,q,0}(\Omega))$ such that $\theta \in \L^q(0,T;\B^{s+2}_{p,r}(\Omega))$ with the estimates
\begin{align}\label{eq:maxRegBspq}
    \kappa^{1-1/q}\lVert \theta \rVert_{\L^\infty(0,T;\B^{s+2 -2/q}_{p,q}(\Omega))} + \kappa\lVert  \theta&\rVert_{\L^q(0,T;\B^{s+2}_{p,r}(\Omega))}\\
    &\lesssim_{p,q,s,\Omega} \lVert f\rVert_{\L^q(0,T;\B^{s}_{p,r}(\Omega))} + \kappa^{1-1/q}\lVert \theta_0\rVert_{\B^{ s+2-2/q}_{p,q}(\Omega)}\nonumber\\&\qquad\qquad\qquad\qquad\qquad\qquad\qquad+  \kappa T^{1/q}|\langle \theta_0, \mathbf{1}\rangle_{\Omega}|\nonumber\text{.}
\end{align}
Furthermore, if  $f\in\L^q(\RR_+,\B^{s}_{p,r,\circ}(\Omega))$ and $\theta_0\in \B^{s+2-2/q}_{p,q,\circ}(\Omega)$ then $\theta\in \C^0_{0}(\overline{\RR_+};\B^{s+2-2/q}_{p,q,\circ}(\Omega))\cap\L^q(\RR_+;\B^{s+2}_{p,r,\circ}(\Omega))$ and \eqref{eq:maxRegBspq} remains valid with a uniform constant with respect to $T=\infty$. Furthermore, there exists $c>0$ such that
\begin{align*}
    \kappa^{1-1/q}\lVert e^{ct} \theta \rVert_{\L^\infty(\RR_+;\B^{s+2 -2/q}_{p,q}(\Omega))} + \kappa\lVert e^{ct} \nabla^2\theta&\rVert_{\L^q(\RR_+;\B^{s}_{p,r}(\Omega))} \\&\lesssim_{p,q,s,\Omega} \lVert  f\rVert_{\L^q(\RR_+;\B^{s}_{p,r}(\Omega))} + \kappa^{1-1/q}\lVert \theta_0\rVert_{\B^{ s+2-2/q}_{p,q}(\Omega)}\text{.}
\end{align*}
\end{proposition}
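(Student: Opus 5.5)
The plan is to reduce the statement to the Da Prato--Grisvard Theorem~\ref{thm:DaPratoGrisvard}, applied to the invertible operator $A:=-\kappa\Delta_\mathcal{N}$ on the mean-free ground space $\X:=\B^{s}_{p,r,\circ}(\Omega)$. The mean of the solution is treated separately. First, the scaling in $\kappa$: setting $\tilde\theta(t):=\theta(t/\kappa)$ turns \eqref{eq:NeumanHeatEq} into the problem with $\kappa=1$ and source $\kappa^{-1}f(\cdot/\kappa)$ on $(0,\kappa T)$. Tracking $\L^q$ norms under this change of time variable produces exactly the powers $\kappa^{1-1/q}$ and $\kappa$ in \eqref{eq:maxRegBspq}. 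So it suffices to treat $\kappa=1$, with constants independent of $T$.

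\textbf{Step 1: splitting the mean.} Write $\theta=\P_\circ\theta+(\theta)_\Omega\mathds{1}_\Omega$, and similarly for $f$ and $\theta_0$. Since $s>-2+1/p$, the pairing $\langle\cdot,\mathbf{1}\rangle_\Omega$ is bounded on $\B^{s}_{p,r,0}(\Omega)$, so $\P_\circ$ is bounded there. By \eqref{eq:FuncCalcIdentityMeanFreeNeumann} (extended by density, as in Step~4 of the proof of Proposition~\ref{prop:NeumannRegularity}), the semigroup commutes with $\P_\circ$ and fixes constants. Hence the mean part solves the ODE $\frac{\d}{\d t}(\theta)_\Omega=(f)_\Omega$, so
\[
(\theta)_\Omega(t)=(\theta_0)_\Omega+\int_0^t(f)_\Omega .
\]
Its $\L^\infty$ norm is controlled by $|\langle\theta_0,\mathbf{1}\rangle_\Omega|+T^{1-1/q}\lVert f\rVert_{\L^q}$. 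Its $\L^q(0,T;\B^{s+2}_{p,r})$ norm is controlled by the term $T^{1/q}|\langle\theta_0,\mathbf{1}\rangle_\Omega|$ plus a $T$-dependent multiple of $\lVert f\rVert$; for finite $T$ such constants are allowed. The constant $\mathds{1}_\Omega$ belongs to $\B^{s+2}_{p,r}$, which is why the domain contains it. In the fully mean-free case this whole part vanishes.

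\textbf{Step 2: the mean-free part.} By Proposition~\ref{prop:NeumannBesov}, $-\Delta_\mathcal{N}$ is invertible and $0$-sectorial on $\B^{s}_{p,q',\circ}(\Omega)$ for any $q'$. Its domain is $\B^{s+2}_{p,q',\circ}(\Omega)$, because $s+2<3/p+\varepsilon_{{}_\Omega}$. The hypothesis $p>3/(2-\varepsilon_{{}_\Omega})$ is what guarantees the lower constraint $s>3/p-3-\varepsilon_{{}_\Omega}$ is compatible with $s\in(-2+1/p,-1+1/p)$, so the parameter window is nonempty.

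The key trick is to choose an auxiliary ground space so that $\B^{s}_{p,r,\circ}$ is itself a real interpolation space. Take $\X_0:=\B^{s-2\eta}_{p,1,\circ}(\Omega)$ with $\eta>0$ small enough that $s-2\eta$ stays in the admissible range. Item \textit{(iii)} of Proposition~\ref{prop:NeumannBesov} then gives
\[
(\X_0,\D(A))_{\eta,r}=\B^{s}_{p,r,\circ}
\quad\text{and}\quad
(\X_0,\D(A))_{\eta_q,q}=\B^{s+2-2/q}_{p,q,\circ},\qquad \eta_q=1+\eta-1/q .
\]
When $\eta_q\geqslant1$, this is read through the $\D(A^2)$ convention of the Remark, and the identification again follows from reiteration combined with item \textit{(iv)}. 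Theorem~\ref{thm:DaPratoGrisvard} on $\X_0$, with invertible $A$, then yields existence, uniqueness, continuity and the estimate uniformly in $T$, including $T=\infty$. It also gives the exponentially weighted estimate with some $c>0$, and $\lVert\nabla^2\theta\rVert_{\B^{s}_{p,r}}\lesssim\lVert\theta\rVert_{\B^{s+2}_{p,r}}$. The property $\C^0_0$ at infinity follows from the exponential decay.

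\textbf{Main obstacle.} The delicate step is the bookkeeping of indices. One must check that every intermediate space used ($s-2\eta$, $s+2$, $s+2-2/q$) lies in the range where Propositions~\ref{prop:NeumannRegularity} and~\ref{prop:NeumannBesov} identify domains and interpolation spaces. In addition, for $s+2-2/q$ crossing $-1+1/p$ or $1/p$, the spaces with subscript $0$ must be matched with the plain ones. Finally, the mild solution must be shown to be consistent across these spaces, which I would handle by density of $\L^2$ data and the $\L^2$ semigroup.
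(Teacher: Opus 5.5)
Your proposal is essentially the paper's proof. Both apply Da Prato--Grisvard to the invertible operator $-\kappa\Delta_\mathcal{N}$ on an auxiliary mean-free ground space $\B^{\tilde s}_{p,1,\circ}(\Omega)$, with $\tilde s=s-2\eta$ slightly below $s$, so that $\B^{s}_{p,r,\circ}(\Omega)$ and $\B^{s+2-2/q}_{p,q,\circ}(\Omega)$ arise as the real interpolation spaces of indices $\eta$ and $1+\eta-1/q$; the mean is then removed via $\P_\circ$ and linearity, and your rescaling of time by $\kappa$ is just an explicit version of the paper's working with $-\kappa\Delta_\mathcal{N}$ directly.

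Two small remarks. The paper splits off only the mean of $\theta_0$, so your separate treatment of the mean of $f$ is more careful than the written proof, and it rightly produces a $T$-dependent constant: with $\theta_0=0$ and $f=\mathds{1}_\Omega$ one gets $\theta(t)=t\,\mathds{1}_\Omega$, so no $T$-uniform bound is possible. However, your reading of the hypothesis $p>3/(2-\varepsilon_{{}_\Omega})$ as ensuring a nonempty parameter window misses what is actually needed, namely that the given $s$ itself lies in the range of Proposition~\ref{prop:NeumannBesov}; the paper's proof also merely asserts this.
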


\begin{proof}For the case $T\in(0,\infty]$ and $\theta_0\in\B^{s+2-2/q}_{p,q,\circ}(\Omega)$, thanks to our assumptions on $p$ and $s$, by Proposition~\ref{prop:NeumannBesov}, one can find $-2+1/p<\tilde{s}<s$ such that
\begin{align*}
    \D_{p,1}^{\tilde{s}}(-\kappa\Delta_{\mathcal{N}}) = \B^{\tilde{s}+2}_{p,1,\circ}(\Omega)
\end{align*}
on $\X = \B^{\tilde{s}}_{p,1,\circ}(\Omega)$,  and $\eta_0\in(0,1)$ close to $0$ such that
\begin{align*}
    \B^s_{p,q,\circ}(\Omega)& = (\B^{\tilde{s}+2}_{p,1,\circ}(\Omega),\D_{p,1}^{\tilde{s}}(-\kappa\Delta_{\mathcal{N}}))_{\eta_0,q},\quad\\&\text{ and }\quad\B^{s+2-2/q}_{p,q,\circ}(\Omega) = (\B^{\tilde{s}+2}_{p,1,\circ}(\Omega),\D_{p,1}^{\tilde{s}}(-\kappa\Delta_{\mathcal{N}}))_{1+\eta_0-1/q,q},
\end{align*}
Therefore, the result is a direct consequence of the Da Prato--Grisvard Theorem, Theorem~\ref{thm:DaPratoGrisvard}, when $\theta_0\in\B^{s+2-2/q}_{p,q,\circ}(\Omega)$. Now, considering the case $T\in(0,\infty)$ and $\theta_0\in\B^{s+2-2/q}_{p,q,0}(\Omega)$, writing $\theta_0 = \P_{\circ}\theta_0 + (\theta_0)_{\Omega}$, by linearity, the complete estimate \eqref{eq:maxRegBspq} holds.
\end{proof}

\begin{proposition}\label{thm:LqMaxRegStokesBesov} Let  $\Omega\subset\RR^n$ be a bounded Lipschitz domain. Let $p\in(1,\infty)$, $r\in[1,\infty]$, $q\in(1,\infty)\cup\{r\}$ and $s\in(-1+1/p,1/p)$ such that $p$ satisfies \eqref{eq:CondtnLpStokesLip} as well as
\begin{align*}
    s+1-\frac{2}{q}<\delta_{{}_{\Omega},p}\quad\text{ and }\quad \frac{2}{q}+\frac{1}{p}\neq s+2,
\end{align*}
where $\delta_{{}_\Omega,p}>0$ is given by Theorem~\ref{thm:StokesLipSobolev}.
\medbreak

Let $T\in(0,\infty]$. For all $\ff\in \L^q(0,T;\B^{s,\sigma}_{p,r,0}(\Omega))$, all $\uu_0\in \B^{s+2-2/q,\sigma}_{p,q,0}(\Omega)$, the problem \eqref{eq:StokesDirichlet} admits a unique mild solution $\uu\in \C^0_{ub}([0,T];\B^{s+2-2/q,\sigma}_{p,q,0}(\Omega))$ such that $\L^q(0,T;\D^{s}_{p,r}(\AA_\mathcal{D},\Omega))$ with the estimates
\begin{align}
    \nu^{1-1/q}\lVert \uu \rVert_{\L^\infty(0,T;\B^{ s+2 -2/q}_{p,q}(\Omega))} + \nu\lVert \mathbb{A}_\mathcal{D} u&\rVert_{\L^q(0,T;\B^{s}_{p,r}(\Omega))}\\&\lesssim_{p,q,s,\Omega} \lVert  \ff\rVert_{\L^q(0,T;\B^{s}_{p,r}(\Omega))} + \nu^{1-1/q}\lVert \uu_0\rVert_{\B^{s+2 -2/q}_{p,q}(\Omega)}\nonumber\text{.}
\end{align}
Furthermore, if $T=\infty$, there exists $c>0$ such that
\begin{align*}
    \nu^{1-1/q}\lVert e^{ct} \uu \rVert_{\L^\infty(\RR_+;\B^{s+2 -2/q}_{p,q}(\Omega))} + \nu\lVert e^{ct}\AA_\mathcal{D}\uu&\rVert_{\L^q(\RR_+;\B^{s}_{p,r}(\Omega))}\\ &\lesssim_{p,q,s,\Omega} \lVert  \ff\rVert_{\L^q(\RR_+;\B^{s}_{p,r}(\Omega))} + \nu^{1-1/q}\lVert \uu_0\rVert_{\B^{s+2 -2/q}_{p,q}(\Omega)}\text{.}
\end{align*}
\end{proposition}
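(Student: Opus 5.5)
The plan mirrors the proof of Proposition~\ref{thm:LqMaxRegNeumannBesov}: reduce everything to the Da Prato--Grisvard Theorem~\ref{thm:DaPratoGrisvard} applied to $A=\nu\AA_\mathcal{D}$ on a suitable ground space $\X$. The ground space is chosen so that $\B^{s,\sigma}_{p,r,0}(\Omega)$ and $\B^{s+2-2/q,\sigma}_{p,q,0}(\Omega)$ both appear as real interpolation spaces between $\X$ and $\D(A)$. Since $\AA_\mathcal{D}$ is invertible, no mean-value correction as in the Neumann case is needed, and the statement for $T=\infty$ together with the exponential weight $e^{ct}$ will come directly from the invertible part of Theorem~\ref{thm:DaPratoGrisvard}. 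The dependence on $\nu$ follows by the time rescaling $t\mapsto \nu t$, which produces exactly the factors $\nu^{1-1/q}$ and $\nu$.

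\emph{First step.} Pick $\tilde s\in(-1+1/p,\,s)$ close to $s$, with $\tilde s\neq 1/p$, and set $\X:=\B^{\tilde s,\sigma}_{p,1,0}(\Omega)$. By Proposition~\ref{prop:StokesBesovLip}\,\textit{(i)}, $\AA_\mathcal{D}$ is an invertible sectorial operator of angle $0$ on $\X$. Set $\eta_0:=(s-\tilde s)/2\in(0,1)$. Proposition~\ref{prop:StokesBesovLip}\,\textit{(ii)}, applicable since $s<1/p<1+\delta_{{}_\Omega,p}$, gives
\begin{align*}
    (\X,\D^{\tilde s}_{p,1}(\AA_\mathcal{D}))_{\eta_0,r}=\B^{s,\sigma}_{p,r,0}(\Omega).
\end{align*}

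\emph{Second step.} Identify the trace space, using the index $\theta_q=1+\eta_0-1/q$. Its regularity is $\tilde s+2\theta_q=s+2-2/q$.
\begin{itemize}
    \item If $\theta_q<1$, Proposition~\ref{prop:StokesBesovLip}\,\textit{(ii)} applies directly, provided $s+2-2/q<1+\delta_{{}_\Omega,p}$ and $s+2-2/q\neq 1/p$. These are exactly the two hypotheses $s+1-2/q<\delta_{{}_\Omega,p}$ and $2/q+1/p\neq s+2$.
    \item If $\theta_q\geqslant 1$, use the remark following Theorem~\ref{thm:DaPratoGrisvard}: the space is $(\X,\D(A^2))_{\theta_q/2,q}$. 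Identify it by reiteration between $\D(A^{\theta'})$, $\theta'<1$ close to $\theta_q-1$, and $\D(A^{1+\theta''})$. Via the isomorphism $\AA_\mathcal{D}:\D(\AA_\mathcal{D}^{1+\theta''})\to\D(\AA_\mathcal{D}^{\theta''})$, the latter reduces to spaces covered by Proposition~\ref{prop:StokesBesovLip}\,\textit{(iii)}. Within the admissible range this again gives $\B^{s+2-2/q,\sigma}_{p,q,0}(\Omega)$.
\end{itemize}

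Once both identifications are in hand, Theorem~\ref{thm:DaPratoGrisvard} yields existence and uniqueness of the mild solution in $\C^0_{ub}([0,T];\B^{s+2-2/q,\sigma}_{p,q,0})$ with $\AA_\mathcal{D}\uu\in\L^q(0,T;\B^{s}_{p,r})$, the stated estimates, and the exponentially weighted version for $T=\infty$. The membership $\uu\in\L^q(0,T;\D^{s}_{p,r}(\AA_\mathcal{D},\Omega))$ is read off from $\AA_\mathcal{D}\uu\in\L^q(\B^{s,\sigma}_{p,r,0})$. For this, check that the part of $\AA_\mathcal{D}$ in $(\X,\D(A))_{\eta_0,r}$ has domain $(\X,\D(A))_{1+\eta_0,r}$, which is the standard $\D^{s}_{p,r}(\AA_\mathcal{D})$.

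The main obstacle is the second step when $\theta_q\geqslant 1$, that is, when the trace regularity $s+2-2/q$ exceeds $1$. One must make sure that the interpolation between $\X$ and $\D(\AA_\mathcal{D}^2)$ still equals a genuine solenoidal Besov space. This is possible only up to regularity $1+\delta_{{}_\Omega,p}$, which is where the hypothesis $s+1-2/q<\delta_{{}_\Omega,p}$ is used. The excluded value $s+2-2/q=1/p$ must also be avoided, since there the solenoidal scale fails to interpolate.
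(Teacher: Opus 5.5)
Your proposal is correct and follows the paper's own proof. The paper likewise picks $\tilde s<s$ and sets $\X=\B^{\tilde s,\sigma}_{p,1,0}(\Omega)$. It identifies the forcing and trace spaces as $(\X,\D^{\tilde s}_{p,1}(\AA_\mathcal{D}))_{\eta_0,\cdot}$ and $(\X,\D^{\tilde s}_{p,1}(\AA_\mathcal{D}))_{1+\eta_0-1/q,q}$ via Proposition~\ref{prop:StokesBesovLip}, then concludes by Theorem~\ref{thm:DaPratoGrisvard}. Your extra details, the $\nu$-rescaling and the domain of the part of $\AA_\mathcal{D}$, are routine additions to the same argument.

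One small correction concerns your description of the case split. Since $\eta_0$ can be taken arbitrarily small, $\theta_q\geqslant 1$ only occurs for $q=r=\infty$; it does not occur whenever $s+2-2/q>1$. Trace regularities in $(1,1+\delta_{{}_\Omega,p})$ are already handled by your first bullet, so the ``main obstacle'' you identify is confined to that endpoint case.
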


\begin{proof} Thanks to our assumptions on $p$ and $s$, by Proposition~\ref{prop:StokesBesovLip}, one can find $-1+1/p<\tilde{s}<s$ and $\eta_0\in(0,1)$ close to $0$ such that
for  $\X = \B^{\tilde{s},\sigma}_{p,1,0}(\Omega)$, one has
\begin{align*}
    \B^{s,\sigma}_{p,q,0}(\Omega)& = (\B^{\tilde{s},\sigma}_{p,1,0}(\Omega),\D_{p,1}^{\tilde{s}}(\AA_\mathcal{D}))_{\eta_0,q},\quad\\&\text{ and }\quad\B^{s+2-2/q,\sigma}_{p,q,0}(\Omega) = (\B^{\tilde{s},\sigma}_{p,1,0}(\Omega),\D_{p,1}^{\tilde{s}}(\AA_\mathcal{D}))_{1+\eta_0-1/q,q}. 
\end{align*}
Therefore, the result is a direct consequence of the Da Prato--Grisvard Theorem, Theorem~\ref{thm:DaPratoGrisvard}.
\end{proof}

\subsection{(Bi-)Linear estimates for the solutions operators of the Boussinesq system.}\label{Sec:BiLiN}

\subsection*{$\L^2$-in-time estimates}

\begin{notation}\label{def:LpSolvSpacesBSQ}For $p,r\in(1,\infty)$, $\Omega\subset \RR^3$ to be a bounded Lipschitz domain and $T\in(0,\infty]$, we set
\begin{itemize}
    \item The function space of velocity vector fields to be
\begin{align*}
    \mathrm{Z}_p((0,T)\times\Omega) :=  \L^\infty(0,T;\B^{0,\sigma}_{p,2,0}(\Omega))\cap\L^2(0,T;\B^{1,\sigma}_{p,\infty,0}(\Omega))
\end{align*}
endowed with norm
\begin{align*}
    \lVert \vv \rVert_{\mathrm{Z}_p((0,T)\times\Omega)}:=\lVert \vv \rVert_{\L^\infty(0,T;\B^{ 0}_{p,2}(\Omega))} + \nu^{1/2}\lVert  \nabla \vv\rVert_{\L^2(0,T;\B^{0}_{p,\infty}(\Omega))};
\end{align*}

\item The function space of temperatures to be
\begin{align*}
    \mathrm{K}_r((0,T)\times\Omega) := \L^\infty(0,T;\B^{-1}_{r,2,0}(\Omega))\cap\L^2(0,T;\B^{0}_{r,\infty}(\Omega))
\end{align*}
endowed with norm
\begin{align*}
    \lVert \Theta \rVert_{\mathrm{K}_r((0,T)\times\Omega)}:=\lVert \Theta \rVert_{\L^\infty(0,T;\B^{ -1}_{r,2}(\Omega))} + \kappa^{1/2}\lVert  \Theta\rVert_{\L^2(0,T;\B^{0}_{r,\infty}(\Omega))}.
\end{align*}
\end{itemize}
\end{notation}

\begin{proposition}\label{Prop:SolutionOperatorBoussinesqLp} Let $\Omega\subset\RR^3$ be  a bounded Lipschitz domain, and let $T\in(0,\infty]$. Then
\begin{enumerate}
    \item the linear operator $\L_\nu$, as given by \eqref{eq:LOpLin},
is well-defined and bounded from $\mathrm{K}_{3/2}((0,T)\times \Omega)$ to $\mathrm{Z}_{3}((0,T)\times\Omega)$ and its operator norm satisfies
\begin{align*}
    \lVert \L_{\nu}\rVert_{\mathrm{K}_{3/2}\rightarrow \mathrm{Z}_3} \lesssim_{\Omega} \frac{1}{\sqrt{\nu}\sqrt{\kappa}};
\end{align*}
    \item the bilinear operator $\B_{\nu}$, as given by \eqref{eq:BOpBiLin},
is well-defined and bounded from $\mathrm{Z}_{3}((0,T)\times\Omega)\times \mathrm{Z}_{3}((0,T)\times\Omega)$ to $\mathrm{Z}_{3}((0,T)\times\Omega)$ and its operator norm satisfies 
\begin{align*}
    \lVert \B_{\nu}\rVert_{\mathrm{Z}_{3}\times \mathrm{Z}_3\rightarrow\mathrm{Z}_3} \lesssim_{\Omega} \frac{1}{\nu};
\end{align*}
\item if additionally $\Omega$ has $\C^{1,\alpha}$ boundary with $\alpha>1/3$, the bilinear operator $\C_\kappa$, as given by \eqref{eq:COpBiLin},
is well-defined and bounded from $\mathrm{Z}_{3}((0,T)\times\Omega)\times \mathrm{K}_{3/2}((0,T)\times \Omega)$ to $\mathrm{K}_{3/2}((0,T)\times \Omega)$ and its operator norm satisfies the bound
\begin{align*}
    \lVert \C_{\kappa}\rVert_{\mathrm{Z}_{3}\times \mathrm{K}_{3/2}\rightarrow\mathrm{K}_{3/2}} \lesssim_{\Omega} \frac{1}{\nu^{1/4}\kappa^{3/4}};
\end{align*}
\end{enumerate}
\end{proposition}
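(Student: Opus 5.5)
Each of the three operators will be treated as a Duhamel solution operator for either the Stokes or the Neumann heat problem with $q=2$. The plan is to apply the Da Prato--Grisvard estimates of Propositions~\ref{thm:LqMaxRegStokesBesov} and \ref{thm:LqMaxRegNeumannBesov}, but in a \emph{shifted} form. Because $\mathrm{Z}_3$ and $\mathrm{K}_{3/2}$ only control $\L^2$ in time with third index $\infty$, the natural route is to write the source as $A^{1/2}g$ with $g\in\L^2(0,T;\B^{-1}_{\cdot,\infty})$-type data. One then uses that $A^{1/2}(\partial_t+A)^{-1}A^{1/2}$ is $\L^2(\X)$-bounded for $\X=\B^{\tau}_{p,\infty}$-type spaces, which follows from Theorem~\ref{thm:DaPratoGrisvard} with $r=\infty$, $q=2$ after commuting fractional powers. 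Together with the trace bound $\L^\infty(\B^{\tau+1}_{p,2})$ this gives the structure of all three estimates. The $\nu,\kappa$ powers come from scaling time, i.e.\ from the factors $\nu^{1-1/q},\nu$ in the maximal regularity statements.

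\textbf{(i)} For $\L_\nu(\Theta)=(\partial_t+\nu\AA_{\mathcal D})^{-1}\PP_\Omega(\Theta\mathfrak e_3)$ I would use the embedding $\B^{0}_{3/2,\infty}\hookrightarrow\B^{-1}_{3,\infty}$ together with $\PP_\Omega$ bounded on $\B^{-1}_{3,\infty}$. The latter is not directly in Proposition~\ref{prop:HelmholtzDecomp}, since $s=-1<-1+1/3$. I would therefore realize $\PP_\Omega\Theta\mathfrak e_3=\AA_{\mathcal D}^{1/2}\,[\AA_{\mathcal D}^{-1/2}\PP_\Omega]\Theta\mathfrak e_3$ and bound $\AA_{\mathcal D}^{-1/2}\PP_\Omega\colon\B^{0}_{3/2,\infty}\to\B^{0}_{3,\infty}$. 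This holds by duality with $\AA_{\mathcal D}^{-1/2}\colon\B^{0,\sigma}_{3/2,1,0}\to\B^{1,\sigma}_{3/2,1,0}\hookrightarrow\B^{0}_{3,1}$, using Proposition~\ref{prop:StokesBesovLip}\,(iii) with $p=3/2$. Maximal regularity with $s=0$, $q=2$, $r=\infty$ and $p=3$ then gives $\AA_{\mathcal D}^{1/2}\uu\in\L^2(\B^0_{3,\infty})$, hence $\nabla\uu\in\L^2(\B^0_{3,\infty})$ via \eqref{eq:SQRTStokesBesovLip}, and $\uu\in\L^\infty(\B^0_{3,2})$. The loss of half a time derivative produces $\nu^{-1/2}$, and measuring $\Theta$ with the $\kappa^{1/2}$ weight produces $\kappa^{-1/2}$.

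\textbf{(ii)} For $\B_\nu$ I would use Corollary~\ref{cor:SrtPropStokesW-1p} in its Besov version (by real interpolation), namely $\AA_{\mathcal D}^{-1/2}\PP_\Omega\div\colon\B^0_{3,\infty}\to\B^{0,\sigma}_{3,\infty,0}$. I would then bound $\vv\otimes\ww$ in $\L^2(\B^0_{3,\infty})$ by $\lVert\vv\rVert_{\L^\infty(\B^0_{3,2})}\lVert\ww\rVert_{\L^2(\B^1_{3,\infty})}$ plus the symmetric term, via the paraproduct estimates of Appendix~\ref{App:ProductRules} and the embedding $\B^{1}_{3,\infty}\hookrightarrow\B^{0}_{\infty,\infty}$ in the critical pairing. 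The same shifted maximal regularity then yields the $\nu^{-1}$ bound.

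\textbf{(iii)} Here $\C_\kappa(\vv,\Theta)=-(\partial_t-\kappa\Delta_{\mathcal N})^{-1}\div(\Theta\vv)$, and I would factor it as $(-\Delta_{\mathcal N})^{1/2}\,[(-\Delta_{\mathcal N})^{-1/2}\div]$. The product $\Theta\vv$ should lie in $\L^{2}(\B^{-1/2}_{6/5,\infty,0})$, obtained by interpolation from $\Theta\in\L^\infty(\B^{-1}_{3/2,2})\cap\L^2(\B^0_{3/2,\infty})$ and $\vv\in\L^\infty(\B^0_{3,2})\cap\L^2(\B^1_{3,\infty})$. Concretely, $\Theta\in\L^4(\B^{-1/2}_{3/2,\cdot})$ and $\vv\in\L^4(\B^{1/2}_{3,\cdot})$, which produces the $\kappa^{-1/4}\nu^{-1/4}$ weights. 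Proposition~\ref{prop:RieszTransfC1alphaDomainNeumannLap} then gives $(-\Delta_{\mathcal N})^{-1/2}\div(\Theta\vv)\in\L^2(\B^{-1/2}_{6/5,\infty,\circ})$; note that the divergence is automatically mean free. Proposition~\ref{thm:LqMaxRegNeumannBesov}, with its fractional-power version from Proposition~\ref{prop:NeumannBesov}\,(iv), then lifts this back. After the embedding $\B^{-1/2}_{6/5,\infty}\hookrightarrow\B^{-1}_{3/2,\infty}$ with $s=-1$, $p=3/2$ (admissible since $-2+2/3<-1<-1/3$), it gives membership in $\L^2(\B^0_{3/2,\infty})\cap\L^\infty(\B^{-1}_{3/2,2})$. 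The remaining factor $\kappa^{-1/2}$ gives the total $\nu^{-1/4}\kappa^{-3/4}$.

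\textbf{Main obstacle.} I expect the hardest step to be the critical product estimate in (iii): placing $\Theta\vv$ in $\B^{-1/2}_{6/5,\infty}$ with negative-regularity $\Theta$ requires a careful paraproduct split. The resonant term needs positive total regularity $-1/2+1/2$, which is borderline, so an endpoint $\infty$-index product rule from the appendix is required. A second concern in the same step is checking that the fractional-power identifications remain valid at index $-1/2$ with $p=6/5$, which is exactly where the $\C^{1,\alpha}$ hypothesis enters.
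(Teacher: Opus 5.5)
Your proposal is correct and is essentially the paper's proof. It uses shifted $\L^2$-maximal regularity in $\AA_{\mathcal D}^{1/2}\B^{0,\sigma}_{3,\infty,0}(\Omega)$ and $(-\Delta_{\mathcal N})^{1/2}\B^{-1}_{3/2,\infty,\circ}(\Omega)$, the bound $\AA_{\mathcal D}^{-1/2}\PP_\Omega\colon\B^{0}_{3/2,\infty}\to\B^{0}_{3,\infty}$ in (i) (the paper gets it from boundedness of $\PP_\Omega$ on $\B^0_{3/2,\infty}$ plus Sobolev rather than by duality), and in (iii) exactly the chain \eqref{NeumannRieszTransf}, the endpoint product rule $\B^{1/2}_{3,1}\cdot\B^{-1/2}_{3/2,1}\subset\B^{-1/2}_{6/5,\infty}$, interpolation, H\"older and Young.

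In (ii) the paper is simpler: it needs no Besov version of Corollary~\ref{cor:SrtPropStokesW-1p} (only $\L^3\hookrightarrow\B^0_{3,\infty}$) and no paraproducts. It uses $\lVert\vv\otimes\ww\rVert_{\L^2(\L^3)}\leqslant\lVert\vv\rVert_{\L^4(\L^6)}\lVert\ww\rVert_{\L^4(\L^6)}$ with $\lVert u\rVert_{\L^6}\lesssim\lVert u\rVert_{\B^{1/2}_{3,1}}\lesssim\lVert u\rVert_{\B^0_{3,2}}^{1/2}\lVert u\rVert_{\B^1_{3,\infty}}^{1/2}$. This is also the clean way to justify your symmetric product bound, since the appendix rules (third index $1$) do not apply directly to $\B^0_{3,2}\times\B^1_{3,\infty}$.
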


\begin{proof}We perform the analysis for $T=\infty$. For $T$ to be finite, it admits the same proof. 

\textbf{Step 1:} By the $\L^2$-maximal regularity for the Stokes operator in $\AA_\mathcal{D}^{1/2}\B^{0,\sigma}_{3,\infty,0}(\Omega)$, Proposition~\ref{thm:LqMaxRegStokesBesov}, and by the boundedness of $\PP_{\Omega}$ in $\B^{0}_{3/2,\infty}(\Omega)^3$, Proposition~\ref{prop:HelmholtzDecomp}, it holds:
\begin{align*}
    \lVert \L_\nu ( \Theta) \rVert_{\mathrm{Z}_{3}(\RR_+\times\Omega)} &\lesssim_{\Omega} \frac{1}{\sqrt{\nu}} \lVert \AA_\mathcal{D}^{-1/2}\PP_{\Omega}(\Theta\mathfrak{e}_3)\rVert_{\L^2(\RR_+;\B^0_{3,\infty}(\Omega))}\\
    &\lesssim_{\Omega} \frac{1}{\sqrt{\nu}}  \lVert \Theta\rVert_{\L^2(\RR_+;\B^0_{3/2,\infty}(\Omega))}\\
    & \lesssim_{\Omega} \frac{1}{\sqrt{\nu}\sqrt{\kappa}} \lVert\Theta \rVert_{{\mathrm{K}_{3/2}}(\RR_+\times\Omega)}.
\end{align*}

\textbf{Step 2:} By the $\L^2$-maximal regularity for the Stokes operator in $\AA_\mathcal{D}^{1/2}\B^{0,\sigma}_{3,\infty,0}(\Omega)$, Proposition~\ref{thm:LqMaxRegStokesBesov}, the embedding $\L^3(\Omega)\hookrightarrow\B^{0}_{3,\infty}(\Omega)$ and Corollary~\ref{cor:SrtPropStokesW-1p}, it holds:
\begin{align}
    \lVert \B_\nu(\vv,\ww) \rVert_{\mathrm{Z}_{3}(\RR_+\times\Omega)} &\lesssim_{\Omega} \frac{1}{\sqrt{\nu}}  \lVert  \AA_\mathcal{D}^{-1/2}\PP_{\Omega}(\div (\vv\otimes\ww))\rVert_{\L^2(\RR_+;\L^3(\Omega))}\nonumber\\
    &\lesssim_{\Omega} \frac{1}{\sqrt{\nu}}  \lVert  \vv\otimes\ww\rVert_{\L^2(\RR_+;\L^3(\Omega))} \nonumber \\
    & \lesssim_{\Omega} \frac{1}{\sqrt{\nu}}   \lVert \vv \rVert_{\L^4(\RR_+;\L^{6}(\Omega))}\lVert \ww \rVert_{\L^4(\RR_+;\L^{6}(\Omega))}\nonumber\\
    & \lesssim_{\Omega} \frac{1}{\sqrt{\nu}}   \lVert \vv \rVert_{\L^4(\RR_+;\H^{1/2,3}(\Omega))}\lVert \ww \rVert_{\L^4(\RR_+;\H^{1/2,3}(\Omega))}\label{eq:ProofNSEpartL4W1/23Est}
\end{align}
We did took advantage of the time-space H\"{o}lder inequality $\L^4(\L^6)\cdot\L^4(\L^6) \hookrightarrow\L^2(\L^3)$, and of the Sobolev embedding $\H^{1/2,3}(\Omega)\hookrightarrow\L^{6}(\Omega)$. By taking advantage of the interpolation inequality
\begin{align*}
     \lVert u \rVert_{\L^4(\RR_+;\H^{1/2,3}(\Omega))}\lesssim_{\Omega} \Big\lVert \lVert u\rVert_{\B^0_{3,2}(\Omega)}^\frac{1}{2} \lVert \nabla u\rVert_{\B^0_{3,\infty}(\Omega)}^\frac{1}{2} \Big\rVert_{\L^4(\RR_+)}\lesssim_{\Omega}  \lVert u\rVert_{\L^\infty(\RR_+;\B^0_{3,2}(\Omega))}^\frac{1}{2} \lVert \nabla u\rVert_{\L^2(\RR_+;\B^0_{3,\infty}(\Omega))}^\frac{1}{2}, 
\end{align*}
the estimate \eqref{eq:ProofNSEpartL4W1/23Est} becomes
\begin{align}
    \lVert &\B_\nu(\vv,\ww) \rVert_{\mathrm{Z}_{3}(\RR_+\times\Omega)}\nonumber\\
    & \lesssim_{\Omega} \frac{1}{\sqrt{\nu}}  \lVert \vv\rVert_{\L^\infty(\RR_+;\B^{0}_{3,2}(\Omega))}^\frac{1}{2} \lVert \nabla \vv\rVert_{\L^2(\RR_+;\B^0_{3,\infty}(\Omega))}^\frac{1}{2}\lVert \ww\rVert_{\L^\infty(\RR_+;\B^{0}_{3,2}(\Omega))}^\frac{1}{2} \lVert \nabla \ww\rVert_{\L^2(\RR_+;\B^0_{3,\infty}(\Omega))}^\frac{1}{2}.\label{eq:ProofBSQBiLinNSEEstLp}
\end{align}
Finally, by Young's inequality $ab \leqslant \frac{a^2+b^2}{2}$, yields
\begin{align*}
    \lVert \B_\nu(\vv,\ww) \rVert_{\mathrm{Z}_{3}(\RR_+\times\Omega)}&\lesssim_{\Omega} \frac{1}{{\nu}}  \lVert \vv \rVert_{\mathrm{Z}_{3}(\RR_+\times\Omega)}\lVert \ww \rVert_{\mathrm{Z}_{3}(\RR_+\times\Omega)}.
\end{align*}

\noindent\textbf{Step 3:} By the $\L^2$-maximal regularity for the Neumann Laplacian in $(-\Delta_\mathcal{N})^{1/2}\B^{-1}_{3/2,\infty,\circ}(\Omega)$, Proposition~\ref{thm:LqMaxRegNeumannBesov}, and the Sobolev embedding $\B^{-1/2}_{6/5,\infty}(\Omega)\hookrightarrow \B^{-1}_{3/2,\infty}(\Omega)$, it holds:
\begin{align*}
    \lVert \C_\kappa(\vv,\Theta) \rVert_{\mathrm{K}_{3/2}(\RR_+\times\Omega)} &\lesssim_{\Omega} \frac{1}{\sqrt{\kappa}}  \lVert (-\Delta_\mathcal{N})^{-1/2} \div (\vv\Theta)\rVert_{\L^2(\RR_+;\B^{-1}_{3/2,\infty}(\Omega))} \\ 
    &\lesssim_{\Omega}\frac{1}{\sqrt{\kappa}} \lVert (-\Delta_\mathcal{N})^{-1/2} \div (\vv\Theta)\rVert_{\L^2(\RR_+;\B^{-1/2}_{6/5,\infty}(\Omega))}\\ 
    &\lesssim_{\Omega} \frac{1}{\sqrt{\kappa}}\lVert \vv\Theta\rVert_{\L^2(\RR_+;\B^{-1/2}_{6/5,\infty}(\Omega))}.
\end{align*}
The last line was obtained by means of \eqref{NeumannRieszTransf}\footnote{This is the only place where regularity beyond than Lipschitz for the boundary is necessary. If instead, one consumes more and more negative regularity in the Sobolev embedding, one gets closer and closer to a pathological $\L^{1}$-type space. Getting closer could possibly allow to reduce the need of boundary regularity up to being arbitrarily close to $\C^{1}$--boundary regularity, while having some space $\B^{-\delta}_{1+\varepsilon,\infty}$ for the product to be estimated. Yet, the integrability indices decomposition to identify in the latter, in order to make the product rules to work, becomes less identifiable.}. Therefore, by the product rule from Corollary~\ref{prop:ProductRullBesovDim3-2}, we deduce
\begin{align}
    \lVert \C_\kappa(\vv,\Theta) \rVert_{\mathrm{K}_{3/2}(\RR_+\times\Omega)} &\lesssim_{\Omega} \frac{1}{\sqrt{\kappa}}\lVert \vv\Theta\rVert_{\L^2(\RR_+;\B^{-1/2}_{6/5,\infty}(\Omega))}\nonumber\\
    &\lesssim_{\Omega}\frac{1}{\sqrt{\kappa}} \Big\lVert \lVert\vv\rVert_{\B^{1/2}_{3,1}(\Omega)}\lVert\Theta\rVert_{\B^{-1/2}_{3/2,1}(\Omega)}\Big\rVert_{\L^2(\RR_+)}.\label{eq:Est:C(v,O)L2Lp}
\end{align}
We recall that we have the interpolation inequalities
\begin{align*}
    \lVert \vv\rVert_{\B^{1/2}_{3,1}(\Omega)} \lesssim_{\Omega}\lVert \vv\rVert_{\B^{0}_{3,2}(\Omega)}^\frac{1}{2} \lVert \vv\rVert_{\B^{1}_{3,\infty}(\Omega)}^\frac{1}{2} ,\qquad \text{ and }\qquad\lVert \Theta\rVert_{\B^{-1/2}_{3/2,1}(\Omega)} \lesssim_{\Omega}\lVert \Theta\rVert_{\B^{-1}_{3/2,2}(\Omega)}^\frac{1}{2} \lVert\Theta\rVert_{\B^0_{3/2,\infty}(\Omega)}^\frac{1}{2}.
\end{align*}
Therefore, by H\"{o}lder's inequality, \eqref{eq:Est:C(v,O)L2Lp} becomes
\begin{align}
    \lVert \C_\kappa(\vv&,\Theta) \rVert_{\mathrm{K}_{3/2}(\RR_+\times\Omega)}\nonumber\\
    &\lesssim_{\Omega} \frac{1}{\sqrt{\kappa}}\Big\lVert \lVert \vv\rVert_{\B^{0}_{3,2}(\Omega)}^\frac{1}{2} \lVert \vv\rVert_{\B^{1}_{3,\infty}(\Omega)}^\frac{1}{2}\Big\rVert_{\L^4(\RR_+)}\Big\lVert \lVert \Theta\rVert_{\B^{-1}_{3/2,2}(\Omega)}^\frac{1}{2} \lVert\Theta\rVert_{\B^0_{3/2,\infty}(\Omega)}^\frac{1}{2}\Big\rVert_{\L^4(\RR_+)}\nonumber\\
    &\lesssim_{\Omega} \frac{1}{\sqrt{\kappa}} \lVert \vv\rVert_{\L^{\infty}(\RR_+;\B^{0}_{3,2}(\Omega))}^\frac{1}{2} \lVert \vv\rVert_{\L^{2}(\RR_+;\B^{1}_{3,\infty}(\Omega))}^\frac{1}{2} \lVert \Theta\rVert_{\L^{\infty}(\RR_+;\B^{-1}_{3/2,2}(\Omega))}^\frac{1}{2} \lVert\Theta\rVert_{\L^{2}(\RR_+;\B^0_{3/2,\infty}(\Omega))}^\frac{1}{2}\label{eq:ProofBSQBiLinTempEstLp}.
\end{align}
By virtue of Young's inequality,
\begin{align*}
    \lVert \C_\kappa(\vv,\Theta) \rVert_{\mathrm{K}_{3/2}(\RR_+\times\Omega)} \lesssim_{\Omega} \frac{1}{\nu^{1/4}\kappa^{3/4}}\lVert \Theta \rVert_{\mathrm{K}_{3/2}(\RR_+\times\Omega)}\lVert \vv \rVert_{\mathrm{Z}_{3}(\RR_+\times\Omega)}.
\end{align*}
This was the remaining claimed estimate, so that the proofs ends.
\end{proof}

\subsection*{$\L^1$-in-time estimates}

\begin{notation}\label{def:BesovSolvSpacesBSQ}For $3/2\leqslant r<2<p\leqslant 3$, $\Omega\subset \RR^3$ to be a bounded Lipschitz domain and $T\in(0,\infty]$, we set
\begin{itemize}
    \item The function space of velocity vector fields to be
\begin{align*}
   \mathrm{U}_{p}((0,T)\times\Omega) := \L^\infty(0,T;\B^{3/p-1,\sigma}_{p,1,0}(\Omega))\cap \L^1(0,T;\D_{p,1}^{3/p-1}(\AA_\mathcal{D},\Omega)).
\end{align*}
endowed with norm
\begin{align*}
    \lVert \vv \rVert_{\mathrm{U}_p((0,T)\times\Omega)}:=\lVert \vv \rVert_{\L^\infty(0,T;\B^{ 0}_{p,1}(\Omega))} + \nu\lVert  \mathbb{A}_{\mathcal{D}}\vv\rVert_{\L^1(0,T;\B^{ 0}_{p,1}(\Omega))};
\end{align*}

\item The function space of temperatures to be
\begin{align*}
     \T_{r}((0,T)\times\Omega) &:=\L^\infty(0,T;\B^{3/r-3}_{r,1,0}(\Omega))\cap\L^1(0,T;\B^{3/r-1}_{r,1}(\Omega)).
\end{align*}
endowed with norm
\begin{align*}
    \lVert \Theta \rVert_{\mathrm{K}_r((0,T)\times\Omega)}:=\lVert \Theta \rVert_{\L^\infty(0,T;\B^{ 3/r -3}_{r,1}(\Omega))} + \kappa\lVert  \Theta\rVert_{\L^1(0,T;\B^{3/r-1}_{r,1}(\Omega))}.
\end{align*}
\end{itemize}
\end{notation}

\begin{proposition}\label{Prop:SolutionOperatorBoussinesqBesov} Let $\Omega\subset\RR^3$ be  a bounded Lipschitz domain, and let $T\in(0,\infty]$. Then
\begin{enumerate}
    \item the linear operator $\L_\nu$, as given by \eqref{eq:LOpLin},
is well-defined and bounded from $\mathrm{T}_{3/2}((0,T)\times \Omega)$ to $\mathrm{U}_{3}((0,T)\times\Omega)$ and its operator norm satisfies
\begin{align*}
    \lVert \L_{\nu}\rVert_{\mathrm{T}_{3/2}\rightarrow \mathrm{U}_3} \lesssim_{\Omega} \frac{1}{\kappa};
\end{align*}
    \item the bilinear operator $\B_{\nu}$, as given by \eqref{eq:BOpBiLin},
is well-defined and bounded from $\mathrm{U}_{3}((0,T)\times\Omega)\times \mathrm{U}_{3}((0,T)\times\Omega)$ to $\mathrm{U}_{3}((0,T)\times\Omega)$ and its operator norm satisfies 
\begin{align*}
    \lVert \B_{\nu}\rVert_{\mathrm{U}_{3}\times \mathrm{U}_3\rightarrow\mathrm{U}_3} \lesssim_{\Omega} \frac{1}{\nu};
\end{align*}
\item  the bilinear operator $\C_\kappa$, as given by \eqref{eq:COpBiLin},
is well-defined and bounded from $\mathrm{U}_{3}((0,T)\times\Omega)\times \mathrm{T}_{3/2}((0,T)\times \Omega)$ to $\mathrm{T}_{3/2}((0,T)\times \Omega)$ and its operator norm satisfies the bound
\begin{align*}
    \lVert \C_{\kappa}\rVert_{\mathrm{U}_{3}\times \mathrm{T}_{3/2}\rightarrow\mathrm{T}_{3/2}} \lesssim_{\Omega} \frac{1}{\sqrt{\kappa}\sqrt{\nu}};
\end{align*}
\end{enumerate}
\end{proposition}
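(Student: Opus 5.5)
The plan is to run, for each of the three operators, the same two-step scheme: first place the relevant forcing term in $\L^1$-in-time with values in a critical Besov space with third index $1$, then apply the $\L^1$-maximal regularity of Propositions~\ref{thm:LqMaxRegStokesBesov} and~\ref{thm:LqMaxRegNeumannBesov} with $q=r=1$. The $\L^\infty$-in-time bounds in $\B^{0}_{3,1}$ and $\B^{-1}_{3/2,1}$ then come for free, since $s+2-2/q=s$. The one interpolation fact used throughout is the following: by Proposition~\ref{prop:StokesBesovLip}\,(ii) with $s=0$, $\theta=1/2$, $r=1$ (here $\delta_{{}_\Omega,3}>0$ is needed, since $1\neq 1/3$ must lie below $1+\delta$),
\begin{align*}
\lVert \vv\rVert_{\B^{1}_{3,1}(\Omega)}\lesssim_\Omega \lVert\vv\rVert_{\B^0_{3,1}(\Omega)}^{1/2}\lVert\AA_\mathcal{D}\vv\rVert_{\B^0_{3,1}(\Omega)}^{1/2},
\end{align*}
so that $\lVert\vv\rVert_{\L^2(0,T;\B^1_{3,1})}\lesssim\nu^{-1/2}\lVert\vv\rVert_{\mathrm{U}_3}$. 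Analogously, by real interpolation between $\B^{-1}_{3/2,1}$ and $\B^{1}_{3/2,1}$, one has $\lVert\Theta\rVert_{\L^2(0,T;\B^0_{3/2,1})}\lesssim\kappa^{-1/2}\lVert\Theta\rVert_{\mathrm{T}_{3/2}}$.

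For \emph{(i)}, the Sobolev embedding $\B^{1}_{3/2,1}(\Omega)\hookrightarrow\B^0_{3,1}(\Omega)$ gives $\Theta\mathfrak{e}_3\in\L^1(0,T;\B^0_{3,1})$ with norm at most $\kappa^{-1}\lVert\Theta\rVert_{\mathrm{T}_{3/2}}$. Since $\PP_\Omega$ is bounded on $\B^0_{3,1}(\Omega)^3$ by Proposition~\ref{prop:HelmholtzDecomp}, I would then apply Proposition~\ref{thm:LqMaxRegStokesBesov} with $p=3$, $s=0$, $q=r=1$. Its hypotheses hold: $s+1-2/q=-1<\delta_{{}_\Omega,3}$ and $2/q+1/p=7/3\neq 2$. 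This yields $\lVert\L_\nu\Theta\rVert_{\mathrm{U}_3}\lesssim\kappa^{-1}\lVert\Theta\rVert_{\mathrm{T}_{3/2}}$.

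For \emph{(ii)}, I use $\div(\vv\otimes\ww)$ together with the algebra property of $\B^{1}_{3,1}(\Omega)$ in dimension $3$. This is the critical space $\B^{3/p}_{p,1}\hookrightarrow\L^\infty$, and its algebra property comes from the product rule of Proposition~\ref{prop:Productq=1Besov}, transported to $\Omega$ via a universal extension operator. It gives
\begin{align*}
\lVert\PP_\Omega\div(\vv\otimes\ww)\rVert_{\L^1(\B^0_{3,1})}\lesssim\lVert\vv\otimes\ww\rVert_{\L^1(\B^1_{3,1})}\lesssim\lVert\vv\rVert_{\L^2(\B^1_{3,1})}\lVert\ww\rVert_{\L^2(\B^1_{3,1})}\lesssim\nu^{-1}\lVert\vv\rVert_{\mathrm{U}_3}\lVert\ww\rVert_{\mathrm{U}_3},
\end{align*}
and the same Stokes $\L^1$-maximal regularity as in (i) then concludes.

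For \emph{(iii)}, write $\mathbf{F}=\Theta\vv$. The key step, which I expect to be the main obstacle, is the product estimate
\begin{align*}
\lVert\Theta\vv\rVert_{\B^0_{3/2,1}(\Omega)}\lesssim\lVert\Theta\rVert_{\B^0_{3/2,1}(\Omega)}\lVert\vv\rVert_{\B^1_{3,1}(\Omega)}.
\end{align*}
An $\L^\infty$ bound on $\vv$ alone does not suffice, because multiplication by an $\L^\infty$ function is not bounded on $\B^0_{3/2,1}$. I would therefore use the paraproduct decomposition of Appendix~\ref{App:ProductRules}: $\B^{3/p}_{p,1}$ multiplies $\B^{s}_{r,1}$ for $|s|<3/p$ under suitable integrability relations, and the endpoint $s=0$ should be admissible there. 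Granting this, $\mathbf{F}\in\L^1(\B^0_{3/2,1})$ with norm $\lesssim\kappa^{-1/2}\nu^{-1/2}\lVert\Theta\rVert_{\mathrm{T}_{3/2}}\lVert\vv\rVert_{\mathrm{U}_3}$. Since $0\in(-1+2/3,2/3)$, we have $\B^0_{3/2,1}(\Omega)=\B^0_{3/2,1,0}(\Omega)$, so $\div\mathbf{F}\in\B^{-1}_{3/2,1,0}(\Omega)$. Moreover $\langle\div\mathbf{F},\mathbf{1}\rangle=0$, since $\mathbf{F}$ is supported in $\overline\Omega$, so $\div\mathbf{F}$ actually lies in the mean-free space $\B^{-1}_{3/2,1,\circ}$. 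This allows the use of the mean-free, $T$-uniform part of Proposition~\ref{thm:LqMaxRegNeumannBesov}, with $p=3/2$, $s=-1$, $q=r=1$; note that $s+2=1<2+\varepsilon_{{}_\Omega}$. The admissibility of $p=3/2$ in that proposition should be double-checked against the restriction on $p$ there, going back if necessary to Proposition~\ref{prop:NeumannBesov}, which covers $s=-1$ for $p=3/2$. The proposition yields $\kappa\lVert\C_\kappa(\vv,\Theta)\rVert_{\L^1(\B^1_{3/2,1})}+\lVert\C_\kappa(\vv,\Theta)\rVert_{\L^\infty(\B^{-1}_{3/2,1})}\lesssim\lVert\div\mathbf{F}\rVert_{\L^1(\B^{-1}_{3/2,1})}$, whence the claimed bound $(\kappa\nu)^{-1/2}$.
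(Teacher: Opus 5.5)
Your proposal is correct and follows the paper's proof essentially step by step. The ingredients are the same: $\L^1$-maximal regularity of Da Prato--Grisvard type for the Stokes and Neumann operators, the embedding $\B^{1}_{3/2,1}(\Omega)\hookrightarrow\B^{0}_{3,1}(\Omega)$ for $\L_\nu$, the algebra property of $\B^{1}_{3,1}(\Omega)$ for $\B_\nu$, and $\L^2$-in-time bounds obtained by interpolation. The only cosmetic difference is that you use $(\B^{0,\sigma}_{3,1,0}(\Omega),\D^{0}_{3,1}(\AA_\mathcal{D}))_{1/2,1}=\B^{1,\sigma}_{3,1,0}(\Omega)$, whereas the paper uses the square-root isomorphism \eqref{eq:SQRTStokesBesovLip} together with the moment inequality; these amount to the same thing.

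The product estimate you flag as the main obstacle is exactly Corollary~\ref{cor:ProductRullBesovDim3-1}, i.e.\ Proposition~\ref{prop:Productq=1Besov} with $s_1=1$, $s_2=0$, $p=3$, $q=r=3/2$. There $s_1+s_2=1>0$ and $1/p+1/q=1$ are admissible, so no endpoint difficulty arises. Your observation that $\div(\Theta\vv)$ is mean-free makes explicit what the $T$-uniform Neumann estimate needs, which the paper leaves implicit.
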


\begin{proof} We perform the analysis for $T=+\infty$, the case $T$ to be finite admits the same proof.

\textbf{Step 1:} By the Da Prato-Grisvard $\L^1$-maximal regularity for the Stokes operator Proposition~\ref{thm:LqMaxRegNeumannBesov}, and by the boundedness of $\PP_{\Omega}$ in $\B^{0}_{3,1}(\Omega)^3$, Proposition~\ref{prop:HelmholtzDecomp}, it holds:
\begin{align*}
    \lVert \L_{\nu} ( \Theta) \rVert_{\mathrm{U}_{3}(\RR_+\times\Omega)} &\lesssim_{\Omega} \lVert \PP_{\Omega}(\Theta\mathfrak{e}_3)\rVert_{\L^1(\RR_+;\B^{0}_{3,1}(\Omega))}\\ &\lesssim_{\Omega} \lVert \Theta\rVert_{\L^1(\RR_+;\B^{0}_{3,1}(\Omega))} \lesssim_{\Omega} \lVert \Theta\rVert_{\L^1(\RR_+;\B^{1}_{3/2,1}(\Omega))} \\
    & \lesssim_{\Omega} \frac{1}{\kappa} \lVert\Theta \rVert_{\T_{{3/2}}(\RR_+\times\Omega)}
\end{align*}
\textbf{Step 2:} By the Da Prato-Grisvard $\L^1$-maximal regularity for the Stokes operator Proposition~\ref{thm:LqMaxRegStokesBesov}, and by the boundedness of $\PP_{\Omega}$ in $\B^{0}_{3,1}(\Omega)^3$, Proposition~\ref{prop:HelmholtzDecomp}, it holds:
\begin{align*}
    \lVert \B_\nu(\vv,\ww) \rVert_{\mathrm{U}_{3}(\RR_+\times\Omega)} &\lesssim_{\Omega} \lVert \PP_{\Omega}(\div (\vv\otimes\ww))\rVert_{\L^1(\RR_+;\B^{0}_{3,1}(\Omega))}\\ &\lesssim_{\Omega} \lVert \vv\otimes\ww\rVert_{\L^1(\RR_+;\B^{1}_{3,1}(\Omega))}  \\
    & \lesssim_{\Omega} \lVert \vv \rVert_{\L^2(\RR_+;\B^{1}_{3,1}(\Omega))}\lVert \ww \rVert_{\L^2(\RR_+;\B^{1}_{3,1}(\Omega))}
\end{align*}
due to the fact that $\B^{1}_{3,1}(\Omega)$ is an algebra, since we are in dimension $3$. Finally, by the square-root property for the Stokes--Dirichlet operator, \eqref{eq:SQRTStokesBesovLip}, it holds
\begin{align}\label{eq:ProofBesovEstiBilLinStokes}
    \lVert \B_\nu(\vv,\ww) \rVert_{\mathrm{U}_{3}(\RR_+\times\Omega)} &\lesssim_{\Omega} \lVert \vv \rVert_{\L^2(\RR_+;\B^{1}_{3,1}(\Omega))}\lVert \ww \rVert_{\L^2(\RR_+;\B^{1}_{3,1}(\Omega))}\nonumber\\
    &\lesssim_{\Omega} \lVert \AA_{\mathcal{D}}^{1/2}\vv \rVert_{\L^2(\RR_+;\B^{0}_{3,1}(\Omega))}\lVert \AA_{\mathcal{D}}^{1/2}\ww \rVert_{\L^2(\RR_+;\B^{0}_{3,1}(\Omega))}.
\end{align}
We recall that we have interpolation inequality:
\begin{align*}
    \lVert A^{1/2}u \rVert_{\L^2(\RR_+;\X)}\lesssim_{A,\X} \bigg(\int_{0}^{+\infty} \lVert u(t)\rVert_{\X}\lVert A u(t)\rVert_{\X} \d t\bigg)^{1/2} \lesssim_{A,\X}\lVert u \rVert_{\L^\infty(\RR_+;\X)}^\frac{1}{2}\lVert A u \rVert_{\L^1(\RR_+;\X)}^\frac{1}{2},
\end{align*}
valid for any sectorial operator $A$ in a Banach space $\X$, \cite[Prop.~6.6.4]{bookHaase2006}. In particular, one obtains
\begin{align}\label{eq:ProofInterpLinftyL1(A)}
    \lVert \AA_{\mathcal{D}}^{1/2} u \rVert_{\L^2(\RR_+;\B^{0}_{3,1}(\Omega))} \lesssim_{\Omega}\lVert u \rVert_{\L^\infty(\RR_+;\B^{0}_{3,1}(\Omega))}^\frac{1}{2}\lVert  \AA_{\mathcal{D}} u \rVert_{\L^1(\RR_+;\B^{0}_{3,1}(\Omega))}^\frac{1}{2},\quad u\in\{\vv,\ww\}.
\end{align}
which, plugged in \eqref{eq:ProofBesovEstiBilLinStokes}, yields
\begin{align}
    \lVert &\B_\nu(\vv,\ww) \rVert_{\mathrm{U}_{3}(\RR_+\times\Omega)}\nonumber\\
    &\lesssim_{\Omega} \lVert \vv \rVert_{\L^\infty(\RR_+;\B^{0}_{3,1}(\Omega))}^\frac{1}{2}\lVert  \AA_{\mathcal{D}} \vv \rVert_{\L^1(\RR_+;\B^{0}_{3,1}(\Omega))}^\frac{1}{2}\lVert \ww \rVert_{\L^\infty(\RR_+;\B^{0}_{3,1}(\Omega))}^\frac{1}{2}\lVert  \AA_{\mathcal{D}} \ww \rVert_{\L^1(\RR_+;\B^{0}_{3,1}(\Omega))}^\frac{1}{2}.\label{eq:ProofBiLinestStokes}
\end{align}

\noindent\textbf{Step 3:} By the Da Prato-Grisvard $\L^1$-maximal regularity for the Neumann Laplacian, Proposition~\ref{thm:LqMaxRegNeumannBesov}, it holds:
\begin{align*}
    \lVert \C_\kappa(\vv,\Theta) \rVert_{\mathrm{T}_{3/2}(\RR_+\times\Omega)} &\lesssim_{\Omega} \lVert \div (\vv\Theta)\rVert_{\L^1(\RR_+;\B^{-1}_{3/2,1}(\Omega))}  \lesssim_{\Omega} \lVert \vv\Theta\rVert_{\L^1(\RR_+;\B^{0}_{3/2,1}(\Omega))}.
\end{align*}
Applying the product estimate Corollary~\ref{cor:ProductRullBesovDim3-1}, we deduce
\begin{align*}
    \lVert \C_\kappa(\vv,\Theta) \rVert_{\mathrm{T}_{3/2}(\RR_+\times\Omega)} &\lesssim_{\Omega} \lVert \vv\Theta\rVert_{\L^1(\RR_+;\B^{0}_{3/2,1}(\Omega))}\\
    &\lesssim_{\Omega} \lVert \vv\rVert_{\L^2(\RR_+;\B^{1}_{3,1}(\Omega))}\lVert \Theta\rVert_{\L^2(\RR_+;\B^{0}_{3/2,1}(\Omega))}.
\end{align*}
However, taking advantage of \eqref{eq:ProofInterpLinftyL1(A)} and of the following interpolation inequality
\begin{align*}
    \lVert \Theta\rVert_{\L^2(\RR_+;\B^{0}_{3/2,1}(\Omega))} &\lesssim_{\Omega} \Big\lVert \lVert \Theta\rVert_{\B^{-1}_{3/2,1}(\Omega)}^\frac{1}{2}\lVert \Theta\rVert_{\B^{1}_{3/2,1}(\Omega)}^\frac{1}{2}\Big\rVert_{\L^2(\RR_+)}\\&\lesssim_{\Omega} \lVert \Theta\rVert_{\L^\infty(\RR_+;\B^{-1}_{3/2,1}(\Omega))}^\frac{1}{2}\lVert \Theta\rVert_{\L^1(\RR_+;\B^{1}_{3/2,1}(\Omega))}^\frac{1}{2},
\end{align*}
one obtains
\begin{align}
   \lVert &\C_\kappa(\vv,\Theta) \rVert_{\mathrm{T}_{3/2}(\RR_+\times\Omega)} \nonumber\\ & \lesssim_{\Omega}\lVert \vv \rVert_{\L^\infty(\RR_+;\B^{0}_{3,1}(\Omega))}^\frac{1}{2}\lVert  \AA_{\mathcal{D}} \vv \rVert_{\L^1(\RR_+;\B^{0}_{3,1}(\Omega))}^\frac{1}{2}\lVert \Theta\rVert_{\L^\infty(\RR_+;\B^{-1}_{3/2,1}(\Omega))}^\frac{1}{2}\lVert \Theta\rVert_{\L^1(\RR_+;\B^{1}_{3/2,1}(\Omega))}^\frac{1}{2}.\label{eq:ProofBiLinestBoussinesTransp}
\end{align}
Young's inequality $ab\leqslant\frac{a^2+b^2}{2}$ allows to conclude the proof with
\begin{align*}
    \lVert \C_\kappa(\vv,\Theta) \rVert_{\mathrm{T}_{3/2}(\RR_+\times\Omega)} \lesssim_{\Omega} \frac{1}{\sqrt{\kappa}\sqrt{\nu}}\lVert \vv \rVert_{\mathrm{U}_{3}(\RR_+\times\Omega)}\lVert \Theta \rVert_{\mathrm{T}_{3/2}(\RR_+\times\Omega)}.
\end{align*}
This last estimate ends the proof.
\end{proof}

%----------------------------------------------------
%-------------- Free Divergence Spaces --------------
%----------------------------------------------------
\section{Well-posedness and asymptotics for the Boussinesq system}\label{Sec:WPLebesgueSobolev}

From now on and throughout the remainder of this work, all function spaces taking values in a finite-dimensional vector space are assumed to be real-valued. We introduce two notions of solutions, depending on the underlying functional framework.

\begin{definition}We say that $(\uu,\theta)\in\L^1_\textrm{loc}([0,T);\L^{1}(\Omega,\RR^3)\times\B^0_{1,\infty}(\Omega))$ is a solution to the Boussinesq system \eqref{eq:BoussinesqSystemIntro} on $(0,T)$ \textit{in Lebesgue-Sobolev spaces}  if the following is satisfied
\begin{enumerate}
    \item $(\uu,\theta)\in\L^\infty(0,T;\B^{0}_{3,2}(\Omega,\RR^3)\times\B^{-1}_{3/2,2,0}(\Omega))\cap\L^2(0,T;\B^{1}_{3,\infty}(\Omega,\RR^3)\times\B^0_{3/2,\infty}(\Omega))$;
    \item For almost every $t>0$,
    \begin{align*}
        \div \uu(t) =0,\quad \text{ in }\Omega,\quad\text{ and }\quad \uu(t)_{|_{\partial\Omega}} =0 \text{ on }\partial\Omega,
    \end{align*}
    respectively in the distributional sense, and in the trace sense.
    \item For all $\boldsymbol{\varphi}\in\Ccinfty([0,T);\Ccinftydiv(\Omega))$, all $\eta\in\Ccinfty([0,T);\D_{3,1}^0(\Delta_\mathcal{N}^\infty))$, it holds
    \begin{align}
        \int_{(0,T)\times\Omega} -\uu\cdot \partial_t \boldsymbol{\varphi} +\nu\nabla \uu: \nabla \boldsymbol{\varphi} - (\uu\otimes\uu) : \nabla \boldsymbol{\varphi} \,\d x \,\d t    &= \int_{(0,T)\times\Omega} \theta \boldsymbol{\varphi}_3\, \d x \d t  + \int_{\Omega} \uu_0\cdot\boldsymbol{\varphi}(0) \,\d x \label{eq:WeakFormBSQLpVelo}\\
        \text{ and }\qquad \int_{(0,T)\times\Omega} -\theta\cdot \partial_t \eta -\kappa \,\theta\,  \Delta \eta -  \theta \, \uu \cdot \nabla \eta \,\d x \,\d t    &=  \int_{\Omega} \theta_0\,\eta(0)\, \d x.\label{eq:WeakFormBSQLpTemp}
    \end{align}
\end{enumerate}
\end{definition}

\begin{definition}We say that $(\uu,\theta)\in\L^1_\textrm{loc}([0,T);\L^{1}(\Omega,\RR^3)\times\L^1(\Omega))$ is a solution to the Boussinesq system \eqref{eq:BoussinesqSystemIntro} on $(0,T)$ \textit{in Lebesgue-Besov spaces}  if the following is satisfied
\begin{enumerate}
    \item $(\uu,\theta)\in\L^\infty(0,T;\B^{0}_{3,1}(\Omega,\RR^3)\times\B^{-1}_{3/2,1,0}(\Omega))\cap\L^1(0,T;\D^{0}_{3,1}(\mathbb{A}_\mathcal{D},\Omega)\times\B^{1}_{3/2,1}(\Omega))$;
    \item For almost every $t>0$,
    \begin{align*}
        \div \uu(t) =0,\quad \text{ in }\Omega,\quad\text{ and }\quad \uu(t)_{|_{\partial\Omega}} =0 \text{ on }\partial\Omega,
    \end{align*}
    respectively in the distributional sense, and in the trace sense.
    \item For all $\boldsymbol{\varphi}\in\Ccinfty([0,T);\Ccinftydiv(\Omega))$, all $\eta\in\Ccinfty([0,T);\C^{\infty}(\overline{\Omega}))$, it holds
    \begin{align}
        \int_{(0,T)\times\Omega} -\uu\cdot \partial_t \boldsymbol{\varphi} +\nu\nabla \uu: \nabla \boldsymbol{\varphi} - (\uu\otimes\uu) : \nabla \boldsymbol{\varphi} \,\d x \,\d t    &= \int_{(0,T)\times\Omega} \theta \boldsymbol{\varphi}_3\, \d x \d t  + \int_{\Omega} \uu_0\cdot\boldsymbol{\varphi}(0) \,\d x \label{eq:WeakFormBSQBesovVelo}\\
        \text{ and }\qquad \int_{(0,T)\times\Omega} -\theta\cdot \partial_t \eta +\kappa \nabla \theta\cdot  \nabla\eta -  \theta \, \uu \cdot \nabla \eta \,\d x \,\d t    &=  \int_{\Omega} \theta_0\,\eta(0)\, \d x.\label{eq:WeakFormBSQBesovTemp}
    \end{align}
\end{enumerate}
\end{definition}

One should notice the difference between the two weak formulations concerning the choice of test functions for the temperature equation.

\subsection{Existence results}\label{Sec:WPBSQExistence}

\begin{theorem}[ Small times, arbitrary initial data  in $\B^{s}_{p,2}$]\label{thm:ExistenceSmallTimesLp} Let $\Omega\subset\RR^3$ be a bounded $\C^{1,\alpha}$ domain with $\alpha>1/3$. For any $(\uu_0,\theta_0)\in\B^{0,\sigma}_{3,2,0}(\Omega)\times\B^{-1}_{3/2,2,0}(\Omega)$, there exists $T>0$, depending on $\uu_0,\theta_0$, $\Omega$, $\nu$ and $\kappa$, such that
then the system \eqref{eq:BoussinesqSystemIntro} admits a (mild) solution on $(0,T)$:
\begin{align*}
    (\uu,\theta)\in\L^2(0,T;\B^{1,\sigma}_{3,\infty,0}(\Omega)\times \B^{0}_{3/2,\infty}(\Omega))\cap \C_{ub}^0([0,T];&\B^{0,\sigma}_{3,2,0}(\Omega)\times \B^{-1}_{3/2,2,0}(\Omega)).
\end{align*}
\end{theorem}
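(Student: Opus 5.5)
We will construct the solution as a fixed point of the bilinear formulation \eqref{eq:ModifAbstractBoussinesq} in $\X_T:=\mathrm{Z}_3((0,T)\times\Omega)\times\mathrm{K}_{3/2}((0,T)\times\Omega)$, using Proposition~\ref{prop:FixedPoint2Bil+Lin}. The boundedness of $\mathcal{B}$ on $\X_T$, with norm controlled uniformly in $T\in(0,\infty]$, follows from Proposition~\ref{Prop:SolutionOperatorBoussinesqLp}. Its three items give bounds on $\B_\nu$, on $\C_\kappa$ (this is where the $\C^{1,\alpha}$, $\alpha>1/3$, assumption enters), and on the composition $\L_\nu\circ\C_\kappa$. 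One difficulty remains. The free term $\mathbf{x}_0=(\mathbf{v}_0+\L_\nu(\boldsymbol{\Theta}_0),\boldsymbol{\Theta}_0)$ does not become small as $T\to0$ in the $\L^\infty$ components of the norms. Moreover, the mean $\tilde\theta_0$ of $\theta_0$ is a constant that does not decay, and $\PP_\Omega(\tilde\theta_0\mathfrak{e}_3)$ is a genuine forcing.

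\textbf{Splitting off the large part.} I will decompose the data. First write $\theta_0=\tilde\theta_0+\P_\circ\theta_0$; note that $\C_\kappa(\tilde\theta_0,\vv)=0$ because $\div(\vv)=0$. Then, by density of smooth data in $\B^{0,\sigma}_{3,2,0}\times\B^{-1}_{3/2,2,\circ}$ (third index $2<\infty$), split $\uu_0=\uu_0^{s}+\uu_0^{r}$ and $\P_\circ\theta_0=\theta_0^s+\theta_0^r$, where the rough parts $(\uu_0^r,\theta_0^r)$ have norm below a threshold $\varepsilon$ and the smooth parts lie in, say, $\B^{1/2,\sigma}_{3,2,0}\times\B^{-1/2}_{3/2,2,\circ}$.

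\textbf{Recasting the equation.} Set $\mathbf{y}:=\mathbf{x}-\mathbf{x}_0^s$, where $\mathbf{x}_0^s$ is the free evolution of the smooth part together with the constant-mean contribution $\L_\nu(\tilde\theta_0)$. The equation becomes
\begin{align*}
\mathbf{y}=\mathbf{x}_0^r+\mathcal{B}(\mathbf{x}_0^s,\mathbf{x}_0^s)+\mathcal{L}(\mathbf{y})+\mathcal{B}(\mathbf{y},\mathbf{y}),\qquad \mathcal{L}(\mathbf{y}):=2\mathcal{B}(\mathbf{x}_0^s,\mathbf{y}).
\end{align*}
The estimates of Proposition~\ref{Prop:SolutionOperatorBoussinesqLp} are of the product form \eqref{eq:ProofBSQBiLinNSEEstLp}, \eqref{eq:ProofBSQBiLinTempEstLp}, in which one factor is $\lVert\cdot\rVert_{\L^\infty}^{1/2}\lVert\cdot\rVert_{\L^2}^{1/2}$ of each argument. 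Hence $\lVert\mathcal{L}\rVert$ and $\lVert\mathcal{B}(\mathbf{x}_0^s,\mathbf{x}_0^s)\rVert$ are controlled by the $\L^2(0,T;\B^1_{3,\infty}\times\B^0_{3/2,\infty})$-norm of $\mathbf{x}_0^s$ (times its $\L^\infty$ norm). For smooth data this $\L^2$-norm is $O(T^{1/4})$ by Theorem~\ref{thm:DaPratoGrisvard}/Propositions~\ref{thm:LqMaxRegNeumannBesov}--\ref{thm:LqMaxRegStokesBesov}, applied with $q=4$ at the higher regularity level and followed by Hölder in time. The constant part $\L_\nu(\tilde\theta_0)$ is, on $(0,T)$, of size $O(T^{1/2}|\tilde\theta_0|)$ in $\mathrm{Z}_3$, by Step~1 of Proposition~\ref{Prop:SolutionOperatorBoussinesqLp} and Hölder in time. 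Finally $\lVert\mathbf{x}_0^r\rVert_{\X_T}\lesssim\varepsilon$ uniformly in $T$ by maximal regularity. Choosing $\varepsilon$ first and then $T$ small gives $\lVert\mathcal{L}\rVert<1/2$ and the smallness condition of Proposition~\ref{prop:FixedPoint2Bil+Lin}, which yields $\mathbf{y}$ and hence the solution $(\uu,\theta)$.

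\textbf{Continuity and main obstacle.} Time continuity with values in $\B^{0}_{3,2}\times\B^{-1}_{3/2,2}$ follows from Theorem~\ref{thm:DaPratoGrisvard} with $q=r=2$: the forcings lie in $\L^2$ of the appropriate spaces, as shown in the proof of Proposition~\ref{Prop:SolutionOperatorBoussinesqLp}, and the mean of $\theta$ is conserved. The main obstacle I expect is the bookkeeping in the $\mathrm{K}_{3/2}$ norm. There the $\L^\infty(\B^{-1}_{3/2,2})$ part of $\mathbf{x}_0^s$ is not small, so every bilinear term must carry at least one factor in the small $\L^2$-in-time norm. The estimates of Proposition~\ref{Prop:SolutionOperatorBoussinesqLp} therefore have to be used in their unsymmetrized form, before the Young's inequality step, and I will check that each term has this structure.
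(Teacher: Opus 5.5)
Your proof is correct and follows the paper's scheme: Proposition~\ref{prop:FixedPoint2Bil+Lin} is applied to the equation for the deviation from a reference free evolution. The estimates \eqref{eq:ProofBSQBiLinNSEEstLp} and \eqref{eq:ProofBSQBiLinTempEstLp} are used before the final Young step, with Young applied only to the unknown argument. The one real difference is your density splitting, which the paper does not need.

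The paper takes the \emph{entire} free evolution $(\tilde{\uu},\tilde{\Theta})$ as reference. The only quantities that must then be small are $\lVert\nabla\tilde{\uu}\rVert_{\L^2(0,T;\B^0_{3,\infty}(\Omega))}$ and $\lVert\tilde{\Theta}\rVert_{\L^2(0,T;\B^0_{3/2,\infty}(\Omega))}$. These tend to $0$ as $T\to0$ simply because they are $\L^2$-norms, over shrinking intervals, of fixed $\L^2_{\mathrm{loc}}$ functions (dominated convergence). The $\L^\infty$ factors $\lVert\uu_0\rVert_{\B^0_{3,2}(\Omega)}^{1/2}$ and $\lVert\theta_0\rVert_{\B^{-1}_{3/2,2}(\Omega)}^{1/2}$ may stay large. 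Since you already isolated this product structure, you could take $\mathbf{x}_0^s=\mathbf{x}_0$ and $\mathbf{x}_0^r=0$.

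What the splitting buys is an explicit rate $O(T^{1/4})$, and hence a quantitative lower bound on the existence time in terms of the chosen approximation. The cost is an extra approximation step and a maximal-regularity estimate at a shifted level. For $q=4$, $p=3$ this would need $s=-1$, which lies outside the range of Proposition~\ref{thm:LqMaxRegStokesBesov}. The rate does follow directly from $\lVert\AA_\mathcal{D}^{1/2}e^{-\nu t\AA_\mathcal{D}}\uu_0^s\rVert_{\B^0_{3,\infty}(\Omega)}\lesssim(\nu t)^{-1/4}\lVert\AA_\mathcal{D}^{1/4}\uu_0^s\rVert_{\B^0_{3,\infty}(\Omega)}$.

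Two minor corrections:
\begin{itemize}
\item $\PP_\Omega(\tilde{\theta}_0\mathfrak{e}_3)=\tilde{\theta}_0\PP_\Omega\nabla x_3=0$. So the mean of $\theta_0$ generates no forcing and $\L_\nu(\tilde{\theta}_0)=0$; your $O(T^{1/2}|\tilde{\theta}_0|)$ bound is harmless but vacuous.
\item Time continuity comes from Theorem~\ref{thm:DaPratoGrisvard} with $q=2$ and $r=\infty$, not $q=r=2$, since the forcings only lie in $\L^2$ of Besov spaces with third index $\infty$.
\end{itemize}
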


\begin{proof}Let $T\in(0,\infty)$, we consider the bilinear form $\mathcal{B}$ given by \eqref{eq:ultimateBilinearmap}, acting on $\mathrm{Z}_{3,T}\times\mathrm{K}_{3/2,T} := \mathrm{Z}_{3}((0,T)\times\Omega)\times\mathrm{K}_{3/2}((0,T)\times\Omega)$, see Notation~\ref{def:LpSolvSpacesBSQ}, thanks to Proposition~\ref{Prop:SolutionOperatorBoussinesqLp}. We set $\Tilde{\uu} := e^{\nu(\cdot)\AA_\mathcal{D}}\uu_0$ and $\Tilde{\Theta}:= e^{\kappa(\cdot)\Delta_\mathcal{N}}\theta_0$, it remains to analyze to following problem
\begin{align*}
    \left(\begin{array}{c}
         \vv \\
         \zeta
    \end{array}\right) =   \mathcal{B}\left(\left(\begin{array}{c}
         \Tilde{\uu} \\
         \Tilde{\Theta}
    \end{array}\right) ,\left(\begin{array}{c}
         \Tilde{\uu} \\
         \Tilde{\Theta}
    \end{array}\right) \right)+ 2 \mathcal{B}\left(\left(\begin{array}{c}
         \Tilde{\uu} \\
         \Tilde{\Theta}
    \end{array}\right),\left(\begin{array}{c}
         \vv \\
         \zeta
    \end{array}\right)\right)+ \mathcal{B}\left(\left(\begin{array}{c}
         \vv \\
         \zeta
    \end{array}\right),\left(\begin{array}{c}
         \vv \\
         \zeta
    \end{array}\right)\right).
\end{align*}
This is a problem in the form
\begin{align*}
    \mathbf{x} = \Tilde{\mathbf{x}}_0 + \mathcal{L}(\mathbf{x}) +\mathcal{B}(\mathbf{x},\mathbf{x}), 
\end{align*}
with 
\begin{align}\label{eq:LinPartBiLinprob}
    \mathcal{L}\,:\,\left(\begin{array}{c}
         \vv \\
         \eta
    \end{array}\right)\longmapsto
    2\mathcal{B}\left(\left(\begin{array}{c}
         \Tilde{\uu} \\
         \Tilde{\Theta}
    \end{array}\right),\left(\begin{array}{c}
         \vv \\
         \eta
    \end{array}\right)\right)
\end{align}
So we aim to apply the fixed point result Proposition~\ref{prop:FixedPoint2Bil+Lin}, hence we have to find $T>0$ such that $\mathcal{L}$ has norm strictly less than $1$ on $\mathrm{Z}_{3,T}\times\mathrm{K}_{3/2,T}$. Thanks to Proposition~\ref{Prop:SolutionOperatorBoussinesqLp}, \eqref{eq:ultimateBilinearmap}, the task is reduced to the analysis of the linear operators
\begin{align*}
    \vv\mapsto \C_{\kappa}(\Tilde{\Theta},\vv),\quad \eta\mapsto \C_{\kappa}(\eta,\Tilde{\uu}) ,\quad\text{ and }\quad \vv\longmapsto\B_\nu(\Tilde{\uu},\vv),
\end{align*}
for which it is sufficient to prove one can find $T>0$ such that each does have small enough norm.

\textbf{Step 1:} First for $\B_\nu$, following the proof of Proposition~\ref{Prop:SolutionOperatorBoussinesqLp}, Step~2, up to \eqref{eq:ProofBSQBiLinNSEEstLp}, one obtains
\begin{align*}
    \lVert &\B_\nu(\Tilde{\uu},\vv) \rVert_{\mathrm{Z}_{3,T}}\\
    & \lesssim_{\Omega} \frac{1}{\sqrt{\nu}}  \lVert \Tilde{\uu}\rVert_{\L^\infty(0,T;\B^{0}_{3,2}(\Omega))}^\frac{1}{2} \lVert \nabla \Tilde{\uu}\rVert_{\L^2(0,T;\B^{0}_{3,\infty}(\Omega))}^\frac{1}{2}\lVert \vv\rVert_{\L^\infty(0,T;\B^{0}_{3,2}(\Omega))}^\frac{1}{2} \lVert \nabla \vv\rVert_{\L^2(0,T;\B^{0}_{3,\infty}(\Omega))}^\frac{1}{2}\\
    & \lesssim_{\Omega} \frac{1}{\nu}  \lVert {\uu}_0\rVert_{\B^{0}_{3,2}(\Omega)}^\frac{1}{2} ( \sqrt{\nu}\lVert \nabla \Tilde{\uu}\rVert_{\L^2(0,T;\B^{0}_{3,\infty}(\Omega))})^\frac{1}{2}\lVert \vv\rVert_{\L^\infty(0,T;\B^{0}_{3,2}(\Omega))}^\frac{1}{2} (\sqrt{\nu}\lVert \nabla \vv\rVert_{\L^2(0,T;\B^{0}_{3,\infty}(\Omega))})^\frac{1}{2}.
\end{align*}
Where the last line was obtained  by uniform boundedness of the semigroup $(e^{-\nu t\AA_\mathcal{D}})_{t\geqslant 0}$ on $\B^{0}_{3,2}(\Omega)$. Hence, by Young's inequality and the dominated convergence theorem, since $\nabla \Tilde{\uu}\in\L^2(\RR_+;\B^{0}_{3,\infty}(\Omega))$, we deduce
\begin{align}\label{eq:SmallnormBnuTStokes}
    \lVert \B_\nu(\Tilde{\uu},\vv) \rVert_{\mathrm{Z}_{3,T}}& \lesssim_{\Omega} \frac{1}{\nu}  \lVert {\uu}_0\rVert_{\B^{0}_{3,2}(\Omega)}^\frac{1}{2} ( \sqrt{\nu}\lVert \nabla \Tilde{\uu}\rVert_{\L^2(0,T;\B^{0}_{3,\infty}(\Omega))})^\frac{1}{2} \lVert \vv\rVert_{\mathrm{Z}_{3,T}} \xrightarrow[T\longrightarrow0]{}0.
\end{align}

\textbf{Step 2:} Second, for $\C_\kappa$, similarly, one proceeds  to follow the proof of Proposition~\ref{Prop:SolutionOperatorBoussinesqLp}, Step~3, up to \eqref{eq:ProofBSQBiLinTempEstLp}, in order to obtain
\begin{align*}
    \lVert \C_\kappa(\vv&,\tilde{\Theta}) \rVert_{\mathrm{K}_{3/2,T}}\\
    &\lesssim_{\Omega} \frac{1}{\sqrt{\kappa}} \lVert \vv\rVert_{\L^{\infty}(0,T;\B^{0}_{3,2}(\Omega))}^\frac{1}{2} \lVert \vv\rVert_{\L^{2}(0,T;\B^{1}_{3,\infty}(\Omega))}^\frac{1}{2} \lVert \tilde{\Theta}\rVert_{\L^{\infty}(0,T;\B^{-1}_{3/2,2}(\Omega))}^\frac{1}{2} \lVert \tilde{\Theta}\rVert_{\L^{2}(0,T;\B^{0}_{3/2,\infty}(\Omega))}^\frac{1}{2}.
    \end{align*}
Now, then applying Young's inequality,  yield
\begin{align*}
    \lVert \C_\kappa(\vv&,\tilde{\Theta}) \rVert_{\mathrm{K}_{3/2,T}}\\
    &\lesssim_{\Omega} \frac{1}{\sqrt{\kappa}\nu^{1/4}} \lVert \tilde{\Theta}\rVert_{\L^{\infty}(0,T;\B^{-1}_{3/2,2}(\Omega))}^\frac{1}{2} \lVert \tilde{\Theta}\rVert_{\L^{2}(0,T;\B^{0}_{3/2,\infty}(\Omega))}^\frac{1}{2} \lVert \vv\rVert_{\mathrm{Z}_{3,T}} .
    \end{align*}
    Finally, uniform boundedness of the Neumann semigroup $(e^{\kappa t\Delta_\mathcal{N}})_{t\geqslant 0}$ on $\B^{-1}_{3/2,2,0}(\Omega)$, and since $\tilde{\Theta}\in\L^{2}_{\text{loc}}([0,\infty);\B^{0}_{3/2,\infty}(\Omega))$, the dominated convergence theorem yields
    \begin{align}
        \lVert \C_\kappa(\vv&,\tilde{\Theta}) \rVert_{\mathrm{K}_{3/2,T}}\nonumber\\
    &\lesssim_{\Omega,\nu,\kappa}\lVert \theta_0\rVert_{\B^{-1}_{3/2,2}(\Omega)}^\frac{1}{2} \lVert \tilde{\Theta}\rVert_{\L^{2}(0,T;\B^{0}_{3/2,\infty}(\Omega))}^\frac{1}{2} \lVert \vv\rVert_{\mathrm{Z}_{3,T}}  \xrightarrow[T\longrightarrow0]{}0.\label{eq:SmallnormCkappaTTheta0}
    \end{align}
This gave the result for $\vv\longmapsto \C_\kappa (\vv,\tilde{\Theta})$. The exacts same arguments apply to $\eta\longmapsto \C_\kappa (\tilde{\uu},\eta)$, so that we obtain
    \begin{align}
        \lVert \C_\kappa(\tilde{\uu}&,\eta) \rVert_{\mathrm{K}_{3/2,T}}\nonumber\\
    &\lesssim_{\Omega,\nu,\kappa}\lVert \uu_0\rVert_{\B^{0}_{3,2}(\Omega)}^\frac{1}{2} \lVert \nabla \tilde{\uu}\rVert_{\L^{2}(0,T;\B^{0}_{3,\infty}(\Omega))}^\frac{1}{2} \lVert \eta\rVert_{\mathrm{K}_{3/2,T}}  \xrightarrow[T\longrightarrow0]{}0\label{eq:SmallnormCkappaTu0}.
    \end{align}
\textbf{Step 3:} We finally deduce the claimed result. By \eqref{eq:SmallnormBnuTStokes}, \eqref{eq:SmallnormCkappaTTheta0} and \eqref{eq:SmallnormCkappaTu0}, for all $\varepsilon>0$, there exists $T>0$, depending on $\uu_0$ and $\theta_0$, such that
\begin{align*}
    \lVert\B_\nu(\Tilde{\uu},\cdot) \rVert_{\mathrm{Z}_{3,T}\rightarrow\mathrm{Z}_{3,T}},\quad  \lVert\C_\kappa( \cdot ,\tilde{\Theta}) \rVert_{\mathrm{Z}_{3,T}\rightarrow\mathrm{K}_{3/2,T}},\quad  \lVert\C_\kappa( \tilde{\uu}, \cdot) \rVert_{\mathrm{K}_{3/2,T}\rightarrow\mathrm{K}_{3/2,T}}< \varepsilon.
\end{align*}
Due to \eqref{eq:LinPartBiLinprob} and \eqref{eq:ultimateBilinearmap}, one obtains
\begin{align*}
    \lVert\mathcal{L}\rVert_{ \mathrm{Z}_{3,T}\times\mathrm{K}_{3/2,T}\rightarrow \mathrm{Z}_{3,T}\times\mathrm{K}_{3/2,T}} 
    < 2\varepsilon (3+2\lVert \L_{\nu}\rVert_{\mathrm{K}_{3/2,\infty}\rightarrow\mathrm{Z}_{3,\infty}}).
\end{align*}
Hence, choosing $T>0$, such that $\varepsilon<\frac{1}{(3+2\lVert \L_{\nu}\rVert_{\mathrm{K}_{3/2,\infty}\rightarrow\mathrm{Z}_{3,\infty}})}$, there exists $T>0$ such that  $$\lVert\mathcal{L}\rVert_{ \mathrm{Z}_{3,T}\times\mathrm{K}_{3/2,T}\rightarrow \mathrm{Z}_{3,T}\times\mathrm{K}_{3/2,T}} <1.$$
For the exact same reasons, with the same arguments as those present in Steps 1 and 2, one can apply above procedures to show that $\mathcal{B}\left(\left(\begin{array}{c}
         \Tilde{\uu} \\
         \Tilde{\Theta}
    \end{array}\right) ,\left(\begin{array}{c}
         \Tilde{\uu} \\
         \Tilde{\Theta}
    \end{array}\right) \right)$ has arbitrarily small norm for a well chosen $T>0$, with the appropriate bound, so that by Proposition~\ref{prop:FixedPoint2Bil+Lin} we deduce the result of existence.
\end{proof}

\begin{theorem}[ Small times, arbitrary initial data in $\B^{s}_{p,1}$] Let $\Omega\subset\RR^3$ be a bounded Lipschitz domain. For any $(\uu_0,\theta_0)\in\B^{0,\sigma}_{3,1,0}(\Omega)\times\B^{-1}_{3/2,1,0}(\Omega)$, there exists $T>0$, depending on $\uu_0,\theta_0$, $\Omega$, $\nu$ and $\kappa$, such that
then the system \eqref{eq:BoussinesqSystemIntro} admits a (mild) solution on $(0,T)$:
\begin{align*}
    (\uu,\theta)\in\C_{ub}^0([0,T];&\B^{0}_{3,1}(\Omega,\RR^3)\times \B^{-1}_{3/2,1,0}(\Omega))\cap\L^1(0,T;\D^{0}_{3,1}(\mathbb{A}_\mathcal{D},\Omega)\times \B^{1}_{3/2,1}(\Omega)).
\end{align*}
\end{theorem}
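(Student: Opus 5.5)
We mirror the proof of Theorem~\ref{thm:ExistenceSmallTimesLp}, now working in $\mathrm{U}_{3,T}\times\mathrm{T}_{3/2,T}:=\mathrm{U}_3((0,T)\times\Omega)\times\mathrm{T}_{3/2}((0,T)\times\Omega)$ (Notation~\ref{def:BesovSolvSpacesBSQ}). The bilinear map $\mathcal{B}$ from \eqref{eq:ultimateBilinearmap} is bounded there by Proposition~\ref{Prop:SolutionOperatorBoussinesqBesov}, which only needs a Lipschitz boundary. Set $\tilde{\uu}:=e^{-\nu(\cdot)\AA_\mathcal{D}}\uu_0$ and $\tilde{\Theta}:=e^{\kappa(\cdot)\Delta_\mathcal{N}}\theta_0$, and write the unknown as $\prescript{t}{}{(\tilde{\uu}+\L_\nu\tilde\Theta,\tilde\Theta)}+\mathbf{x}$. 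This yields $\mathbf{x}=\tilde{\mathbf{x}}_0+\mathcal{L}(\mathbf{x})+\mathcal{B}(\mathbf{x},\mathbf{x})$, where $\mathcal{L}$ is built as in \eqref{eq:LinPartBiLinprob}. We then apply Proposition~\ref{prop:FixedPoint2Bil+Lin} on a short interval $(0,T)$.

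\textbf{Free part.} First we check that the free part lies in $\mathrm{U}_{3,T}\times\mathrm{T}_{3/2,T}$, uniformly in $T\leqslant1$.

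For the velocity, Proposition~\ref{thm:LqMaxRegStokesBesov} with $q=r=1$, $s=0$, $p=3$ gives $\tilde{\uu}\in\C^0_{ub}([0,T];\B^{0,\sigma}_{3,1,0})$ and $\AA_\mathcal{D}\tilde{\uu}\in\L^1(\RR_+;\B^0_{3,1})$. The exponents are admissible because $2/q+1/p=7/3\neq2$.

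For the temperature, Proposition~\ref{thm:LqMaxRegNeumannBesov} with $s=-1$, $p=3/2$, $q=r=1$ applies since $s=-1\in(-4/3,-1/3)$, $p>3/(2-\varepsilon_{{}_\Omega})$ and $s+2=1<2+\varepsilon_{{}_\Omega}$. It gives $\tilde\Theta\in\C^0_{ub}([0,T];\B^{-1}_{3/2,1,0})\cap\L^1(0,T;\B^1_{3/2,1})$. The mean $\langle\theta_0,\mathbf 1\rangle_\Omega$ is conserved by the Neumann flow and contributes the harmless term $\kappa T|\langle\theta_0,\mathbf 1\rangle|$.

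Then $\L_\nu\tilde\Theta\in\mathrm{U}_{3,T}$ by part (i) of Proposition~\ref{Prop:SolutionOperatorBoussinesqBesov}.

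\textbf{Smallness of the linear part.} The key point is that every bilinear estimate in the proof of Proposition~\ref{Prop:SolutionOperatorBoussinesqBesov} factors through the quantities $\lVert\AA_\mathcal{D}^{1/2}\vv\rVert_{\L^2(0,T;\B^0_{3,1})}$ and $\lVert\Theta\rVert_{\L^2(0,T;\B^0_{3/2,1})}$, via \eqref{eq:ProofBiLinestStokes} and \eqref{eq:ProofBiLinestBoussinesTransp}. For the free solutions $\tilde{\uu}$ and $\tilde\Theta$ (and for $\L_\nu\tilde\Theta$) these are bounded by
\begin{align*}
\lVert\cdot\rVert_{\L^\infty(0,T)}^{1/2}\,\lVert \AA_\mathcal{D}\tilde{\uu}\rVert_{\L^1(0,T;\B^0_{3,1})}^{1/2}
\quad\text{and}\quad
\lVert\cdot\rVert_{\L^\infty(0,T)}^{1/2}\,\lVert\tilde\Theta\rVert_{\L^1(0,T;\B^1_{3/2,1})}^{1/2},
\end{align*}
respectively. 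The $\L^1(0,T)$ factors tend to $0$ as $T\to0$ by dominated convergence, since the integrands lie in $\L^1_{\rm loc}$.

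Hence $\vv\mapsto\B_\nu(\tilde{\uu},\vv)$, $\vv\mapsto\C_\kappa(\vv,\tilde\Theta)$ and $\eta\mapsto\C_\kappa(\tilde{\uu},\eta)$ all have norms $o(1)$ as $T\to0$; the same holds with $\tilde{\uu}$ replaced by $\L_\nu\tilde\Theta$. Combined with the bound on $\L_\nu$ (uniform in $T$), this gives $\lVert\mathcal L\rVert<1$ for $T$ small, exactly as in Step~3 of Theorem~\ref{thm:ExistenceSmallTimesLp}.

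\textbf{Smallness of the source.} The same factorization shows $\lVert\tilde{\mathbf{x}}_0\rVert=\lVert\mathcal{B}(\cdot,\cdot)\rVert\to0$, so the condition $4\lVert\mathcal B\rVert\lVert\tilde{\mathbf{x}}_0\rVert<(1-\lVert\mathcal L\rVert)^2$ holds for $T$ small. Proposition~\ref{prop:FixedPoint2Bil+Lin} then provides the fixed point.

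Continuity in time with values in $\B^0_{3,1}\times\B^{-1}_{3/2,1,0}$ follows from the $\C^0_{ub}$ conclusion of the Da Prato--Grisvard Theorem~\ref{thm:DaPratoGrisvard}.

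\textbf{Main obstacle.} The delicate point is the mean-value and sign bookkeeping for $\theta$. We use the space $\B^{-1}_{3/2,1,0}$, where $\mathbf 1$ is a valid test function (since $-1<1/3$). The temperature only needs to lie in $\L^1(0,T;\B^1_{3/2,1})$ without a mean-free constraint, and for finite $T$ this is allowed by the finite-time estimate \eqref{eq:maxRegBspq}. We also must check that the constant part $(\theta_0)_\Omega$ does not spoil the smallness of the $\L^1(0,T;\B^1_{3/2,1})$ norm of $\tilde\Theta$. It does not, because its contribution is $O(T)$.
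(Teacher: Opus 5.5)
Your proposal is correct and follows the paper's route. The paper reruns the fixed-point scheme of Theorem~\ref{thm:ExistenceSmallTimesLp} in $\mathrm{U}_3\times\mathrm{T}_{3/2}$, obtaining smallness as $T\to0$ from the factorized bounds \eqref{eq:ProofBiLinestStokes} and \eqref{eq:ProofBiLinestBoussinesTransp}, exactly as you do; your inclusion of $\L_\nu\tilde\Theta$ in the free part is the correct bookkeeping. One small slip: $p=3/2$ does not satisfy $p>3/(2-\varepsilon_{{}_\Omega})$ as literally written, but the conditions actually needed here, $-1>3/p-3-\varepsilon_{{}_\Omega}=-1-\varepsilon_{{}_\Omega}$ and $s+2=1<3/p+\varepsilon_{{}_\Omega}$, do hold, and the paper itself applies Proposition~\ref{thm:LqMaxRegNeumannBesov} with $p=3/2$ in the same way.
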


\begin{proof} The proof follows exactly the same lines as the proof of Theorem~\ref{thm:ExistenceSmallTimesLp}, taking advantage of \eqref{eq:ProofBiLinestStokes} and \eqref{eq:ProofBiLinestBoussinesTransp} from the proof of Proposition~\ref{Prop:SolutionOperatorBoussinesqBesov}.
\end{proof}

Now, we dive in the global-in-time existence results.

\begin{theorem}[ Large times, small initial data in $\B^s_{p,2}$
]\label{thm:LargeTimesExistenceLp} Let $\Omega\subset\RR^3$ be a bounded $\C^{1,\alpha}$ domain. Let $(\uu_0,\theta_0)\in\B^{0,\sigma}_{3,2,0}(\Omega)\times\B^{-1}_{3/2,2,0}(\Omega)$. We set $\Tilde{\theta}_0=\frac{1}{|\Omega|}\int_{\Omega}\theta_0\in\RR$.  There exists a constant $c>0$, depending only on $\Omega$, $\nu$ and $\kappa$, such that if
\begin{align*}
    \lVert \uu_0\rVert_{\B^{0}_{3,2}(\Omega)}+\lVert \theta_0-\Tilde{\theta}_0\rVert_{\B^{-1}_{3/2,2}(\Omega)} \leqslant c
\end{align*}
then the system \eqref{eq:BoussinesqSystemIntro} admits a global-in-time (mild) solution such that
\begin{align*}
    (\uu,\theta-\tilde{\theta}_0)\in\L^2(\RR_+;\B^{1,\sigma}_{3,\infty,0}(\Omega)\times \B^{0}_{3/2,\infty,\circ}(\Omega))\cap\C^0_{ub}(\overline{\RR_+};\B^{0,\sigma}_{3,2,0}(\Omega)\times \B^{-1}_{3/2,2,\circ}(\Omega)).
\end{align*}
with the estimate
\begin{align*}
    \lVert   \uu  \rVert_{\L^\infty(\RR_+;\B^{0}_{3,2}(\Omega))} +\nu^\frac{1}{2}\lVert & \nabla\uu\rVert_{\L^2(\RR_+,\B^{0}_{3,\infty}(\Omega))}\\
    &+ \lVert   \theta -\Tilde{\theta}_0 \rVert_{\L^\infty(\RR_+,\B^{-1}_{3/2,2}(\Omega))} + \kappa^\frac{1}{2}\lVert  \theta -\Tilde{\theta}_0 \rVert_{\L^2(\RR_+,\B^{0}_{3/2,\infty}(\Omega))}  \lesssim_{\Omega} c.
\end{align*}
\end{theorem}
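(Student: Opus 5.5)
The plan is to reduce to the mean-free temperature and apply the bilinear fixed point, Proposition~\ref{prop:FixedPointBilin}, on $\mathrm{Z}_{3}(\RR_+\times\Omega)\times\mathrm{K}_{3/2}(\RR_+\times\Omega)$. We set $\vartheta:=\theta-\Tilde{\theta}_0$. Since $\div\uu=0$ and $\uu_{|_{\partial\Omega}}=0$, the term $\div(\Tilde{\theta}_0\uu)$ vanishes. The buoyancy term $\Tilde{\theta}_0\mathfrak{e}_3=\nabla(\Tilde{\theta}_0 x_3)$ is a gradient, so $\PP_\Omega(\Tilde{\theta}_0\mathfrak{e}_3)=0$ and it can be absorbed into the pressure. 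Moreover, $e^{\kappa t\Delta_\mathcal{N}}\Tilde{\theta}_0=\Tilde{\theta}_0$. Hence $(\uu,\vartheta)$ solves the same mild system \eqref{eq:ModifAbstractBoussinesq} with data $(\uu_0,\theta_0-\Tilde{\theta}_0)\in\B^{0,\sigma}_{3,2,0}(\Omega)\times\B^{-1}_{3/2,2,\circ}(\Omega)$. The operator $\C_\kappa$ preserves mean-freeness: for $\eta\in\Ccinfty$ we have $\langle\div(\vartheta\uu),\mathbf{1}\rangle_\Omega=0$, and the Neumann semigroup commutes with $\P_\circ$ by \eqref{eq:SemigroupIdentityMeanFreeNeumann}. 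So we may work throughout in the mean-free scale, where the Neumann Laplacian is invertible.

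\textbf{Step 1 (free evolution).} Apply Proposition~\ref{thm:LqMaxRegStokesBesov} with $q=2$, $r=\infty$, $p=3$ and $s=-1$, read in the shifted scale $\AA_\mathcal{D}^{1/2}\B^{0,\sigma}_{3,\infty,0}$ as in the proof of Proposition~\ref{Prop:SolutionOperatorBoussinesqLp}, and use the square-root isomorphism \eqref{eq:SQRTStokesBesovLip}. Apply Proposition~\ref{thm:LqMaxRegNeumannBesov} in its mean-free, $T=\infty$ version. Together these give
\begin{align*}
\lVert \mathbf{v}_0\rVert_{\mathrm{Z}_3}\lesssim_\Omega \lVert\uu_0\rVert_{\B^0_{3,2}(\Omega)},\qquad \lVert\boldsymbol{\Theta}_0\rVert_{\mathrm{K}_{3/2}}\lesssim_\Omega \lVert\theta_0-\Tilde{\theta}_0\rVert_{\B^{-1}_{3/2,2}(\Omega)},
\end{align*}
with $T=\infty$ allowed because both operators are invertible there. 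By Proposition~\ref{Prop:SolutionOperatorBoussinesqLp}(i), $\lVert\L_\nu(\boldsymbol{\Theta}_0)\rVert_{\mathrm{Z}_3}\lesssim (\nu\kappa)^{-1/2}\lVert\boldsymbol{\Theta}_0\rVert_{\mathrm{K}_{3/2}}$. Therefore $\lVert\mathbf{x}_0\rVert\leqslant C_{\Omega,\nu,\kappa}\big(\lVert\uu_0\rVert+\lVert\theta_0-\Tilde{\theta}_0\rVert\big)$.

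\textbf{Step 2 (bilinear bound).} Parts (i)--(iii) of Proposition~\ref{Prop:SolutionOperatorBoussinesqLp} control $\B_\nu$, $\C_\kappa$ and $\L_\nu\circ\C_\kappa$. By \eqref{eq:ultimateBilinearmap}, this gives
\begin{align*}
\lVert\mathcal{B}\rVert\lesssim_\Omega \nu^{-1}+\big(1+(\nu\kappa)^{-1/2}\big)\nu^{-1/4}\kappa^{-3/4}.
\end{align*}
This is the step I expect to be the main obstacle. Part (iii) uses the $\C^{1,\alpha}$ regularity with $\alpha>1/3$ through Proposition~\ref{prop:RieszTransfC1alphaDomainNeumannLap}, so that hypothesis has to be read into the statement. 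The Neumann estimate must also be applied in the mean-free space so that its constants are uniform in $T=\infty$, with no $T^{1/q}$ term from \eqref{eq:maxRegBspq}. This is legitimate because $\C_\kappa$ takes values in mean-free functions.

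\textbf{Step 3 (conclusion).} Choose $c$ so that $4\lVert\mathcal{B}\rVert C_{\Omega,\nu,\kappa}c<1$. Proposition~\ref{prop:FixedPointBilin} then yields a solution $\mathbf{x}=(\uu,\vartheta)$ with $\lVert\mathbf{x}\rVert\leqslant 2\lVert\mathbf{x}_0\rVert\lesssim c$, which is the claimed estimate. Continuity in time with values in $\B^{0,\sigma}_{3,2,0}\times\B^{-1}_{3/2,2,\circ}$ follows from the $\C^0_{ub}$ part of the Da Prato--Grisvard statements (Propositions~\ref{thm:LqMaxRegStokesBesov} and \ref{thm:LqMaxRegNeumannBesov}), applied to the Duhamel terms whose forcing lies in the relevant $\L^2$ spaces, as established in Step 2. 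Finally, $\theta=\vartheta+\Tilde{\theta}_0$ is the desired mild solution.
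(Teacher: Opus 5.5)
Your proposal is correct and follows essentially the paper's route: a bilinear fixed point (Proposition~\ref{prop:FixedPointBilin}) on $\mathrm{Z}_3(\RR_+\times\Omega)\times\mathrm{K}_{3/2}(\RR_+\times\Omega)$ for the mean-free temperature, with the bounds supplied by Proposition~\ref{Prop:SolutionOperatorBoussinesqLp}, combined with the facts $\PP_\Omega(c\mathfrak{e}_3)=0$, $\div(c\uu)=0$ and $e^{\kappa t\Delta_\mathcal{N}}c=c$ to pass between $\theta$ and $\theta-\Tilde{\theta}_0$. The paper solves the mean-free problem first and then adds the constant back, which is the same argument in reverse order; your extra details (the explicit bounds on $\mathbf{x}_0$ and $\lVert\mathcal{B}\rVert$, and the remark that $\alpha>1/3$ is needed for part (iii) of Proposition~\ref{Prop:SolutionOperatorBoussinesqLp}) fill in what the paper leaves implicit.
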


\begin{proof} \textbf{Step 1:} The case $\theta_0\in\B^{-1}_{3/2,2,\circ}(\Omega)\subset\B^{-1}_{3/2,2,0}(\Omega)$, \textit{i.e.} $\tilde{\theta}_0=0$.

Then the result follows directly from the application of Proposition~\ref{prop:FixedPointBilin} to the abstract problem
\begin{align*}
    \left(\begin{array}{c}
         \uu \\
         \theta
    \end{array}\right) =  \left(\begin{array}{c}
         e^{-(\cdot) \nu\AA_\mathcal{D}}\uu_0 + \L_{\nu}( e^{(\cdot) \kappa\Delta_\mathcal{N}}\theta_0)\\
         e^{(\cdot) \kappa\Delta_\mathcal{N}}\theta_0
    \end{array}\right)+ \mathcal{B}\left(\left(\begin{array}{c}
         \uu \\
         \theta
    \end{array}\right),\left(\begin{array}{c}
         \uu \\
         \theta
    \end{array}\right)\right).
\end{align*}
where   $\mathcal{B}$ is the bilinear map given by \eqref{eq:ultimateBilinearmap}, well-defined and bounded on $\Z_3(\RR_+\times\Omega)\times \K_{3/2}(\RR_+\times\Omega)$, see Notations~\ref{def:LpSolvSpacesBSQ}, by Proposition~\ref{Prop:SolutionOperatorBoussinesqLp}.

\textbf{Step 2:} Now, assume that $\theta_0\in\B^{-1}_{3/2,2,0}(\Omega)$, and consider the decomposition $\theta_0 = \theta_0^{\circ}+ \tilde{\theta}_0 \mathds{1}_{\Omega}$, where $\tilde{\theta}_0= |\Omega|^{-1}\langle \tilde{\theta}_0, \mathbf{1}\rangle_{\Omega}$. We identify $\tilde{\theta}_0 \mathds{1}_{\Omega}$ with $\tilde{\theta}_0\in\RR$. Thus $\theta_0^{\circ}$ is mean free. By the previous Step, there exists as solution $(\uu,\theta^\circ)$  to 
\begin{align*}
    \left(\begin{array}{c}
         \uu \\
         \theta^{\circ}
    \end{array}\right) =  \left(\begin{array}{c}
         e^{-(\cdot) \nu\AA_\mathcal{D}}\uu_0 + \L_{\nu}( e^{(\cdot) \kappa\Delta_\mathcal{N}}\theta_0^{\circ})\\
         e^{(\cdot) \kappa\Delta_\mathcal{N}}\theta_0^{\circ}
    \end{array}\right)+ \mathcal{B}\left(\left(\begin{array}{c}
         \uu \\
         \theta^{\circ}
    \end{array}\right),\left(\begin{array}{c}
         \uu \\
         \theta^{\circ}
    \end{array}\right)\right).
\end{align*}
Now, we make a remark: for all $c\in\RR$, one has $\PP_{\Omega}(c\mathfrak{e}_n) = c\PP_{\Omega}(\nabla x_n)=0$. Thus, by the definition of $\mathcal{B}$, \eqref{eq:ultimateBilinearmap}, subordinated to $\B_\nu$, \eqref{eq:BOpBiLin}, and $\L_\nu$, \eqref{eq:LOpLin}, it holds that $\mathcal{B}((0,c), \cdot) =0$ and $\L_\nu(c) =0$, and we obtain
\begin{align*}
    \left(\begin{array}{c}
         \uu \\
         \theta^{\circ} +\tilde{\theta}_0
    \end{array}\right) &=  \left(\begin{array}{c}
         e^{-(\cdot) \nu\AA_\mathcal{D}}\uu_0 + \L_{\nu}( e^{(\cdot) \kappa\Delta_\mathcal{N}}\theta_0^{\circ})\\
         e^{(\cdot) \kappa\Delta_\mathcal{N}}\theta_0^{\circ} +\tilde{\theta}_0
    \end{array}\right)+ \mathcal{B}\left(\left(\begin{array}{c}
         \uu \\
         \theta^{\circ}
    \end{array}\right),\left(\begin{array}{c}
         \uu \\
         \theta^{\circ}
    \end{array}\right)\right)\\
    &=  \left(\begin{array}{c}
         e^{-(\cdot) \nu\AA_\mathcal{D}}\uu_0 + \L_{\nu}( e^{(\cdot) \kappa\Delta_\mathcal{N}}\theta_0^{\circ} + \tilde{\theta}_0)\\
         e^{(\cdot) \kappa\Delta_\mathcal{N}}\theta_0^{\circ} +\tilde{\theta}_0
    \end{array}\right)+ \mathcal{B}\left(\left(\begin{array}{c}
         \uu \\
         \theta^{\circ}+\tilde{\theta}_0
    \end{array}\right),\left(\begin{array}{c}
         \uu \\
         \theta^{\circ}+\tilde{\theta}_0
    \end{array}\right)\right).
\end{align*}
From this point, we now use the fact that $e^{(\cdot)\kappa\Delta_{N}}c =c$ for all $c\in\RR$, and we set $\theta := \theta^{\circ}+\tilde{\theta}_0$, so that we are recovering
\begin{align*}
    \left(\begin{array}{c}
         \uu \\
         \theta
    \end{array}\right) &=  \left(\begin{array}{c}
         e^{-(\cdot) \nu\AA_\mathcal{D}}\uu_0 + \L_{\nu}( e^{(\cdot) \kappa\Delta_\mathcal{N}}\theta_0)\\
         e^{(\cdot) \kappa\Delta_\mathcal{N}}\theta_0 
    \end{array}\right)+ \mathcal{B}\left(\left(\begin{array}{c}
         \uu \\
         \theta
    \end{array}\right),\left(\begin{array}{c}
         \uu \\
         \theta
    \end{array}\right)\right).
\end{align*}
Consequently, we have obtained a solution with the expected properties, which ends the proof.
\end{proof}

\begin{theorem}[ Large times, small initial data in $\B^{s}_{p,1}$]\label{thm:LargeTimesExistenceBesov} Let $\Omega\subset\RR^3$ be a bounded Lipschitz domain. Let $(\uu_0,\theta_0)\in\B^{0,\sigma}_{3,1,0}(\Omega)\times\B^{-1}_{3/2,1,0}(\Omega)$. We set $\Tilde{\theta}_0=\frac{1}{|\Omega|}\int_{\Omega}\theta_0\in\RR$. There exists a constant $c>0$, depending only on $\Omega$, $\nu$ and $\kappa$, such that if  $$\lVert \uu_0\rVert_{\B^{0}_{3,1}(\Omega)}+\lVert \theta_0-\Tilde{\theta}_0\rVert_{\B^{-1}_{3/2,1}(\Omega)} \leqslant c$$
then the system \eqref{eq:BoussinesqSystemIntro} admits a global-in-time (mild) solution
\begin{align*}
    (\uu,\theta)\in\C^0_{ub}(\overline{\RR_+};\B^{0,\sigma}_{3,1,0}(\Omega)\times \B^{-1}_{3/2,1}(\Omega))\cap\L^1_{\text{loc}}(\overline{\RR_+};\D^{0}_{3,1}(\mathbb{A}_\mathcal{D},\Omega)\times \B^{1}_{3/2,1}(\Omega)),
\end{align*}
and it is such that
\begin{align*}
    (\uu,\theta-\tilde{\theta}_0)\in\C^0_{0}(\overline{\RR_+};\B^{0,\sigma}_{3,1,0}(\Omega)\times \B^{-1}_{3/2,1}(\Omega))\cap\L^1(\overline{\RR_+};\D^{0}_{3,1}(\mathbb{A}_\mathcal{D},\Omega)\times \B^{1}_{3/2,1}(\Omega)),
\end{align*}
Furthermore, one has the estimates
\begin{align*}
    \lVert \uu  \rVert_{\L^\infty(\RR_+,\B^{0}_{3,1}(\Omega))} + &\lVert \nu\AA_\mathcal{D}\uu  \rVert_{\L^1(\RR_+,\B^{0}_{3,1}(\Omega))}\\
    &+ \lVert \theta - \tilde{\theta_0}  \rVert_{\L^\infty(\RR_+,\B^{-1}_{3/2,1}(\Omega))} +\lVert  \kappa\nabla^2\theta  \rVert_{\L^1(\RR_+,\B^{-1}_{3/2,1}(\Omega))} \lesssim_{\Omega} c.
\end{align*}
\end{theorem}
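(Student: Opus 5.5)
The argument mirrors the proof of Theorem~\ref{thm:LargeTimesExistenceLp}, now in the $\L^1$-in-time Besov framework of Notation~\ref{def:BesovSolvSpacesBSQ}, and it proceeds in two steps. We work on $\X:=\mathrm{U}_3(\RR_+\times\Omega)\times\mathrm{T}_{3/2}(\RR_+\times\Omega)$. Proposition~\ref{Prop:SolutionOperatorBoussinesqBesov} shows that $\L_\nu$, $\B_\nu$ and $\C_\kappa$ are bounded on these spaces for $T=\infty$, and it requires only Lipschitz regularity of $\Omega$. Hence the bilinear map $\mathcal{B}$ of \eqref{eq:ultimateBilinearmap} is bounded on $\X$, with norm $\lesssim_\Omega \nu^{-1}+\kappa^{-1}(\kappa\nu)^{-1/2}+(\kappa\nu)^{-1/2}$.

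\textbf{Step 1: mean-free temperature.} Assume first $\tilde\theta_0=0$, so that $\theta_0\in\B^{-1}_{3/2,1,\circ}(\Omega)$. Set $\mathbf{x}_0=(e^{-(\cdot)\nu\AA_\mathcal{D}}\uu_0+\L_\nu(e^{(\cdot)\kappa\Delta_\mathcal{N}}\theta_0),\,e^{(\cdot)\kappa\Delta_\mathcal{N}}\theta_0)$. To bound $\lVert\mathbf{x}_0\rVert_\X$ we use Proposition~\ref{thm:LqMaxRegStokesBesov} with $p=3$, $s=0$, $q=r=1$, which applies because $p=3$ satisfies \eqref{eq:CondtnLpStokesLip} and $s+1-2<\delta_{\Omega,3}$. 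It gives $\lVert e^{-(\cdot)\nu\AA_\mathcal{D}}\uu_0\rVert_{\mathrm{U}_3}\lesssim\lVert\uu_0\rVert_{\B^0_{3,1}}$. We also use Proposition~\ref{thm:LqMaxRegNeumannBesov} in its mean-free, $T=\infty$ version with $p=3/2$, $s=-1$, $q=r=1$. The constraints hold: $-1\in(-4/3,-1/3)$, $3/2>3/(2-\varepsilon_\Omega)$, and $s+2=1<2+\varepsilon_\Omega$. It gives $\lVert e^{(\cdot)\kappa\Delta_\mathcal{N}}\theta_0\rVert_{\mathrm{T}_{3/2}}\lesssim\lVert\theta_0\rVert_{\B^{-1}_{3/2,1}}$. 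Combining these with the bound on $\L_\nu$ yields $\lVert\mathbf{x}_0\rVert_\X\lesssim_{\Omega,\nu,\kappa}\lVert\uu_0\rVert_{\B^0_{3,1}}+\lVert\theta_0\rVert_{\B^{-1}_{3/2,1}}$. Choosing $c$ small, Proposition~\ref{prop:FixedPointBilin} gives a unique small solution $\mathbf{x}=(\uu,\theta)\in\X$ with $\lVert\mathbf{x}\rVert_\X\le2\lVert\mathbf{x}_0\rVert_\X\lesssim c$. This is the claimed estimate, since the $\mathrm{T}_{3/2}$-norm controls $\kappa\lVert\theta\rVert_{\L^1(\B^1_{3/2,1})}$ and this in turn controls $\kappa\lVert\nabla^2\theta\rVert_{\L^1(\B^{-1}_{3/2,1})}$.

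\textbf{Continuity and decay.} The $\C^0$ and $\C^0_0$ statements follow from the Da Prato--Grisvard Theorem~\ref{thm:DaPratoGrisvard}. Indeed, $\uu$ and $\theta$ are mild solutions of \eqref{ACP} with right-hand sides $-\PP_\Omega\div(\uu\otimes\uu)+\PP_\Omega(\theta\mathfrak{e}_3)\in\L^1(\RR_+;\B^{0}_{3,1})$ and $-\div(\theta\uu)\in\L^1(\RR_+;\B^{-1}_{3/2,1,\circ})$; these memberships are exactly what was shown inside the proof of Proposition~\ref{Prop:SolutionOperatorBoussinesqBesov}. We must check that $\div(\theta\uu)$ is mean free. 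This holds because $\langle\div(\theta\uu),\mathbf{1}\rangle_\Omega=0$ when $\uu$ vanishes on $\partial\Omega$; it is checked by density, pairing against $\mathbf 1\in\mathcal{B}^{1}_{3,\infty}$. Since the operators are invertible on the mean-free and solenoidal spaces, a mild solution with $\L^1$ forcing and initial datum in the trace space is uniformly continuous. Moreover, $\L^1$-integrability of the forcing together with the exponential decay of the semigroups gives $\uu(t),\theta(t)\to0$: split the Duhamel integral at $t/2$ and use dominated convergence.

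\textbf{Step 2: general mean.} For a general mean we decompose $\theta_0=\theta_0^\circ+\tilde\theta_0$ and repeat verbatim Step 2 of the proof of Theorem~\ref{thm:LargeTimesExistenceLp}. The identities used are $\PP_\Omega(c\,\mathfrak{e}_3)=0$, whence $\L_\nu(c)=0$; $\C_\kappa(c,\vv)=-\int e^{\kappa(t-s)\Delta_\mathcal{N}}c\,\div\vv=0$, since $\div\vv=0$; and $e^{t\kappa\Delta_\mathcal{N}}c=c$. These show that $(\uu,\theta^\circ+\tilde\theta_0)$ solves the full mild system. Because $\tilde\theta_0$ is constant in time, it belongs only to $\L^1_{\mathrm{loc}}$ in the $\B^1_{3/2,1}$ component, which explains the $\L^1_{\text{loc}}$ in the first membership.

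\textbf{Main obstacle.} The delicate point is checking that the index choices $p=3/2$, $s=-1$ and $p=3$, $s=0$ with $q=r=1$ meet the hypotheses of Propositions~\ref{thm:LqMaxRegNeumannBesov} and \ref{thm:LqMaxRegStokesBesov}, in particular the inequality $3/2>3/(2-\varepsilon_\Omega)$. A secondary point is justifying rigorously that $\div(\theta\uu)$ stays in the mean-free scale $\B^{-1}_{3/2,1,\circ}$, so that the $T=\infty$ estimates are legitimate.
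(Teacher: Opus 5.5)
Your proposal follows the paper's proof. The paper simply says to repeat the two steps of Theorem~\ref{thm:LargeTimesExistenceLp} with Proposition~\ref{Prop:SolutionOperatorBoussinesqBesov} in place of Proposition~\ref{Prop:SolutionOperatorBoussinesqLp}: a fixed point in the mean-free case, then the shift by $\tilde\theta_0$ via $\PP_\Omega(c\,\mathfrak{e}_3)=0$ and $e^{t\kappa\Delta_\mathcal{N}}c=c$. Your extra checks on the initial term, on $\div(\theta\uu)$ being mean free and on the $\C^0_0$ decay are sound additions.

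One correction concerns the inequality $3/2>3/(2-\varepsilon_{{}_\Omega})$, which you flag as the delicate point: it is false for $\varepsilon_{{}_\Omega}>0$. As printed, the hypothesis of Proposition~\ref{thm:LqMaxRegNeumannBesov} excludes $p=3/2$, even though the paper itself applies that proposition at $p=3/2$. What its proof really needs at $s=-1$ is the lower bound $s>3/p-3-\varepsilon_{{}_\Omega}$ from Proposition~\ref{prop:NeumannRegularity}, i.e.\ $p>3/(2+\varepsilon_{{}_\Omega})$, and $p=3/2$ does satisfy this.
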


\begin{proof} The proof is the same as the one of Theorem~\ref{thm:LargeTimesExistenceLp}, taking now advantage of Proposition~\ref{Prop:SolutionOperatorBoussinesqBesov}.
\end{proof}

\subsection{Uniqueness results}\label{Sec:WPBSQUniqueness}

We now prove uniqueness. Note that, for the uniqueness of weak solutions, the boundedness of the domain $\Omega$ is crucial for our argument. The goal is to use information from the linear Hilbertian $\L^2$-theory, in order to prove that the difference of two weak solutions we consider can be written as mild solution of an appropriate problem. Thus, we critically rely on the embedding $\L^3(\Omega)\hookrightarrow\L^2(\Omega)$, only valid for bounded domains. We will also need a slight amount of boundary regularity in order to close our arguments. Without boundedness or boundary regularity requirements for the domain-- say if we are in the half-space (using then homogeneous Sobolev and Besov spaces), or if the domain is bounded and only has Lipschitz boundary-- only the second parts in the proofs of Theorem~\ref{thm:UniquenessWeakLp}~and~\ref{thm:UniquenessWeakBesov} below are applicable. In this case, one  recovers then uniqueness only in the sole class of mild solutions, see for instance Proposition~\ref{prop:UniqMild}.

\begin{theorem}\label{thm:UniquenessWeakLp} Let $\Omega\subset\RR^3$ be a bounded $\C^{1,\alpha}$ domain with $\alpha>1/3$ and  $T\in(0,\infty]$. Let $(\uu_0,\theta_0)\in\B^{0,\sigma}_{3,2,0}(\Omega)\times\B^{-1}_{3/2,2,0}(\Omega)$, and let
\begin{align*}
    (\uu_1,\theta_1),\quad (\uu_2,\theta_2)\in\L^\infty(0,T;&\,\B^{0,\sigma}_{3,2,0}(\Omega)\times \B^{-1}_{3/2,2,0}(\Omega))\cap\L^2(0,T;\B^{1,\sigma}_{3,\infty,0}(\Omega)\times \B^{0}_{3/2,\infty}(\Omega))
\end{align*}
such that both satisfies the system \eqref{eq:BoussinesqSystemIntro} on $(0,T)$ with initial value $(\uu_0,\theta_0)$ in the sense of \eqref{eq:WeakFormBSQLpVelo}--\eqref{eq:WeakFormBSQLpTemp}. Then, it holds
\begin{align*}
   \forall s\in[0,T),\quad (\uu_1,\theta_1)(s) = (\uu_2,\theta_2)(s).
\end{align*}
\end{theorem}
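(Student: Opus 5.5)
The plan has two parts, as announced before the statement. First, we show that any weak solution in the stated class is in fact a mild solution: it satisfies the Duhamel formulation \eqref{eq:AbstractBoussinesq}. Second, we prove that two mild solutions with the same data coincide, by a smallness argument on short intervals and a continuation argument.

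\textbf{Part 1 (weak $\Rightarrow$ mild).} Fix a solution $(\uu,\theta)$ in the given class. Since $\Omega$ is bounded, $\B^{0}_{3,2}\hookrightarrow\L^2$ and $\B^{1}_{3,\infty}\hookrightarrow \H^{1,2}$, so $\uu\in\L^\infty(\L^2_{\mathfrak{n},\sigma})\cap\L^2(\H^{1,2}_{0,\sigma})$. The nonlinearity $\uu\otimes\uu$ lies in $\L^2(\L^{3})\subset\L^2(\L^2)$ by the estimate of Step~2 in Proposition~\ref{Prop:SolutionOperatorBoussinesqLp}, and $\theta\in\L^2(\B^0_{3/2,\infty})\hookrightarrow\L^2(\H^{-1,2}_0)$. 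Hence the first weak identity \eqref{eq:WeakFormBSQLpVelo} says that $\uu$ is a weak solution, in the Hilbertian Lions framework $(\H^{1,2}_{0,\sigma},\L^2_{\mathfrak{n},\sigma},(\H^{1,2}_{0,\sigma})')$, of the linear Stokes problem \eqref{eq:StokesDirichlet} with right-hand side $\PP_1[-\div(\uu\otimes\uu)+\theta\mathfrak{e}_3]\in\L^2((\H^{1,2}_{0,\sigma})')$. By uniqueness of such variational solutions (the form is coercive), $\uu$ coincides with the mild solution given by the Duhamel formula.

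For $\theta$ the same idea applies, now with the Neumann Laplacian extrapolated to $\H^{-1,2}_0(\Omega)$. Its test space $\D^{0}_{3,1}(\Delta_\mathcal{N}^\infty)$ is exactly what allows testing against $e^{\kappa(t-s)\Delta_\mathcal{N}}\eta$, so that Duhamel can be derived by duality. Concretely, we would fix $t$ and $\eta_1\in\D(\Delta_\mathcal{N}^\infty)$, plug in $\eta(s)=\chi(s)e^{\kappa(t-s)\Delta_\mathcal{N}}\eta_1$ with a cutoff $\chi\to\mathds{1}_{[0,t]}$, and pass to the limit. This requires $\theta\uu\in\L^1_{\mathrm{loc}}(\B^{-1/2}_{6/5,\infty})$, which holds by Corollary~\ref{prop:ProductRullBesovDim3-2}, together with the dual bound behind \eqref{NeumannRieszTransf}. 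This duality step is where the $\C^{1,\alpha}$ hypothesis enters: we need $\nabla e^{\kappa(t-s)\Delta_\mathcal{N}}\eta_1$ controlled in $\B^{1/2}_{6,1}$, which follows from the isomorphism \eqref{eq:refHigherRgneumannC1aplha}.

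\textbf{Part 2 (uniqueness of mild solutions).} Let $(\vv,\zeta)=(\uu_1-\uu_2,\theta_1-\theta_2)$. It satisfies
\begin{align*}
\vv=\B_\nu(\vv,\uu_1)+\B_\nu(\uu_2,\vv)+\L_\nu(\zeta),\qquad \zeta=\C_\kappa(\zeta,\uu_1)+\C_\kappa(\theta_2,\vv),
\end{align*}
with zero initial data, and $\zeta$ is mean-free. The difficulty is that $\L_\nu$ is not small. To handle this, we do not estimate on $\mathrm{Z}_3\times\mathrm{K}_{3/2}$ with equal weights. Instead we use the triangular structure: first substitute $\L_\nu(\zeta)$, then use the norm $\lVert\vv\rVert_{\mathrm{Z}_{3,\tau}}+M\lVert\zeta\rVert_{\mathrm{K}_{3/2,\tau}}$ with $M$ large compared with $\lVert\L_\nu\rVert$. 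On a short interval $(t_0,t_0+\tau)$, the localized bounds \eqref{eq:SmallnormBnuTStokes}--\eqref{eq:SmallnormCkappaTu0}, with $\tilde{\uu},\tilde\Theta$ replaced by $\uu_i,\theta_i$, carry factors $\lVert\nabla\uu_i\rVert_{\L^2(t_0,t_0+\tau;\B^0_{3,\infty})}^{1/2}$ and $\lVert\theta_i\rVert_{\L^2(t_0,t_0+\tau;\B^0_{3/2,\infty})}^{1/2}$, which tend to zero as $\tau\to0$ uniformly in $t_0$ by absolute continuity of the integral.

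This is the main obstacle. The term $\C_\kappa(\theta_2,\vv)$ is bounded by $\lVert\theta_2\rVert^{1/2}_{\L^\infty\B^{-1}}\lVert\theta_2\rVert^{1/2}_{\L^2\B^0}\lVert\vv\rVert_{\mathrm Z}$, which is small only through the $\L^2$ factor. Meanwhile, $\L_\nu(\zeta)$ on a short interval is small only via $\tau^{1/2}\lVert\zeta\rVert_{\L^\infty(\B^{-1}_{3/2,2})}$, using $\L^2$-maximal regularity from $\L^2(\B^{-1})$ data (the semigroup is bounded on the interval). I would first try to close the estimate using that weighted combination, so that $\lVert(\vv,\zeta)\rVert\le\frac12\lVert(\vv,\zeta)\rVert$ on $(0,\tau)$, which gives $(\vv,\zeta)=0$ there. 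Since $\tau$ is uniform in $t_0$, we then iterate over $[\tau,2\tau]$ and so on, starting from zero data each time, and conclude that the two solutions agree on all of $[0,T)$; continuity in time of mild solutions gives equality for every $s$.
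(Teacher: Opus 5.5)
There is a genuine, though repairable, gap in the velocity half of Part~1. Your architecture is the paper's: each weak solution is shown to be mild, then the difference is killed on short intervals using the smallness of the $\L^2$-in-time factors, and one continues. The problem is the claim $\B^{1}_{3,\infty}(\Omega)\hookrightarrow\H^{1,2}(\Omega)$, which is false. On a bounded domain you may lower the integrability index at fixed regularity, but you cannot improve the third index. For instance, with $\chi\in\Ccinfty(\Omega)$, the Weierstrass-type function $\chi(x)\sum_{j}2^{-j}\cos(2^jx_1)$ lies in $\B^{1}_{\infty,\infty}$. Yet its dyadic blocks have $\L^2$-norm of order $2^{-k}$, so it is not in $\B^{1}_{2,2}=\H^{1,2}$; taking a curl gives a compactly supported solenoidal example.

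Consequently $\uu\in\L^2(0,T;\H^{1,2}_{0,\sigma}(\Omega))$ is not available, and the Lions uniqueness argument in the triple $(\H^{1,2}_{0,\sigma},\L^2_{\mathfrak{n},\sigma},(\H^{1,2}_{0,\sigma})')$ cannot be applied to $\uu$ as you state it. Only $\B^{1}_{3,\infty}(\Omega)\hookrightarrow\H^{1-\varepsilon,2}(\Omega)$ holds, and this is exactly why the paper shifts the scale. Since $\H^{1-\varepsilon,2}_{0,\sigma}(\Omega)=\D_2(\AA_\mathcal{D}^{(1-\varepsilon)/2})$ by Theorem~\ref{thm:StokesLipSobolev}, one has $\AA_\mathcal{D}^{-\varepsilon/2}\uu\in\L^\infty(\L^2_{\mathfrak{n},\sigma})\cap\L^2(\H^{1,2}_{0,\sigma})$. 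The forcing $\AA_\mathcal{D}^{-\varepsilon/2}\PP_\Omega[\,\cdots]$ is still in $\L^2((\H^{1,2}_{0,\sigma})')$, so the variational uniqueness applies to $\AA_\mathcal{D}^{-\varepsilon/2}\uu$. Alternatively, you could derive Duhamel for $\uu$ by the same backward-semigroup duality you propose for $\theta$. This requires first extending \eqref{eq:WeakFormBSQLpVelo} by density to test functions $\chi(s)e^{-\nu(t-s)\AA_\mathcal{D}}\boldsymbol{\varphi}_1$, which are not compactly supported in $\Omega$.

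The same objection applies to your phrase ``Neumann Laplacian extrapolated to $\H^{-1,2}_0(\Omega)$''. The space $\B^{-1}_{3/2,2}(\Omega)$ only embeds into $\H^{-3/2,2}$, so $\theta\notin\L^\infty(\H^{-1,2})$; this is why the paper's Step~2 works in the shifted scale $(-\Delta_\mathcal{N})^{(3+\varepsilon)/4}\L^2_\circ(\Omega)$. Your concrete argument, however, needs no Hilbert triple at all. Testing \eqref{eq:WeakFormBSQLpTemp} with $\chi(s)e^{\kappa(t-s)\Delta_\mathcal{N}}\eta_1$, $\eta_1\in\D^0_{3,1}(\Delta_\mathcal{N}^\infty)$, cancels the $\partial_t$ and $\Delta$ terms and gives the Duhamel formula tested against $\eta_1$ at Lebesgue points. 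This is a legitimate and more direct route than the paper's. It uses exactly the test class of the definition, and it spends the $\C^{1,\alpha}$ hypothesis at the same place as the paper, namely through \eqref{eq:refHigherRgneumannC1aplha} (cf. \eqref{eq:membershipForcingTermThetaProofUniqueLp}).

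In Part~2, the claim that $\L_\nu(\zeta)$ is small like $\tau^{1/2}\lVert\zeta\rVert_{\L^\infty(\B^{-1}_{3/2,2})}$ is wrong. $\L_\nu$ into $\mathrm{Z}_3$ needs $\zeta\in\L^2(\B^{0}_{3/2,\infty})$, and $\L^2(\B^{-1}_{3/2,2})$-data is one derivative short. The claim is also unnecessary. Fix $M\geqslant 4\lVert\L_\nu\rVert$ first, then take $\tau$ so small that the coefficients coming from $\B_\nu(\vv,\uu_1)+\B_\nu(\uu_2,\vv)$, from $\C_\kappa(\zeta,\uu_1)$ and from $M\,\C_\kappa(\theta_2,\vv)$ are at most $1/4$. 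The weighted sum then closes with factor $1/2$, which is equivalent to the paper's substitution of the $\zeta$-equation into $\L_\nu(\zeta)$. Your iteration with $\tau$ uniform in $t_0$ is a fine substitute for the paper's open--closed argument. The uniformity follows from absolute continuity of the $\L^1$ functions $t\mapsto\lVert\nabla\uu_i(t)\rVert^2_{\B^0_{3,\infty}}$ and $t\mapsto\lVert\theta_i(t)\rVert^2_{\B^0_{3/2,\infty}}$.
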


\begin{proof} We set $\ww = \uu_1-\uu_2$ and $\Theta = \theta_1-\theta_2$, for all $\boldsymbol{\varphi}\in\Ccinfty([0,T);\Ccinftydiv(\Omega))$, all $\eta\in\Ccinfty([0,T);\D_{3,1}^0(\Delta_\mathcal{N}^\infty))$, it holds
\begin{align}
        \int_{(0,T)\times\Omega} -\ww\cdot \partial_t \boldsymbol{\varphi} +\nu\nabla \ww: \nabla \boldsymbol{\varphi} - (\ww\otimes[\uu_1+\uu_2]) : \nabla \boldsymbol{\varphi} \,\d x \,\d t    &= \int_{(0,T)\times\Omega} \Theta \boldsymbol{\varphi}_3\, \d x \d t \label{eq:WeakFormulProofUniqLpVelo} \\
        \text{ and }\qquad \int_{(0,T)\times\Omega} -\Theta\cdot \partial_t \eta -\kappa \,\Theta\,  \Delta \eta -  \theta_1 \, \ww \cdot \nabla \eta-  \Theta \, \uu_2 \cdot \nabla \eta \,\d x \,\d t    &= 0. \label{eq:WeakFormulProofUniqLpTemp}
\end{align}
\textbf{Step 1:} We show that  $\ww$ is a mild solution to a linear Stokes evolution problem. By linearity, and since $\Omega$ is bounded, for $\varepsilon>0$ small enough it holds that $$\ww\in\L^\infty(0,T;\,\B^{0,\sigma}_{3,2,0}(\Omega))\cap\L^2(0,T;\B^{1,\sigma}_{3,\infty,0}(\Omega))\hookrightarrow\L^\infty(0,T;\,\L^{2}_{\mathfrak{n,\sigma}}(\Omega))\cap\L^2(0,T;\W^{1-\varepsilon,2}_{0,\sigma}(\Omega)).$$
Therefore, one obtains 
\begin{align*}
    \AA_{\mathcal{D}}^{-\frac{\varepsilon}{2}}\ww \in \L^\infty(0,T;\,\L^{2}_{\mathfrak{n,\sigma}}(\Omega))\cap\L^2(0,T;\W^{1,2}_{0,\sigma}(\Omega)).
\end{align*}

On the other hand, as in the proof of Proposition~\ref{Prop:SolutionOperatorBoussinesqLp}, \eqref{eq:ProofNSEpartL4W1/23Est}--\eqref{eq:ProofBSQBiLinNSEEstLp}, one obtains $\ww\otimes[\uu_1+\uu_2]\in\L^2(0,T;\L^3(\Omega)^9)\subset \L^2(0,T;\L^2(\Omega)^9)$ and $\Theta {\mathfrak{e}_3}\in\L^2(0,T;\B^0_{3/2,\infty,\circ}(\Omega))\hookrightarrow \L^2(0,T;\W^{-1,2}_\circ(\Omega))$, where the mean free condition is obtained by testing the temperature equation with  $\eta\,:\,(t,x)\mapsto \eta(t)$ in \eqref{eq:WeakFormulProofUniqLpTemp}. Therefore, there exists $F\in\L^2(0,T;\L^2(\Omega)^9)$, such that $\div (F)=\Theta\mathfrak{e}_3$, and it holds that $F-\ww\otimes[\uu_1+\uu_2]\in\L^2(0,T;\L^2(\Omega)^9)$, since $\ww\otimes[\uu_1+\uu_2]\in\L^2(0,T;\L^2(\Omega)^9)$ as in \eqref{eq:ProofNSEpartL4W1/23Est}. By \cite[Thm.~2.4.1]{SohrBook2001}, up to apply $\AA_{\mathcal{D}}^{-\frac{\varepsilon}{2}}$ since it is an isomorphism, for almost every $t\in[0,T)$, one can write
\begin{align}
    \AA_{\mathcal{D}}^{-\frac{\varepsilon}{2}}\ww(t) &= \int_{0}^t e^{-\nu(t-s)\AA_\mathcal{D}}\AA_{\mathcal{D}}^{-\frac{\varepsilon}{2}}\PP_{\Omega}\div( F(s) - \ww(s)\otimes[\uu_1(s)+\uu_2(s)])\,\d s\nonumber\\ &= \int_{0}^t e^{-\nu(t-s)\AA_\mathcal{D}}\AA_{\mathcal{D}}^{-\frac{\varepsilon}{2}}\PP_{\Omega}[\Theta(s)\mathfrak{e}_3]\,\d s- \int_{0}^t e^{-\nu(t-s)\AA_\mathcal{D}}\AA_{\mathcal{D}}^{-\frac{\varepsilon}{2}}\PP_{\Omega}\div(\ww(s)\otimes[\uu_1(s)+\uu_2(s)])\,\d s\nonumber\\
    &= \AA_{\mathcal{D}}^{-\frac{\varepsilon}{2}}\L_\nu(\Theta)(t)+\AA_{\mathcal{D}}^{-\frac{\varepsilon}{2}}\B_\nu(\ww,\uu_1+\uu_2)(t),\label{eq:ProofThetaUniquenessVelo}
\end{align}
and it holds that $\AA_{\mathcal{D}}^{-\frac{\varepsilon}{2}}\ww\in\C^{0}([0,T);\L^{2}_{\mathfrak{n},\sigma}(\Omega))\cap\W^{1,2}(0,T;\AA_\mathcal{D}^{1/2}\L^{2}_{\mathfrak{n},\sigma}(\Omega))$. Furthermore, by Propositions~\ref{thm:LqMaxRegStokesBesov}~and~\ref{Prop:SolutionOperatorBoussinesqBesov}, it holds that
\begin{align*}
    \ww\in\C^0([0,T);\,\B^{0,\sigma}_{3,2,0}(\Omega))\cap\L^2(0,T;\B^{1,\sigma}_{3,\infty,0}(\Omega)).
\end{align*}

\textbf{Step 2:} We show that $\Theta$ is a mild solution. Now, by the isomorphism properties Propositions~\ref{prop:NeumannBesov}~and~\ref{prop:NeumannRegularity}, and Sobolev embeddings, for $\varepsilon>0$ small enough, it holds
\begin{align}
    \Theta \in &\L^\infty(0,T;\, \B^{-1}_{3/2,2,\circ}(\Omega))\cap\L^2(0,T;\B^{0}_{3/2,\infty,\circ}(\Omega))\nonumber\\
    &= \L^\infty(0,T;\, (-\Delta_\mathcal{N})^{(1+\varepsilon)/4}\B^{-1/2}_{3/2,2,\circ}(\Omega))\cap\L^2(0,T;(-\Delta_\mathcal{N})^{(1+\varepsilon)/4}\H^{1/2,3/2}_{\circ}(\Omega))\nonumber \\
    &\hookrightarrow \L^\infty(0,T;\, (-\Delta_\mathcal{N})^{(1+\varepsilon)/4}\H^{-1,2}_{\circ}(\Omega))\cap\L^2(0,T;(-\Delta_\mathcal{N})^{(1+\varepsilon)/4}\L^{2}_\circ(\Omega)) \nonumber\\
    &= \L^\infty(0,T;\, (-\Delta_\mathcal{N})^{(3+\varepsilon)/4}\L^{2}_{\circ}(\Omega))\cap\L^2(0,T;(-\Delta_\mathcal{N})^{(3+\varepsilon)/4}\W^{1,2}_\circ(\Omega)) \label{eq:UniqunessL2MildSpaceTheta}.
\end{align}
Moreover, it holds that $(-\Delta_\mathcal{N})^{-(3+\varepsilon)/4}[\div (\uu_1\Theta) + \div(\ww \theta_2)]\in\L^2(0,T;\W^{1,2}_{\circ}(\Omega))$: indeed by the isomorphisms properties for the Neumann Laplacian, Proposition~\ref{prop:NeumannBesov}, the boundedness of $(-\Delta_\mathcal{N})^{-\varepsilon/4}$ and by Sobolev embeddings it holds
\begin{align}
    \lVert (-\Delta_\mathcal{N})^{-(3+\varepsilon)/4}&[\div (\uu_2\Theta) + \div(\ww \theta_1)]\rVert_{\L^2(0,T;\W^{-1,2}(\Omega))}\nonumber\\
    &\lesssim_{\Omega}\lVert (-\Delta_\mathcal{N})^{-1/2}[\div (\uu_2\Theta) + \div(\ww \theta_1)]\rVert_{\L^2(0,T;\B^{-1}_{3/2,\infty}(\Omega))}\label{eq:membershipForcingTermThetaProofUniqueLp}
\end{align}
and therefore, one can conclude following the arguments of \eqref{eq:Est:C(v,O)L2Lp}--\eqref{eq:ProofBSQBiLinTempEstLp}. Thus, we have obtained
\begin{itemize}
    \item $\Theta\in\L^\infty(0,T;\, (-\Delta_\mathcal{N})^{(3+\varepsilon)/4}\L^{2}_{\circ}(\Omega))\cap\L^2(0,T;(-\Delta_\mathcal{N})^{(3+\varepsilon)/4}\W^{1,2}_\circ(\Omega))$;
    \item $f:=-[\div (\uu_2\Theta) + \div(\ww \theta_1)]\in\L^2(0,T;(-\Delta_\mathcal{N})^{(3+\varepsilon)/4}\W^{-1,2}_\circ(\Omega))$.
\end{itemize}
Testing the equation \eqref{eq:WeakFormulProofUniqLpTemp} with elements $\eta\in\Ccinfty(0,T;\D_{3,1}^0(\Delta_\mathcal{N}^\infty)) \hookrightarrow \Ccinfty(0,T;\D_{2}(\Delta_\mathcal{N}^\infty))$, we deduce
\begin{align*}
    \partial_t\Theta\in\L^2(0,T;(-\Delta_\mathcal{N})^{(3+\varepsilon)/4}\W^{-1,2}_\circ(\Omega)),
\end{align*}
and therefore $\Theta\in\C^0([0,T);(-\Delta_\mathcal{N})^{(3+\varepsilon)/4}\L^{2}_{\circ}(\Omega))$. This implies that $\Theta$ is a mild solution and can be written for all $t\in[0,T)$ as
\begin{align}
    \Theta(t)&=-\int_{0}^t e^{\kappa (t-s)\Delta_\mathcal{N}}[\div (\uu_2(s)\Theta(s)) + \div(\ww(s) \theta_1(s))]\,\d s\nonumber\\
    &= \C_\kappa(\uu_2,\Theta)(t) + \C_\kappa(\ww,\theta_1)(t)\label{eq:ProofThetaUniqueness}.
\end{align}
Thus, by Propositions~\ref{thm:LqMaxRegNeumannBesov}~and~\ref{Prop:SolutionOperatorBoussinesqBesov}, it holds that
\begin{align*}
    \Theta\in\C^0([0,T); \B^{-1}_{3/2,2}(\Omega))\cap\L^2(0,T;\B^{0}_{3/2,\infty}(\Omega))
\end{align*}

\textbf{Step 3:} We show that $\ww=0$ and $\Theta=0$. First, notice that the set
\begin{align*}
    I=\{ t\in[0,T)\,|\, (\ww(t),\Theta(t))=(0,0) \text{ in } \B^{0}_{3,2}(\Omega,\RR^3)\times\B^{-1}_{3/2,2,0}(\Omega)\}
\end{align*}
is closed and non empty. We show that it is open. By a maximality argument and connectedness it suffices to show that $[0,\tau)\subset I$ for some $\tau>0$.

We recall that plugging \eqref{eq:ProofThetaUniqueness} in \eqref{eq:ProofThetaUniquenessVelo}, one can write
\begin{align*}
    \ww &= \B_\nu(\ww,\uu_1+\uu_2) + \L_\nu(\C_\kappa(\uu_2,\Theta)) + \L_\nu(\C_\kappa(\ww,\theta_1))\\
    \Theta&= \C_\kappa(\uu_2,\Theta) + \C_\kappa(\ww,\theta_1)
\end{align*}

Let $\tau>0$, following the steps that lead respectively to \eqref{eq:SmallnormBnuTStokes}, \eqref{eq:SmallnormCkappaTTheta0} and \eqref{eq:SmallnormCkappaTu0} in the proof of Theorem~\ref{thm:ExistenceSmallTimesLp}, it holds that
\begin{align*}
    &\lVert \ww \rVert_{\mathrm{Z}_3((0,\tau)\times\Omega)}=\lVert \ww \rVert_{\L^\infty(0,\tau;\B^{0}_{3,2}(\Omega))} + \sqrt{\nu}\lVert \ww \rVert_{\L^2(0,\tau;\B^{1}_{3,\infty}(\Omega))}\\
    &\qquad\qquad\lesssim_{\Omega,\nu,\kappa}\lVert {\uu}_1 + \uu_2\rVert_{\L^\infty(0,\tau;\B^{0}_{3,2}(\Omega))}^\frac{1}{2}\lVert \nabla( \uu_1 + \uu_2)\rVert_{\L^2(0,\tau;\B^{0}_{3,\infty}(\Omega))}^\frac{1}{2} \lVert \ww \rVert_{\mathrm{Z}_3((0,\tau)\times\Omega)}\\
    &\qquad\qquad\qquad\qquad  +\lVert \theta_1\rVert_{\L^{\infty}(0,\tau;\B^{-1}_{3/2,2}(\Omega))}^\frac{1}{2} \lVert \theta_1\rVert_{\L^{2}(0,\tau;\B^{0}_{3/2,\infty}(\Omega))}^\frac{1}{2} \lVert \ww \rVert_{\mathrm{Z}_3((0,\tau)\times\Omega)}\\
    &\qquad\qquad\qquad\qquad  +\lVert \uu_2\rVert_{\L^{\infty}(0,\tau;\B^{0}_{3,2}(\Omega))}^\frac{1}{2} \lVert \nabla \uu_2\rVert_{\L^{2}(0,\tau;\B^{0}_{3,\infty}(\Omega))}^\frac{1}{2}  \lVert \Theta \rVert_{\mathrm{K}_{3/2}((0,\tau)\times\Omega)},
\end{align*}
and similarly
\begin{align*}
    & \lVert \Theta \rVert_{\mathrm{K}_{3/2}((0,\tau)\times\Omega)}=\lVert \Theta \rVert_{\L^\infty(0,\tau;\B^{-1}_{3/2,2}(\Omega))} + \sqrt{\kappa}\lVert \Theta\rVert_{\L^2(0,\tau;\B^{0}_{3/2,\infty}(\Omega))}\\
    &\qquad\qquad\lesssim_{\Omega,\nu,\kappa}\lVert \theta_1\rVert_{\L^{\infty}(0,\tau;\B^{-1}_{3/2,2}(\Omega))}^\frac{1}{2} \lVert \theta_1\rVert_{\L^{2}(0,\tau;\B^{0}_{3/2,\infty}(\Omega))}^\frac{1}{2} \lVert \ww \rVert_{\mathrm{Z}_3((0,\tau)\times\Omega)}\\
    &\qquad\qquad\qquad\qquad  +\lVert \uu_2\rVert_{\L^{\infty}(0,\tau;\B^{0}_{3,2}(\Omega))}^\frac{1}{2} \lVert \nabla \uu_2\rVert_{\L^{2}(0,\tau;\B^{0}_{3,\infty}(\Omega))}^\frac{1}{2}  \lVert \Theta \rVert_{\mathrm{K}_{3/2}((0,\tau)\times\Omega)}.
\end{align*}
Summing up both, one obtains
\begin{align}
    &\lVert \ww \rVert_{\mathrm{Z}_3((0,\tau)\times\Omega)}+\lVert \Theta \rVert_{\mathrm{K}_{3/2}((0,\tau)\times\Omega)}\nonumber\\
    &\lesssim_{\Omega,\nu,\kappa}\lVert {\uu}_1 + \uu_2\rVert_{\L^\infty(0,\tau;\B^{0}_{3,2}(\Omega))}^\frac{1}{2}\lVert \nabla( \uu_1 + \uu_2)\rVert_{\L^2(0,\tau;\B^{0}_{3,\infty}(\Omega))}^\frac{1}{2} \lVert \ww \rVert_{\mathrm{Z}_3((0,\tau)\times\Omega)}\nonumber\\
    &\qquad\qquad  +\lVert \theta_1\rVert_{\L^{\infty}(0,\tau;\B^{-1}_{3/2,2}(\Omega))}^\frac{1}{2} \lVert \theta_1\rVert_{\L^{2}(0,\tau;\B^{0}_{3/2,\infty}(\Omega))}^\frac{1}{2} \lVert \ww \rVert_{\mathrm{Z}_3((0,\tau)\times\Omega)}\label{eq:FinalEstimateuniquenessLp}\\
    &\qquad\qquad  +\lVert \uu_2\rVert_{\L^{\infty}(0,\tau;\B^{0}_{3,2}(\Omega))}^\frac{1}{2} \lVert \nabla \uu_2\rVert_{\L^{2}(0,\tau;\B^{0}_{3,\infty}(\Omega))}^\frac{1}{2}  \lVert \Theta \rVert_{\mathrm{K}_{3/2}((0,\tau)\times\Omega)},\nonumber
\end{align}
Since $(\uu_1,\theta_1), (\uu_2,\theta_2)\in\L^\infty(0,T;\,\B^{0,\sigma}_{3,2,0}(\Omega)\times \B^{-1}_{3/2,2,0}(\Omega))\cap\L^2(0,T;\B^{1,\sigma}_{3,\infty,0}(\Omega)\times \B^{0}_{3/2,\infty}(\Omega))$, by the dominated convergence theorem,
\begin{align*}
    \left.\begin{array}{r}
         \lVert {\uu}_1 + \uu_2\rVert_{\L^\infty(0,\tau;\B^{0}_{3,2}(\Omega))}^\frac{1}{2}\lVert \nabla( \uu_1 + \uu_2)\rVert_{\L^2(0,\tau;\B^{0}_{3,\infty}(\Omega))}^\frac{1}{2}\\
         \lVert \theta_1\rVert_{\L^{\infty}(0,\tau;\B^{-1}_{3/2,2}(\Omega))}^\frac{1}{2} \lVert \theta_1\rVert_{\L^{2}(0,\tau;\B^{0}_{3/2,\infty}(\Omega))}^\frac{1}{2}\\
         \lVert \uu_2\rVert_{\L^{\infty}(0,\tau;\B^{0}_{3,2}(\Omega))}^\frac{1}{2} \lVert \nabla \uu_2\rVert_{\L^{2}(0,\tau;\B^{0}_{3,\infty}(\Omega))}^\frac{1}{2} 
    \end{array}\right\}\xrightarrow[\tau\longrightarrow 0]{}0.
\end{align*}
Therefore, there exists $\tau>0$ such that \eqref{eq:FinalEstimateuniquenessLp} becomes
\begin{align*}
    \lVert \ww \rVert_{\mathrm{Z}_3((0,\tau)\times\Omega)}+\lVert \Theta \rVert_{\mathrm{K}_{3/2}((0,\tau)\times\Omega)}<\frac{\lVert \ww \rVert_{\mathrm{Z}_3((0,\tau)\times\Omega)}+\lVert \Theta \rVert_{\mathrm{K}_{3/2}((0,\tau)\times\Omega)}}{2},
\end{align*}
which implies $(\ww,\Theta)=0$ in $[0,\tau)$. This ends the proof for uniqueness.
\end{proof}

The next uniqueness result simplifies significantly due to the $\L^1$-in-time maximal regularity.

\begin{theorem}\label{thm:UniquenessWeakBesov} Let $\Omega\subset\RR^3$ be a bounded $\C^{1,\alpha}$ domain with $\alpha>1/3$ and $T\in(0,\infty]$. Let $(\uu_0,\theta_0)\in\B^{0,\sigma}_{3,1,0}(\Omega)\times\B^{-1}_{3/2,1,0}(\Omega)$, and let
\begin{align*}
    (\uu_1,\theta_1),\quad (\uu_2,\theta_2)\in\L^\infty(0,T;&\B^{0,\sigma}_{3,1,0}(\Omega)\times \B^{-1}_{3/2,1,0}(\Omega))\cap\L^1(0,T;\D^{0}_{3,1}(\mathbb{A}_\mathcal{D},\Omega)\times \B^{1}_{3/2,1}(\Omega))
\end{align*}
such that both satisfies the system \eqref{eq:BoussinesqSystemIntro} on $(0,T)$ with initial value $(\uu_0,\theta_0)$ in the sense of \eqref{eq:WeakFormBSQBesovVelo}--\eqref{eq:WeakFormBSQBesovTemp}. Then, it holds
\begin{align*}
   \forall s\in[0,T),\quad (\uu_1,\theta_1)(s) = (\uu_2,\theta_2)(s).
\end{align*}
\end{theorem}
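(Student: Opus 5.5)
The plan mirrors the proof of Theorem~\ref{thm:UniquenessWeakLp}, in three steps. We set $\ww=\uu_1-\uu_2$ and $\Theta=\theta_1-\theta_2$. Subtracting the weak formulations \eqref{eq:WeakFormBSQBesovVelo}--\eqref{eq:WeakFormBSQBesovTemp} gives the difference system, with forcing terms $\div(\ww\otimes[\uu_1+\uu_2])$ and $\Theta\mathfrak{e}_3$ for the velocity, and $\div(\uu_2\Theta)+\div(\ww\theta_1)$ for the temperature. Testing the temperature equation with $\eta(t,x)=\eta(t)$ shows that $\Theta$ is mean free. The goal is to prove that $(\ww,\Theta)$ is a \emph{mild} solution, i.e.
\begin{align*}
    \ww = \B_\nu(\ww,\uu_1+\uu_2)+\L_\nu(\Theta),\qquad \Theta=\C_\kappa(\uu_2,\Theta)+\C_\kappa(\ww,\theta_1),
\end{align*}
and then to close a smallness argument on short intervals in the spaces $\mathrm{U}_3$ and $\mathrm{T}_{3/2}$.

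\textbf{Step 1 (velocity is mild).} Since $\Omega$ is bounded, $\ww\in\L^\infty(\B^{0}_{3,1})\cap\L^1(\D^0_{3,1}(\AA_\mathcal{D}))$. By the interpolation inequality \eqref{eq:ProofInterpLinftyL1(A)} and \eqref{eq:SQRTStokesBesovLip}, it also lies in $\L^2(0,T;\B^{1}_{3,1})\hookrightarrow\L^2(0,T;\W^{1,2}_{0,\sigma}(\Omega))$, with $\ww\in\L^\infty(\L^2_{\mathfrak{n},\sigma})$. Hence $\ww$ is an $\L^2$-energy class weak solution of the Stokes system. Its forcing terms are $\ww\otimes(\uu_1+\uu_2)\in\L^2(\L^2)$, by the algebra property of $\B^1_{3,1}$, and $\Theta\mathfrak{e}_3\in\L^1(\B^{1}_{3/2,1})\subset\L^1(\L^2)$, which lies in $\L^2(\H^{-1,2})$ after interpolation with $\L^\infty(\B^{-1}_{3/2,1})$. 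Applying \cite[Thm.~2.4.1]{SohrBook2001} as in \eqref{eq:ProofThetaUniquenessVelo}, possibly after composing with $\AA_{\mathcal{D}}^{-\varepsilon/2}$, identifies $\ww$ with the Duhamel formula. Proposition~\ref{Prop:SolutionOperatorBoussinesqBesov} then shows that the right-hand side lies in $\mathrm{U}_3$.

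\textbf{Step 2 (temperature is mild).} This is the main obstacle. The test functions here are only $\C^\infty(\overline{\Omega})$, not elements of $\D(\Delta_\mathcal{N}^\infty)$, so the formulation carries the Neumann condition through $\nabla\theta\cdot\nabla\eta$. We first embed $\B^{-1}_{3/2,1,\circ}\hookrightarrow\H^{-1,2}_\circ$ up to a small loss. Then, using the isomorphisms of Propositions~\ref{prop:NeumannRegularity} and \ref{prop:NeumannBesov}, we place $\Theta$ in
\begin{align*}
    \L^\infty\big((-\Delta_\mathcal{N})^{(3+\varepsilon)/4}\L^2_\circ\big)\cap\L^2\big((-\Delta_\mathcal{N})^{(3+\varepsilon)/4}\W^{1,2}_\circ\big),
\end{align*}
in the style of \eqref{eq:UniqunessL2MildSpaceTheta}. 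For the forcing, the estimate \eqref{eq:membershipForcingTermThetaProofUniqueLp} requires \eqref{NeumannRieszTransf}, and this is where $\C^{1,\alpha}$, $\alpha>1/3$, enters. We bound $\uu_2\Theta$ and $\ww\theta_1$ in $\L^2(\B^{-1/2}_{6/5,\infty})$ via Corollary~\ref{prop:ProductRullBesovDim3-2}. This uses $\uu\in\L^4(\B^{1/2}_{3,1})$ and $\theta\in\L^4(\B^{-1/2}_{3/2,1})$, both obtained by interpolation from the $\L^\infty\cap\L^1$ classes. Density of $\C^\infty(\overline{\Omega})$ in $\W^{1,2}$ lets us pass to form-domain test functions, which gives $\partial_t\Theta$ in the dual space. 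The $\L^2$ Lions-type uniqueness for the Neumann heat equation then identifies $\Theta$ with $\C_\kappa(\uu_2,\Theta)+\C_\kappa(\ww,\theta_1)$. Proposition~\ref{thm:LqMaxRegNeumannBesov} with $q=1$ upgrades this to $\Theta\in\mathrm{T}_{3/2}$.

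\textbf{Step 3 (smallness and continuation).} We plug in the bilinear bounds \eqref{eq:ProofBiLinestStokes} and \eqref{eq:ProofBiLinestBoussinesTransp}, together with the bound on $\L_\nu$ from Proposition~\ref{Prop:SolutionOperatorBoussinesqBesov}, on $(0,\tau)$. The term $\L_\nu(\Theta)$ is not small by itself. We therefore use the composed form $\ww=\B_\nu(\ww,\uu_1+\uu_2)+\L_\nu(\C_\kappa(\uu_2,\Theta))+\L_\nu(\C_\kappa(\ww,\theta_1))$. The resulting coefficients are $\|\uu_i\|^{1/2}_{\L^\infty(0,\tau;\B^0_{3,1})}\|\AA_\mathcal{D}\uu_i\|^{1/2}_{\L^1(0,\tau;\B^0_{3,1})}$ and $\|\theta_1\|^{1/2}_{\L^\infty(0,\tau;\B^{-1}_{3/2,1})}\|\theta_1\|^{1/2}_{\L^1(0,\tau;\B^1_{3/2,1})}$. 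Each tends to $0$ as $\tau\to0$ by dominated convergence, since the $\L^1$ factor vanishes. This forces $(\ww,\Theta)=0$ on $[0,\tau)$. The open/closed connectedness argument of Theorem~\ref{thm:UniquenessWeakLp} then propagates this to $[0,T)$, using time-continuity in $\B^0_{3,1}\times\B^{-1}_{3/2,1}$ from the Da Prato--Grisvard theorem and restarting at the last vanishing time.
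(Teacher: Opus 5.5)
Your proposal is correct. Steps~1 and~2 are essentially the paper's own argument: Sohr's theorem for $\ww$, and the shifted $\L^2$-theory in $(-\Delta_\mathcal{N})^{(3+\varepsilon)/4}\L^2_\circ(\Omega)$ for $\Theta$, with the forcing controlled via \eqref{NeumannRieszTransf} and Proposition~\ref{prop:ProductRullBesovDim3-2} in $\L^4(\B^{1/2}_{3,1})\times\L^4(\B^{-1/2}_{3/2,1})$. The genuine difference is your Step~3.

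The paper does not localize in time. It estimates the $\L^1$-maximal-regularity right-hand sides pointwise in time. It then uses the moment inequality and Young's inequality to absorb $\lVert\AA_\mathcal{D}\ww\rVert_{\L^1(0,t;\B^0_{3,1})}$ and $\lVert\Theta\rVert_{\L^1(0,t;\B^1_{3/2,1})}$, and reaches
\[
\lVert\ww(t)\rVert_{\B^0_{3,1}}+\lVert\Theta(t)\rVert_{\B^{-1}_{3/2,1}}\lesssim\int_0^t\varphi(s)\big(\lVert\ww(s)\rVert_{\B^0_{3,1}}+\lVert\Theta(s)\rVert_{\B^{-1}_{3/2,1}}\big)\,\d s,
\]
with $\varphi=1+\lVert\theta_1\rVert^2_{\B^0_{3/2,1}}+\lVert\uu_2\rVert^2_{\B^1_{3,1}}+\lVert\uu_1+\uu_2\rVert^2_{\B^1_{3,1}}\in\L^1_{\mathrm{loc}}$. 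Gr\"{o}nwall's lemma then gives uniqueness on all of $[0,T)$ at once, with no continuity or restart argument. The price is the coupling term $\int_0^t\lVert\Theta\rVert_{\B^1_{3/2,1}}\,\d s$ produced by $\L_\nu(\Theta)$ in \eqref{eq:GronwallL1P1}. It is not of Gr\"{o}nwall type and must be absorbed by a weighted sum of \eqref{eq:GronwallL1P1} and \eqref{eq:GronwallL1P2}.

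You sidestep this by inserting the temperature equation into the velocity equation. Then $\L_\nu(\Theta)$ becomes $\L_\nu\circ\C_\kappa$ terms carrying the vanishing factors $\lVert\cdot\rVert^{1/2}_{\L^1(0,\tau)}$. You need only \eqref{eq:ProofBiLinestStokes}, \eqref{eq:ProofBiLinestBoussinesTransp} and the $\tau$-uniform bound on $\L_\nu$, exactly as in Theorem~\ref{thm:UniquenessWeakLp}. Your continuation step costs nothing: once the forcings vanish a.e.\ on $(0,t_0)$, the Duhamel formulas start at $t_0$ with zero data.

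One detail in your Step~2 should be corrected. Because $\nabla\Theta$, $\Theta\uu_2$ and $\ww\theta_1$ lie only in $\L^1(0,T;\B^0_{3/2,1}(\Omega))\subset\L^1(0,T;\L^{3/2}(\Omega))$, \eqref{eq:WeakFormBSQBesovTemp} is continuous in $\eta$ for the $\W^{1,3}(\Omega)$ topology in space, not the $\W^{1,2}(\Omega)$ one. So the test class must be enlarged using density of $\C^\infty(\overline{\Omega})$ in $\W^{1,3}(\Omega)$, not in $\W^{1,2}(\Omega)$. This suffices, since the test functions you actually need, $(-\Delta_\mathcal{N})^{-(3+\varepsilon)/4}\phi$ with $\phi$ in the form domain, lie in $\D_2(\Delta_\mathcal{N})\subset\W^{1,3}(\Omega)$ on a $\C^{1,\alpha}$ domain.
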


\begin{proof} \textbf{Step 1:} The proofs begin similarly to the one of Theorem~\ref{thm:UniquenessWeakLp}: with the same notations, we show that $(\ww,\Theta)=(\uu_1-\uu_2,\theta_1-\theta_2)$ is a mild solution of an appropriate problem. 

First notice that by linearity, it holds
\begin{align*}
    (\ww,\Theta)\in\L^\infty(0,T;\B^{0,\sigma}_{3,1,0}(\Omega)\times \B^{-1}_{3/2,1,0}(\Omega))\cap\L^1(0,T;\D^{0}_{3,1}(\mathbb{A}_\mathcal{D},\Omega)\times \B^{1}_{3/2,1}(\Omega)).
\end{align*}
It also holds, as in the proof Theorem~\ref{thm:UniquenessWeakLp}, that
\begin{align*}
    \langle \Theta,\mathbf{1}\rangle_{\Omega}=0. 
\end{align*}

By the isomorphism property \eqref{eq:SQRTStokesBesovLip},  the interpolation inequality \eqref{eq:ProofInterpLinftyL1(A)} and the boundedness of $\Omega$, it holds that $\ww\in\L^{2}(0,T;\B^{1,\sigma}_{3,1,0}(\Omega))\hookrightarrow\L^{2}(0,T;\W^{1,2}_{0,\sigma}(\Omega))$. Moreover, and since $\Omega$ is bounded, it holds $\B^{0,\sigma}_{3,1,0}(\Omega)\hookrightarrow\L^{2}_{\mathfrak{n},\sigma}(\Omega)$. Altogether, we have obtained
\begin{align*}
    \ww\in \L^{\infty}(0,T;\L^{2}_{\mathfrak{n},\sigma}(\Omega))\cap\L^{2}(0,T;\W^{1,2}_{0,\sigma}(\Omega)).
\end{align*}
Additionally, following the argument from the proof of Proposition~\ref{Prop:SolutionOperatorBoussinesqBesov}, Step 2, since  $\uu_1+\uu_2, \ww \in \L^{2}(0,T;\B^{1,\sigma}_{3,1,0}(\Omega))$, we deduce
\begin{align*}
    \lVert \div([\uu_1+\uu_2]\otimes \ww ])\rVert_{\L^1(0,T;\L^2(\Omega))} &\lesssim_{\Omega}\lVert \div([\uu_1+\uu_2]\otimes \ww )\rVert_{\L^1(0,T;\B^{0}_{3,1}(\Omega))} \\
    &\lesssim_{\Omega}\lVert [\uu_1+\uu_2]\otimes \ww \rVert_{\L^1(0,T;\B^{1}_{3,1}(\Omega))} \\&\lesssim_{\Omega}\lVert [\uu_1+\uu_2] \rVert_{\L^2(0,T;\B^{1}_{3,1}(\Omega))} \lVert \ww\rVert_{\L^2(0,T;\B^{1}_{3,1}(\Omega))} ,
\end{align*}
due to the fact that $\B^{1}_{3,1}$ is an algebra since we are in dimension $3$. Additionally, by the  Sobolev embedding $\B^{0}_{3/2,1}\hookrightarrow\H^{-1,2}$ and interpolation inequalities it holds
\begin{align*}
    \lVert \Theta\rVert_{\L^2(0,T;\W^{-1,2}(\Omega))} &\lesssim_{\Omega}\lVert \Theta\rVert_{\L^2(0,T;\B^{0}_{3/2,1}(\Omega))} 
    \lesssim_{\Omega}\lVert \Theta\rVert_{\L^\infty(0,T;\B^{-1}_{3/2,1}(\Omega))}^\frac{1}{2}\lVert \Theta\rVert_{\L^1(0,T;\B^{1}_{3/2,1}(\Omega))}^\frac{1}{2}.
\end{align*}
In particular, we isolate the fact that
\begin{align}\label{eq:InterpIneqTemp}
    \L^1(0,T;\B^{1}_{3/2,1,\circ}(\Omega))\cap \L^\infty(0,T;\B^{-1}_{3/2,1,\circ}(\Omega))\hookrightarrow\L^2(0,T;\B^{0}_{3/2,1,\circ}(\Omega))
\end{align}
\medbreak
\noindent Since $\Theta \mathfrak{e}_3\in \L^2(0,T;\W^{-1,2}_\circ(\Omega,\RR^3))$, there exists $F\in\L^{2}(0,T;\L^2(\Omega)^9)$ such that $$\div (F) = \Theta.$$ Now, we summarize our current knowledge concerning the Stokes part of the problem:
\begin{itemize}
    \item $\ww\in \L^{\infty}(0,T;\L^{2}_{\mathfrak{n},\sigma}(\Omega))\cap\L^{2}(0,T;\W^{1,2}_{0,\sigma}(\Omega));$
    \item $\div([\uu_1+\uu_2]\otimes \ww ) \in\L^{1}(0,T;\L^2(\Omega,\RR^3))$ and $F\in\L^2(0,T;\L^2(\Omega)^9)$.
\end{itemize}
Therefore, by \cite[Thm~2.4.1]{SohrBook2001}, $\ww$ is the \textbf{mild} solution to the linear Stokes evolution problem with forcing term $-\div([\uu_1+\uu_2]\otimes \ww )+\div(F)=-\div([\uu_1+\uu_2]\otimes \ww )+\Theta\mathfrak{e}_3$ and initial value $0$. The identity \eqref{eq:ProofThetaUniquenessVelo} remains valid in our framework. Additionally, by Propositions~\ref{thm:LqMaxRegStokesBesov}~and~\ref{Prop:SolutionOperatorBoussinesqBesov}, one obtains
\begin{align*}
    \ww\in\C^{0}([0,T);\B^{0,\sigma}_{3,1,0}(\Omega))\cap\L^1(0,T;\D^{0}_{3,1}(\mathbb{A}_\mathcal{D},\Omega)).
\end{align*}
Notice that by \eqref{eq:InterpIneqTemp}, that $\Theta\in\L^2(0,T;\, \B^{0}_{3/2,1,\circ}(\Omega))\hookrightarrow\L^2(0,T;\, \L^{3/2}_\circ(\Omega))$ and that we also have $\Theta\in\L^\infty(0,T;\, \B^{-1}_{3/2,1,\circ}(\Omega))\hookrightarrow\L^\infty(0,T;\, \B^{-1}_{3/2,2,\circ}(\Omega))$. Therefore, the analysis that lead to \eqref{eq:UniqunessL2MildSpaceTheta} in the proof of Theorem~\ref{thm:UniquenessWeakLp} remains valid, and it holds
\begin{align*}
    \Theta\in\L^\infty(0,T;\, (-\Delta_\mathcal{N})^{3/4}\L^{2}_{\circ}(\Omega))\cap\L^2(0,T;(-\Delta_\mathcal{N})^{3/4}\W^{1,2}_\circ(\Omega)).
\end{align*}
Starting from \eqref{eq:membershipForcingTermThetaProofUniqueLp} (this is where comes from the $\C^{1,\alpha}$-assumption on the boundary regularity), we have that \begin{align}\label{eq:ClaimProofUniqForcingTermThetaBesov}
    (-\Delta_\mathcal{N})^{-3/4}[\div (\uu_2\Theta) + \div(\ww \theta_1)]\in\L^{2}(0,T;\W^{-1,2}_\circ(\Omega)).
\end{align}
Indeed, by the Sobolev embedding and the product rule from Corollary~\ref{prop:ProductRullBesovDim3-2}, one obtains 
\begin{align*}
    &\lVert (-\Delta_\mathcal{N})^{-3/4}[\div (\uu_2\Theta) + \div(\ww \theta_1)]\rVert_{\L^2(0,T;\W^{-1,2}(\Omega))}\\
    &\lesssim_{\Omega,\kappa} \lVert \uu_2 \Theta\rVert_{\L^2(\RR_+;\B^{-1/2}_{6/5,\infty}(\Omega))}+\lVert \ww \theta_1\rVert_{\L^2(\RR_+;\B^{-1/2}_{6/5,\infty}(\Omega))}\nonumber\\
    &\lesssim_{\Omega,\kappa} \Big\lVert \lVert\uu_2\rVert_{\B^{1/2}_{3,1}(\Omega)}\lVert\Theta\rVert_{\B^{-1/2}_{3/2,1}(\Omega)}\Big\rVert_{\L^2(\RR_+)} + \Big\lVert \lVert\ww\rVert_{\B^{1/2}_{3,1}(\Omega)}\lVert\theta_1\rVert_{\B^{-1/2}_{3/2,1}(\Omega)}\Big\rVert_{\L^2(\RR_+)}.
\end{align*}
However, for $(\vv,\eta)\in \{(\uu_2,\Theta),(\ww,\theta_1)\}$, by interpolation inequalities, and H\"{o}lder's inequality, it holds
\begin{align*}
    &\Big\lVert \lVert\vv\rVert_{\B^{1/2}_{3,1}(\Omega)}\lVert\eta\rVert_{\B^{-1/2}_{3/2,1}(\Omega)}\Big\rVert_{\L^2(\RR_+)}\\ &\lesssim_{\Omega} \Big\lVert \lVert\vv\rVert_{\B^{0}_{3,1}(\Omega)}^\frac{1}{2}\lVert\vv\rVert_{\B^{1}_{3,1}(\Omega)}^\frac{1}{2}\lVert\eta\rVert_{\B^{-1}_{3/2,1}(\Omega)}^\frac{3}{4}\lVert\eta\rVert_{\B^{1}_{3/2,1}(\Omega)}^\frac{1}{4}\Big\rVert_{\L^2(\RR_+)} \\
    &\lesssim_{\Omega} \Big\lVert \lVert\vv\rVert_{\B^{0}_{3,1}(\Omega)}^\frac{1}{2}\lVert\vv\rVert_{\B^{1}_{3,1}(\Omega)}^\frac{1}{2}\Big\rVert_{\L^4(\RR_+)} \Big\lVert \lVert\eta\rVert_{\B^{-1}_{3/2,1}(\Omega)}^\frac{3}{4}\lVert\eta\rVert_{\B^{1}_{3/2,1}(\Omega)}^\frac{1}{4}\Big\rVert_{\L^4(\RR_+)}\\
    &\lesssim_{\Omega} \lVert\vv\rVert_{\L^\infty(0,T;\B^{0}_{3,1}(\Omega))}^\frac{1}{2}\lVert\vv\rVert_{\L^2(0,T;\B^{1}_{3,1}(\Omega))}^\frac{1}{2} \lVert\eta\rVert_{\L^\infty(0,T;\B^{-1}_{3/2,1}(\Omega))}^\frac{3}{4}\lVert\eta\rVert_{\L^1(0,T;\B^{1}_{3/2,1}(\Omega))}^\frac{1}{4}<\infty.
\end{align*}
Thus, the claim \eqref{eq:ClaimProofUniqForcingTermThetaBesov} is valid. Consequently, for the difference of temperatures $\Theta$, we have obtained
\begin{itemize}
    \item $\Theta\in\L^\infty(0,T;\, (-\Delta_\mathcal{N})^{3/4}\L^{2}_{\circ}(\Omega))\cap\L^2(0,T;(-\Delta_\mathcal{N})^{3/4}\W^{1,2}_\circ(\Omega))$;
    \item $g:=\div (\uu_2\Theta) + \div(\ww \theta_1)\in\L^2(0,T;(-\Delta_\mathcal{N})^{3/4}\W^{-1,2}_\circ(\Omega))$
\end{itemize}
so that $\Theta$ is a weak variational solution to the Heat Neumann equation with forcing term $g$ and initial value $0$. Hence, $\Theta$ is the  \textbf{mild} solution belonging to $\C^{0}([0,T);\, (-\Delta_\mathcal{N})^{3/4}\L^{2}_{\circ}(\Omega))\cap\L^2(0,T;(-\Delta_\mathcal{N})^{3/4}\W^{1,2}_\circ(\Omega))$.

\textbf{Step 2:} We show that $(\ww,\Theta)=(0,0)$ on $(0,T)$. Since we have proved that $(\ww,\Theta)$ are appropriate mild solutions, we can write
\begin{align*}
    \ww &= \B_\nu(\ww,\uu_1+\uu_2) + \L_\nu(\Theta)\\
    \Theta&= \C_\kappa(\uu_2,\Theta) + \C_\kappa(\ww,\theta_1).
\end{align*}
Let $t\in[0,T)$. By $\L^1$-maximal regularity, Proposition~\ref{thm:LqMaxRegStokesBesov}, it holds that
\begin{align}
     \lVert \ww \rVert_{\mathrm{U}_{3}((0,t)\times\Omega)} &\lesssim_{\Omega} \int_{0}^{t} \lVert \PP_{\Omega}\div([\uu_1(s)+\uu_2(s)]\otimes \ww (s))\rVert_{\B^{0}_{3,1}(\Omega)} \d s +\int_{0}^{t} \lVert \PP_{\Omega}[\Theta(s) \mathfrak{e}_3]\rVert_{\B^{0}_{3,1}(\Omega)} \d s\nonumber\\
     &\lesssim_{\Omega} \int_{0}^{t} \lVert [\uu_1(s)+\uu_2(s)]\otimes \ww (s)\rVert_{\B^{1}_{3,1}(\Omega)} \d s +\int_{0}^{t} \lVert \Theta(s) \rVert_{\B^{0}_{3,1}(\Omega)} \d s\nonumber\\
     &\lesssim_{\Omega} \int_{0}^{t} \lVert [\uu_1(s)+\uu_2(s)]\rVert_{\B^{1}_{3,1}(\Omega)}\lVert\ww (s)\rVert_{\B^{1}_{3,1}(\Omega)} \d s +\int_{0}^{t} \lVert \Theta(s) \rVert_{\B^{1}_{3/2,1}(\Omega)} \d s\label{eq:RefGronwallL1P1Velo}.
\end{align}
The last line is obtained by the Sobolev embedding $\B^{1}_{3/2,1}\hookrightarrow\B^{0}_{3,1}$. By the isomorphism property \eqref{eq:SQRTStokesBesovLip}, the moment inequality $\lVert A^{1/2} x\rVert_{\X}\lesssim \lVert  x\rVert_{\X}^\frac{1}{2}\lVert A x\rVert_{\X}^\frac{1}{2}$, then Young's Inequality $ab \leqslant \frac{a^2}{2\varepsilon}+\frac{\varepsilon b^2}{2}$, one deduces
\begin{align}
     &\int_{0}^{t} \lVert [\uu_1(s)+\uu_2(s)]\rVert_{\B^{1}_{3,1}(\Omega)}\lVert\ww (s)\rVert_{\B^{1}_{3,1}(\Omega)} \d s\nonumber\nonumber\\
     &\lesssim_{\Omega} \int_{0}^{t} \lVert [\uu_1(s)+\uu_2(s)]\rVert_{\B^{1}_{3,1}(\Omega)}\lVert \AA_\mathcal{D}^{1/2}\ww (s)\rVert_{\B^{0}_{3,1}(\Omega)} \d s \nonumber\\
     &\lesssim_{\Omega} \int_{0}^{t} \lVert [\uu_1(s)+\uu_2(s)]\rVert_{\B^{1}_{3,1}(\Omega)}\lVert \ww (s)\rVert_{\B^{0}_{3,1}(\Omega)}^\frac{1}{2}\lVert \AA_\mathcal{D} \ww (s)\rVert_{\B^{0}_{3,1}(\Omega)}^\frac{1}{2} \d s \nonumber\\
     &\lesssim_{\Omega} \frac{1}{\varepsilon}\int_{0}^{t} \lVert [\uu_1(s)+\uu_2(s)]\rVert_{\B^{1}_{3,1}(\Omega)}^2   \lVert \ww (s)\rVert_{\B^{0}_{3,1}(\Omega)} \d s + {\varepsilon}\lVert \AA_\mathcal{D} \ww (s)\rVert_{\L^1(0,t;\B^{0}_{3,1}(\Omega))} \nonumber \\
     &\lesssim_{\Omega,\nu} \frac{1}{\varepsilon}\int_{0}^{t} \lVert [\uu_1(s)+\uu_2(s)]\rVert_{\B^{1}_{3,1}(\Omega)}^2   \lVert \ww (s)\rVert_{\B^{0}_{3,1}(\Omega)} \d s + {\varepsilon} \lVert \ww \rVert_{\mathrm{U}_{3}((0,t)\times\Omega)},\qquad\forall\varepsilon>0\label{eq:RefGronwallL1P2Velo}.
\end{align}
Plugging \eqref{eq:RefGronwallL1P2Velo} into \eqref{eq:RefGronwallL1P1Velo}, then choosing $\varepsilon$ small enough, it yields
\begin{align}
    \lVert \ww \rVert_{\mathrm{U}_{3}((0,t)\times\Omega)} \lesssim_{\Omega,\nu}\int_{0}^{t} \lVert [\uu_1(s)+\uu_2(s)]\rVert_{\B^{1}_{3,1}(\Omega)}^2   \lVert \ww (s)\rVert_{\B^{0}_{3,1}(\Omega)} \d s +\int_{0}^{t} \lVert \Theta(s) \rVert_{\B^{1}_{3/2,1}(\Omega)} \d s.\label{eq:GronwallL1P1} 
\end{align}
On the other hand, by $\L^1$-maximal regularity for the Neumann Laplacian, Proposition~\ref{thm:LqMaxRegNeumannBesov}, and Corollary~\ref{cor:ProductRullBesovDim3-1},  it holds
\begin{align*}
    &\lVert \Theta \rVert_{\mathrm{T}_{3/2}((0,t)\times\Omega)} \\&\lesssim_{\Omega} \int_{0}^t \lVert \uu_2(s) \Theta(s)\rVert_{\B^{0}_{3/2,1}(\Omega)} \d s +\int_{0}^t \lVert \ww(s) \theta_1(s)\rVert_{\B^{0}_{3/2,1}(\Omega)} \d s\\
    &\lesssim_{\Omega} \int_{0}^t \lVert \uu_2(s) \rVert_{\B^{1}_{3,1}(\Omega)}\lVert \Theta(s)\rVert_{\B^{0}_{3/2,1}(\Omega)} \d s +\int_{0}^t \lVert \ww(s) \rVert_{\B^{1}_{3,1}(\Omega)} \lVert  \theta_1(s)\rVert_{\B^{0}_{3/2,1}(\Omega)} \d s.
\end{align*}
Consequently, one can proceed exactly as for \eqref{eq:RefGronwallL1P2Velo}, in order to obtain
\begin{align*}
    &\lVert \Theta \rVert_{\mathrm{T}_{3/2}((0,t)\times\Omega)} \\
    &\lesssim_{\Omega,\kappa, \nu} \frac{1}{\varepsilon'}\int_{0}^t \lVert \uu_2(s) \rVert_{\B^{1}_{3,1}(\Omega)}^2\lVert \Theta(s)\rVert_{\B^{-1}_{3/2,1}(\Omega)} \d s  + \varepsilon'\lVert \Theta\rVert_{\L^{1}(0,t;\B^{-1}_{3/2,1}(\Omega))}\\
    &\qquad\qquad+ \frac{1}{\varepsilon}\int_{0}^t \lVert \ww(s) \rVert_{\B^{0}_{3,1}(\Omega)} \lVert  \theta_1(s)\rVert_{\B^{0}_{3/2,1}(\Omega)}^2 \d s + \varepsilon \lVert \AA_\mathcal{D} \ww\rVert_{\L^{1}(0,t;\B^0_{3,1}(\Omega))},\\
    &\lesssim_{\Omega,\kappa, \nu} \frac{1}{\varepsilon'}\int_{0}^t \lVert \uu_2(s) \rVert_{\B^{1}_{3,1}(\Omega)}^2\lVert \Theta(s)\rVert_{\B^{-1}_{3/2,1}(\Omega)} \d s  + \varepsilon'\lVert \Theta \rVert_{\mathrm{T}_{3/2}((0,t)\times\Omega)}\\
    &\qquad\qquad+ \frac{1}{\varepsilon}\int_{0}^t \lVert \ww(s) \rVert_{\B^{0}_{3,1}(\Omega)} \lVert  \theta_1(s)\rVert_{\B^{0}_{3/2,1}(\Omega)}^2 \d s + \varepsilon \lVert \ww \rVert_{\mathrm{U}_{3}((0,t)\times\Omega)} ,\qquad \forall \varepsilon,\varepsilon'>0.
\end{align*}
Choosing $\varepsilon'$ sufficiently small, we are left with the inequality
\begin{align}
     &\lVert \Theta \rVert_{\mathrm{T}_{3/2}((0,t)\times\Omega)}\nonumber\\ 
    &\lesssim_{\Omega,\kappa, \nu} \int_{0}^t \lVert \uu_2(s) \rVert_{\B^{1}_{3,1}(\Omega)}^2\lVert \Theta(s)\rVert_{\B^{-1}_{3/2,1}(\Omega)} \d s \label{eq:GronwallL1P2} \\
    &\qquad\qquad+ \frac{1}{\varepsilon}\int_{0}^t \lVert \ww(s) \rVert_{\B^{0}_{3,1}(\Omega)} \lVert  \theta_1(s)\rVert_{\B^{0}_{3/2,1}(\Omega)}^2 \d s + \varepsilon \lVert \ww \rVert_{\mathrm{U}_{3}((0,t)\times\Omega)} ,\qquad \forall \varepsilon>0.\nonumber
\end{align}
Now, we take the sum of \eqref{eq:GronwallL1P2} with \eqref{eq:GronwallL1P1}, so that choosing $\varepsilon$ small enough again, we reach the estimate,
\begin{align*}
     \lVert \Theta \rVert_{\mathrm{T}_{3/2}((0,t)\times\Omega)} +  \lVert \ww \rVert_{\mathrm{U}_{3}((0,t)\times\Omega)} \lesssim_{\Omega,\kappa,\nu} \int_{0}^{t} \varphi(s)[\lVert \ww(s) \rVert_{\B^{0}_{3,1}(\Omega)}+ \lVert \ww(s) \rVert_{\B^{-1}_{3/2,1}(\Omega)}] \d s
\end{align*}
where $\varphi(s):=1+\lVert  \theta_1(s)\rVert_{\B^{0}_{3/2,1}(\Omega)}^2+\lVert \uu_2(s) \rVert_{\B^{1}_{3,1}(\Omega)}^2+\lVert \uu_1(s)+\uu_2(s) \rVert_{\B^{1}_{3,1}(\Omega)}^2$. Finally, we have obtained
\begin{align*}
    \lVert \ww(t) \rVert_{\B^{0}_{3,1}(\Omega)}+ \lVert \Theta(t) \rVert_{\B^{-1}_{3/2,1}(\Omega)} \lesssim_{\Omega,\kappa,\nu} \int_{0}^{t} \varphi(s)[\lVert \ww(s) \rVert_{\B^{0}_{3,1}(\Omega)}+ \lVert \Theta(s) \rVert_{\B^{-1}_{3/2,1}(\Omega)}] \d s, \quad \forall t\in[0,T).
\end{align*}
This, by Gr\"{o}nwall's inequality, implies $\lVert \ww(t) \rVert_{\B^{0}_{3,1}(\Omega)}+ \lVert \Theta(t) \rVert_{\B^{-1}_{3/2,1}(\Omega)} =0$ for all $t\in[0,T)$, which ends the proof.
\end{proof}

As the second part, Step 2, of the proof of Theorem~\ref{thm:UniquenessWeakBesov} only relies on the fact that solutions are mild solutions, we obtain the following result:
\begin{proposition}\label{prop:UniqMild}Let $\Omega\subset\RR^3$ be a bounded Lipschitz domain and $T\in(0,\infty]$. Let $(\uu_0,\theta_0)\in\B^{0,\sigma}_{3,1,0}(\Omega)\times\B^{-1}_{3/2,1,0}(\Omega)$, and let
\begin{align*}
    (\uu_1,\theta_1),\quad (\uu_2,\theta_2)\in\L^\infty(0,T;&\B^{0,\sigma}_{3,1,0}(\Omega)\times \B^{-1}_{3/2,1,0}(\Omega))\cap\L^1(0,T;\D^{0}_{3,1}(\mathbb{A}_\mathcal{D},\Omega)\times \B^{1}_{3/2,1,0}(\Omega))
\end{align*}
such that both are mild solutions to the system \eqref{eq:BoussinesqSystemIntro} on $(0,T)$ with initial value $(\uu_0,\theta_0)$ in the sense of \eqref{eq:ModifAbstractBoussinesq}. Then, it holds that
\begin{align*}
   \forall s\in[0,T),\quad (\uu_1,\theta_1)(s) = (\uu_2,\theta_2)(s).
\end{align*}
\end{proposition}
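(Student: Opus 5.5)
The plan is to rerun Step 2 of the proof of Theorem~\ref{thm:UniquenessWeakBesov} verbatim. Set $\ww:=\uu_1-\uu_2$ and $\Theta:=\theta_1-\theta_2$. Both pairs are mild solutions in the sense of \eqref{eq:ModifAbstractBoussinesq} with the same data $\mathbf{x}_0$. Subtracting the two identities and using bilinearity of $\B_\nu$ and $\C_\kappa$ gives, on $(0,T)$,
\begin{align*}
    \ww = \B_\nu(\ww,\uu_1)+\B_\nu(\uu_2,\ww) + \L_\nu(\Theta),\qquad \Theta = \C_\kappa(\uu_2,\Theta)+\C_\kappa(\ww,\theta_1).
\end{align*}
To pass from the $\L_\nu\C_\kappa$ form to $\L_\nu(\Theta)$, I use the second equation inside the first. $\B_\nu(\ww,\uu_1)+\B_\nu(\uu_2,\ww)$ is the solution operator applied to $-\PP_\Omega\div(\uu_1\otimes\ww+\ww\otimes\uu_2)$. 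The resulting expression is estimated exactly like $\div([\uu_1+\uu_2]\otimes\ww)$ in \eqref{eq:RefGronwallL1P1Velo}, since only the product rule in $\B^1_{3,1}$ is involved.

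The key point is that no weak-to-mild identification is required here. That identification was the only place in the proof of Theorem~\ref{thm:UniquenessWeakBesov} where the $\C^{1,\alpha}$ regularity entered, through \eqref{NeumannRieszTransf}. Everything left uses tools valid on bounded Lipschitz domains:
\begin{enumerate}
    \item the $\L^1$-maximal regularity of Propositions~\ref{thm:LqMaxRegStokesBesov} and~\ref{thm:LqMaxRegNeumannBesov};
    \item boundedness of $\PP_\Omega$ on $\B^0_{3,1}$ (Proposition~\ref{prop:HelmholtzDecomp});
    \item the algebra property of $\B^1_{3,1}(\Omega)$;
    \item the product rule Corollary~\ref{cor:ProductRullBesovDim3-1};
    \item the square root property \eqref{eq:SQRTStokesBesovLip};
    \item the moment inequality.
\end{enumerate}

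I also need $\Theta$ to be mean free, so that the Neumann $\L^1$-estimate holds with constants independent of $t$. Testing the $\C_\kappa$ identity against $\mathbf{1}$ gives this, because $\langle e^{\kappa t\Delta_\mathcal{N}}\div F,\mathbf{1}\rangle=\langle \div F,\mathbf{1}\rangle=0$ for $F$ in the relevant space (a boundary-free integration by parts in the dual pairing). Alternatively, I can simply keep the constant term, since $T$ is arbitrary and I work on $(0,t)$ with $t<T$ finite.

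Then, for each $t<T$, I derive the two inequalities \eqref{eq:GronwallL1P1} and \eqref{eq:GronwallL1P2} as in the earlier proof. For the velocity:
\begin{enumerate}
    \item bound $\lVert \ww\rVert_{\mathrm{U}_3((0,t)\times\Omega)}$ by $\int_0^t\lVert\uu_1+\uu_2\rVert_{\B^1_{3,1}}\lVert\ww\rVert_{\B^1_{3,1}}+\int_0^t\lVert\Theta\rVert_{\B^1_{3/2,1}}$;
    \item write $\lVert\ww\rVert_{\B^1_{3,1}}\lesssim\lVert\ww\rVert_{\B^0_{3,1}}^{1/2}\lVert\AA_\mathcal{D}\ww\rVert_{\B^0_{3,1}}^{1/2}$;
    \item absorb the highest-order term with Young's inequality.
\end{enumerate}
The temperature estimate follows the same pattern. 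For it I use the interpolation $\lVert\Theta\rVert_{\B^0_{3/2,1}}\lesssim\lVert\Theta\rVert_{\B^{-1}_{3/2,1}}^{1/2}\lVert\Theta\rVert_{\B^1_{3/2,1}}^{1/2}$ and absorb $\kappa\lVert\Theta\rVert_{\L^1(\B^1_{3/2,1})}$.

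The term $\int_0^t\lVert\Theta\rVert_{\B^1_{3/2,1}}$ in the velocity inequality is the delicate one. I plan to handle it by adding $C$ times the temperature inequality to the velocity one, with the constant chosen so the highest-order terms are absorbed after summing. The weight to aim for is
\[
\varphi:=1+\lVert\theta_1\rVert_{\B^0_{3/2,1}}^2+\lVert\uu_2\rVert_{\B^1_{3,1}}^2+\lVert\uu_1+\uu_2\rVert_{\B^1_{3,1}}^2 .
\]
This lies in $\L^1(0,t)$ by the interpolation inequalities \eqref{eq:ProofInterpLinftyL1(A)} and \eqref{eq:InterpIneqTemp}, given the assumed regularity of $(\uu_i,\theta_i)$. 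The result is
\[
\lVert\ww(t)\rVert_{\B^0_{3,1}}+\lVert\Theta(t)\rVert_{\B^{-1}_{3/2,1}}\lesssim\int_0^t\varphi(s)\bigl[\lVert\ww(s)\rVert_{\B^0_{3,1}}+\lVert\Theta(s)\rVert_{\B^{-1}_{3/2,1}}\bigr]\d s,
\]
and Grönwall's lemma then gives $(\ww,\Theta)=0$ on $[0,T)$.

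The main obstacle is the bookkeeping in the absorption step. The coupling term $\int\Theta$ in the $\ww$-bound has to be controlled by the dissipative part of the $\mathrm{T}_{3/2}$ norm, not by a lower-order quantity. If that absorption fails, I fall back to the small-interval argument of Step 3 of Theorem~\ref{thm:UniquenessWeakLp}, used with the $\L^1$ bounds of Proposition~\ref{Prop:SolutionOperatorBoussinesqBesov} on $(0,\tau)$. That version is absolute continuity in $\tau$ of the coefficient norms, combined with connectedness of the set where the two solutions agree, and it avoids Grönwall altogether.
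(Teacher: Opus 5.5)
Your proposal is correct and follows the paper's own route. The paper justifies Proposition~\ref{prop:UniqMild} precisely by noting that Step~2 of the proof of Theorem~\ref{thm:UniquenessWeakBesov} uses only the mild formulation: subtract the mild identities, apply $\L^1$-maximal regularity, the product rules, the square-root property and the moment inequality, then Gr\"onwall. Hence the $\C^{1,\alpha}$-dependent weak-to-mild step is not needed. Your weighted combination of the two inequalities is the bookkeeping that the paper's ``take the sum and choose $\varepsilon$ small'' implicitly requires: first choose the multiplier of the temperature estimate large enough to absorb $\int_0^t\lVert\Theta\rVert_{\B^{1}_{3/2,1}(\Omega)}$, then choose $\varepsilon$ small. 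So no fallback argument is needed.
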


\subsection{Exponential stabilization towards the equilibrium.}\label{Sec:Asymptotics}

\begin{proposition}Let $\Omega\subset\RR^3$ be a bounded $\C^{1,\alpha}$ domain with $\alpha>1/3$. Let $(\uu_0,\theta_0)\in\B_{3,2,0}^{0,\sigma}(\Omega)\times \B^{-1}_{3/2,2,0}(\Omega)$ such that $(\uu_0,\theta_0-\tilde{\theta}_0)$ has small norm, where $\tilde{\theta}_0 = \frac{1}{|\Omega|} \int_{\Omega} \theta_0$. There exits $\lambda>0$, independent of the initial data, such that the unique mild solution $(\uu,\theta)\in\C^{0}_{ub}(\overline{\RR_+},\B_{3,2,0}^{0,\sigma}(\Omega)\times \B^{-1}_{3/2,2,0}(\Omega))$, given by Theorems~\ref{thm:LargeTimesExistenceLp}~and~\ref{thm:UniquenessWeakLp}, satisfies
\begin{align*}
    e^{\lambda t}\lVert (\uu(t),\theta(t))- (0,\Tilde{\theta}_0) \rVert_{\B^{0}_{3,2}(\Omega)\times \B^{-1}_{3/2,2}(\Omega)} \xrightarrow[t\rightarrow+\infty]{} 0.
\end{align*}
\end{proposition}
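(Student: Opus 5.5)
The plan is to redo the global fixed point of Theorem~\ref{thm:LargeTimesExistenceLp} in exponentially weighted spaces, and then conclude the decay from a density argument. Fix $\lambda>0$ small; below we take $\lambda\leqslant c/2$, where $c>0$ comes from the weighted estimates in Propositions~\ref{thm:LqMaxRegNeumannBesov} and~\ref{thm:LqMaxRegStokesBesov}. Define the weighted spaces
\begin{align*}
\mathrm{Z}^\lambda_3:=\{\vv\,:\,e^{\lambda t}\vv\in\mathrm{Z}_3(\RR_+\times\Omega)\},\qquad \mathrm{K}^\lambda_{3/2}:=\{\Theta\,:\,e^{\lambda t}\Theta\in\mathrm{K}_{3/2}(\RR_+\times\Omega)\},
\end{align*}
with the obvious norms. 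As in Step~2 of the proof of Theorem~\ref{thm:LargeTimesExistenceLp}, we reduce to mean-free temperature by writing $\theta=\theta^\circ+\tilde\theta_0$. Here $\theta^\circ$ is mean free for all times: test with $\eta=\eta(t)$, or use $\div(\theta\uu)$ with $\uu\cdot\mathbf n=0$.

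\textbf{Step 1: the free parts lie in the weighted spaces.} The terms $e^{-\nu t\AA_\mathcal D}\uu_0$ and $e^{\kappa t\Delta_\mathcal N}\theta_0^\circ$ belong to $\mathrm{Z}^\lambda_3\times\mathrm{K}^\lambda_{3/2}$. This is the exponential part of Propositions~\ref{thm:LqMaxRegStokesBesov} and~\ref{thm:LqMaxRegNeumannBesov}. Those propositions apply because the Stokes operator is invertible on $\B^{0,\sigma}_{3,2,0}$ and the Neumann Laplacian is invertible on the mean-free spaces $\B^{\bullet}_{3/2,\bullet,\circ}$. Equivalently, $\AA_\mathcal D-\lambda$ and $-\Delta_\mathcal N-\lambda$ remain invertible sectorial operators for $\lambda$ small, and one applies Da Prato--Grisvard (Theorem~\ref{thm:DaPratoGrisvard}) to them.

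\textbf{Step 2: the bilinear estimates are compatible with the weight.} Write $\vv_\lambda:=e^{\lambda t}\vv$. Then $e^{\lambda t}\B_\nu(\vv,\ww)$ is the Duhamel integral for $\AA_\mathcal D-\lambda/\nu$ with forcing $e^{\lambda t}\PP_\Omega\div(\vv\otimes\ww)$. We bound this forcing by $e^{-\lambda t}\,|\vv_\lambda\otimes\ww_\lambda|\leqslant|\vv_\lambda\otimes\ww_\lambda|$. Hence the proof of Proposition~\ref{Prop:SolutionOperatorBoussinesqLp} goes through verbatim with the shifted operators, giving
\begin{align*}
\lVert\B_\nu(\vv,\ww)\rVert_{\mathrm{Z}^\lambda_3}\lesssim\lVert\vv\rVert_{\mathrm{Z}^\lambda_3}\lVert\ww\rVert_{\mathrm{Z}^\lambda_3}.
\end{align*}
The same argument gives $\lVert\C_\kappa(\vv,\Theta)\rVert_{\mathrm{K}^\lambda_{3/2}}\lesssim\lVert\vv\rVert_{\mathrm{Z}^\lambda_3}\lVert\Theta\rVert_{\mathrm{K}^\lambda_{3/2}}$. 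For this one needs the shifted Neumann maximal regularity, including Proposition~\ref{prop:RieszTransfC1alphaDomainNeumannLap}, which is where the $\C^{1,\alpha}$ assumption enters, together with the fact that $\div(\vv\Theta)$ is mean free. The linear term $\L_\nu$ is linear, so the weight passes through it directly. Proposition~\ref{prop:FixedPointBilin} then produces a solution in $\mathrm{Z}^\lambda_3\times\mathrm{K}^\lambda_{3/2}$ for small data. Since $\mathrm{Z}^\lambda_3\times\mathrm{K}^\lambda_{3/2}\hookrightarrow\mathrm{Z}_3\times\mathrm{K}_{3/2}$, this solution coincides with the one from Theorems~\ref{thm:LargeTimesExistenceLp} and~\ref{thm:UniquenessWeakLp}. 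We obtain
\begin{align*}
\sup_t e^{\lambda t}\lVert(\uu,\theta-\tilde\theta_0)(t)\rVert_{\B^{0}_{3,2}\times\B^{-1}_{3/2,2}}<\infty.
\end{align*}

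\textbf{Step 3: upgrade boundedness to convergence to $0$.} Run Step~2 with $2\lambda$ in place of $\lambda$; this requires $2\lambda$ to still be below the spectral gaps, which holds for $\lambda$ small. Then $e^{\lambda t}\lVert\cdot\rVert\leqslant Ce^{-\lambda t}\to0$.

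The main obstacle is the smallness threshold in Step~2. The weighted bilinear constants depend on $\lambda$, and we need the smallness condition to be uniform for $\lambda$ in a fixed range, so that $\lambda$ is independent of the data. The plan is to fix $\lambda_0$ equal to half of the minimum of $c_{\text{Stokes}}$ and $c_{\text{Neumann}}$, and to shrink the smallness constant $c$ of Theorem~\ref{thm:LargeTimesExistenceLp} if necessary. Uniformity should hold because the shifted operators stay uniformly sectorial and invertible for $\lambda\in[0,2\lambda_0]$, so the Da Prato--Grisvard constants are uniform in $\lambda$ on this range.
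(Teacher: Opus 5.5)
Your proposal is correct and follows the same route as the paper: reduce to the mean-free temperature $\theta-\tilde\theta_0$, use the exponentially weighted maximal regularity available because $\nu\AA_\mathcal{D}$ and $-\kappa\Delta_\mathcal{N}$ are invertible on the relevant (mean-free) spaces, then halve the rate to pass from boundedness to convergence to $0$. The paper compresses this into a direct appeal to the weighted estimates of Propositions~\ref{thm:LqMaxRegNeumannBesov} and~\ref{thm:LqMaxRegStokesBesov}; your weighted fixed point, built on the shifted operators $\nu\AA_\mathcal{D}-\lambda$, $-\kappa\Delta_\mathcal{N}-\lambda$ and the bound $e^{\lambda t}|\vv\otimes\ww|\leqslant|\vv_\lambda\otimes\ww_\lambda|$, spells out how the nonlinear terms are handled under the weight, which that appeal leaves implicit.
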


\begin{proof} By the uniqueness result Theorem~\ref{thm:UniquenessWeakLp}, and by the analysis performed in the proof of Theorem~\ref{thm:LargeTimesExistenceLp}, it holds that $(\uu,\theta)$ is the mild solution with initial data $(\uu_0,\theta_0)$ if and only if $(\uu,\theta-\tilde{\theta}_0)$ is the mild solution with initial data $(\uu_0,\theta_0-\tilde{\theta}_0)$. Since $\theta_0-\tilde{\theta}_0$ is mean free, we can apply the exponential-weighted maximal regularity estimates from Propositions~\ref{thm:LqMaxRegNeumannBesov}~and~\ref{thm:LqMaxRegStokesBesov}, and we deduce that there exists $c>0$, such that
\begin{align*}
    e^{ct}\lVert (\uu(t),\theta(t))- (0,\Tilde{\theta}_0) \rVert_{\B^{0}_{3,2}(\Omega)\times \B^{-1}_{3/2,2}(\Omega)} \lesssim_{\nu,\kappa,\Omega} 1, \qquad\forall t\geqslant 0.
\end{align*}
Choosing $\lambda := c/2$ yields the result.
\end{proof}

\begin{proposition}Let $\Omega\subset\RR^3$ be a bounded Lipschitz domain. Let $(\uu_0,\theta_0)\in\B_{3,1,0}^{0,\sigma}(\Omega)\times \B^{-1}_{3/2,1,0}(\Omega)$ such that $(\uu_0,\theta_0-\tilde{\theta}_0)$ has small norm, where $\tilde{\theta}_0 = \frac{1}{|\Omega|} \int_{\Omega} \theta_0$. There exits $\lambda>0$, independent of the initial data, such that the unique mild solution $(\uu,\theta)\in\C^{0}_{ub}(\overline{\RR_+},\B_{3,1,0}^{0,\sigma}(\Omega)\times \B^{-1}_{3/2,1,0}(\Omega))$, given by Theorem~\ref{thm:LargeTimesExistenceBesov}~and~Proposition~\ref{prop:UniqMild}, satisfies
\begin{align*}
    e^{\lambda t}\lVert (\uu(t),\theta(t))- (0,\Tilde{\theta}_0) \rVert_{\B^{0}_{3,1}(\Omega)\times \B^{-1}_{3/2,1}(\Omega)} \xrightarrow[t\rightarrow+\infty]{} 0.
\end{align*}
\end{proposition}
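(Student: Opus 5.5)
The plan is to copy the proof of the previous proposition, now in the $\L^1$-in-time Besov framework. First, I reduce to the mean-free case. The remark in Step 2 of the proof of Theorem~\ref{thm:LargeTimesExistenceLp} gives $\PP_\Omega(c\mathfrak{e}_3)=0$, $\L_\nu(c)=0$, $\mathcal{B}((0,c),\cdot)=0$ and $e^{t\kappa\Delta_\mathcal{N}}c=c$. Hence, with $(\uu,\theta)$ the mild solution, $(\uu,\theta-\tilde{\theta}_0)$ is the mild solution with data $(\uu_0,\theta_0-\tilde\theta_0)$. Uniqueness in the mild class (Proposition~\ref{prop:UniqMild}) identifies it with the fixed point built in Theorem~\ref{thm:LargeTimesExistenceBesov}. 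No boundary regularity beyond Lipschitz is needed, since only mild solutions enter.

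Next, I redo the fixed point in exponentially weighted spaces. Fix $\lambda'>0$ smaller than the constants $c$ of the weighted estimates in Propositions~\ref{thm:LqMaxRegNeumannBesov} and \ref{thm:LqMaxRegStokesBesov}, used with $q=r=1$. Define $\mathrm{U}_3^{\lambda'}$ and $\mathrm{T}_{3/2}^{\lambda'}$ by the norms of $e^{\lambda' t}\vv$ and $e^{\lambda' t}\Theta$ in $\mathrm{U}_3$ and $\mathrm{T}_{3/2}$, the latter restricted to mean-free temperatures. The mean-free condition is preserved because $\div(\theta\uu)$ has zero mean, since $\uu$ vanishes on $\partial\Omega$. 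Those weighted estimates give the linear bounds $\lVert e^{\lambda' t}\mathbf{v}_0\rVert_{\mathrm{U}_3}\lesssim\lVert\uu_0\rVert_{\B^0_{3,1}}$ and the analogous bound for $\boldsymbol\Theta_0$.

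The bilinear estimates of Proposition~\ref{Prop:SolutionOperatorBoussinesqBesov} carry over verbatim. Every right-hand side is of the form $e^{\lambda' t}(\vv\otimes\ww)$, $e^{\lambda' t}\vv\Theta$ or $e^{\lambda' t}\Theta$. I put the whole weight on one factor: $e^{\lambda' t}\vv\otimes\ww=(e^{\lambda' t}\vv)\otimes\ww$, and similarly for the other products. The interpolation/Hölder steps \eqref{eq:ProofBiLinestStokes} and \eqref{eq:ProofBiLinestBoussinesTransp} then bound the product by weighted norms times unweighted ones, and unweighted norms are dominated by weighted ones because $e^{\lambda' t}\geqslant1$. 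The weighted maximal regularity applied to the forcing $e^{\lambda' t}f$ closes the estimate.

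So $\mathcal{B}$ is bounded on $\mathrm{U}_3^{\lambda'}\times\mathrm{T}_{3/2}^{\lambda'}$ with a norm depending only on $\Omega,\nu,\kappa$. Proposition~\ref{prop:FixedPointBilin} then yields, for small data, a fixed point in the weighted space. Since the weighted space embeds into the unweighted one, this fixed point coincides with the original solution by the smallness-uniqueness clause of Proposition~\ref{prop:FixedPointBilin}, or by Proposition~\ref{prop:UniqMild}.

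This gives $\sup_t e^{\lambda' t}\lVert(\uu(t),\theta(t)-\tilde\theta_0)\rVert_{\B^0_{3,1}\times\B^{-1}_{3/2,1}}\lesssim1$. Taking $\lambda:=\lambda'/2$ gives the convergence to $0$. The main point to check is that the weighted maximal regularity bound controls the $\L^\infty$ norm of $e^{\lambda' t}u$ for the forced problem. This follows from Theorem~\ref{thm:DaPratoGrisvard} applied to $A-\lambda'$, which is still invertible sectorial because $\lambda'$ is smaller than the spectral bound of $A$ on the mean-free/solenoidal spaces. A direct $\L^1$ Duhamel estimate then closes the argument.
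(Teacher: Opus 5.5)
Your proposal is correct and follows the paper's route. Both arguments first reduce to mean-free temperature data, using that constants are annihilated by $\L_\nu$ and $\mathcal{B}$ and fixed by the Neumann semigroup, and then invoke exponentially weighted $\L^1$-maximal regularity for the Stokes and Neumann operators on their invertible parts.

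The difference is only in detail, and it works in your favour. The paper just says to apply the weighted estimates, whereas you supply the nonlinear step: you redo the fixed point in $e^{\lambda' t}$-weighted spaces, put the weight on one factor, and feed the forcing $e^{\lambda' t}f$ into Da Prato--Grisvard for $A-\lambda'$. This step is genuinely needed. The weighted bounds as printed in the paper carry an unweighted $f$ on the right-hand side, and such bounds cannot by themselves yield exponential decay.
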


\begin{proof} The proof is exactly the same as the one of the previous Proposition, but instead we take advantage of  Proposition~~\ref{Prop:SolutionOperatorBoussinesqBesov},  Theorem~\ref{thm:LargeTimesExistenceBesov}, and the exponential weighted estimate from Propositions~\ref{thm:LqMaxRegNeumannBesov}~and~\ref{thm:LqMaxRegStokesBesov}.
\end{proof}

%----------------------------------------------------
%--------------------- Appendix ---------------------
%----------------------------------------------------
%----------------------------------------------------
%------------------- Appendices ----------------------
%----------------------------------------------------

\appendix

\section{Appendix: Product rules in Besov spaces}\label{App:ProductRules}

\begin{proposition}[ {\cite[Appendix]{DanchinHieberMuchaTolk2020}} ]\label{prop:Productq=1Besov} Let $\Omega$ be a bounded Lipschitz domain of $\RR^n$. Let $(s_1,s_2)\in\RR^2$ and $(p,q,r)\in[1,\infty]^3$ satisfy
$$s_2\leqslant  s_1\leqslant  \min\biggl(\frac np,\frac nq\biggr),\quad
s_1+s_2> 0,\quad\frac1p+\frac1q\leqslant 1\quad\&\quad
\frac1r=\frac1p+\frac1q-\frac{s_1}n\cdotp$$
Then
\begin{equation}\label{eq:prod1} \B^{s_1}_{p , 1}(\Omega) \cdot \B^{s_2}_{q , 1}(\Omega) \subset \B^{s_2}_{r , 1}(\Omega).\end{equation}
If in the limit case $s_1+s_2=0$, 
then
\begin{equation}\label{eq:prod2} \B^{s_1}_{p , 1}(\Omega) \cdot \B^{s_2}_{q , 1}(\Omega) \subset  \B^{s_2}_{r , \infty}(\Omega).\end{equation}
\end{proposition}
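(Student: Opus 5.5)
The plan is to prove \eqref{eq:prod1}--\eqref{eq:prod2} first on $\RR^n$ and then transfer them to $\Omega$ by extension and restriction. Since $\Omega$ is a bounded Lipschitz domain, Rychkov's universal extension operator $\mathcal{E}$ is bounded $\B^{s}_{p,1}(\Omega)\to\B^{s}_{p,1}(\RR^n)$ for every $s\in\RR$ and $p\in[1,\infty]$. Given $u\in\B^{s_1}_{p,1}(\Omega)$ and $v\in\B^{s_2}_{q,1}(\Omega)$, we have $uv=\R_\Omega[(\mathcal{E}u)(\mathcal{E}v)]$. Here the product on $\RR^n$ is well defined as a tempered distribution because $s_1+s_2\geqslant0$ together with the paraproduct bounds below. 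The restriction $\R_\Omega$ is bounded by definition of the quotient norm. It therefore suffices to treat $\Omega=\RR^n$.

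On $\RR^n$ I would use Bony's decomposition
\begin{align*}
    uv = T_u v + T_v u + R(u,v).
\end{align*}
Set $\frac{1}{p^\ast}:=\frac1p-\frac{s_1}{n}\geqslant0$. The embedding $\B^{s_1}_{p,1}\hookrightarrow\B^{0}_{p^\ast,1}\hookrightarrow\L^{p^\ast}$ holds since $s_1\leqslant n/p$ (when $s_1=n/p$ the target is $\L^\infty$). By Hölder, $\frac1r=\frac1{p^\ast}+\frac1q$, so $T_uv\in\B^{s_2}_{r,1}$ follows from the standard estimate $\lVert S_{j-1}u\,\Delta_jv\rVert_{\L^r}\leqslant\lVert S_{j-1}u\rVert_{\L^{p^\ast}}\lVert\Delta_jv\rVert_{\L^q}$, using $\sup_j\lVert S_{j-1}u\rVert_{\L^{p^\ast}}\lesssim\lVert u\rVert_{\B^0_{p^\ast,1}}$.

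For $T_vu$ the regularity of $u$ is used instead. I would write
\begin{align*}
    \lVert S_{j-1}v\,\Delta_ju\rVert_{\L^r}\leqslant\lVert S_{j-1}v\rVert_{\L^{a}}\lVert\Delta_ju\rVert_{\L^{b}}
\end{align*}
with Bernstein-type choices $b\geqslant p$ and $a\geqslant q$, and with the regularity split as $s_1\to s_1-n(\frac1p-\frac1b)$ and $s_2\to s_2-n(\frac1q-\frac1a)$. The indices are tuned so that the total is $s_2$ in $\L^r$. When $s_2<n/q$, the low frequencies $S_{j-1}v$ cost $2^{j(n/q-s_2)}$ in $\L^\infty$; placing $u$ in $\B^{s_1-n/q}_{\tilde p,1}$ with $\frac1{\tilde p}=\frac1r$ then gives exactly $s_2$. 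The constraint $s_2\leqslant s_1$ makes the summation over low frequencies converge; in the endpoint $s_2=s_1$ the third index $1$ is used. The condition $s_1\leqslant n/q$ guarantees the integrability exponents stay in $[1,\infty]$.

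The remainder $R(u,v)=\sum_{|j-k|\leqslant1}\Delta_ju\Delta_kv$ is where $s_1+s_2>0$ enters. Each block satisfies
\begin{align*}
    \lVert\Delta_ju\widetilde\Delta_jv\rVert_{\L^{m}}\lesssim 2^{-j(s_1+s_2)}c_j,\qquad \frac1m=\frac1p+\frac1q\leqslant1,\quad (c_j)\in\ell^1.
\end{align*}
The sum over dyadic balls lies in $\B^{s_1+s_2}_{m,1}$ provided $s_1+s_2>0$. The embedding $\B^{s_1+s_2}_{m,1}\hookrightarrow\B^{s_2}_{r,1}$ then follows from $s_1-\frac nm+\frac nr=0$, which is exactly the definition of $r$. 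In the limit case $s_1+s_2=0$, the dyadic sum only yields $\B^{0}_{m,\infty}$, and the same embedding gives $\B^{s_2}_{r,\infty}$. This proves \eqref{eq:prod2}.

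I expect the main obstacle to be the index bookkeeping for $T_vu$ when $s_2$ is negative and $r<p,q$, ensuring all exponents remain admissible. I would handle it by reducing, via Sobolev embedding, to the balanced case of the remainder and $T_uv$, and would follow \cite[Appendix]{DanchinHieberMuchaTolk2020} for the precise choices.
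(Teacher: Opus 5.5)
Your overall route (Bony's decomposition on $\RR^n$, then transfer to $\Omega$ via a universal extension operator and the quotient norm) is the standard argument behind the cited appendix; the paper itself gives no proof. Your treatment of $T_uv$, of $R(u,v)$ including the limit case, and of the transfer step is correct.

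The step that fails as written is $T_vu$, which you leave to ``follow the reference'' after a concrete instance that does not work.

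\begin{itemize}
\item Putting $S_{j-1}v$ in $\L^\infty$ forces $\Delta_ju$ into $\L^r$. But $\frac1r-\frac1p=\frac1q-\frac{s_1}{n}\geqslant0$, so $r<p$ unless $s_1=n/q$, and Bernstein's inequality cannot lower the integrability exponent on $\RR^n$.
\item The regularity count is also off. Since $2^{js_2}\cdot2^{j(n/q-s_2)}=2^{jn/q}$, you would need $u\in\B^{n/q}_{r,1}$, not $u\in\B^{s_1-n/q}_{r,1}$. This does not follow from $u\in\B^{s_1}_{p,1}(\RR^n)$ when $r<p$.
\item For instance, in Corollary~\ref{cor:ProductRullBesovDim3-1} ($n=3$, $p=3$, $q=r=3/2$, $s_1=1$, $s_2=0$) your instance would require $u\in\B^{2}_{3/2,1}$.
\item Your fallback of ``reducing via Sobolev embedding to the balanced case'' does not specify what replaces this step.
\end{itemize}

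The fix is simple and needs no case distinction on the sign of $s_2$. Keep $\Delta_ju$ in $\L^p$ and put the low frequencies of $v$ in $\L^a$ with $\frac1a=\frac1q-\frac{s_1}{n}\in[0,\frac1q]$. This is exactly where $s_1\leqslant n/q$ enters, and it gives $\frac1r=\frac1a+\frac1p$. Bernstein then yields
\begin{align*}
\lVert S_{j-1}v\rVert_{\L^a}\lesssim\sum_{k\leqslant j-2}2^{k(s_1-s_2)}\,2^{ks_2}\lVert\Delta_kv\rVert_{\L^q}\leqslant 2^{j(s_1-s_2)}\lVert v\rVert_{\B^{s_2}_{q,1}},
\end{align*}
which uses only $s_2\leqslant s_1$; the $\ell^1$ index covers $s_1=s_2$. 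Hence
\begin{align*}
2^{js_2}\lVert S_{j-1}v\,\Delta_ju\rVert_{\L^r}\lesssim 2^{js_1}\lVert\Delta_ju\rVert_{\L^p}\lVert v\rVert_{\B^{s_2}_{q,1}},
\end{align*}
and summing over $j$ gives $\lVert T_vu\rVert_{\B^{s_2}_{r,1}}\lesssim\lVert u\rVert_{\B^{s_1}_{p,1}}\lVert v\rVert_{\B^{s_2}_{q,1}}$.

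Equivalently, use the embedding $\B^{s_2}_{q,1}\hookrightarrow\B^{s_2-s_1}_{a,1}$ together with the paraproduct bound $\B^{s_2-s_1}_{a,1}\times\B^{s_1}_{p,1}\to\B^{s_2}_{r,1}$, valid because $s_2-s_1\leqslant0$. This holds for either sign of $s_2$ and in the limit case $s_1+s_2=0$, so $T_vu$ is in fact the easiest of the three terms.
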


\begin{corollary}\label{cor:ProductRullBesovDim3-1} Let $\Omega$ be a Lipschitz domain of $\RR^3$. One has the product rule
\begin{equation*} \B^{1}_{3, 1}(\Omega) \cdot \B^{0}_{3/2, 1}(\Omega) \subset \B^{0}_{3/2, 1}(\Omega),\end{equation*}
\end{corollary}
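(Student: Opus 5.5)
The plan is to obtain Corollary~\ref{cor:ProductRullBesovDim3-1} as a direct specialization of Proposition~\ref{prop:Productq=1Besov} with $n=3$. The first step is to pick the parameters: let the first factor $\B^{1}_{3,1}(\Omega)$ carry $(s_1,p)=(1,3)$ and the second factor $\B^{0}_{3/2,1}(\Omega)$ carry $(s_2,q)=(0,3/2)$.

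The second step is to check the hypotheses of Proposition~\ref{prop:Productq=1Besov} one by one:
\begin{itemize}
    \item $s_2=0\leqslant s_1=1$;
    \item $s_1=1\leqslant \min(n/p,n/q)=\min(1,2)=1$, so the bound is attained but allowed, since it is non-strict;
    \item $s_1+s_2=1>0$, so we are in the non-limiting case \eqref{eq:prod1} rather than \eqref{eq:prod2};
    \item $\frac1p+\frac1q=\frac13+\frac23=1\leqslant 1$.
\end{itemize}
The output integrability is then fixed by $\frac1r=\frac1p+\frac1q-\frac{s_1}{n}=1-\frac13=\frac23$, that is, $r=3/2$. Applying \eqref{eq:prod1} gives $\B^{1}_{3,1}(\Omega)\cdot\B^{0}_{3/2,1}(\Omega)\subset\B^{0}_{3/2,1}(\Omega)$, together with the bilinear estimate that the inclusion implies by the closed graph theorem, or directly from the proof in \cite{DanchinHieberMuchaTolk2020}.

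There is essentially no obstacle; the only points to watch are the two borderline equalities, $s_1=n/p$ and $1/p+1/q=1$. Both are non-strict in the statement, which is precisely why the third index $1$ is needed here, since $\B^{1}_{3,1}\hookrightarrow\L^\infty$. One caveat is that the corollary is stated for a (possibly unbounded) Lipschitz domain while the proposition assumes boundedness. I would handle this by noting that the whole-space product rule transfers to any Lipschitz domain through a universal bounded extension operator (Rychkov's) followed by restriction, so boundedness is not used.
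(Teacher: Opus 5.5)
Your proposal is correct and matches the paper's proof, which likewise just specializes Proposition~\ref{prop:Productq=1Besov} in the non-limiting case \eqref{eq:prod1} with $(s_1,p)=(1,3)$, $(s_2,q)=(0,3/2)$, $n=3$, so that $r=3/2$. Your extra remark on passing from bounded to general Lipschitz domains via a universal extension operator followed by restriction is a refinement the paper does not spell out; the corollary is only used for bounded $\Omega$ there.
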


\begin{proof}Just set the right parameters in Proposition~\ref{prop:Productq=1Besov}, as a special case of \eqref{eq:prod1}.
\end{proof}

\begin{proposition}\label{prop:ProductRullBesovDim3-2}  Let $\Omega$ be a Lipschitz domain of $\RR^n$, $p\in[1,\infty]$ and $s\in[0,n/p]$. One has the product rule
\begin{equation}\label{eq:MainProductRule} \B^{s}_{p, 1}(\Omega) \cdot \B^{-s}_{p', 1}(\Omega) \subset \B^{-s}_{(\frac{n}{s})' ,\infty}(\Omega).\end{equation}
\end{proposition}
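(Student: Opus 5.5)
We prove \eqref{eq:MainProductRule} first on $\RR^n$ and then transfer it to $\Omega$. For the transfer, given $u\in\B^{s}_{p,1}(\Omega)$ and $v\in\B^{-s}_{p',1}(\Omega)$, we pick extensions $U\in\B^{s}_{p,1}(\RR^n)$ and $V\in\B^{-s}_{p',1}(\RR^n)$ with comparable norms. This uses Rychkov's universal extension operator, which is bounded on Besov spaces of any regularity over Lipschitz domains. Since the product $UV$ is defined on $\RR^n$ and its restriction $\R_\Omega(UV)$ equals $uv$, the domain estimate follows from the whole-space one by the definition of $\B^{-s}_{r,\infty}(\Omega)$ as a quotient space. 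We set
\begin{align*}
\frac1r := 1-\frac sn = \Big(\frac ns\Big)'^{-1},
\end{align*}
so that $\frac1r=\frac1p+\frac1{p'}-\frac sn$, exactly as in Proposition~\ref{prop:Productq=1Besov}.

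On $\RR^n$ we use Bony's decomposition $UV=T_UV+T_VU+R(U,V)$.

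\textbf{Paraproduct $T_UV=\sum_j S_{j-1}U\,\Delta_jV$.} If $s<n/p$, the embedding $\B^{s}_{p,1}\hookrightarrow\L^{\rho}$ with $\frac1\rho=\frac1p-\frac sn$ gives $\lVert S_{j-1}U\rVert_{\L^\rho}\lesssim\lVert U\rVert_{\B^s_{p,1}}$. By Hölder, $\lVert S_{j-1}U\Delta_jV\rVert_{\L^r}\lesssim\lVert U\rVert_{\B^s_{p,1}}\lVert\Delta_jV\rVert_{\L^{p'}}$, since $\frac1\rho+\frac1{p'}=\frac1r$. Each block is spectrally localized in an annulus, so we obtain $\B^{-s}_{r,1}\subset\B^{-s}_{r,\infty}$. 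In the endpoint $s=n/p$ one has $\rho=\infty$, and $\B^{n/p}_{p,1}\hookrightarrow\L^\infty$ again suffices.

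\textbf{Paraproduct $T_VU=\sum_j S_{j-1}V\,\Delta_jU$.} Since $-s\le 0$, for $s>0$ we have $\lVert S_{j-1}V\rVert_{\L^{\rho'}}\lesssim 2^{js}\sum_{k<j}2^{-(j-k)s}\,2^{-ks}\lVert\Delta_kV\rVert$ after Bernstein from $\L^{p'}$ to $\L^{\rho'}$. Here $\frac1{\rho'}=\frac1{p'}$ plus nothing, so we must be careful with the exponents. Instead we place $\Delta_jU$ in $\L^{\rho}$ by Bernstein, with gain $2^{j(n/p-n/\rho)}=2^{js}$, and $S_{j-1}V$ in $\L^{p'}$. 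This yields $\lVert S_{j-1}V\Delta_jU\rVert_{\L^r}\lesssim 2^{js}\lVert\Delta_jU\rVert_{\L^p}\,\lVert S_{j-1}V\rVert_{\L^{p'}}$. The factor $\lVert S_{j-1}V\rVert_{\L^{p'}}\lesssim\sum_{k<j}2^{ks}\,2^{-ks}\lVert\Delta_kV\rVert_{\L^{p'}}\le 2^{js}\lVert V\rVert_{\B^{-s}_{p',1}}$. Multiplying by the weight $2^{-js}$, the $j$-th block is bounded by $2^{js}\lVert\Delta_jU\rVert_{\L^p}\lVert V\rVert_{\B^{-s}_{p',1}}$, which is summable in $\ell^1$. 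This gives $\B^{-s}_{r,1}$ control. The case $s=0$ is the trivial Hölder estimate $\L^1\subset\B^0_{1,\infty}$-type bound with $\ell^1$ sums.

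\textbf{Remainder — main obstacle.} This is the step where $s_1+s_2=0$ is critical. We write $R(U,V)=\sum_j\Delta_jU\,\widetilde\Delta_jV$. Hölder gives $\lVert\Delta_jU\widetilde\Delta_jV\rVert_{\L^1}\le(2^{js}\lVert\Delta_jU\rVert_{\L^p})(2^{-js}\lVert\widetilde\Delta_jV\rVert_{\L^{p'}})=:a_jb_j$. Both sequences lie in $\ell^1$, so $\sum_ja_jb_j<\infty$ and $R\in\L^1\subset\B^{0}_{1,\infty}$ (indeed with spectrum in balls). Since the blocks are supported in balls rather than annuli, zero regularity is the limit at which no positivity is needed, and this is exactly why the third index degenerates to $\infty$. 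To reach $\B^{-s}_{r,\infty}$ we use the embedding $\B^{0}_{1,\infty}\hookrightarrow\B^{-n(1-1/r)}_{r,\infty}=\B^{-s}_{r,\infty}$, which holds because $n(1-\frac1r)=s$. For the endpoint cases $p\in\{1,\infty\}$ we check that Bernstein and Hölder still apply; those cases are where I expect the bookkeeping to need care.

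Summing the three pieces gives \eqref{eq:MainProductRule} on $\RR^n$, and restriction to $\Omega$ concludes.
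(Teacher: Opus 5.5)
Your proof is correct, but it takes a different route from the paper. The paper proves this in one line by specializing Proposition~\ref{prop:Productq=1Besov} (the product law of Danchin--Hieber--Mucha--Tolksdorf) to $s_1=s$, $s_2=-s$, $q=p'$, so that $\frac1r=1-\frac sn$. Since $s_1+s_2=0$, the relevant case is really the limit case \eqref{eq:prod2}, not \eqref{eq:prod1} as the paper writes. You instead re-derive that limit case directly, via Bony's decomposition on $\RR^n$ followed by restriction.

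Your route buys three things:
\begin{itemize}
\item It is self-contained.
\item It shows where the third index $\infty$ comes from. Both paraproducts land in $\B^{-s}_{r,1}$; only the remainder is merely in $\L^1\subset\B^0_{1,\infty}$, which the embedding $\B^0_{1,\infty}\hookrightarrow\B^{-s}_{r,\infty}$ then lowers.
\item It covers the full stated range $s\in[0,n/p]$. Read literally, the citation also needs $s_1\le n/q$, i.e. $s\le n/p'$, which excludes for instance $p=1$, $s\in(0,n]$. This does not matter for the paper's application ($n=3$, $p=3$, $s=1/2$), but it does matter for the statement as written.
\end{itemize}
The paper's route buys brevity.

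A few small fixes:
\begin{itemize}
\item Delete the abandoned first attempt in the $T_VU$ step. Bernstein from $\L^{p'}$ to $\L^{\rho'}$ goes the wrong way, since $\rho\geqslant p$ gives $\rho'\leqslant p'$.
\item Your worry about $p\in\{1,\infty\}$ is unfounded. Bernstein, H\"older and the uniform $\L^\rho$-bounds for $\Delta_j$ and $S_j$ hold for every $\rho\in[1,\infty]$.
\item Rychkov's operator is not needed. With the paper's quotient definition of $\B^{s}_{p,q}(\Omega)$, extensions of comparable norm exist by definition.
\item Add one line showing that $\R_\Omega(UV)$ does not depend on the chosen extensions, i.e.\ that $U_{|_\Omega}=0$ implies $(UV)_{|_\Omega}=0$. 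For example, approximate $V$ by $S_NV$, which converges in $\B^{-s}_{p',1}$ because the third index is finite. Then use $\langle U\,S_NV,\phi\rangle=\langle U,(S_NV)\phi\rangle=0$ for $\phi\in\Ccinfty(\Omega)$, and pass to the limit with your bilinear bound.
\end{itemize}
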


\begin{proof} Just set the right parameters in Proposition~\ref{prop:Productq=1Besov}, as a special case of \eqref{eq:prod1}.

% We provide the proof for $\Omega=\RR^n$, the case of domains will follow from the definition of function spaces by restriction.

% First, since $\B^{n/p}_{p,1}(\RR^n)$ is an algebra, one has the product rule
% \begin{align}\label{eq:FirstProductRuleProof}
%     \B^{n/p}_{p,1}(\RR^n)\cdot \B^{n/p}_{p,1}(\RR^n)\hookrightarrow \B^{n/p}_{p,1}(\RR^n)\hookrightarrow \mathcal{B}^{n/p}_{p,\infty}(\RR^n),
% \end{align}
% and
% \begin{align}\label{eq:SecondProductRuleProof}
%     \B^{0}_{p,1}(\RR^n)\cdot \B^{0}_{\infty,1}(\RR^n)\hookrightarrow \L^p(\RR^n)\cdot \C^0_{ub}(\RR^n)\hookrightarrow \L^p(\RR^n)\hookrightarrow\mathcal{B}^{0}_{p,\infty}(\RR^n).
% \end{align}
% By bilinear complex interpolation between \eqref{eq:FirstProductRuleProof} and \eqref{eq:SecondProductRuleProof}, setting $\theta\frac{n}{p}=:s$ where $\theta\in[0,1]$, one obtains
% \begin{align*}
%     \B^{s}_{p,1}(\RR^n)\cdot \B^{s}_{\frac{n}{s},1}(\RR^n) \hookrightarrow \mathcal{B}^{s}_{p,\infty}(\RR^n).
% \end{align*}
% Now, by duality, \eqref{eq:MainProductRule} holds.
\end{proof}

\section{Appendix: Some recalls on operator theory}\label{App:OperatorTheory}

We gather here some notions and results related to operator theory such as Bounded Imaginary Powers, (bounded) $\mathbf{H}^\infty$-functional calculus, and their links to the standard $\L^q$-maximal regularity theory.

We consider $(\D(A),A)$ a sectorial operator of angle $0<\omega<\pi$ on a Banach space $\X$, following the knowledge from Section~\ref{Sec:SectOp}, \textit{i.e.} satisfying \eqref{eq:SectCondtn}. We denote by $\D(A^\infty)$ the intersection of the family $(\D(A^k))_{k\in\NN}$. When $\omega<\pi/2$, note that for all $t>0$, $e^{-tA}\X\subset \D(A^\infty)$. If $\N(A)$ is the null space of $A$, one has $e^{-tA}{}_{|_{\N(A)}} = \I_{\N(A)}$ for all $t\geqslant 0$.

One of the main features of sectorial operators concerns the possibility of defining the Dunford functional calculus, that is, making sense of the quantity
\begin{align}\label{eq:DunfordintegralFuncCalc}
    f(A)x = \frac{1}{2 i \pi}\int_{\partial \Sigma_{\theta}} f(z)(z\I-A)^{-1}x\,\mathrm{d}z\text{,}
\end{align}
for all $x\in \X$, provided $\theta,\mu \in(\omega,\pi)$ and
$f\in\L^1(\Sigma_{\theta}, \mathrm{d}|z|/|z| )\cap\mathbf{H}^\infty({\Sigma}_{\mu})$, and $\theta<\mu$. Here, $\mathbf{H}^\infty(\Sigma_\theta)$ stands for the set of holomorphic functions on $\Sigma_\theta$ which are uniformly bounded. In particular, for the following class of decaying holomorphic functions
\begin{align*}
    \mathbf{H}^\infty_0(\Sigma_\mu):=\Big\{\,\psi\in\mathbf{H}^\infty({\Sigma}_\mu)\,:\exists\varepsilon_0>0,\,\forall z \in\Sigma_\mu,\,|\psi(z)|\lesssim_{\mu,\psi,\varepsilon_0}\frac{|z|^{\varepsilon_0}}{(1+|z|)^{2\varepsilon_0}}\,\Big\},
\end{align*}
it is easy to check that \eqref{eq:DunfordintegralFuncCalc} makes sense for any $\omega$-sectorial operator $(\D(A),A)$ on $\X$ and for any $x\in\X$, and yields a bounded linear operator. One says that $A$ admits a \textit{\textbf{bounded}} (or $\mathbf{H}^\infty(\Sigma_\mu)$-)\textit{\textbf{holomorphic functional calculus}} on $\X$ (of angle $\mu\in(\omega,\pi)$) if, there exists a constant $K_\mu$ such that for all $f\in\mathbf{H}^\infty(\Sigma_\mu)$,
\begin{align*}
    \lVert f(A)\rVert_{\X\rightarrow \X}\leqslant K_\mu \lVert f \rVert_{\L^\infty(\Sigma_\mu)}\text{.}
\end{align*}
In that case to be properly define we need $A$ to be injective, and the knowledge of the inequality above for all elements of $\mathbf{H}^\infty_0(\Sigma_\mu)$ is sufficient. If $0\in\sigma(A)$ and $A$ has a non-trivial null space, in order to be able to consider the functional calculus, we restrict the action of $A$ to the closed subspace $\overline{\R(A)}$ of $\X$.

Note that this property implies in particular that for all $s\in\RR$, one can set $f(z)=z^{is}$, so that the fractional imaginary power of $A$, $A^{is}$, is a well-defined linear operator on $\X$ (or on $\overline{\R(A)}$ if $A$ is not injective). This is called the property of \emph{Bounded Imaginary Powers} (BIP).

Furthermore, if $A$ has bounded functional calculus of angle $\mu\in(\omega,\pi)$, one has
\begin{align*}
      \theta_A :=\inf \left\{ \nu\geqslant 0\,:\, \sup_{s\in\RR} e^{-\nu|s|}\lVert A^{is}\rVert_{\X \rightarrow \X}<\infty\right\} \leqslant \mu,
\end{align*}
and $\theta_A$ is called the \emph{type} of $A$.

Bounded Imaginary Powers have two very important properties that are of interest for us:

\medbreak

\noindent \textbf{First}, it implies that the family if domains of fractional powers $(\D(A^\alpha))_\alpha$ forms a complex interpolation scale:

\medbreak

\begin{proposition}[ {\cite[Theorem~6.6.9]{bookHaase2006}} ]\label{thm:BIP}
Let $(\D(A),A)$ be a sectorial operator on $\X$ such that it has BIP. Then for all $\theta\in(0,1)$ and $\zeta:=(1-\theta)\alpha+\theta\beta$, it holds that
\begin{align*}
    [\D(A^\alpha),\D(A^\beta)]_{\theta} = \D(A^{\zeta})
\end{align*}
for all $0\leqslant \Re \alpha\leqslant \Re \beta$, with either $\Re \alpha>0$ or $\alpha =0$.
\end{proposition}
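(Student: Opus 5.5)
The plan is to reduce to the case $\alpha=0$ and then prove $[\X,\D(A^\beta)]_\theta=\D(A^{\theta\beta})$ by the three-lines lemma applied to analytic families built from the imaginary powers. Two reductions come first.

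\textbf{Reduction to invertible $A$.} Replacing $A$ by $\I+A$ changes neither the domains $\D(A^\gamma)$ (with graph norms) nor the BIP property: BIP for $A$ implies BIP for $\I+A$, with type at most $\max(\theta_A,\cdot)$, by the usual functional calculus perturbation argument of \cite[Chap.~5]{bookHaase2006}. So we may assume $0\in\rho(A)$ and equip $\D(A^\gamma)$ with the norm $\lVert A^\gamma\cdot\rVert_{\X}$.

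\textbf{Reduction to $\alpha=0$.} When $\Re\alpha>0$, the operator $A^\alpha$ is an isomorphism $\D(A^\alpha)\to\X$. It maps $\D(A^\beta)$ onto $\D(A^{\beta-\alpha})$ and $\D(A^\zeta)$ onto $\D(A^{\theta(\beta-\alpha)})$. Complex interpolation commutes with such isomorphisms, so it suffices to treat the pair $(\X,\D(A^\gamma))$ with $\Re\gamma\geqslant0$.

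\textbf{Inclusion $\D(A^{\theta\gamma})\subset[\X,\D(A^\gamma)]_\theta$.} Given $x\in\D(A^{\theta\gamma})$, set
\begin{align*}
F(z):=e^{(z-\theta)^2}A^{(\theta-z)\gamma}A^{\theta\gamma}x\quad\text{(understood as } e^{(z-\theta)^2}A^{-z\gamma}A^{\theta\gamma}x\text{)},
\end{align*}
which is holomorphic on the open strip and continuous and bounded on its closure.

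On $\Re z=0$ we have $F(is)=e^{(is-\theta)^2}A^{-is\gamma}A^{\theta\gamma}x$. BIP gives $\lVert A^{-is\gamma}\rVert\lesssim e^{(\theta_A+\varepsilon)|s||\gamma|}$ (plus a factor from $\Im\gamma$, also at most exponential in $|s|$). The Gaussian $e^{-s^2}$ absorbs this growth, so $F$ is bounded in $\X$ there.

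On $\Re z=1$, we have $A^\gamma F(1+is)=e^{(1+is-\theta)^2}A^{-is\gamma}A^{\theta\gamma}x$, which is bounded in the same way. Since $F(\theta)=x$, this gives $\lVert x\rVert_{[\cdot,\cdot]_\theta}\lesssim\lVert A^{\theta\gamma}x\rVert$.

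\textbf{Converse inclusion.} Let $f$ lie in the Calder\'on space with $f(\theta)=x$, and set $G(z):=e^{(z-\theta)^2}A^{z\gamma}f(z)$. The obstacle is that $A^{z\gamma}f(z)$ is not a priori defined or holomorphic in the interior. I would regularize with $A_\varepsilon^{z\gamma}:=A^{z\gamma}(\I+\varepsilon A)^{-N}$ for $N>\Re\gamma$, which is holomorphic and bounded on the strip. I would also use density of $\D(A^\infty)$-valued finite sums $\sum e^{\delta z^2}\varphi_k(z)x_k$ in the Calder\'on space.

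On the two boundary lines, BIP again controls $A^{is\gamma}$ uniformly in $\varepsilon$, since $(\I+\varepsilon A)^{-N}$ is uniformly bounded. The three-lines lemma then yields
\begin{align*}
\lVert A^{\theta\gamma}(\I+\varepsilon A)^{-N}x\rVert\lesssim\lVert f\rVert_{\mathcal F},
\end{align*}
uniformly in $\varepsilon$. Letting $\varepsilon\to0$ and using closedness of $A^{\theta\gamma}$ gives $x\in\D(A^{\theta\gamma})$ with the desired bound.

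The main difficulty is this regularization step: justifying holomorphy and the uniform bounds. The remaining steps are bookkeeping with the exponential type bound from BIP.
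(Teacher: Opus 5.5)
Your route (Gaussian-damped analytic families built from imaginary powers, plus the three-lines lemma) is the standard argument for this result. The paper does not prove it and only cites Haase. However, some steps fail as written.

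\textbf{Non-real exponents.} Your boundary estimates are false for non-real exponents, and your reduction produces $\gamma=\beta-\alpha$, which is non-real in general. Write $\gamma=a+ib$ with $b\neq0$. Then $A^{-is\gamma}=A^{bs}A^{-ias}$, and $A^{bs}$ is an \emph{unbounded} real power whenever $bs>0$; it is not an exponentially bounded factor. So $F(is)=e^{(is-\theta)^2}A^{(\theta-is)\gamma}x$ is defined only if $x\in\D(A^{\theta a+bs})$. This fails for $bs$ large and general $x\in\D(A^{\theta\gamma})$, so $F$ is not even $\X$-valued on $\Re z=0$. The same problem affects $A^{z\gamma}$ in $G$. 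The missing step is easy. Under BIP, $A^{i\Im\gamma}$ is a bounded isomorphism and $A^{\gamma}=A^{i\Im\gamma}A^{\Re\gamma}$. Hence $\D(A^{\gamma})=\D(A^{\Re\gamma})$ and $\D(A^{\theta\gamma})=\D(A^{\theta\Re\gamma})$ with equivalent norms. You should reduce to real $\gamma>0$ before building $F$ and $G$.

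\textbf{The limit $\varepsilon\to0$.} Your closing step for the converse inclusion is not justified. The three-lines lemma gives only $\sup_{\varepsilon>0}\lVert A^{\theta\gamma}(\I+\varepsilon A)^{-N}x\rVert\lesssim\lVert f\rVert_{\mathcal F}$. Closedness of $A^{\theta\gamma}$ needs \emph{convergence} of $A^{\theta\gamma}(\I+\varepsilon A)^{-N}x$, not boundedness, and outside reflexive spaces boundedness does not suffice. For example, on $c_0$ with $Ax=(2^nx_n)_n$, the sequence $x=(2^{-n/2})_n$ satisfies $\sup_\varepsilon\lVert A^{1/2}(\I+\varepsilon A)^{-N}x\rVert\leqslant1$, yet $x\notin\D(A^{1/2})$. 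Use the density you mention, in the right order:
\begin{enumerate}
\item Take $f(z)=e^{\delta z^2}\sum_k e^{\lambda_k z}x_k$ with $x_k\in\D(A^\gamma)$. Then $G(z)=e^{(z-\theta)^2}A^{(z-1)\gamma}A^{\gamma}f(z)$ is already well defined, with no regularization needed, and gives $\lVert A^{\theta\gamma}f(\theta)\rVert\lesssim\lVert f\rVert_{\mathcal F}$.
\item For general $f$, approximate by such $f_n$. By linearity, $A^{\theta\gamma}f_n(\theta)$ is Cauchy, and only then does closedness apply.
\end{enumerate}

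\textbf{Density of the domain.} Your claim that $F$ is continuous up to $\Re z\in\{0,1\}$ uses $A^{-\varepsilon}y\to y$ and strong continuity of $s\mapsto A^{is}y$, i.e.\ $\overline{\D(A)}=\X$. This hypothesis cannot be dropped. On $\ell^\infty$ with the same $A$ (which has $\lVert A^{is}\rVert=1$), the coordinatewise three-lines lemma together with density of $\D(A)$ in the interpolation space gives
\[
[\ell^\infty,\D(A)]_{1/2}\subset\{x:\,2^{n/2}x_n\to0\}\subsetneq\D(A^{1/2}).
\]
So the statement as written, without dense domain, is false. This is harmless for the paper, which uses it only in reflexive spaces, where sectorial operators are densely defined.
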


\medbreak

\noindent \textbf{Second}, it yields a simple way to reach \hyperref[eq:MaxRegLq]{(MR${}_{q}^T$)} for $0<T\leqslant \infty$ on \emph{UMD} Banach spaces (we will not go into details here, but the reader has to be aware that typically, for $1<p,q<\infty$, finite-dimensional-valued $\L^p$, $\H^{s,p}$, and $\B^{s}_{p,q}$ spaces satisfy such a property):

\medbreak

\begin{theorem}\label{thm:LqMaxRegUMD}
Let $(\D(A),A)$ be a sectorial operator of angle $\omega\in [0,\frac{\pi}{2})$ on a UMD Banach space $\X$ such that it has BIP of type $\theta_A\in[0,\frac{\pi}{2})$. Let $q\in(1,\infty)$.

Let $T\in(0,\infty)$. For all $f\in\L^q(0,T;\X)$ and all $u_0\in (\X,\D(A))_{1-1/q,q}$, the problem \eqref{ACP} admits a unique mild solution
$u\in \mathrm{C}^0_{ub}([0,T];(\X,\D(A))_{1-1/q,q})$
such that $\partial_t u$ and $Au$ belong to $\L^q(0,T;\X)$, with the estimates
\begin{align}
    \lVert u &\rVert_{\L^\infty(0,T;(\X,\D(A))_{1-1/q,q})}\nonumber\\
    &\quad\quad\quad\quad\quad\lesssim_{A,\X,q,T}
    \lVert (u,\partial_t u, Au)\rVert_{\L^q(0,T;\X)}\label{eq:BoundLqMaxReghomogeneous}\\
    &\quad\quad\quad\quad\quad\quad\quad\quad\quad\quad\quad\quad
    \lesssim_{A,\X,q,T}
    \lVert f\rVert_{\L^q(0,T;\X)} + \lVert u_0\rVert_{(\X,\D(A))_{1-1/q,q}}\nonumber\text{.}
\end{align}
Furthermore, if additionally $0\in \rho(A)$, \textit{i.e.} if $A$ is invertible, then the implicit constants are uniform with respect to $T$, and the result remains valid for $T=\infty$. In this latter case, there exists even $c>0$ such that the following estimate holds:
\begin{align*}
    \lVert e^{ct} u &\rVert_{\L^\infty(\RR_+;(\X,\D(A))_{1-1/q,q})}\\ 
    &\quad\quad\quad\quad\lesssim_{A,\X,q}
    \lVert e^{ct}(u,\partial_t u, Au)\rVert_{\L^q(\RR_+;\X)}\\
    &\quad\quad\quad\quad\quad\quad\quad\quad\quad\quad\quad
    \lesssim_{A,\X,q}
    \lVert f\rVert_{\L^q(\RR_+;\X)} + \lVert u_0\rVert_{(\X,\D(A))_{1-1/q,q}}\text{.}
\end{align*}
\end{theorem}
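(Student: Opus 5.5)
The argument treats the sectorial operator $A$ of angle $\omega<\pi/2$ with BIP of type $\theta_A<\pi/2$ on the UMD space $\X$ in three stages: the homogeneous problem $u_0=0$, the initial value contribution, and finally the invertible case with exponential weights.

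\textbf{Step 1: the case $u_0=0$.} The plan is to prove boundedness of $f\mapsto A(\partial_t+A)^{-1}f$ on $\L^q(\RR;\X)$. This is the Dore--Venni theorem, whose outline I would follow.
\begin{itemize}
\item The time derivative $B:=\partial_t$ on $\L^q(\RR;\X)$ has BIP of type $\pi/2$, because $\X$ is UMD: its imaginary powers are operator-valued multipliers with symbol $(i\xi)^{is}$, bounded by the vector-valued Mikhlin/Hilbert transform theory on UMD spaces.
\item The operator $A$, extended pointwise in time, keeps BIP of type $\theta_A$ on $\L^q(\RR;\X)$.
\item The two operators are resolvent commuting, and $\theta_A+\pi/2<\pi$.
\end{itemize}
Dore--Venni then gives that $A+B$ is closed on $\D(A)\cap\D(B)$ and that $\lVert Au\rVert+\lVert Bu\rVert\lesssim \lVert (A+B)u\rVert$. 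This yields (MR$_q$) on $\RR$, with $A$ replaced by $A+\varepsilon$ if $A$ is not invertible. For finite $T$, I would extend $f$ by zero and restrict the solution to $(0,T)$; causality of the Duhamel formula justifies this. The lower-order term $\lVert u\rVert_{\L^q(0,T)}$ is controlled using $T<\infty$ together with the uniform bound on the semigroup, which is where the $T$-dependence of the constants enters in the non-invertible case.

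\textbf{Step 2: initial data.} Next I would show that $t\mapsto e^{-tA}u_0$ has $Ae^{-tA}u_0\in\L^q(\RR_+;\X)$ if and only if $u_0\in(\X,\D(A))_{1-1/q,q}$. This is the characterization \eqref{eq:AdaptedBesovSpaces} with $m=1$, $\theta=1-1/q$, under the substitution $t^{1/q}\,\d t/t$ (the inhomogeneous version on $(0,T)$ for non-invertible $A$). Uniqueness of the mild solution follows from the uniqueness of mild solutions recalled in Section~\ref{Sec:SectOp}.

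\textbf{Step 3: trace estimate.} The bound $\lVert u\rVert_{\L^\infty((\X,\D(A))_{1-1/q,q})}\lesssim \lVert(u,\partial_tu,Au)\rVert_{\L^q}$ is the classical trace theorem for $\W^{1,q}(\X)\cap\L^q(\D(A))$; I would take it from Lunardi's real interpolation theory. Continuity in time comes from density of smooth functions.

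\textbf{Step 4: invertible $A$ and exponential weights.} Invertibility and sectoriality with $\omega<\pi/2$ put the spectrum in $\{\Re z\ge 2c\}$ for some $c>0$. Then $A-c$ is still sectorial, invertible, and has BIP of angle $<\pi/2$; the BIP angle is preserved under this small shift, for instance via bounded $\mathbf{H}^\infty$-calculus perturbation results in Haase. Setting $v=e^{ct}u$ gives $\partial_tv+(A-c)v=e^{ct}f$, and Steps 1--3 applied to $A-c$ give the weighted estimate with $T$-independent constants.

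\textbf{Main obstacle.} The delicate point is Step 1, the Dore--Venni argument: constructing $(A+B)^{-1}$ as a Mellin-type contour integral of $A^{-z}B^{z-1}/\sin(\pi z)$ along $\Re z=\text{const}$. Its convergence requires exactly the angle condition $\theta_A+\theta_B<\pi$ together with the exponential bounds on the imaginary powers. I would cite this result rather than reprove it.
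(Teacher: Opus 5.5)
The paper does not prove this statement. It is recalled in the appendix as well known: bounded imaginary powers of type $<\pi/2$ on a UMD space give $\L^q$-maximal regularity via the Dore--Venni theorem, with a pointer to the author's earlier work. Your Steps 1--3 are that standard argument and are fine as a plan. Two points in Step 4 need correcting.

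\textbf{The BIP of $A-c$.} You justify that $A-c$ has BIP of type $<\pi/2$ by perturbation results for the bounded $\mathbf{H}^\infty$-calculus. Only BIP is assumed, so those results are not available. The claim is still true and follows directly from invertibility.
Take $\B_r(0)\subset\rho(A)$, $0<c<r'<r$ and $\theta\in(\omega,\pi/2)$. Write $(A-c)^{is}=A^{is}(\I+g_s(A))$ with $g_s(z):=(1-c/z)^{is}-1$.
This function is holomorphic on $\{|z|>c\}$ and satisfies $|g_s(z)|\lesssim |s|\,|z|^{-1}e^{|s|\arcsin(c/r')}$ for $|z|\geqslant r'$. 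Hence $g_s(A)$ is a bounded Dunford integral over $\partial(\Sigma_\theta\setminus\B_{r'}(0))$.
So for $c$ small, $A-c$ is sectorial of angle $<\pi/2$ with BIP of type at most $\theta_A+\arcsin(c/r')<\pi/2$.
Alternatively, you can avoid BIP of the shifted operator altogether. By a theorem of Dore, maximal regularity on bounded intervals together with exponential stability of the semigroup gives maximal regularity on $\RR_+$.

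\textbf{The weight on $f$.} The substitution $v=e^{ct}u$ gives the weighted estimate with $\lVert e^{ct}f\rVert_{\L^q(\RR_+;\X)}$ on the right-hand side, not $\lVert f\rVert_{\L^q(\RR_+;\X)}$ as printed. You should say so explicitly, because the printed form fails for every $c>0$.
Take $\X=\CC$, $A=1$, $u_0=0$ and $f(t)=(1+t)^{-1}\in\L^q(\RR_+)$. Then $u(t)\geqslant(1-e^{-1})(1+t)^{-1}$ for $t\geqslant 1$, so $e^{ct}u$ is unbounded while the printed right-hand side is finite.
Your argument therefore proves the correct version of the last claim, with the exponential weight carried by the data $f$.
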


For more details about the functional calculus of sectorial operators and their link with maximal regularity, we refer to the general references
\cite{bookHaase2006,KunstmannWeis2004,bookDenkHieberPruss2003,PrussSimonett2016,HytonenNeervenVeraarWeisbookVolIII2023}.
The above theorem is very well known, but difficult to find stated in this form; see also the variation given by the author in \cite[Thm.~2.2]{Gaudin2023}.
In \eqref{eq:BoundLqMaxReghomogeneous}, the implicit constant is known to have polynomial growth.

% \medbreak

% Finally, last but not least, on real interpolation spaces built upon the domain of a sectorial operators, not only maximal regularity comes for free, as stated in Theorem~\ref{thm:DaPratoGrisvard}, but also the bounded holomorphic functional calculus and Boundedness of Imaginary Powers:

\smallskip
\par\noindent
{\bf Acknowledgement}.  The author is grateful to Raphaël Danchin for suggesting this interesting problem and for valuable discussions related to it. He is also indebted to him for pointing out a mistake in an earlier version of this work. The author also thanks Dominic Breit for helpful suggestions regarding the introduction.

\section*{Compliance with Ethical Standards}\label{conflicts}
\smallskip
\par\noindent
{\bf Conflict of Interest}. The author declares that he has no conflict of interest.

\smallskip
\par\noindent
{\bf Data Availability}. Data sharing is not applicable to this article as no datasets were generated or analysed during the current study.

%----------------------------------------------------
%------------------ Bibliography --------------------
%----------------------------------------------------
\typeout{}                                
\bibliographystyle{alpha}
{\footnotesize
\bibliography{Bibliography.bib}}

\end{document}